\documentclass[10pt]{amsart}
\usepackage{fancyvrb}
\usepackage[breakable]{tcolorbox}
\usepackage{amsfonts}
\usepackage{amsmath}
\usepackage{amssymb} 
\usepackage{amsthm}
\usepackage{mathtools}
\usepackage{tikz-cd}
\usetikzlibrary{calc}
\tikzset{curve/.style={settings={#1},to path={(\tikztostart)
    .. controls ($(\tikztostart)!\pv{pos}!(\tikztotarget)!\pv{height}!270:(\tikztotarget)$)
    and ($(\tikztostart)!1-\pv{pos}!(\tikztotarget)!\pv{height}!270:(\tikztotarget)$)
    .. (\tikztotarget)\tikztonodes}},
    settings/.code={\tikzset{quiver/.cd,#1}
        \def\pv##1{\pgfkeysvalueof{/tikz/quiver/##1}}},
    quiver/.cd,pos/.initial=0.35,height/.initial=0}

\usepackage[hidelinks]{hyperref}
\allowdisplaybreaks
\usepackage{geometry}
\makeatletter
\def\l@section{\@tocline{1}{0pt}{1.5em}{1.5em}{}}
\def\l@subsection{\@tocline{2}{0pt}{3.2em}{3.2em}{}}
\makeatother

\newcommand{\Tot}{\mathrm{Tot}}

\newcommand{\BZ}{\mathbb{Z}}
\newcommand{\BA}{\mathbb{A}}
\newcommand{\BQ}{\mathbb{Q}}

\newcommand{\BC}{\mathbb{C}}
\newcommand{\BK}{\mathbb{K}}
\newcommand{\BP}{\mathbb{P}}

\newcommand{\BH}{\mathbb{H}}
\newcommand{\BV}{\mathbb{V}}
\newcommand{\R}{\textnormal{R}}
\newcommand{\CO}{\mathcal{O}}
\newcommand{\CB}{\mathcal{B}}
\newcommand{\CH}{\mathcal{H}}

\newcommand{\CS}{\mathcal{S}}
\newcommand{\CQ}{\mathcal{Q}}
\newcommand{\CL}{\mathcal{L}}

\newcommand{\CF}{\mathcal{F}}

\newcommand{\CG}{\mathcal{G}}
\newcommand{\bt}{\mathbf{t}}

\newcommand{\BGm}{\textnormal{B}\mathbb{G}_m}

\newcommand{\Span}{\text{Span}} 
\newcommand{\Db}{\textrm{D}^{\textrm{b}}} 

\newcommand{\PP}{\mathbb{P}} 

\newcommand{\Perf}{\textnormal{Perf}}
\newcommand{\Quot}{\textnormal{Quot}}
\newcommand{\Coh}{\textnormal{Coh}}
\newcommand{\Hom}{\textnormal{Hom}}
\newcommand{\Ext}{\textnormal{Ext}}
\newcommand{\Rep}{\textnormal{Rep}} 
\newcommand{\BM}{\textnormal{BM}}
\newcommand{\RHom}{\textnormal{RHom}}
\newcommand{\RHHom}{\textnormal{R}\mathcal{H}\textnormal{om}}
\newcommand{\End}{\textnormal{End}}

\newcommand{\cone}{\textnormal{cone}}
\newcommand{\id}{\textnormal{id}}
\newcommand{\bd}{\textbf{d}} 
\newcommand{\op}{\textnormal{op}}
\newcommand{\pr}{\textnormal{pr}}

\newcommand{\tor}{\textnormal{tor}}
\newcommand{\full}{\textnormal{full}}

\newcommand{\Zero}{\textnormal{Zero}}

\newcommand{\vdim}{\textnormal{vdim}}
\newcommand{\rk}{\textnormal{rk}}

\newcommand{\pt}{\textnormal{pt}}
\newcommand{\vir}{\textnormal{vir}}

\newcommand{\Res}{\textnormal{Res}}
\renewcommand{\ss}{\textnormal{ss}}

\newcommand{\GL}{\textnormal{GL}}
\newcommand{\Sh}{\textnormal{Sh}}
\newcommand{\loc}{\textnormal{loc}}
\newcommand{\gr}{\textnormal{gr}}
\newcommand{\vac}{|0\rangle}
\newcommand{\Gr}{\mathrm{Gr}}

\DeclareMathOperator{\Sym}{Sym}

\newtheorem{theorem}{Theorem}[section]
\newtheorem {lemma}[theorem]{Lemma}

\newtheorem {corollary}[theorem]{Corollary}
\newtheorem {proposition}[theorem]{Proposition}
\theoremstyle{definition}
\newtheorem {example}[theorem]{Example} 
\theoremstyle {definition} 
\newtheorem{remark}[theorem]{Remark}
\newtheorem{notation}[theorem]{Notation}
\theoremstyle {definition} 
\newtheorem{definition}[theorem]{Definition}

\newtheorem{introtheorem}{Theorem}

\def\horizontaldistance{\kern2pt}
\def\verticaldistance{6pt}

\title{Cohomological Hall algebras and Quot schemes of curves}

\author[S. Jindal]{Shivang Jindal}
\address{\'Ecole Polytechnique F\'ed\'erale de Lausanne, Institute of Mathematics}
\email{shivang.jindal@epfl.ch}

\author[W. Lim]{Woonam Lim}
\address{Yonsei University, Department of Mathematics}
\email{woonamlim@yonsei.ac.kr}

\begin{document}
\begin{abstract}

We study cohomological Hall algebras of curves and their actions on the homology of Quot schemes. We introduce the virtual homology of Quot schemes and show that it is preserved by both creation and annihilation actions. We prove that the torsion CoHA is isomorphic to a shuffle algebra and, equivalently, to a braided symmetric algebra associated with a Yang–Baxter operator. We use this description to determine the ideal of tautological relations for punctual Quot schemes and obtain a new basis for their cohomology rings. Finally, we introduce a universal way to double the torsion CoHA and show that it acts naturally on the virtual homology of Quot schemes of arbitrary type.

\end{abstract}

\baselineskip=16pt
\maketitle
\vspace{-23pt}
\tableofcontents

\section{Introduction}

\subsection{Motivation}

The purpose of this paper is to study the cohomological Hall algebra of curves and their representations using the geometry of Quot schemes. The cohomological Hall algebras (CoHAs for short) were introduced by Kontsevich--Soibelman \cite{Kontsevich_Soibelman} for quivers with potential. Since then they have become a central topic in Donaldson--Thomas theory and geometric representation theory. A closely related two-dimensional Hall-algebra construction, now understood as the one-loop preprojective CoHA \cite{YZ}, appeared in the work of Schiffmann--Vasserot \cite{schiffmannvasserotcherednik} on the AGT conjecture. An important feature of CoHAs is that they act naturally on the cohomology of framed moduli spaces \cite{schiffmannvasserotcherednik, Soibelman,franzen, Davison_Meinhardt,Minets,young,YZ, giacomini}. In this paper, we develop an analogous theory for coherent sheaves on a curve, with Quot schemes playing the role of framed moduli spaces.

Over the past decade, there have been many foundational developments in the theory of CoHAs such as cohomological integrality theorems \cite{efimov,Davison_Meinhardt,Henncartint, BDNKP,hennecart2025bpsdecompositiontheorem}, the construction of coproducts and vertex-coalgebra structures \cite{davison2016criticalcohaquiverpotential,jindal2026criticalcohasvertexcoalgebras,  DHKSV}, and connections with Maulik--Okounkov Yangians \cite{MaulikOkounkov, BD, SV_Yangian}. CoHAs have also played a role in the cohomological $\chi$-independence conjecture \cite{DHKSV}, the $P=W$ conjecture \cite{hausel2025pwmathcalh2}, Langlands duality for 3-manifolds \cite{Kinjo-Park-Safronov} and recent approaches to Dolbeault geometric Langlands \cite{pădurariu2026dolbeaultgeometriclanglandsconjecture}. 

Despite these advances, there have been only a few explicit descriptions of CoHAs arising from geometry. One exception is the description of the zero dimensional CoHA of surfaces in terms of the positive half of the deformed $W$-algebra \cite{N1,MMSV}; see also \cite{arbesfeldschiffmann, Sala-Schiffmann,DPS,  Jindal, davisonaffinebps, DPSSV, sala2025kleinianorbifoldscohomologicalhall}. In this paper, we compute the torsion CoHA of a curve as a shuffle algebra with the kernel determined by the diagonal class. Using the shuffle algebra description, we give a presentation of the torsion CoHA by spherical generators and relations.

On the other hand, Quot schemes have played a fundamental role in enumerative geometry. The virtual intersection theory of Quot schemes of curves has applications to moduli spaces of semistable bundles \cite{MO1, M1, M2}, as well as to the quantum cohomology and quantum $K$-theory of Grassmannians \cite{Bertram, BDW, MO2, SZ1, SZ2}. More recently, the virtual intersection theory of torsion Quot schemes on surfaces has been studied, with particular emphasis on the rationality of generating series of virtual invariants \cite{OP, JOP, AJLOP, Anderson}.

One of our motivations is to investigate the representation-theoretic structures underlying the rationality of generating series associated with Quot schemes, in analogy with the role of the Heisenberg algebra \cite{Nakajima, Grojnowski} in the generating series of Betti numbers of Hilbert schemes of points on surfaces \cite{Gottsche}. Recently, Marian--Negu\c t \cite{marian2026cohomologyquotschemesmooth} constructed an action of the shifted Yangian on the cohomology of punctual Quot schemes which induces a basis explaining the rationality of the generating series of Betti numbers. Kaushik \cite{kaushik} subsequently generalized this picture to hyperquot schemes using the higher-rank shifted Yangians. We expect that our construction of the CoHA action on the virtual homology of Quot schemes similarly underlies the rationality of generating series of certain virtual invariants, which will be pursued elsewhere.

Another motivation of this paper is to propose a natural definition for the doubled torsion CoHA of a curve. CoHAs often behave as positive halves of some quasi-triangular algebras, but constructions of their doubles are known only in a limited number of cases, including the spherical part of the preprojective CoHA \cite{YZ}, the CoHA of tripled quiver with canonical cubic potential via Maulik-Okounkov Yangians \cite{MaulikOkounkov, BD, SV_Yangian}, and the zero dimensional CoHA of surfaces \cite{MMSV}. In this paper, we define the doubled torsion CoHA of curves based on our computation of the commutators between creation, annihilation, and tautological class multiplication operators. Surprisingly, these commutators are shown to be independent of the type of the Quot schemes under consideration, hence suggesting a natural way to double the torsion CoHA.

\subsection{Main results}

We summarize the main results of the paper. Let $C$ be a smooth projective curve over the complex numbers. Denote by $\Coh_{(r,d)}(C)$ the stack of all coherent sheaves on $C$ of rank $r$ and degree $d$. Using these stacks, one can define the cohomological Hall algebra
$$\BH^\full:=\bigoplus_{(r,d)}H^\BM_*(\Coh_{(r,d)}(C)),
$$
where the multiplication is given by the short exact sequence correspondence. In order to construct a representation of the CoHA, we fix a coherent sheaf $V$ on $C$ that plays the role of a framing dimension vector for CoHA of quivers. Let $\Quot_{(r,d)}(V)$ be the Quot scheme parametrizing rank $r$ and degree $d$ quotients of $V$ and consider 
$$\BV^\full:=\bigoplus_{(r,d)}H^\BM_*(\Quot_{(r,d)}(V)). 
$$
We also consider the virtual homology which is a subspace $$\BV^\vir\subseteq \BV^\full$$ 
spanned by the virtual fundamental classes capped with tautological classes, see Definition \ref{def: virtual homology}. The virtual homology should be thought of as the cohomological counterpart of looking at the integral of tautological classes against the virtual fundamental classes.

\begin{introtheorem}[Prop. \ref{prop: CoHA-module}, Def. \ref{def: annihilation}, Thm. \ref{thm: virtual homology is preserved}, Prop. \ref{prop: cyclic by vacuum}]
    The nested Quot scheme correspondence induces the creation and annihilation actions
    $$\BH^\full \curvearrowright \BV^\full,\quad (\BH^\full)^\op \curvearrowright \BV^\full,
    $$
    such that $\BV^\vir\subseteq \BV^\full$ is preserved. Furthermore, $\BV^\vir$ is a cyclic $\BH^\full$-module generated by the vacuum vector $|0\rangle\in H^\BM_0(\Quot_{(0,0)}(V))$. 

\end{introtheorem}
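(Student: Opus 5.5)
The plan is to build both actions from the nested Quot correspondence
$$\Quot_{(r,d)}(V)\xleftarrow{\;p\;}\Quot_{(r,d),(r',d')}(V)\xrightarrow{\;q\times \pi\;}\Quot_{(r+r',d+d')}(V)\times \Coh_{(r',d')}(C).$$
Here $\Quot_{(r,d),(r',d')}(V)$ parametrizes flags $K'\subset K\subset V$ with $V/K$ of type $(r,d)$ and $K/K'$ of type $(r',d')$. The map $p$ sends a flag to $V/K$. The map $q\times\pi$ sends it to $(V/K',\,K/K')$.

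The first step is to check that $p$ is quasi-smooth (lci). Its fiber over a point $[V\to Q]$ with kernel $K$ is the stack of extensions $0\to E\to V/K'\to Q\to 0$. Equivalently, it is the stack of surjections $K\to E$ onto sheaves $E$ of type $(r',d')$, and this maps to $\Coh$. Because $C$ is a curve, the relative obstruction theory $\RHom(K',E)$ has amplitude in $[0,1]$. This gives a virtual pullback $p^!$ in the sense of Manolescu/Khan, and $q$ is proper.

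The creation action of $h\in H^\BM_*(\Coh_{(r',d')})$ is then
$$v\mapsto q_*\,\bigl(p\times\pi\bigr)^!(v\boxtimes h).$$
Associativity, i.e.\ that this is an $\BH^\full$-module (Prop.~\ref{prop: CoHA-module}), comes from comparing the two iterated correspondences through the doubly nested Quot scheme. One uses base change and functoriality of virtual pullbacks, exactly as in the proof of associativity for $\BH^\full$. The annihilation action (Def.~\ref{def: annihilation}) is defined dually: pull back along the quasi-smooth map $q\times\pi$, which is smooth relative to $\Coh$ in the appropriate sense, and push forward along $p$. The opposite-algebra property follows from the same diagram chase.

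For preservation of $\BV^\vir$, the key input is compatibility of virtual classes with the correspondence. I would show that
$$(p\times \pi)^!\bigl([\Quot_{(r,d)}]^\vir\boxtimes [\Coh_{(r',d')}]\bigr)=[\Quot_{(r,d),(r',d')}]^\vir,$$
and that $q_*$ of this class, capped with tautological classes, lands in $\BV^\vir$. The strategy is to express the nested virtual class via a second factorization. Realize the nested scheme over $\Quot_{(r+r',d+d')}(V)$ as the relative Quot scheme of the universal kernel $\mathcal{K}'$. The universal subsheaf $K/K'$ sits inside $V/K'$, so one can write the nested scheme as a zero locus, or degeneracy locus, of a section of a tautological bundle over a projective-bundle-like space. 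Virtual Grothendieck--Riemann--Roch or virtual localization then expresses $q_*$ of tautological classes as tautological classes capped with $[\Quot_{(r+r',d+d')}]^\vir$. Since $H^*(\Coh)$ is generated by tautological classes (Künneth components of Chern classes of the universal sheaf), it suffices to treat $h$ of the form $\text{taut}\cap[\Coh]$.

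I expect this step to be the main obstacle. The fibers of $q$ are relative Quot schemes of the universal kernel, so $q_*$ of a virtual class should be computed by a "virtual Quot scheme pushforward" formula. The first thing I would try is to reduce to the torsion case $r'=0$ plus a single extension step. For torsion quotients the nested scheme is cut out as a zero locus of $\Hom(\mathcal{K}',\mathcal{O}_x)$-type bundles on $\Quot\times C^{[d']}$, so Thom--Porteous gives the needed tautological expression. Annihilation is handled symmetrically; it is easier, because $p_*$ of tautological classes of the nested scheme against its virtual class is directly tautological on $\Quot_{(r,d)}$ via the projective-bundle description.

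For cyclicity, take the creation action on $|0\rangle$ of $\text{taut}\cap[\Coh_{(r,d)}]$. This yields $\text{taut}\cap[\Quot_{(r,d)}(V)]^\vir$, because the correspondence from $\Quot_{(0,0)}=\pt$ is just $\Quot_{(r,d)}(V)\to\Coh_{(r,d)}$, and its virtual pullback of the fundamental class is the virtual class. Tautological classes on $\Quot$ are pullbacks of those on $\Coh$ via the universal quotient, so these elements span $\BV^\vir$.
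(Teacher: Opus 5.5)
Your creation action, the quasi-smoothness and properness inputs, and your cyclicity argument are fine. Defining the annihilation geometrically as $p_*\circ(q\times\pi)^!$ is also a legitimate variant of the paper's definition. The paper instead dualizes the cohomological creation action $q_!\circ(p\times\pi)^*$ through the Kronecker pairing; by the virtual projection formula the two agree up to sign.

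The gap is in the step that carries the content of the theorem, namely preservation of $\BV^\vir$. Write $\beta=(r,d)$ and $\alpha=(r',d')$. You need that for tautological $\gamma$ on $\Quot_{\alpha,\beta}$, the classes $q_*(\gamma\cap[\Quot_{\alpha,\beta}]^\vir)$ and $p_*(\gamma\cap[\Quot_{\alpha,\beta}]^\vir)$ are tautological classes capped with $[\Quot_{\alpha+\beta}]^\vir$ and $[\Quot_\beta]^\vir$ respectively. None of the tools you propose gives this in the required generality:
\begin{itemize}
\item Virtual localization needs a torus action, which a general $C$ and $V$ do not have.
\item The derived projective bundle (or zero-locus) description exists only for length-one torsion modifications $\alpha=(0,1)$, so it cannot handle $p_*$ for general $\alpha$.
\item The reduction to $r'=0$ is unexplained, and torsion classes do not generate $\BH^\full$ since products preserve rank.
\end{itemize}
Also, the fibers of $q$ are Quot schemes of the universal quotient $V/K'$, not of a kernel.

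The missing idea is that no pushforward has to be computed at all.
\begin{itemize}
\item The map $q$ is the base change of $\Coh_{\alpha,\beta}\to\Coh_{\alpha+\beta}$ along $\Quot_{\alpha+\beta}\to\Coh_{\alpha+\beta}$, $[V\twoheadrightarrow Q]\mapsto Q$.
\item The map $p$ is the base change of $\Coh_{[V]-(\alpha+\beta),\alpha}\to\Coh_{[V]-\beta}$ along $[V\twoheadrightarrow Q]\mapsto\ker(V\to Q)$.
\end{itemize}
You already invoke Heinloth for $\Coh_{(r',d')}$; apply it to these stacks instead. Their cohomology is tautologically generated. For the extension stacks this holds because they are vector bundle stacks over products of $\Coh$'s, so they have the same cohomology. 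The classifying maps pull this cohomology back onto exactly $R^*(\Quot_{\alpha+\beta})$, $R^*(\Quot_\beta)$ and $R^*(\Quot_{\alpha,\beta})$.

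Base change for virtual Umkehr maps then gives $q_!(R^*)\subseteq R^*$ and $p_!(R^*)\subseteq R^*$. Combine this with two facts:
\begin{itemize}
\item $[\Quot_{\alpha,\beta}]^\vir=q^![\Quot_{\alpha+\beta}]^\vir=p^![\Quot_\beta]^\vir$, by functoriality of virtual classes.
\item The virtual projection formula $q_*(\gamma\cap q^!w)=q_!(\gamma)\cap w$.
\end{itemize}
Together these give preservation under creation. With your definition of annihilation, the identical argument with $p$ in place of $q$ gives preservation under annihilation. The paper, having defined annihilation by duality, instead shows that the cohomological creation action preserves the annihilator of $\BV^\vir$ under the pairing; that argument needs the same statement for $p_!$.
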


\begin{remark}
    Our construction of the annihilation action crucially relies on the projectivity assumption. Nevertheless, the torsion CoHA and its action on the Borel--Moore homology of punctual Quot schemes still make sense even for quasi-projective curves. When $C=\BA^1$, this recovers the framed CoHA-module of the Jordan quiver. 
\end{remark}

The next result computes the torsion CoHA which is the subalgebra of $\BH^\full$ given by 
$$\BH:=\bigoplus_{d\geq 0} H^\BM_*(\Coh_{(0,d)}(C)). 
$$
We define the shuffle algebra, also considered by Negu\c t in \cite[Sec. 4.7]{N2}, whose underlying vector space is given by 
$$\Sh^\Delta_{*}(H^*(C)[z]):=\bigoplus_{d\geq 0}\Sym_d(H^*(C)[z])=\bigoplus_{d\geq 0} \left(
(H^*(C)^{\otimes d})[z_1,\dots, z_d]
\right)^{S_d}. 
$$
The multiplication of $f\in \Sym_d(H^*(C)[z])$ and $g\in \Sym_e(H^*(C)[z])$ is defined as 
$$f\star g:=\sum_{\sigma\in \Sh_{d,e}}\sigma\left(
f(z_1,\dots, z_d)\cdot g(z_{d+1},\dots, z_{d+e})\cdot \hspace{-10pt}\prod_{\substack{1\leq i\leq d\\d+1\leq j\leq d+e}}
\hspace{-5pt}\left(1+\frac{\Delta_{ij}}{z_j-z_i}\right)
\right),
$$
where $\Sh_{d,e}$ consists of $\sigma\in S_{d+e}$ such that $\sigma(1)<\cdots<\sigma(d)$ and $\sigma(d+1)<\cdots<\sigma(d+e)$, and $\Delta_{ij}\in H^*(C\times \cdots \times C)$ denotes the diagonal class of $(i,j)$ factors. The shuffle algebra with the trivial kernel, denoted by $\Sh_{*}(H^*(C)[z])$, refers to the same vector space with the multiplication given by 
$$f\star g:=\sum_{\sigma\in \Sh_{d,e}}\sigma\left(
f(z_1,\dots, z_d)\cdot g(z_{d+1},\dots, z_{d+e})
\right).
$$
In the case of smooth projective curves, the next theorem was also proven in the unpublished note of Schiffmann--Vasserot \cite{SV_unpublished}.

\begin{introtheorem}[Thm. \ref{thm: CoHA equals shuffle}]\label{introthm: CoHA equals shuffle}
    For any smooth quasi-projective curve $C$, there is an isomorphism
    $$\BH\simeq \Sh^\Delta_{*}(H^*(C)[z])$$
    of doubly graded algebras.
\end{introtheorem}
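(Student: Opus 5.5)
The strategy follows the standard shuffle-algebra computation for Jordan-quiver-type CoHAs. We use localization to a torus-equivariant, or rather "generic stratum", model, and then show the resulting map is injective.

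\emph{Step 1: the map to the shuffle algebra.} The stack $\Coh_{(0,d)}(C)$ admits the quotient presentation by $\GL_d$ of the smooth Quot scheme $\Quot(\CO^d,d)$, or locally of commuting-pair data. Its Borel--Moore homology is pure and comparable to that of the stack of $d$ points with $\GL_d$-structure. We consider the open substack $U_d\subset \Coh_{(0,d)}(C)$ of sheaves $\bigoplus_i \CO_{x_i}$ with pairwise distinct support points. Its closure contains the stratum $(C\times \BGm)^d/S_d$ of direct sums of skyscrapers. The key map is pullback to the stack $\prod (C\times\BGm)$ of split sheaves $\bigoplus_i \CO_{x_i}$, i.e. restriction along
\[
\iota_d:(C\times \BGm)^d\to \Coh_{(0,d)}(C).
\]
Its Borel--Moore homology is $H^*(C)^{\otimes d}[z_1,\dots,z_d]$ after Poincaré duality. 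We apply $\iota_d^*$, using that $\Coh_{(0,d)}(C)$ is smooth of dimension $0$ so that refined pullback is defined, and land in the $S_d$-invariants. To check multiplicativity, we compute the Hall product via the correspondence $\Coh_{(0,d)}\times\Coh_{(0,e)}\leftarrow \mathrm{Ext}\text{-stack}\to \Coh_{(0,d+e)}$. Restricting to split sheaves, the stack of extensions becomes a vector bundle stack with fiber $\Ext^1(\CO_{x_j},\CO_{x_i})$ modulo $\Hom(\CO_{x_j},\CO_{x_i})$, with $i$ in the first block and $j$ in the second. This is $\RHom$ of skyscrapers on a curve, whose $K$-class over $C\times C$ is $-\CO_\Delta$ twisted by the weights $z_j-z_i$. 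Its equivariant Euler class is therefore
\[
e\bigl(\CO_\Delta\otimes(z_j-z_i)\bigr)^{-1}\cdot(\text{normal correction})=\frac{z_j-z_i+\Delta_{ij}}{z_j-z_i}.
\]
We obtain this by Grothendieck--Riemann--Roch: $\mathrm{ch}(\CO_\Delta)=\Delta+\dots$, and only the degree-one part survives, since $\Delta^2=\Delta\cdot e(T_C)$ is absorbed appropriately. Pushing forward along $\pr$ from the flag stack then produces the sum over shuffles. This follows the standard computation of Kontsevich--Soibelman and Schiffmann--Vasserot, using the abelianization formula $H_*(\mathrm{pt}/\GL_d)=\Sym$ together with the Weyl symmetrization.

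\emph{Step 2: injectivity and surjectivity.} For surjectivity we use a dimension count. The Poincaré series of $H^\BM_*(\Coh_{(0,d)}(C))$ is computed from the stratification by support type (Laumon), or from the known purity of the cohomology of $\Coh_{(0,d)}(C)$, which is $\Sym^d(H^*(C)[z])$ by Laumon/Heinloth. This matches the graded dimension of $\Sym_d(H^*(C)[z])$. For injectivity, we note that the homology is pure and is generated by tautological classes, namely the Künneth components of the Chern classes of the universal sheaf. We then check that these classes pull back under $\iota_d$ to the elementary symmetric expressions in $z_i$ and $H^*(C)$, which are algebraically independent. Equivalently, restriction to the torus fixed locus is injective because the homology is free over $H^*_{\GL_d}$ (equivariant formality). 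Since both sides are graded with equal finite-dimensional pieces, injectivity gives the isomorphism, and the double grading by $d$ and homological degree is preserved.

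\emph{Main obstacle.} The hardest step is making the localization/injectivity rigorous, because $\Coh_{(0,d)}(C)$ is not a global quotient by a torus acting with isolated fixed points; $C$ has no torus action when $g\ge 1$. What I would try first is the local model. Via the Hilbert--Chow-type map to $\Sym^d C$ and étale-local reduction to $\BA^1$, we reduce to the commuting variety $\{(X)\}\subset \mathfrak{gl}_d/\GL_d$, i.e. the Jordan quiver CoHA, where injectivity of the abelianization map is classical. The local statements are then glued using the decomposition theorem for the support map, which is semismall-like in the appropriate sense. A secondary difficulty is correctly identifying the kernel factor with $\Delta_{ij}$ rather than $\Delta_{ij}$ times a genus correction, which requires care with the sign and degree conventions in the excess intersection.
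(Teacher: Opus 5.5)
Your outline (identify $H^*(\Coh_d)$ with $\Sym_d(H^*(C)[z])$ by restricting to split sheaves, compute the Hall product there, symmetrize over shuffles) has the right shape. However, the step that produces the kernel is never actually carried out.

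Restricting to split sheaves does not by itself compute $\iota_{d+e}^*(f\star g)$. The base change of $p\colon\Coh_{d,e}\to\Coh_{d+e}$ to the split locus is not a union of copies of that locus: over $\CO_x^{\oplus(d+e)}$ the fibre of $p$ is $\Gr(d,d+e)$. So you need a localization theorem to turn $p_*$ into a sum over shuffles, and you never supply one.

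Your Euler class is also attached to the wrong map. The stack $[\Ext^1(F_e,F_d)/\Hom(F_e,F_d)]$ is the fibre of $q$, which is a vector bundle stack, so $q^*$ is an isomorphism and contributes no kernel. The kernel comes from $p$. It is the ratio of Euler classes of the normal bundles of the fixed loci in the target and source of the pushforward, i.e.\ the inverse Euler class of $T_p=\RHom(F_d,F_e)$, whose $K$-class is $\sum_{i\le d<j}(\CO-\CO(\Delta_{ij}))\bt_i^{-1}\bt_j$. The complex $\RHom(F_e,F_d)$ has the same $K$-classes but inverse torus weights. A consistent computation with it would therefore give $K_{d,e}$ with $\Delta_{ij}$ replaced by $-\Delta_{ij}$; your formula comes out right only because you assigned it the weight $z_j-z_i$. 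No Grothendieck--Riemann--Roch is needed, since the class is already a difference of line bundles.

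The obstacle you single out is a misdiagnosis, and your fallback would not repair it. No torus acting on $C$ is needed. The paper uses $\Coh_d\simeq[Q_d/\GL_d]$, where $Q_d\subset\Quot_d(\BC^d\otimes\CO_C)$ is the open locus on which $\BC^d\to H^0(F)$ is an isomorphism; the whole Quot scheme modulo $\GL_d$ is not $\Coh_d$. Together with $\Coh_{d,e}\simeq[Q_{d,e}/\GL_{d,e}]$, it localizes with respect to the maximal torus of $\GL_{d+e}$ acting on the framing.

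The fixed loci are $C^{d+e}\subset Q_{d+e}$ and $\bigsqcup_I C^{d+e}_I\subset\GL_{d+e}\times_{\GL_{d,e}}Q_{d,e}$, indexed by shuffles. They are positive dimensional, which Atiyah--Bott localization allows. Their normal bundles come from $TQ_d=\pi_*\mathcal{H}om(\CS,\CQ)$, which gives $\bigoplus_{i\ne j}\CO(\Delta_{ij})\bt_i^{-1}\bt_j$, and from the tangent sequence of $Q_{d,e}$ together with $\mathfrak{gl}_{d+e}/\mathfrak{p}_{d,e}$. Their Euler class ratio is exactly $K_{d,e}$.

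By contrast, an \'etale-local reduction to the Jordan quiver cannot see the kernel. That CoHA has trivial kernel and $[\Delta_{\BA^1}]=0$, whereas $\Delta_{ij}\in H^2(C\times C)$ is a global class and the sole source of the non-supercommutativity of $\BH$. Gluing local injectivity statements via the decomposition theorem says nothing about the product. You would need a sheaf-level product over $\Sym^d C$, which you do not set up.

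Injectivity itself is easier than you suggest. The direct sum map factors through the flag stack $\tilde{\Coh}_d$ via a section of $\gr$. Its pullback is therefore $(\gr^*)^{-1}$ composed with pullback along $\tilde{\Coh}_d\to\Coh_d$, which is the Laumon--Heinloth isomorphism onto $S_d$-invariants. After that, only torsion-freeness over $H^*_T$ is needed. Your claim that the pulled-back tautological classes are algebraically independent is false; for example $[\pt]\cdot[\pt]=0$ in $H^*(C)\subset H^*(C)[z]$.
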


\begin{remark}
    If $C=\BA^1$, the torsion CoHA is equal to the CoHA of the Jordan quiver. Since $H^*(\BA^1)=\BQ$ and $\Delta_{\BA^1}=0$, Theorem \ref{introthm: CoHA equals shuffle} computes the CoHA of the Jordan quiver as the shuffle algebra $\Sh_{*}(\BQ[z])$ as done in Kontsevich--Soibelman \cite[Sec. 2.5]{Kontsevich_Soibelman}.  If $C=\BP^1$, then we explain in Section \ref{sec: Kronecker} that the torsion CoHA is equal to a certain semistable CoHA of the Kronecker quiver which is also computed by Franzen--Reineke \cite{franzen2019cohomological}. 
\end{remark}

\begin{remark}
The case of $C=\mathbb{G}_m$ is particularly interesting. For any $d \geq 0$, the following stacks are all naturally isomorphic to each other:
\begin{enumerate}
    \item[i)] the stack $\Coh_{(0,d)}(\mathbb{G}_m)$ of torsion sheaves of length $d$ on $\mathbb{G}_m$,
    \item[ii)] the stack $\mathrm{Loc}_d(S^1)$ of rank $d$ local systems on the circle $S^1$,
    \item[iii)] the stack $\mathrm{Rep}_d^{\textnormal{mult}}(Q^{\textnormal{Jor}})$ of $d$ dimensional multiplicative representations of the Jordan quiver.
\end{enumerate}
The associated CoHA structures are all identified and Theorem \ref{introthm: CoHA equals shuffle} shows that they are isomorphic to the (Koszul signed) shuffle algebra $\Sh_{*}((\BQ\oplus\BQ[-1])[z])$, since $H^*(\mathbb{G}_m)=\BQ\oplus \BQ[-1]$ and $\Delta_{\mathbb{G}_m}=0$.
\end{remark}

Using the shuffle algebra descriptions, we obtain several structural results for the torsion CoHA. The first result concerns a presentation of $\BH$ in terms of spherical elements, i.e., 
$$e^\alpha_i:=\alpha \cdot z^i\in \BH_1=H^*(\Coh_{(0,1)}(C))=H^*(C\times \BGm)=H^*(C)[z]. 
$$
The next two results control the noncommutative nature of the torsion CoHA in two different ways. One expresses $\BH$ as a deformation of a supercommutative algebra and the other expresses it as a symmetric algebra in a certain braided category with respect to a Yang--Baxter operator.

\begin{introtheorem}[Thm. \ref{relations:torsioncoha},  Prop. \ref{prop: associatedgraded}, Prop. \ref{prop:twist}, Cor. \ref{cor: YB-twisted symmetric}]
For any smooth quasi-projective curve $C$, the following holds. 
\begin{enumerate}
    \item [1)] $\BH$ admits a presentation by generators $\{e^\alpha_i\,|\,\alpha\in H^*(C),\ i\geq 0\}$ and relations
$$e^{\alpha+\beta}_i=e^\alpha_i+e^\beta_i,\quad [e^{\alpha}_i, e^{\beta}_j ] =  
\begin{cases}
-\sum_{k=1}^{i-j} e_{i-k} \star^{\alpha \cup \beta} e_{j+k-1}, & \textnormal{if}\quad  i\geq j, \\[\verticaldistance] \sum_{k=1}^{j-i} e_{j-k} \star^{\alpha \cup \beta} e_{i+k-1} , & \textnormal{if}\quad  i <j, 
\end{cases}
$$
for each $\alpha, \beta\in H^*(C)$ and $i, j\geq 0$.
    \item [2)] There exists a multiplicative filtration $F_\bullet \BH$ such that  $\Gr^F_\bullet \BH\simeq \Sh_{*}(H^*(C)[z])$ as algebras. 
    \item [3)] There exists a Yang--Baxter operator $R$ on $H^*(C)[z]$ such that $\BH\simeq \Sym^R(H^*(C)[z])$. 
\end{enumerate}
\end{introtheorem}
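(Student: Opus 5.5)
All three statements will be deduced from Theorem \ref{introthm: CoHA equals shuffle}, so we work entirely inside $\Sh^\Delta_*(H^*(C)[z])$, with $e^\alpha_i$ corresponding to $\alpha z^i\in\Sym_1$.

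\emph{Part 1).} We first compute $e^\alpha_i\star e^\beta_j$ in the shuffle algebra. In two variables,
\[
e^\alpha_i\star e^\beta_j=\alpha_1\beta_2 z_1^iz_2^j\Bigl(1+\tfrac{\Delta_{12}}{z_2-z_1}\Bigr)+\alpha_2\beta_1z_2^iz_1^j\Bigl(1+\tfrac{\Delta_{12}}{z_1-z_2}\Bigr).
\]
Since $\Delta_{12}\alpha_1\beta_2=\Delta_{12}(\alpha\cup\beta)_1$, the commutator is
\[
[e^\alpha_i,e^\beta_j]=\Delta_{12}(\alpha\cup\beta)_1\,\frac{z_1^iz_2^j-z_1^jz_2^i}{z_2-z_1},
\]
up to the Koszul sign in the supercommutator. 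Expanding the divided difference as $-\sum_k z_1^{i-k}z_2^{j+k-1}$ and recognizing $\Delta_{12}(\gamma)_1 z_1^az_2^b+\mathrm{sym}$ as the quadratic element $e_a\star^{\gamma}e_b$ gives the stated relation. Here $e_a\star^\gamma e_b$ is the paper's notation, presumably $\sum_k e^{\gamma\gamma_k}_a e^{\gamma^k}_b$ over a Künneth decomposition $\Delta=\sum\gamma_k\otimes\gamma^k$, which is a product of generators. The relations are therefore satisfied.

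Generation by $\Sym_1$ is the harder point. We prove it by filtering by polynomial degree and passing to the trivial-kernel shuffle algebra, where symmetric tensors are generated by degree-one elements (polarization, in characteristic zero). The correction terms $\Delta/(z_j-z_i)$ lower the $z$-degree, so induction on $z$-degree gives surjectivity.

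For injectivity from the abstract algebra $A$ defined by the relations, we note that the relations let us reorder any monomial into a nondecreasing ordered form modulo terms of lower filtration degree. Hence $\Gr A$ is a quotient of $\Sym(H^*(C)[z])$, while $\Gr \BH\simeq\Sh_*(H^*(C)[z])=\Sym$ by Part 2). Comparing graded dimensions in each bidegree forces $A\to\BH$ to be an isomorphism.

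\emph{Part 2).} Define $F_n$ on $\Sym_d$ as the span of elements of total $z$-degree at most $n+(\text{cohomological correction})$. The cleanest choice is the grading where $\Delta_{ij}$ has weight equal to its $z$-weight, so that $\Delta_{ij}/(z_j-z_i)$ has filtration degree strictly negative. Concretely, we use $\deg z=1$ and $\deg\alpha=0$; then $\Delta/(z_j-z_i)$ has degree $-1$. Expanding the product kernel, the leading term is the trivial kernel. Multiplicativity is immediate, and the associated graded map is the identity on vector spaces, which is an algebra map to $\Sh_*$.

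\emph{Part 3).} We define $R$ on $V\otimes V$, with $V=H^*(C)[z]$, as the operator for which the shuffle product in two variables reads $\mu(1+R)$. Explicitly, for $f\in V^{\otimes2}$ we set
\[
R(f)=\tau f+\frac{\Delta_{12}(f-\tau f)}{z_1-z_2},
\]
where $\tau$ is the super-swap. This operator is polynomial because the numerator is antisymmetric. We then verify three things:
\begin{itemize}
\item $R^2=1$, which is a direct check using $\Delta\tau=\Delta$ and $\Delta^2=\Delta\cdot e(C)$-type terms; these cancel in the divided differences.
\item The braid relation, which holds because the factors $(1+\Delta_{ij}/(z_j-z_i))$ form a rational-type R-matrix satisfying the Yang--Baxter equation, as with Yang's $R$-matrix $1+P/u$.
\item The identification of the shuffle algebra as $\bigoplus(V^{\otimes d})^{S_d,R}$, the $R$-symmetric tensors, which follows by conjugating with the kernel.
\end{itemize}
The main obstacle is the Yang--Baxter verification and the precise matching of the shuffle product with the $R$-symmetrization. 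We would handle it by expressing the kernel product as an intertwiner $K_d$ with $K_d\sigma=R_\sigma K_d$.
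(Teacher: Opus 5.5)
Your parts 1) and 2) follow the paper's route. You check the relations in the two-variable shuffle product, and you get spherical generation and injectivity from the $z$-degree filtration, whose associated graded is the trivial-kernel shuffle algebra $\Sh_*(H^*(C)[z])\simeq\Sym^*(H^*(C)[z])$.

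The two-variable check has slips.
\begin{itemize}
\item The supercommutator is $2\Delta_*(\alpha\cup\beta)\frac{z_1^iz_2^j-z_1^jz_2^i}{z_2-z_1}$. The $\Delta$-terms of the two orderings add, since $\beta\cup\alpha=(-1)^{|\alpha||\beta|}\alpha\cup\beta$.
\item $e_a\star^\gamma e_b$ is not just $\Delta_*\gamma\,(z_1^az_2^b+z_1^bz_2^a)$. It also carries $\Delta\cdot\Delta_*\gamma\,\frac{z_1^az_2^b-z_1^bz_2^a}{z_2-z_1}$.
\end{itemize}
Both are repaired only after summing over $k$. The symmetry $k\mapsto i-j+1-k$ kills the extra terms and produces the matching factor $2$.

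The genuine gap is in part 3). The isomorphism itself needs no intertwiner. With $\Sym^R(V)=T_*(V)/\langle v_1\otimes v_2-R(v_1\otimes v_2)\rangle$, relation (ii) says exactly $e^\alpha_i\star e^\beta_j=\star\circ R(e^\alpha_i\otimes e^\beta_j)$. So part 1) \emph{is} the statement $\BH\simeq\Sym^R(H^*(C)[z])$.

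Your explicit sign is also off. With kernel $1+\Delta_{12}/(z_2-z_1)$ one gets $v\star w=(1+R)(v\otimes w)$ for $R(f)=\tau f-\Delta_{12}(f-\tau f)/(z_1-z_2)$. Your version is its conjugate under $z\mapsto -z$.

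What actually needs proof is the braid relation, and your argument for it does not work. The factors $1+\Delta_{ij}/(z_j-z_i)$ are multiplication by even elements of a supercommutative ring. They commute, so any Yang--Baxter identity among them is vacuous and says nothing about $R$, which mixes a swap with a divided difference.

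The proposed intertwiner $K_d\sigma=R_\sigma K_d$, with $K_d$ multiplication by the kernel, also fails already for $d=2$. Applied to $1\otimes1$, it would force $K_2(1\otimes1)=1+\Delta_{12}/(z_2-z_1)$ to be $R$-fixed. Since $R-1=\bigl(1+\frac{\Delta_{12}}{z_1-z_2}\bigr)(\tau-1)$ with the first factor invertible, $R$-fixed means symmetric. But $K_2(1\otimes1)$ is not symmetric whenever $\Delta_{12}\neq0$.

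The missing idea is the splitting $R=\sigma^C\sigma-\Delta^C\partial$.
\begin{itemize}
\item $\sigma$ and $\partial$ are the swap and the divided difference on $\BQ[z_1,z_2]$.
\item $\sigma^C$ and $\Delta^C$ are the super-swap and multiplication by $\Delta$ on $H^*(C)^{\otimes 2}$.
\item The two families commute, so each side of $R_{12}R_{23}R_{12}=R_{23}R_{12}R_{23}$ expands into eight terms graded by the number of $\partial$'s.
\item These match group by group. On the polynomial side you use the classical identities for $\sigma,\partial$: braid and mixed relations, and $\partial_{12}\partial_{23}\partial_{12}=\partial_{23}\partial_{12}\partial_{23}$. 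On the cohomological side you use $\sigma^C_{ij}\Delta^C_{ab}=\Delta^C_{a'b'}\sigma^C_{ij}$ and $\Delta^C_{12}\Delta^C_{13}=\Delta^C_{12}\Delta^C_{23}=\Delta^C_{13}\Delta^C_{23}$.
\item The last identity makes all six quadratic cohomological coefficients equal, so only the summed quadratic identity for $\sigma,\partial$ is needed.
\item The same splitting, with $\partial^2=0$ and $\sigma\partial+\partial\sigma=0$, gives $R^2=1$.
\end{itemize}
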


\begin{remark}
In the first part of the above theorem, we used a notation 
$$e_i\star^\gamma e_j:=\sum e_i^{\gamma_{(1)}}\star e_j^{\gamma_{(2)}}\in \BH_2
$$
where $\Delta_*\gamma=\sum \gamma_{(1)}\otimes \gamma_{(2)}\in H^*(C\times C)$ using Sweedler's notation. In the second part, the filtration $F_\bullet \BH$ corresponds to the $z$-degree filtration upon the isomorphism $\BH\simeq \Sh^\Delta_{*}(H^*(C)[z])$. In the third part, the Yang--Baxter operator $R$ is a $H^*(C)$-decorated version of the divided difference operator, see Definition \ref{def: curve YB operator} for a precise formula. When $C=\BP^1$, our construction of the Yang--Baxter operator solves a conjecture of Franzen--Reineke \cite[Conj. 5]{franzen2019cohomological}.
\end{remark}

Let $V$ be a vector bundle on $C$ and $V^{[d]}$ be the tautological bundle on $\Coh_{(0,d)}(C)$ such that\footnote{In the literature, the notation $(V^\vee)^{[d]} $ is more commonly used for this tautological bundle.}
$$V^{[d]}\big|_F=\Hom(V,F),\quad F\in \Coh_{(0,d)}(C).
$$
The next result characterizes the Borel--Moore homology groups of punctual Quot schemes, i.e., 
$$ \BV:=\bigoplus_{d\geq 0}H^\BM_*(\Quot_{(0,d)}(V))\subseteq \BV^\full, 
$$
purely in terms of the structure of the torsion CoHA and tautological bundles. This would then induce an explicit basis for the cohomology of punctual Quot schemes. We fix an ordered basis $\alpha_1 < \alpha_2 <  \cdots< \alpha_m $ of pure codimension classes in $H^{*}(C)$.

\begin{introtheorem}[Thm. \ref{thm:quotschemekernel}, Thm. \ref{thm: Nakajima type basis}]
    Let $C$ be a smooth quasi-projective curve and $V$ be a vector bundle on it. Then $\BV$ is a cyclic $\BH$-module generated by the vacuum vector $|0\rangle$ such that
    $$\ker(\BH\twoheadrightarrow\BV)=\sum_{d_1\geq 0,\ d_2>0}\BH_{d_1}\star(e(V^{[d_2]})\cdot \BH_{d_2}). 
    $$
    Furthermore, $\BV$ has a basis given by monomials $e^{\alpha_{k_1}}_{i_1} \star e^{\alpha_{k_2}}_{i_2} \star \cdots \star e^{\alpha_{k_d}}_{i_d} \vac$ satisfying
\[ d\geq 0,\quad 0\leq i_1 \leq i_2\leq \cdots \leq i_d < \rk(V)\] and, whenever $i_j=i_{j+1}$ then one has $\alpha_{k_j} \leq \alpha_{k_{j+1}}$ with a strict inequality when both classes are odd.
\end{introtheorem}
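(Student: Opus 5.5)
The plan is to realize $\Quot_{(0,d)}(V)$ as an open substack-free quotient: the forgetful map $\pi:\Quot_{(0,d)}(V)\to\Coh_{(0,d)}(C)$ is the open locus of surjections inside the total space of the vector bundle $V^{[d]}$, and the action map $\BH_d\to\BV_d$, $h\mapsto h\vac$, is pullback $\pi^*$ along this smooth morphism (up to shift). First I would check that $h\mapsto h\star\vac$ agrees with smooth pullback of $h$ to the total space $\Tot(V^{[d]})$ followed by restriction to the open $\Quot_{(0,d)}(V)$. Since $H^\BM_*(\Tot(V^{[d]}))\simeq \BH_d$ by the homotopy property, the localization sequence
$$H^\BM_*(Z_d)\to H^\BM_*(\Tot(V^{[d]}))\to H^\BM_*(\Quot_{(0,d)}(V))\to 0,$$
with $Z_d$ the closed locus of non-surjective maps, gives surjectivity (hence cyclicity) and identifies the kernel with the image of $H^\BM_*(Z_d)$.

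Next stratify $Z_d$ by the length $d_2>0$ of the cokernel: a non-surjective $V\to F$ factors uniquely as $V\twoheadrightarrow F_1\subset F$ with $F/F_1$ of length $d_2$. The stratum is described by the correspondence $\Coh_{(0,d_1)}\leftarrow \mathfrak{Exact}\to\Coh_{(0,d)}$ enriched with a surjection to $F_1$, so its BM homology image is $\BH_{d_1}$-type classes times contributions from $\Coh_{(0,d_2)}$. The key point: pushing forward from the stratum into $\Tot(V^{[d]})$ and identifying with $\BH_d$ produces the CoHA product $\BH_{d_1}\star(\,\cdot\,)$ applied to classes on $\Coh_{(0,d_2)}$ twisted by the Euler class of the normal direction, which is $e(V^{[d_2]})$ (the maps $V\to F/F_1$ that are forced to vanish). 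Concretely, the closure of the zero section $\Coh_{(0,d_2)}\hookrightarrow\Tot(V^{[d_2]})$ contributes $e(V^{[d_2]})\cdot\BH_{d_2}$, and then the product with $\BH_{d_1}$ (using the Quot module structure, Prop. \ref{prop: CoHA-module}, and induction on $d$) gives the inclusion $\supseteq$ and, by filtering $Z_d$ by closed unions of strata and using that each stratum's BM homology is generated by such classes, also $\subseteq$. The inclusion $\supseteq$ alternatively follows from the simple observation that $e(V^{[d_2]})\cdot h$ restricts to zero on $\Quot_{(0,d_2)}(V)$ since the tautological section there is nowhere vanishing, plus module associativity.

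For the basis, I would pass to the shuffle side via Thm. \ref{thm: CoHA equals shuffle}: $e(V^{[d]})$ corresponds (by Grothendieck–Riemann–Roch for $V^{[1]}$ and Whitney on the diagonal-corrected roots) to $\prod_i \prod (z_i+\text{Chern roots of }V^\vee)$, essentially degree $\rk V$ in each $z_i$. Using the filtration of Prop. \ref{prop: associatedgraded}, the associated graded kernel becomes the ideal in the supersymmetric algebra $\Sym(H^*(C)[z])$ generated by leading terms $z^{\rk V}$-multiples, so $\Gr\BV\simeq \Sym(H^*(C)[z]/z^{\rk V})$ in the supercommutative sense, whose standard monomial basis is exactly the ordered monomials described (odd classes squaring to zero giving strict inequality). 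Lifting a basis of the associated graded gives the stated basis of $\BV$. Dimension check against known Betti numbers of punctual Quot schemes (via torus localization) confirms no collapse.

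The main obstacle is the $\subseteq$ direction of the kernel: showing every class supported on $Z_d$ lies in the stated sum, which requires controlling BM homology of the stratum stacks (an affine-bundle-type argument over $\Quot_{(0,d_1)}(V)\times\Coh_{(0,d_2)}$ and induction on $d_1$) and checking the filtration compatibility so the leading-term argument is valid.
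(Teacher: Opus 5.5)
\textbf{Gap 1: the localization sequence does not end in $0$.} Your kernel argument follows the paper's route. You stratify $\Tot(V^{[d]})$ by the length of the image. Each stratum $Q^{=k}_d$ is a vector-bundle stack over $\Quot_k\times\Coh_{d-k}$, and the tautological section of $V^{[d-k]}$ identifies its contribution with $\BH_k\star(e(V^{[d-k]})\cdot\BH_{d-k})$. Your short argument for $\supseteq$ (the Euler class vanishes on $\Quot_{d_2}$ because the universal quotient map gives a nowhere-vanishing section, then use module associativity) is correct.

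However, you cannot simply write the sequence as $H^\BM_*(Z_d)\to H^\BM_*(\Tot(V^{[d]}))\to H^\BM_*(\Quot_d)\to 0$. Surjectivity of each restriction $H^*(Q^{\geq k}_d)\to H^*(Q^{\geq k+1}_d)$ is equivalent to injectivity of the Gysin map $\iota_*$ from $Q^{=k}_d$. This is the real content of the paper's proof (Lemma \ref{lem: Gysin splitting}). There one writes $\iota^*\iota_*=e(N_\iota)$ and identifies $N_\iota=\R\pr_{1*}\RHHom(\CS',\CF'')$ in $K$-theory as a bundle pulled back from $\Quot_k\times\Coh_{d-k}$. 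This bundle has weight one for the $\mathbb{G}_m$-gerbe over $\Quot_k\times\Coh_{d-k}^{\mathrm{rig}}$, so its Euler class is a non-zero-divisor.

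Without this input you have not proved cyclicity. Your $\subseteq$ induction also fails, because lifting classes from $Q^{=k}_d$ to $\Coh_d$ uses surjectivity of $\pi_k^*$, which is cyclicity at lower levels. No purity shortcut is available in the quasi-projective generality of the statement: there $\Quot_d(V)$ is not proper and its cohomology need not be pure.

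\textbf{Gap 2: linear independence of the basis.} In the associated graded, leading terms only give $\gr I\supseteq J$, where $J$ is the ideal generated by $z^{N}H^*(C)[z]$ in $\Sym(H^*(C)[z])$ and $N=\rk(V)$. This proves that your monomials span $\BV$. Linear independence needs $\gr I\subseteq J$, i.e.\ that leading terms of elements of $I$ never cancel, and you assert this without proof.

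The proposed ``dimension check against known Betti numbers'' does not close the gap. For quasi-projective $C$ the Betti numbers are not known beforehand; the paper obtains them as a new consequence in Corollary \ref{cor: Poincare}. Torus-localization or motivic methods give only virtual Poincar\'e polynomials for non-proper Quot schemes.

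There are two ways to close the gap.
\begin{enumerate}
\item[(a)] The paper's argument. The map $\Phi(x)=e(V^{[d]})\cup x$ is an algebra endomorphism of $\BH$ by Proposition \ref{CoHAcupcompatibility}, since $e(V^{[d]})$ restricts to $e(V^{[d_1]})\otimes e(V^{[d_2]})$ under direct sum. Replace $e^\alpha_i$ by $e(V^{[1]})\cup e^\alpha_{i-N}$ for $i\geq N$ to get a modified PBW basis. This yields an isomorphism $\mathrm{span}(\CB_N)\otimes\Phi(\BH)\xrightarrow{\star}\BH$. Since $I=\BH\star\Phi(\BH_+)$, it follows that $\mathrm{span}(\CB_N)$ is a complement of $I$.
\item[(b)] Your leading-term route, completed. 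Once the split Gysin sequences are available, the isomorphisms of Proposition \ref{prop: main proposition for basis} compute $\dim I_d$ exactly in each degree. This matches $\dim J_d$, which forces $\gr I=J$ and makes your argument valid.
\end{enumerate}

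A minor point: no Grothendieck--Riemann--Roch or diagonal correction is needed. One has simply $e(V^{[d]})=\prod_i P(z_i)$ with $P(z)=\prod_j(z-v_j)$.
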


\begin{remark}
    It appears that an explicit basis for the cohomology of punctual Quot schemes on {\it quasi-projective} curves is not known in general. For projective cases, an explicit Nakajima type basis has been obtained recently by Marian--Negu\c t \cite{marian2026cohomologyquotschemesmooth}. The basis from loc. cit. involves not only the creation operators but also multiplication operators by tautological classes, hence is different from the basis in the above theorem even in the projective case. In an another paper, Marian–-Negu\c t \cite[Thm. 2]{negutmarianderived} constructs a different basis in $K$-theory, which we expect to be related to ours under the passage from $K$-theory to cohomology. See Corollary \ref{cor: Poincare} for the discussion about its implication on the Poincar\'e polynomial of punctual Quot schemes. 
\end{remark}

\begin{remark}
    We remark that the above theorem can also be used to study a ring structure of the cohomology of punctual Quot schemes. Indeed, for each $d\geq 0$, the surjection $\BH\twoheadrightarrow\BV$ induces a tautological relation ideal exact sequence
    $$0\rightarrow I_d\rightarrow H^*(\Coh_{(0,d)}(C))\rightarrow H^*(\Quot_{(0,d)}(V))\rightarrow 0
    $$
    describing the cohomology ring of $\Quot_{(0,d)}(V)$. When $d=1$, this recovers the usual projective bundle formula for the cohomology ring of $\Quot_{(0,1)}(V)\simeq \BP(V)$, see Example \ref{ex: d=1 ring}. The case of $d=2$ is also worked out in Example \ref{ex: d=2 ring}. We note that Gautam's thesis \cite[Thm. 3.0.2]{Gautam} gives a very different type of a ring presentation for the cohomology of punctual Quot schemes on $\BP^1$, together with a conjectural formula for arbitrary smooth projective curves.
\end{remark}

Assume that $C$ is a smooth projective curve. In Section \ref{sec: Drinfeld double}, we define a double of the torsion CoHA as a completed tensor product
$$D(\BH):=\BH^{\textnormal{op}}\ \widehat \otimes\ \BH^{0-}\ \widehat \otimes\ \BH^{0+}\ \widehat \otimes\ \BH,
$$
where all four tensor factors are subalgebras. Here $\BH^{0\pm}$ is a  completion of a free supercommutative algebra related to tautological classes. The definition of the algebra structure on $D(\BH)$ is motivated by the results in Section \ref{sec: Commutator relations}, which compute the commutators among the creation and annihilation operators and multiplication by tautological classes acting on the virtual homology $\BV^\vir$.

\begin{introtheorem}[Sec. \ref{sec: Commutator relations}, Prop. \ref{prop: doubled algebra action}]
    There exists an action $D(\BH) \curvearrowright \BV^\vir$ which extends the action of $\BH$, $\BH^\op$ and $\BH^{0\pm}$ via creation and annihilation operators and multiplication by tautological classes, respectively. 
\end{introtheorem}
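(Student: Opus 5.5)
The action will be built in three layers: individual operators, the relations among them, and the factorization of $D(\BH)$.

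\textbf{Operators.} The subalgebra $\BH$ acts on $\BV^\vir$ by creation operators. This uses Proposition \ref{prop: CoHA-module} together with Theorem \ref{thm: virtual homology is preserved}. The subalgebra $\BH^\op$ acts by the annihilation operators of Definition \ref{def: annihilation}; by the same theorem, these also preserve $\BV^\vir$. For $\BH^{0\pm}$, I note that $\BV^\vir$ is spanned by virtual classes capped with tautological classes, so it is stable under cap product with tautological classes. These classes are the Chern classes or Künneth components of $\R\pi_*(\mathcal{Q}\otimes \pi_C^*\gamma)$ for the universal quotient $\mathcal{Q}$. I will send the free supercommutative generators of $\BH^{0\pm}$ to these multiplication operators. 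Each $\BV^\vir_{(r,d)}$ is finite-dimensional in each homological degree, and the tautological operators are nilpotent after fixing degree. Hence the completions in $\BH^{0\pm}$ and in $\widehat\otimes$ act, because only finitely many terms are nonzero on any given vector.

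\textbf{Relations.} The algebra $D(\BH)$ is presented as the four subalgebras subject to cross relations. These relations are exactly the commutator formulas of Section \ref{sec: Commutator relations}:
\begin{enumerate}
\item $[\text{taut},\text{taut}]$: supercommutativity;
\item $[\text{taut}, e_i^\alpha]$ and $[\text{taut}, f_i^\alpha]$: shifts of the tautological classes by the added or removed point;
\item $[f_j^\beta, e_i^\alpha]$: expressed through tautological classes.
\end{enumerate}
The key input is that Section \ref{sec: Commutator relations} proves these identities as operators on $\BV^\vir_{(r,d)}$ for every $V$ and $(r,d)$, with coefficients independent of the type. Therefore the relations defining $D(\BH)$ hold in $\End(\BV^\vir)$.

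\textbf{Factorization.} It then remains to check that the multiplication on $D(\BH)$, defined by straightening any word into the order $\BH^\op\,\BH^{0-}\,\BH^{0+}\,\BH$, is compatible with composition of operators. I plan to show this by induction on the total length of words in spherical generators. The relations in $\BH$ from Theorem \ref{relations:torsioncoha} hold in $\End(\BV^\vir)$ because $\BH$ genuinely acts. The same holds for $\BH^\op$, since annihilation is an action of $\BH^\op$. I will use that $\BH$ is generated by the $e^\alpha_i$, so it suffices to commute generators past one another. The map $D(\BH)\to\End(\BV^\vir)$ is then a homomorphism, because both sides satisfy the same straightening rules and $D(\BH)$ is spanned by ordered products.

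\textbf{Main obstacle.} I expect the hardest point to be consistency. The commutator $[f,e]$ produces infinite series in tautological classes, for example from $e(V^{[d]})$ ratios or from $\R\pi_*$ of the diagonal. These series must be matched, uniformly in $(r,d)$, with elements of the completed $\BH^{0+}\widehat\otimes\BH^{0-}$, and they must satisfy the Jacobi-type compatibility with the shuffle relations. My first approach is to verify $[f,e]$ on generating series $e^\alpha(z)=\sum_i e^\alpha_i z^{-i-1}$. There the commutator should take the form $\delta(z-w)$ times a ratio of tautological generating functions, and I will check compatibility with the kernel $1+\Delta_{ij}/(z_j-z_i)$ of the shuffle product.
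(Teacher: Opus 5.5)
Your outline follows the paper's argument: the generators act on $\BV^\vir$ by creation, annihilation and the specific operators ${}^{\pm}h^\alpha_n$ of \eqref{haction}, Section~\ref{sec: Commutator relations} verifies every defining relation independently of the type, and the action descends through $\widehat R/\widehat J\simeq D(\BH)$. Two corrections: the ${}^{\pm}h^\alpha_n$ must be the K\"unneth components of $\widehat s_{\pm w}(\BK^\pm)$, not Chern classes of $\R\pi_*(\mathcal{Q}\otimes\pi_C^*\gamma)$, since the relations are stated for them, and the completion is harmless because of the $d$-independent vanishing \eqref{eq: vanishing for small enough i} of ${}^{\pm}h_n$ for $n\ll 0$, not nilpotency; your ``main obstacle'' is also unnecessary, since $[e,f]$ is already Theorem~\ref{thm: [e,f]} and Jacobi-type consistency is the PBW theorem's concern, not a condition for a presented algebra to act once its relations hold.
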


\begin{remark}
    Note that our doubled algebra $D(\BH)$ is universal in the sense that it works for any choice of a coherent sheaf $V$. When $V$ is a fixed rank $N$ vector bundle, the restriction of our action to the Borel--Moore homology of punctual Quot schemes $D(\BH)\curvearrowright \BV$ factors through a smaller algebra $D(\BH)_{V,0}$ which is essentially the same as the action of the $N$-shifted Yangian constructed by Marian--Negu\c t \cite{marian2026cohomologyquotschemesmooth}, see Remark \ref{rem: relation to MN}. In \cite{cao2026shiftedquantumgroupscritical} authors conjecturally construct a shifted double of the cohomological Hall algebra of a quiver with potential, which acts on the cohomology of framed representations. Our double $D(\mathbb{H})_{V,0}$ can be seen as an analog of the double for compact geometries.
\end{remark}

\subsection{Notation and convention}
We work over $\BC$. Cohomology and (Borel--Moore) homology are always taken with $\BQ$-coefficients. The notions of commutators, shuffle algebras, opposite algebras and symmetric algebras are all with respect to the super-grading induced from cohomological grading. Every (nested) Quot scheme is understood to be derived (nested) Quot schemes in the sense of \cite{Adhikari, Monavari-Pavia-Ricolfi}. We note that cohomology or Borel--Moore homology of derived stacks depends only on their classical truncation. For a quasi-smooth (resp. quasi-smooth and proper) morphism $f:X\rightarrow Y$, the virtual pullback (resp. virtual Umkehr map) is denoted by 
$$f^!:H^\BM_*(Y)\rightarrow H^\BM_{*+2\dim(f)}(X)\quad\quad \textnormal{(resp. }f_!:H^*(X)\rightarrow H^{*-2\dim(f)}(Y)\textnormal{)}.
$$
See Appendix for basic functorial properties of the virtual pullbacks and virtual Umkehr maps.

\medskip
\noindent \textbf{Acknowledgments.}
We thank Y. Bae, B. Davison, N. Giacomini, S. Kaubrys, A. Kaushik, M. Kool, A. Marian, M. Moreira, A. Negu\c t, D. Oprea, H. Park, T. Peerenboom, W. Pi, and O. Schiffmann for useful discussions. Especially, we would like to thank O. Schiffmann for sharing the unpublished note \cite{SV_unpublished} and explaining many new ideas. WL is supported by the Yonsei University Research Fund of 2024-22-0502, the POSCO Science Fellowship of POSCO TJ Park Foundation, and NRF
grant funded by the Korean government (MSIT) (RS-2025-00514643). The mathematical content of this paper is the product of human work. GPT-5.6 in ChatGPT was used for textual editing and finding an old reference \cite{LLT} used in Example \ref{ex: YB classical}. The authors take full responsibility for the content of the paper.

\section{Cohomological Hall algebra and module}

\subsection{Cohomological Hall algebra of curves}

Let $C$ be a smooth connected projective curve over the complex numbers. We denote a topological type of a coherent sheaf as a pair $(r,d)$ of its rank and degree. The set of all topological types forms a monoid $P=(\BZ_{>0}\times \BZ)\sqcup(\{0\}\times\BZ_{\geq 0})$ under pairwise addition. The stack of coherent sheaves on $C$, denoted by $\Coh$, decomposes into connected components according to their topological types\footnote{For simplicity, we drop $C$ from the notation whenever it is clear from the context.}
$$\Coh=\bigsqcup_{\alpha\in P}\Coh_{\alpha}.
$$

The notion of a short exact sequence is a distinctive feature of abelian categories. A moduli theoretic version of short exact sequences induces a diagram
\begin{equation}\label{eq: CoHA diagram}
\begin{tikzcd}
  & \Coh_{\alpha,\beta} \arrow[ld, "q"'] \arrow[rd, "p"] &   \\
\Coh_\alpha\times \Coh_\beta &                                    & \Coh_{\alpha+\beta},
\end{tikzcd}
\end{equation}
which records the data of short exact sequences and two projections
\begin{center}
\begin{tikzcd}
  & (0\rightarrow F_\alpha\rightarrow F_{\alpha+\beta}\rightarrow F_\beta\rightarrow 0) \arrow[ld, mapsto,"q"'] \arrow[rd, mapsto,"p"] &   \\
(F_\alpha,F_\beta) &                                    & F_{\alpha+\beta}.
\end{tikzcd}
\end{center}
This diagram induces a structure of a cohomological Hall algebra, CoHA for short, on the direct sum of Borel--Moore homology of the stacks (with rational coefficients)
$$\BH^\full:=\bigoplus_{\alpha\in P}H^\BM_*(\Coh_\alpha)$$ 
defined by the composition 
$$\star:=p_*\circ q^!:H^\BM_a(\Coh_\alpha)\otimes H^\BM_b(\Coh_\beta)\xrightarrow{q^!}
H^\BM_{a+b-2\chi(\beta,\alpha)}(\Coh_{\alpha,\beta})\xrightarrow{p_*} H^\BM_{a+b-2\chi(\beta,\alpha)}(\Coh_{\alpha+\beta}). 
$$
This definition utilizes the fact that $q$ is smooth and $p$ is proper. It is well-known by Sala--Schiffmann \cite[Sec. 3.2]{Sala-Schiffmann} that $(\BH^\full,\star)$ forms an associative algebra with a unit 
$$1\in \BQ=H^\BM_0(\Coh_{(0,0)})$$ 
where $\Coh_{(0,0)}$ is simply a point parametrizing the zero sheaf. We remark that $\star$ preserves the $P$-grading but not Borel--Moore homological grading. Restricting to the submonoid $P^\tor:=\{0\}\times \BZ_{\geq 0}$ of topological types of torsion sheaves, we get a subalgebra called the torsion CoHA
$$\Big(\BH:=\bigoplus_{d=0}^\infty H^{\BM}_*(\Coh_{(0,d)})\,\,\star\Big)\subseteq \Big(\BH^{\full}\,,\,\star\Big).
$$
We note that the Borel--Moore homological grading is preserved for the torsion CoHA and that the definition of the torsion CoHA makes sense even when the curve $C$ is quasi-projective. 

\subsection{Quot schemes and CoHA-modules}

Algebras are often best understood by their representations. A main purpose of this paper is to study CoHA-modules that are geometrically constructed using Quot schemes. Unlike the short exact sequence correspondence diagram \eqref{eq: CoHA diagram} which involves only smooth stacks, the nested Quot scheme correspondence diagram \eqref{eq: CoHA-module diagram} below will involve singular schemes in general. For this reason, we use derived Quot schemes by Adhikari \cite{Adhikari} and derived nested Quot schemes by Monavari--Pavia--Ricolfi \cite{Monavari-Pavia-Ricolfi} in order to use hidden smoothness behind the nested Quot scheme correspondence diagram. The use of derived algebraic geometry in the context of cohomological Hall algebras has been ubiquitous; (categorified) CoHA of surfaces \cite{Kapranov-Vasserot, Sala-Schiffmann, Porta-Sala} and CoHA of 3-Calabi--Yau categories \cite{Kinjo-Park-Safronov}. From now on, we refer to the derived (nested) Quot schemes simply by (nested) Quot schemes.

Let $V$ be a fixed coherent sheaf on $C$ which plays the role of a framing vector in the case of quiver varieties. The Quot scheme of $V$ decomposes into  
$$\Quot(V)=\bigsqcup_{\alpha\in P}\Quot_{\alpha}(V)
$$
according to the topological type of the quotient sheaf. We often omit $V$ from the notation when it is clear from the context. We consider the nested Quot scheme correspondence diagram 
\begin{equation}\label{eq: CoHA-module diagram}
\begin{tikzcd}
  & \Quot_{\alpha,\beta} \arrow[ld, "f"'] \arrow[rd, "g"] &   \\
\Coh_\alpha\times \Quot_\beta &                                    & \Quot_{\alpha+\beta},
\end{tikzcd}
\end{equation}
where $\Quot_{\alpha,\beta}$ is the nested Quot scheme parametrizing nested quotients\footnote{Subscripts $\alpha+\beta$ and $\alpha$ are used to emphasize the topological type of underlying sheaves. }
$$V\twoheadrightarrow Q'_{\alpha+\beta}\twoheadrightarrow Q_\beta. 
$$
It is useful to consider the following diagram induced from the nested quotients
\begin{equation}\label{eq: nested Quot}
\begin{tikzcd}
            &                                  &                       & F_\alpha \arrow[d, hook]     &   \\
0 \arrow[r] & S' \arrow[r] \arrow[d]            & V \arrow[r] \arrow[d, equal] & Q'_{\alpha+\beta} \arrow[r] \arrow[d] & 0 \\
0 \arrow[r] & S \arrow[d, two heads] \arrow[r] & V \arrow[r]           & Q_\beta \arrow[r]           & 0. \\
            & F_\alpha                                &                       &                       &  
\end{tikzcd}
\end{equation}
The maps $f$ and $g$ send this data to 
$$\Big(F_\alpha\,,\,[\,0\rightarrow S\rightarrow V\rightarrow Q_\beta\rightarrow 0\,]\Big)
\quad\textnormal{and}\quad 
[\,0\rightarrow S'\rightarrow V\rightarrow Q'_{\alpha+\beta}\rightarrow 0\,],$$
respectively. 

\begin{lemma}\label{lem: quasi-smoothness}
    In the diagram \eqref{eq: CoHA-module diagram}, $f$ is quasi-smooth and $g$ is quasi-smooth and proper. 
\end{lemma}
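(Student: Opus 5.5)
Quasi-smoothness of $f$ and $g$ will be checked by computing their relative cotangent complexes and showing they have Tor-amplitude in $[-1,0]$. Properness of $g$ will be read off from classical Quot schemes. The derived nested Quot scheme of Monavari--Pavia--Ricolfi is defined as a derived mapping/Quot stack. I will use the standard description of its tangent complex at a point $V\twoheadrightarrow Q'\twoheadrightarrow Q$ as the $\RHom$ complex of the nested kernels, in the form of a cone of the natural map
$$\RHom(S',Q')\oplus \RHom(S,Q)\longrightarrow \RHom(S',Q).$$
Similarly, $T_{\Quot_\beta}|_{[S\subset V]}=\RHom(S,Q)$ and $T_{\Coh_\alpha}|_{F}=\RHom(F,F)[1]$.

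\textbf{Properness of $g$.} The classical truncation of $g$ is the forgetful map $V\twoheadrightarrow Q'\twoheadrightarrow Q\ \mapsto\ V\twoheadrightarrow Q'$. Its fiber over $[V\twoheadrightarrow Q']$ is the classical Quot scheme $\Quot_\beta(Q')$, which is projective since $C$ is projective. Globally, this truncation is the relative Quot scheme of the universal quotient $\mathcal{Q}'$ over $\Quot_{\alpha+\beta}\times C \to \Quot_{\alpha+\beta}$, and relative Quot schemes of coherent sheaves along projective morphisms are projective. Since properness depends only on the truncation, $g$ is proper.

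\textbf{Quasi-smoothness of $g$.} The relative tangent complex of $g$ at a point is the tangent complex of the derived Quot scheme of $Q'$ at $Q'\twoheadrightarrow Q$, that is, $\RHom(F_\alpha,Q)$. Here $F_\alpha=\ker(Q'\to Q)$; equivalently this is the cone of $\RHom(S',Q')\to$ the nested tangent complex. On a curve, $\Ext^i$ vanishes for $i\ge 2$, so $\RHom(F_\alpha,Q)$ has cohomology only in degrees $0$ and $1$. The relative cotangent complex therefore has Tor-amplitude in $[-1,0]$, and $g$ is quasi-smooth.

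\textbf{Quasi-smoothness of $f$.} Since $\Coh_\alpha$ and $\Quot_\beta$ are themselves quasi-smooth (smooth and quasi-smooth respectively), one option is to compute the cotangent complex of the source and use the triangle $f^*L_{\mathrm{target}}\to L_{\mathrm{source}}\to L_f$. The cleaner route is to compute $L_f$ directly. The fiber of $f$ over $(F_\alpha,[S\subset V])$ parametrizes surjections $S\twoheadrightarrow F_\alpha$, where $S'$ is the kernel. This fiber is an open substack of the derived linear stack $\Hom(S,F_\alpha)$, and its relative tangent complex is $\RHom(S,F_\alpha)$. Again by the curve condition this complex lives in degrees $[0,1]$.

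The main obstacle is making these pointwise tangent computations global and rigorous. Concretely, I need to identify $f$ with the open locus of surjections inside the relative derived mapping stack $\mathbf{Hom}_{\Coh_\alpha\times\Quot_\beta}(\mathcal{S},\mathcal{F})$. Here $\mathcal{S}$ is the universal subsheaf, which is flat and perfect because $C$ is a smooth curve. This relative derived mapping stack is the total space of $(\pi_*\RHHom(\mathcal{S},\mathcal{F}))^\vee$, and the pushforward is perfect of amplitude $[0,1]$ by the curve condition, so the stack is quasi-smooth. The needed identification should follow from the construction of derived nested Quot schemes in \cite{Monavari-Pavia-Ricolfi}. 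Once it holds, $f$ is quasi-smooth, and $g$ is handled the same way via the relative derived Quot scheme of $\mathcal{Q}'$.
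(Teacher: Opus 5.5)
Your proposal is correct, and it takes a genuinely different, more hands-on route than the paper.

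\textbf{The paper's route.} The paper computes no cotangent complexes. It writes $g$ as the base change of the CoHA map $\tilde g\colon\Coh_{\alpha,\beta}\to\Coh_{\alpha+\beta}$ along $\phi\colon\Quot_{\alpha+\beta}\to\Coh_{\alpha+\beta}$. It writes $f$ as the base change along $\rho$ of $\tilde f\colon\Coh_{[V]-(\alpha+\beta),\alpha}\to\Coh_{[V]-\beta}\times\Coh_\alpha$, which sends $(0\to S'\to S\to F_\alpha\to 0)$ to $(S,F_\alpha)$. Quasi-smoothness and properness of $\tilde g$ are the standard input for the CoHA. The map $\tilde f$ is quasi-smooth simply because it is a morphism between smooth stacks.

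\textbf{How your route relates.} Your global identifications are these same pullbacks, unfolded one step further. The open locus of surjections in $\mathbf{Hom}(\mathcal S,\mathcal F)$ over $\Coh_\alpha\times\Quot_\beta$ is $\rho^*\tilde f$. The relative derived Quot scheme of $\mathcal Q'$ is $\phi^*\tilde g$. So the step you leave as ``should follow from the construction'' is exactly what the paper asserts when it calls its squares ``clearly Cartesian.'' You are not assuming more than the paper does.

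\textbf{What each approach buys.} The paper's route is more economical. It reduces everything to two known facts about stacks of sheaves. The same Cartesian squares are then reused in Lemma \ref{lem: tautological subring is preserved} and in Proposition \ref{prop: virtual projective bundle}. Your route produces explicit relative tangent complexes, $\RHom(F_\alpha,Q)$ for $g$ and $\RHom(S,F_\alpha)$ for $f$. From these you read off the relative virtual dimensions $\chi(\alpha,\beta)$ and $\chi([V]-\beta,\alpha)$ directly.

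\textbf{A small correction.} The tangent complex of the nested Quot scheme is the \emph{fiber} (cocone) of $\RHom(S',Q')\oplus\RHom(S,Q)\to\RHom(S',Q)$, not its cone. Its $H^0$ must be the fiber product of the tangent spaces. Likewise, $T_g$ is the fiber of $T_{\Quot_{\alpha,\beta}}\to g^*T_{\Quot_{\alpha+\beta}}$, not a cone of a map out of $\RHom(S',Q')$. This shift does not affect your argument. The identifications $T_g=\RHom(F_\alpha,Q)$ and $T_f=\RHom(S,F_\alpha)$, which are what you actually use, are correct.
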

\begin{proof}
    In the proof, we use the notation as in \eqref{eq: nested Quot}. Consider the diagram 
\begin{equation}\label{eq: morphism g}
    \begin{tikzcd}
\Quot_{\alpha,\beta} \arrow[r, "\tilde{\phi}"] \arrow[d, "g"] & \Coh_{\alpha,\beta} \arrow[d, "\tilde{g}"] \\
\Quot_{\alpha+\beta} \arrow[r, "\phi"]                 & \Coh_{\alpha+\beta}       
\end{tikzcd}
\end{equation}
    where $\phi$ maps a quotient $V\twoheadrightarrow Q'_{\alpha+\beta}$ to $Q'_{\alpha+\beta}$ and $\tilde{\phi}$ maps a nested quotient $V\twoheadrightarrow Q'_{\alpha+\beta}\twoheadrightarrow Q_\beta$ to a short exact sequence
    $$0\rightarrow F_\alpha\rightarrow Q'_{\alpha+\beta}\rightarrow Q_\beta\rightarrow 0.
    $$
    Then the diagram \eqref{eq: morphism g} is clearly Cartesian. Since $\tilde{g}$ is quasi-smooth and proper, the same is true for $g$ after the base change. 

    Write $[V]:=(\rk(V),\deg(V))$. For the other case, we consider the diagram
\begin{equation}\label{eq: morphism f}
    \begin{tikzcd}
\Quot_{\alpha,\beta} \arrow[r,"\tilde{\rho}"] \arrow[d, "f"] & \Coh_{[V]-(\alpha+\beta), \alpha} \arrow[d, "\tilde{f}"] \\
\Coh_\alpha\times \Quot_{\beta} \arrow[r, "\rho"]                 & \Coh_{[V]-\beta} \times \Coh_\alpha    
\end{tikzcd}
\end{equation}
where $\tilde{f}$ is the natural forgetful morphism, $\rho$ sends $(F_\alpha, V\twoheadrightarrow Q_\beta)$ to $(S,F_\alpha)$ and $\tilde{\rho}$ sends a nested quotient $V\twoheadrightarrow Q'_{\alpha+\beta}\twoheadrightarrow Q_\beta$ to a short exact sequence 
$$0\rightarrow S'\rightarrow S\rightarrow F_\alpha\rightarrow 0.$$
The diagram \eqref{eq: morphism f} is also clearly Cartesian. Since $\tilde f$ is a morphism between smooth stacks, it is quasi-smooth. Therefore the same is true for $f$ after the base change. 

\end{proof}

Consider the direct sum of Borel--Moore homology of the Quot schemes 
$$\BV^\full:=\bigoplus_{\alpha\in P}H^\BM_*(\Quot_\alpha)
$$
and composition of the morphisms
$$\star:=g_*\circ f^!:H^\BM_a(\Coh_\alpha)\otimes H^\BM_b(\Quot_\beta)\xrightarrow{f^!}H^\BM_{a+b+2\chi(V-\beta,\alpha)}(\Quot_{\alpha,\beta})\xrightarrow{g_*} H^\BM_{a+b+2\chi(V-\beta,\alpha)}(\Quot_{\alpha+\beta}). 
$$
This definition utilizes the fact that $f$ is quasi-smooth and $g$ is proper. 

\begin{proposition}\label{prop: CoHA-module}
    The above construction yields a CoHA-module $\star:\BH^\full\otimes \BV^\full\rightarrow \BV^\full$. 
\end{proposition}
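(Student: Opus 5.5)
We must show that $1\star v=v$ and that $(a\star b)\star v=a\star(b\star v)$ for $a\in H^\BM_*(\Coh_\alpha)$, $b\in H^\BM_*(\Coh_\beta)$ and $v\in H^\BM_*(\Quot_\gamma)$.

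\textbf{Unit.} This is immediate. For $\alpha=(0,0)$ the nested Quot scheme is $\Quot_{(0,0),\beta}=\Quot_\beta$, and both $f$ and $g$ are the identity. The relative virtual dimension $\chi(V-\beta,0)$ vanishes, so $f^!$ is the identity as well.

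\textbf{Associativity: the common correspondence.} Introduce the doubly nested Quot scheme $\Quot_{\alpha,\beta,\gamma}$. It parametrizes chains $V\twoheadrightarrow Q''_{\alpha+\beta+\gamma}\twoheadrightarrow Q'_{\beta+\gamma}\twoheadrightarrow Q_\gamma$, taken in the derived sense via the iterated construction of \cite{Monavari-Pavia-Ricolfi}. We will show that both compositions equal the push--pull along one correspondence
$$\Coh_\alpha\times\Coh_\beta\times\Quot_\gamma\xleftarrow{\;F\;}\Quot_{\alpha,\beta,\gamma}\xrightarrow{\;G\;}\Quot_{\alpha+\beta+\gamma},$$
where $F$ records the two successive kernels $F_\alpha$, $F_\beta$ together with $Q_\gamma$.

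\textbf{Associativity: the right side $a\star(b\star v)$.} Here $\Quot_{\alpha,\beta,\gamma}$ is the fiber product $\Quot_{\alpha,\beta+\gamma}\times_{\Quot_{\beta+\gamma}}\Quot_{\beta,\gamma}$, formed over $g$ on the right factor and the second component of $f$ on the left. We verify this square is (derived) Cartesian by writing both sides as base changes of the Cartesian squares \eqref{eq: morphism f} and \eqref{eq: morphism g} used in Lemma \ref{lem: quasi-smoothness}. Then:
\begin{itemize}
\item[(i)] Base change $f^!g_*=g'_*f'^!$ for virtual pullbacks along proper pushforwards, from the Appendix, moves $g_*$ past $f^!$.
\item[(ii)] Functoriality of virtual pullbacks, $(f_1\circ f_2)^!=f_2^!f_1^!$, combines the two pullbacks into $F^!$.
\end{itemize}

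\textbf{Associativity: the left side $(a\star b)\star v$.} Here the CoHA diagram \eqref{eq: CoHA diagram} appears. We use the Cartesian square
$$\Quot_{\alpha,\beta,\gamma}\simeq \Coh_{\alpha,\beta}\times_{\Coh_{\alpha+\beta}}\Quot_{\alpha+\beta,\gamma}.$$
The map to $\Coh_{\alpha+\beta}$ sends a chain to the kernel of $Q''\twoheadrightarrow Q$, which is an extension of $F_\beta$ by $F_\alpha$; the Cartesianness comes from the fact that specifying the intermediate quotient $Q'$ is the same as specifying a subsheaf $F_\alpha$ of that kernel. Base change lets $p_*$ pass through $f^!$. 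Smooth pullback $q^!$ combined with the virtual pullback gives $F^!$, again by functoriality.

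\textbf{Degree bookkeeping.} We must check that the virtual dimensions of $F$ agree on both sides. The left side gives $\chi(V-\alpha-\beta-\gamma\text{ shifts})$: concretely, $-\chi(\beta,\alpha)+\chi(V-\gamma,\alpha+\beta)$. The right side gives $\chi(V-\gamma,\beta)+\chi(V-\beta-\gamma,\alpha)$. These are equal by bilinearity of $\chi$.

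\textbf{Main obstacle.} The hardest step is justifying that these squares are Cartesian at the derived level, so that the virtual pullbacks are compatible via the Appendix base-change and functoriality statements; classical Cartesianness alone would not suffice. To handle this, I would express each relevant map as a base change of a morphism between smooth stacks, as in \eqref{eq: morphism f}, where derived fiber products are controlled by stacks of short exact sequences and flags of sheaves. I would then reduce to associativity-type Cartesian squares among the stacks $\Coh_{\alpha,\beta,\gamma}$ of two-step filtrations, as in Sala--Schiffmann's proof of associativity of $\BH^\full$.
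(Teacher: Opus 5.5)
Your proposal is correct and takes essentially the same approach as the paper. Both arguments identify the two composite correspondences, which are obtained as fiber products, with the three-step derived nested Quot scheme $\Quot_{\alpha,\beta,\gamma}$ and its obvious projections; associativity then follows from base change (A11) and functoriality (A9) of virtual pullbacks, and your explicit unit check, base-change steps and degree bookkeeping spell out what the paper leaves implicit.
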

\begin{proof}  The $\star$-product is associative if the outer arrows of the diagram 
\begin{center}
\begin{tikzcd}[column sep=-20pt]
  &                         &  X\arrow[ld] \arrow[rd] \arrow[lldd, bend right=49] \arrow[rrdd, bend left=59] &                         &   \\
  & \Coh_\alpha\times \Quot_{\beta,\gamma} \arrow[ld] \arrow[rd] &                         & \Quot_{\alpha,\beta+\gamma} \arrow[ld] \arrow[rd] &   \\
\Coh_\alpha\times\Coh_\beta\times \Quot_\gamma &                         & \Coh_\alpha\times \Quot_{\beta+\gamma}                       &                         & \Quot_{\alpha+\beta+\gamma}
\end{tikzcd}
\end{center}
are isomorphic to the outer arrows of the diagram 
\begin{center}
\begin{tikzcd}[column sep=-20pt]
  &                         &  Y\arrow[ld] \arrow[rd] \arrow[lldd, bend right=49] \arrow[rrdd, bend left=59] &                         &   \\
  & \Coh_{\alpha,\beta}\times \Quot_\gamma \arrow[ld] \arrow[rd] &                         & \Quot_{\alpha+\beta,\gamma} \arrow[ld] \arrow[rd] &   \\
\Coh_\alpha\times\Coh_\beta\times \Quot_\gamma &                         & \Coh_{\alpha+\beta}\times \Quot_{\gamma}                       &                         & \Quot_{\alpha+\beta+\gamma}
\end{tikzcd}
\end{center}
where $X$ and $Y$ are obtained as Cartesian products. Both $X$ and $Y$ can be identified with three step derived nested Quot scheme $\Quot_{\alpha,\beta,\gamma}$ parametrizing
$$V\twoheadrightarrow Q''_{\alpha+\beta+\gamma}\twoheadrightarrow Q'_{\beta+\gamma}\twoheadrightarrow Q_\gamma
$$
with the outer arrows being obvious projections. It is straightforward to check that the action is unital, i.e., $1\star(-)$ is an identity.
\end{proof}

We remark that even when the curve $C$ is quasi-projective, torsion Quot schemes $\{\Quot_{d}\}_{d\geq 0}$ and the corresponding module of the torsion CoHA are well-defined.

\subsection{Annihilation action via dualities}\label{sec: annihilation}

In this section, we construct the annihilation action of the (Koszul signed) opposite algebra $(\BH^\full)^\op$ on the homology of Quot schemes. The construction crucially relies on the projectivity assumption of the curve $C$. 

\begin{definition}\label{def: Koszul signed opposite algebra}
    The opposite algebra of $\BH^\full$, denoted by $(\BH^\full)^\op$, is an associative algebra whose underlying group is the same as $\BH^\full$ but with a multiplication given by 
    $$a\star^{\op}b:=(-1)^{|a||b|}\cdot b\star a. 
    $$
\end{definition}

In what follows, we will construct the $(\BH^\full)^\op$-module structure on $\BV^\full$ by annihilation action, assuming that $C$ is projective. A key idea is to consider the CoHA and CoHA-module structures on the usual cohomology using the virtual Umkehr map, instead of using virtual pullback on the Borel--Moore homology groups. First, CoHA multiplication can be defined as a composition
$$\star:=p_!\circ q^*:H^a(\Coh_\alpha)\otimes H^b(\Coh_\beta)\xrightarrow{q^*}H^{a+b}(\Coh_{\alpha,\beta})\xrightarrow{p_!} H^{a+b-2\chi(\alpha,\beta)}(\Coh_{\alpha+\beta}). 
$$
This definition uses the fact that $p$ is a proper quasi-smooth morphism with no further condition on $q$. Via Poincar\'e duality isomorphism for smooth stacks
$$H^*(\Coh_\alpha)\xrightarrow{\sim}H^\BM_{-2\chi(\alpha,\alpha)-*}(\Coh_\alpha),\quad \gamma\mapsto \gamma\cap [\Coh_\alpha],
$$
CoHA multiplications on both sides coincide, so we will use them interchangeably. Second, the CoHA-module structure on the cohomology groups is defined as a composition
$$\star:=g_!\circ f^*:H^a(\Coh_\alpha)\otimes H^b(\Quot_\beta)\xrightarrow{f^*}H^{a+b}(\Quot_{\alpha,\beta})\xrightarrow{g_!} H^{a+b-2\chi(\alpha,\alpha+\beta)}(\Quot_{\alpha+\beta}). 
$$
This definition uses the fact that $g$ is a proper quasi-smooth morphism from Lemma \ref{lem: quasi-smoothness} with no further condition on $f$. The proof of Proposition \ref{prop: CoHA-module} also shows that this indeed defines a CoHA-module on cohomology groups. In other words, we have defined the creation action 
\begin{equation}\label{eq: creation on cohomology}
    \BH^\full\curvearrowright\bigoplus_\alpha H^*(\Quot_\alpha). 
\end{equation}

Consider the topological pairing, also known as Kronecker pairing, with an additional sign\footnote{The sign is inserted here to make the statement in Lemma \ref{lem: Hecke and CoHA actions} free of any signs. Since $\deg(x\cap v)=0$ unless $|x|=|v|$, the pairing can be equivalently defined by $(-1)^{|x|}\deg(x\cap v)$.}
$$H^*(\Quot_\alpha)\otimes H_*(\Quot_\alpha)\xrightarrow{\ (\,,\,)_\alpha\ }\BQ,\quad x\otimes v\mapsto (-1)^{|x||v|}\deg(x\cap v). 
$$
This is a perfect pairing by the universal coefficient theorem. We use this perfect pairing to turn the creation action on the cohomology of Quot schemes into the annihilation action of the opposite algebra on the (Borel--Moore) homology of Quot schemes.\footnote{Note that the homology group and the Borel--Moore homology group agree because Quot schemes are proper if $C$ is projective.} Roughly speaking, we will do the following:
\begin{align*}
    \displaystyle\BH^\full\curvearrowright \bigoplus_\alpha H^*(\Quot_\alpha)
    &\ \ \xleftrightarrow[\textnormal{pairing}]{\textnormal{\ Kronecker\ }}\ \ 
    \displaystyle \bigoplus_\alpha H_*(\Quot_\alpha)\curvearrowleft \BH^\full\\
    &\ \ \xleftrightarrow{\textnormal{left-to-right}}\ \ 
    \displaystyle(\BH^\full)^\op\curvearrowright \bigoplus_\alpha H_*(\Quot_\alpha). 
\end{align*}
Precisely, for each $f\in \BH^\full_\alpha$ and $v\in \BV_{\alpha+\beta}$, we define the right action $v\star f\in \BV_\beta$ as a unique element satisfying 
$$(x,v\star f)_\beta=(f\star x,v)_{\alpha+\beta}\quad \textnormal{for all}\quad x\in H^*(\Quot_\beta),
$$
where $f\star x$ on the right hand side is the creation action on the cohomology from \eqref{eq: creation on cohomology}. This indeed defines the right action because 
\begin{align*}
    (x,v\star(f_1\star f_2))=&\,((f_1\star f_2)\star x, v)\\
    =&\,(f_1\star (f_2\star x),v)\\
    =&\,(f_2\star x,v\star f_1)\\
    =&\,(x,(v\star f_1)\star f_2). 
\end{align*}
Purely algebraically, we can turn the right action into the left action of the (Koszul signed) opposite algebra by setting 
$$f\star^{\op}v:=(-1)^{|f||v|}\cdot v\star f. 
$$

\begin{definition}\label{def: annihilation}
We define the annihilation action $\star:(\BH^\full)^\op\otimes \BV^\full\rightarrow \BV^\full$ by the above construction. 
\end{definition}

\subsection{CoHA-submodule via virtual homology}

In enumerative geometry, numerical invariants are often defined by integrating tautological classes against the virtual fundamental classes. The resulting invariants, called the virtual invariants, remain unchanged under deformation of the underlying data, which are the curve $C$ and a choice of $V$ in the case of Quot schemes. In this section, we introduce the notion of virtual homology of Quot schemes which is a cohomological analogue of considering virtual invariants. 

\begin{definition}\label{def: tautological subring}
    The tautological subring $R^*(\Quot_\alpha)\subseteq H^*(\Quot_\alpha)$ is defined as a subring generated by K\"unneth components of
    $$c_i(\CS),\ c_i(\CQ_\alpha)\in H^*(\Quot_\alpha)\otimes H^*(C),\quad i\geq 1$$
    where $\CS$ and $\CQ_\alpha$ are the universal subsheaf and quotient over $\Quot_\alpha\times C$, respectively. Similarly, the tautological subring $R^*(\Quot_{\alpha,\beta})\subseteq H^*(\Quot_{\alpha,\beta})$ is defined as a subring generated by K\"unneth components of
    $$c_i(\CS),\ c_i(\CQ_\beta),\ c_i(\CS'),\ c_i(\CQ'_{\alpha+\beta}),\ c_i(\CF_\alpha)\in H^*(\Quot_{\alpha,\beta})\otimes H^*(C),\quad i\geq 1$$
    where $\CS$, $\CQ_\beta$, $\CS'$, $\CQ'_{\alpha+\beta}$, $\CF_\alpha$ are the universal objects over $\Quot_{\alpha,\beta}\times C$ as in the diagram \eqref{eq: nested Quot}. 
    
\end{definition}

\begin{remark}\label{rem: taut generation}
    By the universal short exact sequence
    $$0\rightarrow \CS\rightarrow \CO_{\Quot_{\alpha}}\boxtimes V\rightarrow \CQ\rightarrow 0,
    $$
    it suffices to use K\"unneth components of $\{c_i(\CS)\}_{i\geq 1}$ (or equivalently $\{c_i(\CQ_\alpha)\}_{i\geq 1}$) in order to define the tautological subring of $\Quot_\alpha$. Similarly, it suffices to consider K\"unneth components of $\{c_i(\CQ_\beta), c_i(\CF_\alpha)\}_{i\geq 1}$ (or equivalently $\{c_i(\CS'), c_i(\CF_\alpha)\}_{i\geq 1}$) to define the tautological subring of $\Quot_{\alpha,\beta}$. 
\end{remark}

\begin{definition}\label{def: virtual homology}
    The virtual homology group is a subspace $H^\vir_*(\Quot_\alpha)\subseteq H_*(\Quot_\alpha)$ defined as the image of a linear map
    $$R^*(\Quot_\alpha)\rightarrow H_*(\Quot_\alpha),\quad \gamma\mapsto \gamma\cap[\Quot_\alpha]^\vir,
$$
obtained by capping tautological classes with the virtual class. 
\end{definition}

\begin{remark}
    The virtual homology groups defined above should be more related to enumerative geometry than the usual homology. For instance, if the virtual dimension is negative, the virtual fundamental class vanishes and all virtual invariants are zero. In this situation, the virtual homology of the Quot scheme is trivial, even though its ordinary homology may be nontrivial.
\end{remark}

\begin{remark}
    Assume that $\Quot_\alpha$ satisfies the vanishing condition
    \begin{equation*}
\Ext^1(S_1,Q_2)=0\quad\textnormal{for all}\quad [V\twoheadrightarrow Q_1],\ [V\twoheadrightarrow Q_2]\in \Quot_\alpha.
    \end{equation*}
    By deformation theory of Quot schemes, this implies that the Quot scheme is smooth of dimension $\chi([V]-\alpha,\alpha)$ and $[\Quot_\alpha]^\vir=[\Quot_\alpha]$. Furthermore, the cohomology ring of $\Quot_\alpha$ is tautologically generated by a variation of Beauville's diagonal trick as in the proof of \cite[Prop. 4.7]{KLMP}. Therefore, the above vanishing condition implies that $H^\vir_*(\Quot_\alpha)=H_*(\Quot_\alpha)$ which holds in the following cases by a simple application of Serre duality:
    \begin{enumerate}
        \item [(i)] $C$ is any curve, $V$ is any vector bundle, and $\alpha=(0,d)$ is of torsion type,
        \item [(ii)] $C=\BP^1$, $V=\CO^{\oplus N}$ is a trivial bundle, and $\alpha$ is any topological type. 
    \end{enumerate}
    
\end{remark}

In order to construct a CoHA-module structure on the virtual homology groups of Quot schemes
$$\BV^\vir:=\bigoplus_{\alpha\in P}H_*^\vir(\Quot_\alpha)\subseteq \BV^\full,
$$
we prove below that the CoHA action on $\BV^\full$ preserves this subspace. We need the following lemma regarding the preservation of the tautological ring under the virtual Umkehr maps with respect to the forgetful maps from the nested Quot scheme
\begin{equation}\label{eq: two forgetful maps}
    g:\Quot_{\alpha,\beta}\rightarrow \Quot_{\alpha+\beta},\quad \pi:\Quot_{\alpha,\beta}\rightarrow \Quot_\beta.
\end{equation}
The above maps are both quasi-smooth and proper by Lemma \ref{lem: quasi-smoothness}, hence virtual Umkehr maps are well defined. 

\begin{lemma}\label{lem: tautological subring is preserved}
    The virtual Umkehr maps with respect to the forgetful morphisms in \eqref{eq: two forgetful maps} preserve the tautological subring, i.e., 
    $$g_!:R^*(\Quot_{\alpha,\beta})\rightarrow R^{*-2\,\vdim(g)}(\Quot_{\alpha+\beta}),\quad \pi_!:R^*(\Quot_{\alpha,\beta})\rightarrow R^{*-2\,\vdim(\pi)}(\Quot_{\beta}).
    $$
\end{lemma}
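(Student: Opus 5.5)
We would reduce both statements to a Grothendieck--Riemann--Roch type computation for virtual pushforwards along the two forgetful maps, using the Cartesian diagrams from the proof of Lemma \ref{lem: quasi-smoothness}. Consider first $g$. By \eqref{eq: morphism g}, $g$ is the base change of $\tilde g:\Coh_{\alpha,\beta}\to\Coh_{\alpha+\beta}$ along $\phi$. Over $\Quot_{\alpha+\beta}\times C$ we have the universal quotient $\CQ'_{\alpha+\beta}$. The fibre of $g$ over a point $[V\twoheadrightarrow Q']$ is the (derived) Quot scheme of quotients $Q'\twoheadrightarrow Q_\beta$ of type $\beta$. We would therefore identify
$$\Quot_{\alpha,\beta}\simeq \Quot_{\Quot_{\alpha+\beta}\times C/\Quot_{\alpha+\beta}}(\CQ'_{\alpha+\beta},\beta),$$
the relative derived Quot scheme of a family of sheaves. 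Similarly, $\pi$ identifies $\Quot_{\alpha,\beta}$ with the relative derived Quot scheme $\Quot(\CS,\,[V]-\beta-\alpha)$ of the universal subsheaf $\CS$ over $\Quot_\beta\times C$. Here one parametrizes quotients $S\twoheadrightarrow F_\alpha$, or equivalently subsheaves $S'\subseteq S$. So both statements reduce to one claim. Let $\CE$ be a family of sheaves on $B\times C$, let $p:\Quot_B(\CE,\gamma)\to B$ be the relative derived Quot scheme, and call a class tautological if it is generated by Künneth components of Chern classes of $\CE$ and of the universal objects. Then $p_!$ maps classes that are tautological on the total space to classes built from Künneth components of $c_i(\CE)$ on $B$.

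To prove the claim we would use a universal, resolution-type trick. Choose $\CO(-n)^{\oplus M}\twoheadrightarrow \CE$ globally, with $\CO(1)$ ample on $C$ and $n\gg 0$; this is possible fibrewise, and we can arrange it after passing to a suitable affine bundle or Jouanolou cover of $B$, which does not change cohomology. Then $\Quot_B(\CE,\gamma)$ embeds into $B\times \Quot(\CO(-n)^{M},\gamma)$ as the derived zero locus of a section of the bundle $\pi_*\RHHom(\CK,\CQ_\gamma)$. Here $\CK=\ker(\CO(-n)^M\to\CE)$, suitably interpreted as a complex. The virtual Umkehr $p_!$ then factors as the product with the Euler class of this K-theoretic bundle, followed by pushforward along $B\times \Quot(\CO(-n)^M,\gamma)\to B$. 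The Euler class is a polynomial in Chern classes of $R\pi_*(\CK^\vee\otimes\CQ_\gamma)$. By GRR these are polynomials in Künneth components of Chern classes of $\CE$ and the universal quotient, hence tautological.

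This reduces the problem to the ambient space $\Quot(\CO(-n)^M,\gamma)$ with constant base, where $\Quot$ is smooth of expected dimension after possibly twisting further. There one needs that pushforward to a point of tautological monomials, mixed with pullbacks of Künneth components over $C$, lands in $H^*(C)$-valued polynomials. The key step is the following. Let $\mathrm{pr}:B\times\Quot(\CO(-n)^M,\gamma)\to B$, and let $\Phi\in H^*(B\times C^k)$ and $\Psi\in H^*(\Quot\times C^k)$ be tautological. Then $\mathrm{pr}_*(\Phi\cdot\Psi)$ is obtained by integrating $\Psi$ over $\Quot$, which gives a class in $H^*(C^k)$, and then pairing it against $\Phi$ using the Künneth decomposition and integration over $C^k$. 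The result is a Künneth component of $\Phi$, hence tautological on $B$.

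We expect the main obstacle to be making the embedding and Euler class description rigorous over a singular base $B=\Quot_{\alpha+\beta}$ or $\Quot_\beta$. Specifically, one must check that the virtual Umkehr of the derived Quot scheme agrees with the refined Euler class construction, using the Appendix's functoriality of virtual pullbacks under base change, and that $\CE$ admits the required global presentation. If the Jouanolou step is problematic, we would instead work on the smooth stacks $\Coh_{\alpha+\beta}$ and $\Coh_{[V]-\beta}\times\Coh_\alpha$. Via the base change in \eqref{eq: morphism g} and \eqref{eq: morphism f}, it suffices to treat $\tilde g$ and $\tilde f$, where analogous statements for smooth stacks can be handled using the known tautological generation of $H^*(\Coh)$. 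Finally, for $\pi$ one must note that $\CS$ itself has tautological Chern classes on $\Quot_\beta$ by Remark \ref{rem: taut generation}.
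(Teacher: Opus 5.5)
Your main route has a genuine gap at the Euler-class step. It bites for quotient types $\gamma$ of positive rank, which the lemma must cover because it is used for all $\alpha,\beta\in P$.

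For $\iota:\Quot_B(\mathcal{E},\gamma)\hookrightarrow B\times\Quot(\CO(-n)^{\oplus M},\gamma)$, the relative tangent complex is $\R\pr_*\RHHom(\mathcal{K},\CQ_\gamma)[-1]$, with pushforward along $C$. The obstruction group is $\Ext^1(\mathcal{K}_b,Q)\simeq\Hom(Q,\mathcal{K}_b\otimes K_C)^\vee$. Nothing forces it to vanish on the ambient space once $Q$ has positive rank and $C$ has positive genus: the bounds $\mu_{\max}(\mathcal{K}_b\otimes K_C)\leq -n\deg\CO(1)+2g_C-2$ and $\mu_{\min}(Q/\textnormal{tors})\geq -n\deg\CO(1)$ do not separate.

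So on the proper ambient space you have a genuine two-term complex, not a vector bundle with a section, and no Euler class computes $\iota_!1$. Taking $n\gg 0$ can make $\iota$ quasi-smooth near its image. But $\iota_!1$ is then a localized class, not determined by the $K$-theory class of the complex on the ambient space. For example, with $E=[\CO\xrightarrow{0}\CO]$ and the section $1$, the zero locus is empty while $c_0(E)=1$. Shrinking to a neighbourhood where the complex is a bundle destroys the properness you need for the final integration over $\Quot$.

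Relatedly, $\Quot(\CO(-n)^{\oplus M},\gamma)$ is not smooth in general in positive rank; only its virtual class is available. For torsion $\gamma$ your construction can be made to work, since then $\Ext^1(\mathcal{K}_b,Q)=0$, but it is a long detour.

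The fallback in your last paragraph is the right argument, and it is the paper's entire proof. Once the target stack's cohomology is known to be tautological, there is nothing left to prove on the stacks.

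For $g$: applying (A12) to the Cartesian square \eqref{eq: morphism g} gives $g_!\circ\tilde\phi^*=\phi^*\circ\tilde g_!$. By Heinloth, $H^*(\Coh_{\alpha+\beta})$ is generated by K\"unneth components of Chern classes of the universal sheaf. Since $q^*$ is an isomorphism, the same holds for $H^*(\Coh_{\alpha,\beta})$. Pulling back along $\tilde\phi$ turns these generators into K\"unneth components of $c_i(\CF_\alpha),c_i(\CQ'_{\alpha+\beta}),c_i(\CQ_\beta)$; pulling back along $\phi$ gives those of $c_i(\CQ'_{\alpha+\beta})$. By Remark \ref{rem: taut generation}, $\tilde\phi^*$ therefore maps onto $R^*(\Quot_{\alpha,\beta})$ and $\phi^*$ maps into $R^*(\Quot_{\alpha+\beta})$. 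Hence $g_!(\tilde\phi^*y)=\phi^*(\tilde g_!y)$ is tautological.

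For $\pi$, you named the wrong map. $\tilde f$ sends $0\to S'\to S\to F_\alpha\to 0$ to $(S,F_\alpha)$, and its fibres are the open loci of surjections in $\Hom(S,F_\alpha)$. So $\tilde f$ is not proper and $\tilde f_!$ does not exist. Use instead $\tilde\pi:\Coh_{[V]-(\alpha+\beta),\alpha}\to\Coh_{[V]-\beta}$, which remembers only $S$. It is the relative Quot scheme of the universal sheaf, hence proper and quasi-smooth, and its base change along $\rho:\Quot_\beta\to\Coh_{[V]-\beta}$ is $\pi$. The same argument then applies: the image of $\tilde\rho^*$ is generated by K\"unneth components of $c_i(\CS'),c_i(\CS),c_i(\CF_\alpha)$, and the image of $\rho^*$ by those of $c_i(\CS)$.
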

\begin{proof}
    Recall from the proof of Lemma \ref{lem: quasi-smoothness} that we have a Cartesian diagram
\begin{center}
    \begin{tikzcd}
\Quot_{\alpha,\beta} \arrow[r, "\tilde{\phi}"] \arrow[d, "g"] & \Coh_{\alpha,\beta} \arrow[d, "\tilde{g}"] \\
\Quot_{\alpha+\beta} \arrow[r, "\phi"]                 & \Coh_{\alpha+\beta}.       
\end{tikzcd}
\end{center}
By base change of the virtual Umkehr maps, the diagram
\begin{equation}\label{eq: virtual Umkehr base change}
\begin{tikzcd}
H^*(\Quot_{\alpha,\beta}) \arrow[d, "{g_!}"] & H^*(\Coh_{\alpha,\beta}) \arrow[l, "\tilde{\phi}^*"'] \arrow[d, "{\tilde{g}_!}"] \\
H^{*-2\,\vdim(g)}(\Quot_{\alpha+\beta})                & H^{*-2\,\vdim(g)}(\Coh_{\alpha+\beta}) \arrow[l, "{\phi^*}"']               
\end{tikzcd}
\end{equation}
commutes. By a result of Heinloth, building on the work of Atiyah-Bott, cohomology of $\Coh_{\alpha+\beta}$ is generated by tautological classes \cite{Heinloth_curve}. In other words, $H^*(\Coh_{\alpha+\beta})=R^*(\Coh_{\alpha+\beta})$. Since the forgetful morphism
$$q:\Coh_{\alpha,\beta}\rightarrow \Coh_\alpha\times\Coh_\beta
$$
induces an isomorphism on cohomology, the cohomology ring of $\Coh_{\alpha,\beta}$ is also tautologically generated. By Remark \ref{rem: taut generation}, the image of $\phi^*$ (resp. $\tilde{\phi}^*$) is precisely the tautological subring $R^*(\Quot_{\alpha+\beta})$ (resp. $R^*(\Quot_{\alpha,\beta})$). Therefore, commutativity of the diagram \eqref{eq: virtual Umkehr base change} implies that the virtual Umkehr map $g_!$ preserves the tautological subring. 

The other case is the same once we use \eqref{eq: morphism f} to construct a Cartesian diagram 
\begin{center}
    \begin{tikzcd}
\Quot_{\alpha,\beta} \arrow[r,"\tilde{\rho}"] \arrow[d, "\pi"] & \Coh_{[V]-(\alpha+\beta), \alpha} \arrow[d, "\tilde{\pi}"] \\
\Quot_{\beta} \arrow[r, "\rho"]                 & \Coh_{[V]-\beta}.    
\end{tikzcd}
\end{center}

\end{proof}

\begin{theorem}\label{thm: virtual homology is preserved}
    The virtual homology group $\BV^\vir\subseteq \BV^\full$ is preserved under the creation action of $\BH^\full$ and the annihilation action of $(\BH^\full)^\op$.
\end{theorem}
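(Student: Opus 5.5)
Both halves of Theorem \ref{thm: virtual homology is preserved} reduce to computing the action on a class $\gamma\cap[\Quot_\beta]^\vir$ with $\gamma\in R^*(\Quot_\beta)$ (resp. on $\gamma\cap[\Quot_{\alpha+\beta}]^\vir$), and then using Lemma \ref{lem: tautological subring is preserved} to push tautological classes forward. The common tool is the functoriality of virtual pullbacks and Umkehr maps from the Appendix. In particular, for a quasi-smooth proper map $h:X\to Y$ of quasi-smooth derived schemes, we use the projection formula
$$h_*(x\cap[X]^\vir)=h_!(x)\cap[Y]^\vir,$$
which holds because $[X]^\vir=h^![Y]^\vir$. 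We also use that virtual pullback along quasi-smooth maps commutes with cap products by pulled-back cohomology classes.

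\textbf{Creation.} Take $f\in H^\BM_*(\Coh_\alpha)$. Since $\Coh_\alpha$ is smooth, Poincar\'e duality lets us write $f=\delta\cap[\Coh_\alpha]$ with $\delta\in H^*(\Coh_\alpha)$, and $\delta$ is tautological by Heinloth. Then
$$f\otimes(\gamma\cap[\Quot_\beta]^\vir)=(\delta\boxtimes\gamma)\cap[\Coh_\alpha\times\Quot_\beta]^\vir.$$
Applying $f^!$ gives $f^*(\delta\boxtimes\gamma)\cap[\Quot_{\alpha,\beta}]^\vir$, because virtual pullback sends virtual classes to virtual classes and commutes with capping. The class $f^*(\delta\boxtimes\gamma)$ lies in $R^*(\Quot_{\alpha,\beta})$: pullbacks of universal classes of $F_\alpha$, $\CS$ and $\CQ_\beta$ are among the generators of Definition \ref{def: tautological subring}. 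Applying $g_*$ and the projection formula gives
$$g_!\bigl(f^*(\delta\boxtimes\gamma)\bigr)\cap[\Quot_{\alpha+\beta}]^\vir,$$
and the class in front is tautological by Lemma \ref{lem: tautological subring is preserved}.

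\textbf{Annihilation.} Take $v=\gamma\cap[\Quot_{\alpha+\beta}]^\vir$ and $f=\delta\cap[\Coh_\alpha]$ with $\delta\in H^*(\Coh_\alpha)$. For $x\in H^*(\Quot_\beta)$, we unwind $(f\star x,v)$ with $f\star x=g_!f^*(\delta\boxtimes x)$:
$$\deg\bigl(g_!f^*(\delta\boxtimes x)\cap\gamma\cap[\Quot_{\alpha+\beta}]^\vir\bigr)=\deg\bigl(f^*(\delta\boxtimes x)\cdot g^*\gamma\cap[\Quot_{\alpha,\beta}]^\vir\bigr),$$
using the projection formula for $g$ together with $g^*\gamma\cdot -$. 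Next we write $f^*(\delta\boxtimes x)=\tilde\delta\cdot\pi^*x$, where $\tilde\delta$ is the pullback of $\delta$ along $\Quot_{\alpha,\beta}\to\Coh_\alpha$ and $\pi$ is as in \eqref{eq: two forgetful maps}. Both $\tilde\delta$ and $g^*\gamma$ lie in $R^*(\Quot_{\alpha,\beta})$. Pushing forward along $\pi$ via the projection formula gives
$$\deg\bigl(x\cdot\pi_!(\tilde\delta\cdot g^*\gamma)\cap[\Quot_\beta]^\vir\bigr).$$
Since the pairing is perfect, this identifies $v\star f$, up to the Koszul signs built into the pairing and the opposite structure, with $\pi_!(\tilde\delta\, g^*\gamma)\cap[\Quot_\beta]^\vir$. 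By Lemma \ref{lem: tautological subring is preserved}, $\pi_!$ preserves tautological classes, so $v\star f\in H^\vir_*(\Quot_\beta)$.

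\textbf{Main obstacle.} The delicate point is justifying $[\Quot_{\alpha,\beta}]^\vir=g^![\Quot_{\alpha+\beta}]^\vir=\pi^![\Quot_\beta]^\vir$. It should also equal the virtual pullback along $f$ of $[\Coh_\alpha]\times[\Quot_\beta]^\vir$, where the virtual class of $\Coh_\alpha\times\Quot_\beta$ is understood as the product class. All three are instances of the rule that virtual classes of quasi-smooth derived schemes are compatible with virtual pullback along quasi-smooth maps, i.e.\ functoriality $h^![Y]^\vir=[X]^\vir$ for quasi-smooth $h$. I would verify this using the Cartesian squares \eqref{eq: morphism g} and \eqref{eq: morphism f}, together with the compatibility of $f^!$ and $g^!$ with the derived structures. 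Only after this is settled do the projection-formula manipulations above go through; the bookkeeping of signs and degree shifts is routine.
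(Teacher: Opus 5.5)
Your proposal is correct and follows the paper's proof. The creation half is the same computation. In the annihilation half you run the same chain of virtual projection formulas along $g$ and then $\pi$, followed by Lemma~\ref{lem: tautological subring is preserved}; the only difference is that you read off $v\star f=\pm\,\pi_!(\tilde\delta\cdot g^*\gamma)\cap[\Quot_\beta]^{\vir}$ directly from perfectness of the Kronecker pairing, while the paper phrases the same identity as the creation action on cohomology preserving the annihilator ideals $I_\beta$ of $H^{\vir}_*(\Quot_\beta)$. The equalities of virtual classes you flag as the main obstacle follow immediately from the functoriality $1_{X/Y}\cdot 1_{Y/\pt}=1_{X/\pt}$ of relative virtual classes recalled in the Appendix, which is how the paper uses them.
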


\begin{proof}
    We first consider the creation action. Recall the diagram 
    \begin{center}
\begin{tikzcd}
  & \Quot_{\alpha,\beta} \arrow[ld, "f"'] \arrow[rd, "g"] &   \\
\Coh_\alpha\times \Quot_\beta &                                    & \Quot_{\alpha+\beta}.
\end{tikzcd}
\end{center}
Note that all the spaces and morphisms in the diagram above are quasi-smooth. This implies that 
$$f^!([\Coh_\alpha]\boxtimes [\Quot_\beta]^\vir)=[\Quot_{\alpha,\beta}]^\vir=g^![\Quot_{\alpha+\beta}]^\vir. 
$$
Pick any $x\in R^*(\Coh_\alpha)=H^*(\Coh_\alpha)$ and $y\in R^*(\Quot_\beta)$. Then we have 
\begin{align*}
    (x\cap [\Coh_\alpha])\star (y\cap [\Quot_\beta]^\vir)
    &=g_*f^!\Big((x\cap [\Coh_\alpha])\boxtimes (y\cap [\Quot_\beta]^\vir)\Big)\\
    &=g_*\Big(f^*(x\boxtimes y)\cap [\Quot_{\alpha,\beta}]^\vir\Big)\\
    &=g_*\Big(f^*(x\boxtimes y)\cap g^![\Quot_{\alpha+\beta}]^\vir\Big)\\
    &=g_!(f^*(x\boxtimes y))\cap [\Quot_{\alpha+\beta}]^\vir, 
\end{align*}
where the last equality is the virtual projection formula. By Lemma \ref{lem: tautological subring is preserved}, we have $g_!(f^*(x\boxtimes y))\in R^*(\Quot_{\alpha+\beta})$ which proves that the virtual homology is preserved under creation action. 

We now turn to the annihilation action. Define the ideal
$$I_\alpha:=\{x\in H^*(\Quot_\alpha)\,|\,(x,y)=0\textnormal{ for all }y\in H^\vir_*(\Quot_\alpha)\}. 
$$
Then the topological pairing between the cohomology and homology groups induces a diagram 
\begin{center}
\begin{tikzcd}
  H^*(\Quot_\alpha)   \arrow[r, phantom, "\otimes"] \arrow[d]       & H_*(\Quot_\alpha) \arrow[r] & \BQ \arrow[d, equal] \\
 H^*(\Quot_\alpha)/I_\alpha  \arrow[r, phantom, "\otimes"]  &   H_*^\vir(\Quot_\alpha) \arrow[r]  \arrow[u]  & \BQ                      
\end{tikzcd}
\end{center}
where the bottom row is also a perfect pairing by definition of $I_\alpha$. Recall that the annihilation action on $H_*(\Quot_\alpha)$ was defined as a dual of the creation action on $H^*(\Quot_\alpha)$. Therefore, the annihilation action preserves the virtual homology group if and only if the creation action on the cohomology group preserves the ideal $I_\alpha$'s. Let $x\in R^*(\Coh_\alpha)=H^*(\Coh_\alpha)$ and $y\in I_\beta$. We need to show that $x\star y\in I_{\alpha+\beta}$. This amounts to showing that
$$\int_{[\Quot_{\alpha+\beta}]^\vir}(x\star y)\cup z=0\quad\textnormal{for all}\quad z\in R^*(\Quot_{\alpha+\beta}). 
$$
By the virtual projection formulas and $[\Quot_{\alpha,\beta}]^\vir=g^![\Quot_{\alpha+\beta}]^\vir$, we have
\begin{align*}
    \int_{[\Quot_{\alpha+\beta}]^\vir}(x\star y)\cup z
    &=\int_{[\Quot_{\alpha+\beta}]^\vir}g_! f^*(x\boxtimes y) \cup z\\
    &=\int_{[\Quot_{\alpha+\beta}]^\vir}g_! \Big(f^*(x\boxtimes y) \cup g^*(z)\Big)\\
    &=\int_{[\Quot_{\alpha,\beta}]^\vir}f^*(x\boxtimes y) \cup g^*(z).
\end{align*}
Using the projection map $\pi:\Quot_{\alpha,\beta}\rightarrow \Quot_\beta$ and $\phi:\Quot_{\alpha,\beta}\rightarrow \Coh_\alpha$, we further simplify this integral to
\begin{align*}
    \int_{[\Quot_{\alpha,\beta}]^\vir}f^*(x\boxtimes y) \cup g^*(z)
    &=\int_{[\Quot_\beta]^\vir}\pi_!\Big(f^*(x\boxtimes y) \cup g^*(z)\Big)\\
    &=(-1)^{|x||y|}\int_{[\Quot_\beta]^\vir}y\cup \pi_!\Big(\phi^*(x) \cup g^*(z)\Big).
\end{align*}
Since $y\in I_{\beta}$, this integral vanishes if $\pi_!\big(\phi^*(x) \cup g^*(z)\big)\in R^*(\Quot_\beta)$. The latter follows from Lemma \ref{lem: tautological subring is preserved}, hence completing the proof. 
\end{proof}

In the above, we have shown that $\BV^\vir\subseteq \BV^\full$ is a $\BH^\full$-submodule via creation action. In fact, we show below that the virtual homology is generated by the vacuum vector as a $\BH^\full$-module, hence giving a representation theoretic reason to consider the virtual homology groups instead of the ordinary homology groups. In other words, $\BV^\vir$ is the cyclic module of $\BH^\full$. 

We first define the vacuum vector using the fact that $\Quot_{(0,0)}=\pt$. 

\begin{definition}
    The vacuum vector $|0\rangle\in \BV_{(0,0)}$ is the one that corresponds to $1$ under the identification $H_0(\Quot_{(0,0)})\simeq \BQ$.
\end{definition}

\begin{proposition}\label{prop: cyclic by vacuum}
    The virtual homology $\BV^{\vir}$ is generated by the vacuum vector $|0\rangle$ as a $\BH^{\full}$-module.
\end{proposition}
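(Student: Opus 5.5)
The plan is to realize every element $\gamma\cap[\Quot_\alpha]^\vir$ with $\gamma\in R^*(\Quot_\alpha)$ as a single creation operator applied to $\vac$. Consider the diagram \eqref{eq: CoHA-module diagram} with $\beta=(0,0)$. Since $\Quot_{(0,0)}=\pt$ (the quotient $V\twoheadrightarrow 0$), the nested Quot scheme $\Quot_{\alpha,(0,0)}$ parametrizes $V\twoheadrightarrow Q'_\alpha\twoheadrightarrow 0$. Hence $g:\Quot_{\alpha,(0,0)}\to\Quot_\alpha$ is an isomorphism; since $g$ is a base change of $\tilde g:\Coh_{\alpha,0}\to\Coh_\alpha$, which is an isomorphism, this holds at the derived level too. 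Under this identification, $f$ becomes the map
$$\phi:\Quot_\alpha\to\Coh_\alpha,\qquad [V\twoheadrightarrow Q]\mapsto Q,$$
and the diagram \eqref{eq: morphism f} is the Cartesian square expressing $\Quot_\alpha$ as the derived fiber product of $\Coh_{[V]-\alpha,\alpha}\to\Coh_{[V]}\times\Coh_\alpha$ over the point $\{V\}\times\Coh_\alpha$.

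Next I will run the computation from the proof of Theorem \ref{thm: virtual homology is preserved} with $\beta=(0,0)$ and $y=1$. For $x\in H^*(\Coh_\alpha)$ it gives
$$(x\cap[\Coh_\alpha])\star\vac=g_!(f^*(x))\cap[\Quot_\alpha]^\vir=\phi^*(x)\cap[\Quot_\alpha]^\vir.$$
The inputs are that $[\Quot_{(0,0)}]^\vir$ is the fundamental class of the point, so $f^!([\Coh_\alpha]\boxtimes\vac)=[\Quot_{\alpha,(0,0)}]^\vir$, and that $g_!$ is the identity for an isomorphism. One should check that the virtual classes match under $g$. This holds because $g^![\Quot_\alpha]^\vir=[\Quot_{\alpha,(0,0)}]^\vir$, and $g^!$ is the identity for an isomorphism.

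It therefore remains to show that $\phi^*:H^*(\Coh_\alpha)\to H^*(\Quot_\alpha)$ has image exactly $R^*(\Quot_\alpha)$. This is the step carrying the real content, and it was already used in the proof of Lemma \ref{lem: tautological subring is preserved}. By Heinloth \cite{Heinloth_curve}, $H^*(\Coh_\alpha)$ is generated by Künneth components of the Chern classes of the universal sheaf. These pull back under $\phi$ to the Künneth components of $c_i(\CQ_\alpha)$, which generate $R^*(\Quot_\alpha)$ by Remark \ref{rem: taut generation}. Since $\phi^*$ is a ring map, its image is $R^*(\Quot_\alpha)$.

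Consequently every generator $\gamma\cap[\Quot_\alpha]^\vir$ of $H^\vir_*(\Quot_\alpha)$ equals $(x\cap[\Coh_\alpha])\star\vac$ for some $x$ with $\phi^*x=\gamma$. Conversely, $\BH^\full\star\vac\subseteq\BV^\vir$ by Theorem \ref{thm: virtual homology is preserved}, since $\vac\in\BV^\vir$. The only mild obstacle is the bookkeeping identifying $f$ with $\phi$ and the virtual classes in the degenerate case $\beta=(0,0)$; there are no sign issues because $y=1$ has even degree.
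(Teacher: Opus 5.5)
Your proposal is correct and follows the paper's proof essentially step for step. With $\beta=(0,0)$ the roof collapses to $\Coh_\alpha\xleftarrow{f}\Quot_\alpha\xrightarrow{\id}\Quot_\alpha$, which gives $(x\cap[\Coh_\alpha])\star|0\rangle=f^*(x)\cap[\Quot_\alpha]^\vir$. Heinloth's tautological generation together with Remark~\ref{rem: taut generation} then identifies the image of $f^*$ with $R^*(\Quot_\alpha)$.

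The only minor difference is how you get the inclusion $\BH^\full\star|0\rangle\subseteq\BV^\vir$. You invoke Theorem~\ref{thm: virtual homology is preserved}, whereas the paper reads it off from the same formula, since every class in $H^\BM_*(\Coh_\alpha)$ is of the form $x\cap[\Coh_\alpha]$ by Poincar\'e duality. In doing so the paper also reproves that $\BV^\vir$ is a submodule.
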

\begin{proof}
    Since $\Quot_{(0,0)}=\pt$, the roof diagram for the CoHA-module with $\alpha\in P$ and $\beta=(0,0)$ becomes
    \begin{center}
\begin{tikzcd}
  & \Quot_{\alpha} \arrow[ld, "f"'] \arrow[rd, "g=\id"] &   \\
\Coh_\alpha&                                    & \Quot_{\alpha},
\end{tikzcd}
\end{center}
If $x\in R^*(\Coh_\alpha)=H^*(\Coh_\alpha)$, then $x\star|0\rangle = f^*(x)\cap [\Quot_\alpha]^\vir\in \BV^\vir_\alpha$. Since the image of $H^*(\Coh_\alpha)=R^*(\Coh_\alpha)$ under $f^*$ is precisely the tautological subring $R^*(\Quot_\alpha)$, this shows that $\BV^\vir$ is generated by $|0\rangle$ as a $\BH^\full$-module. In fact, this gives another proof for the fact that $\BV^\vir\subseteq \BV$ is a $\BH^\full$-submodule.

\end{proof}

\section{Structure of the torsion CoHA}\label{sec: Shuffle algebra}
For this section, we let $C$ be a smooth connected quasi-projective curve.  

\subsection{Shuffle algebra} 

We start by reviewing shuffle algebras. Let $V$ be a $\BZ$-graded vector space over $\BQ$ with the induced super-grading $|\cdot|$. Consider the tensor algebra of $V$
$$T_*(V):=\bigoplus_{d\geq 0}T_d(V)\quad\textnormal{where}\quad T_d(V)=\underbrace{V \otimes \cdots \otimes V}_{d \ \mathrm{times}}. 
$$
The symmetric group $S_d$ acts on $T_d(V)$ such that the action of a transposition is given by 
$$\sigma_{i,i+1}(v_{1} \otimes \cdots \otimes v_{i} \otimes v_{i+1} \otimes \cdots \otimes v_{d}) = (-1)^{\mid v_i \mid \mid v_{i+1} \mid}v_{1} \otimes \cdots \otimes v_{i+1} \otimes v_{i} \otimes \cdots \otimes v_{d}.$$
The subspace of symmetric tensors 
$$\Sh_*(V):=\bigoplus_{d\geq 0} \Sym_d(V)\quad\textnormal{where}\quad\Sym_d(V)=\Big(\underbrace{V \otimes \cdots \otimes V}_{d \ \mathrm{times}}\Big)^{S_d}
$$
is equipped with an algebra structure via shuffle product
\begin{align} \label{def:Shufflealgebrausualstructure}
(v_1 \otimes \cdots \otimes v_n) \cdot (v_{n+1} \otimes \cdots\otimes v_{n+m}) := \sum_{\sigma \in \Sh_{n,m}} \sigma(v_1 \otimes \cdots \otimes v_n \otimes v_{n+1} \otimes \cdots\otimes v_{n+m}). \end{align}
Here $\Sh_{n,m}\subseteq S_{n+m}$ denotes the subset of permutations $\sigma$ satisfying $\sigma(1)<\sigma(2)<\cdots<\sigma(n)$ and $\sigma(n+1)<\sigma(n+2)<\cdots<\sigma(n+m)$. 

Note that we can also consider the coinvariant ring, i.e., the quotient algebra 
\[ \Sym^*(V) := T_*(V)/\langle v_{1} \otimes v_{2} = (-1)^{\mid v_1 \mid \mid v_2 \mid} v_{2} \otimes v_{1} \rangle. \]
Both $\Sh_*(V)$ and $\Sym^{*}(V)$ are doubly graded supercommutative algebras; one grading is induced from the $\BZ$-grading of $V$ from which the super-grading is induced, and the other in terms of the number of tensor factors.

\begin{lemma}\label{lem: isomorphismofshufflealgebraandcoalgebra}
There is an isomorphism of doubly graded algebras 
\begin{align}  F: \Sym^*(V)\xrightarrow{\sim} \Sh_*(V) \end{align} given by linearly extending
\[ \overline{v_{1} \otimes \cdots \otimes v_n} \mapsto  \sum_{\sigma \in S_n} \sigma(v_1 \otimes \cdots \otimes v_n). \]
\end{lemma}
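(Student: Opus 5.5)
Write $N:=\sum_{\sigma\in S_n}\sigma$ for the symmetrization operator on $T_n(V)$. Since the signed $S_n$-action on $T_n(V)$ is a genuine group action, $\tau N=N$ for every $\tau\in S_n$. I will use this and the characteristic-zero hypothesis (we work over $\BQ$) throughout.

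First I check that $F$ is well defined. The defining relations of $\Sym^*(V)$ generate the two-sided ideal spanned by the elements $a\otimes(v\otimes w-(-1)^{|v||w|}w\otimes v)\otimes b$. Each such element equals $x-\sigma_{i,i+1}x$ for some $x$ and some adjacent transposition. Because $N\sigma_{i,i+1}=N$, the map $N$ kills these elements. Hence $N$ descends to the quotient, and its image clearly lies in $\Sym_n(V)$.

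Next I show $F$ is bijective in each tensor degree $n$, which gives bijectivity of $F$ overall. The quotient map $T_n(V)\to\Sym^n(V)$ is exactly the coinvariant projection $T_n(V)\to T_n(V)_{S_n}$. Over $\BQ$, the composite of the inclusion $T_n(V)^{S_n}\hookrightarrow T_n(V)$ with the projection $T_n(V)\to T_n(V)_{S_n}$ is an isomorphism, with inverse induced by $\frac{1}{n!}N$. This follows from the decomposition $T_n(V)=\ker(N)\oplus\operatorname{im}(N)$, where $\operatorname{im}(N)=T_n(V)^{S_n}$ and $\ker(N)$ is spanned by the elements $x-\tau x$. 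So $F=N$ is an isomorphism $\Sym^n(V)\to\Sym_n(V)$. It preserves both gradings, because the $S_n$-action preserves the internal $\BZ$-degree.

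The remaining and main step is multiplicativity: for $x\in T_n(V)$ and $y\in T_m(V)$, I need $N_{n+m}(x\otimes y)=N_n(x)\cdot N_m(y)$, where the right-hand side is the shuffle product. Expanding the right-hand side gives
$$N_n(x)\cdot N_m(y)=\sum_{\sigma\in\Sh_{n,m}}\ \sum_{(\tau,\rho)\in S_n\times S_m}\sigma\circ(\tau\times\rho)\,(x\otimes y).$$
The key fact is the coset decomposition $S_{n+m}=\Sh_{n,m}\cdot(S_n\times S_m)$, under which every element factors uniquely as $\sigma(\tau\times\rho)$. The action of $S_n\times S_m$ on $x\otimes y$ is factorwise, and the Koszul signs are compatible with this factorization because the signed permutation action is a group action. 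The double sum is therefore exactly $\sum_{\pi\in S_{n+m}}\pi(x\otimes y)$. This is the only place where sign bookkeeping could cause trouble, and it is resolved by appealing to the functoriality of the action rather than tracking signs by hand. Finally, $F$ sends the unit of $\Sym^*(V)$ to the unit of $\Sh_*(V)$, so $F$ is an isomorphism of doubly graded algebras.
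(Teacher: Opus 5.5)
Your proof is correct and takes essentially the same route as the paper. Well-definedness comes from $N\tau=N$, and multiplicativity comes from the definition of the shuffle product: you make this explicit via the coset decomposition $S_{n+m}=\Sh_{n,m}\cdot(S_n\times S_m)$, a step the paper only asserts. The one cosmetic difference is in bijectivity: the paper writes down the inverse $x\mapsto \overline{x}/d!$ directly, and this is exactly the inverse of $F$ that your invariants--coinvariants splitting $T_n(V)=\ker(N)\oplus\operatorname{im}(N)$ produces.
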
 

\begin{proof}
    The map $F$ is well-defined because
    $$\sum_{\sigma\in S_n}\sigma (\tau(v_1\otimes \cdots\otimes v_n))=\sum_{\sigma\in S_n}\sigma (v_1\otimes \cdots\otimes v_n)=\tau\Big(\sum_{\sigma\in S_n}\sigma (v_1\otimes \cdots\otimes v_n)\Big)
    $$
    for any $\tau\in S_n$. By definition of the shuffle product, it follows that $F$ is an algebra homomorphism, which obviously preserves the $\BZ$-grading. The map $F$ is an isomorphism because it admits the inverse $G:\Sh_*(V)\rightarrow\Sym^*(V)$, defined by linearly extending $x\mapsto \overline{x}/d!$ where $\overline{x}$ is the image of $x$ under the composition $\Sym_d(V)\subseteq T_d(V)\twoheadrightarrow \Sym^d(V)$.
    
\end{proof}

We now introduce the shuffle algebra of a curve $C$. We start with a $\BZ$-graded vector space $V = H^{*}(C)[z]$ whose grading is given by the cohomological grading on $H^{*}(C)$ and $\deg(z^n):=2n$. Using the identification of graded vector spaces 
$$\underbrace{\BQ[z]\otimes \cdots \otimes \BQ[z]}_{d \ \mathrm{times}}=\BQ[z_1,\dots,z_d],\quad z^{k_1}\otimes\cdots \otimes z^{k_d}\longleftrightarrow z_1^{k_1}\cdots z_d^{k_d},
$$
we can also identify
\begin{align} 
\Sh_*(H^{*}(C)[z]) = \bigoplus_{d \geq 0} \Big((H^{*}(C)^{\otimes d}) [z_1,\cdots,z_d]\Big)^{S_d}.
\end{align}  
Under this identification, the action of the symmetric group $S_d$ on $(H^{*}(C)^{\otimes d}) [z_1,\cdots,z_d]$ is such that the transposition acts by 
\begin{multline*}
    \sigma_{i,i+1}\Big((\alpha_1 \otimes \cdots \otimes\alpha_i \otimes \alpha_{i+1}\otimes 
    \cdots \otimes \alpha_d) z_{1}^{k_1} \cdots z_{i}^{k_i} z^{k_{i+1}}_{i+1}\cdots z_{d}^{k_d}\Big)\\
    = (-1)^{|\alpha_i||\alpha_{i+1}|} (\alpha_1 \otimes \cdots \otimes \alpha_{i+1} \otimes \alpha_{i}\otimes \cdots \otimes \alpha_d) z_{1}^{k_1} \cdots z_{i}^{k_{i+1}} z^{k_i}_{i+1}\cdots z_{d}^{k_d}.
\end{multline*}

By the earlier discussion, $\Sh_*(H^{*}(C)[z])$ is equipped with the usual shuffle product. On the same underlying vector space, we will introduce a new shuffle product defined by the kernel
\begin{equation}\label{eq: shuffle kernel}
    K_{d,e}(z_1,\dots,z_{d+e}):=\prod_{\substack{1 \leq i \leq d \\ d+1 \leq j \leq d+e}}\left(1+\frac{\Delta_{ij}}{z_j-z_i}\right)\in H^*(C)^{\otimes (d+e)}(z_1,\dots,z_{d+e})
\end{equation}
where $\Delta_{ij}\in H^2(\,\underbrace{C \times \cdots \times C}_{d+e \textrm{ times }}\,)$ is pullback of the diagonal class from $i$ and $j$ factors. Note that this definition requires the ring structure on $H^*(C)^{\otimes (d+e)}(z_1,\dots, z_{d+e})$.

\begin{definition} \label{defnofshuffleproduct}
For each $f \in \Sym_{d}(H^{*}(C)[z])$ and $g \in \Sym_{e}(H^{*}(C)[z])$, the shuffle product is defined as 
\[ f(z_1,\cdots,z_d) \star g(z_1,\cdots,z_e) := \sum_{\sigma \in \Sh_{d,e}} \sigma \left( f(z_1,\cdots,z_d)\cdot g(z_{d+1},\cdots,z_{d+e})\cdot K_{d,e}(z_1,\cdots,z_{d+e})\right).\] 
The resulting algebra, denoted by $\Sh_*^{\Delta}(H^{*}(C)[z])$, is called the shuffle algebra of $C$. 

\end{definition}
Since the shuffle kernel has denominators of the form $(z_j-z_i)$, it is nontrivial that the shuffle product of two symmetric polynomials is again a polynomial. We prove this in the proposition below.\footnote{Alternatively, we can prove that the shuffle product equals the multiplication in the cohomological Hall algebra, which would then automatically imply the polynomiality of the shuffle product. }

\begin{proposition}\label{prop: no pole}
    In Definition \ref{defnofshuffleproduct}, $f\star g\in \Sym_{d+e}(H^*(C)[z])$. 
\end{proposition}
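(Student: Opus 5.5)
We need to show two things: that $f\star g$ is symmetric, and that it is a polynomial, i.e.\ it has no poles along $z_i=z_j$.

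\textbf{Symmetry.} Write $F:=f(z_1,\dots,z_d)\,g(z_{d+1},\dots,z_{d+e})\,K_{d,e}$. Since $f$ is $S_d$-invariant, $g$ is $S_e$-invariant, and $K_{d,e}$ is invariant under $S_d\times S_e$, the element $F$ is $S_d\times S_e$-invariant. Hence
\[
f\star g=\frac{1}{d!\,e!}\sum_{\sigma\in S_{d+e}}\sigma(F),
\]
which is manifestly $S_{d+e}$-invariant as an element of $H^*(C)^{\otimes(d+e)}(z_1,\dots,z_{d+e})$. Signs are not a problem: the $\Delta_{ij}$ are even classes, so they commute with everything and transform by permuting indices, $\sigma(\Delta_{ij})=\Delta_{\sigma(i)\sigma(j)}$.

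\textbf{Polynomiality.} The only possible denominators are products of the linear forms $z_b-z_a$. Since $\Delta_{ab}^2=0$ for a curve (the diagonal class squared lies in $H^4(C\times C)$ restricted appropriately; in fact $\Delta\cdot\Delta_{ab}$ pulled back is $\chi$ times a point class, but more simply $\Delta_{ab}^2=\Delta_{ab}\cdot e_a$-type classes of degree $4$ on the factor pair $(a,b)$, which vanish because $H^4(C\times C)\ne 0$ only if $C$ is projective, so we should argue carefully), each factor contributes at most a simple pole.

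So fix a pair $a<b$. It suffices to show that the symmetrized sum has at most a removable singularity along $z_a=z_b$: multiply by $z_a-z_b$ and check that the result vanishes on $z_a=z_b$.

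The terms $\sigma(F)$ having a pole along $z_a=z_b$ are exactly those in which $a,b$ come from different blocks. Pair such a $\sigma$ with $s_{ab}\sigma$, where $s_{ab}$ is the transposition of $a$ and $b$. Write $\sigma(F)=\frac{\Delta_{ab}}{z_b-z_a}\,G+H$, where $G,H$ are regular along $z_a=z_b$. Then
\[
\sigma(F)+s_{ab}\sigma(F)=\frac{\Delta_{ab}}{z_b-z_a}\bigl(G-s_{ab}G\bigr)+\text{(regular)}.
\]
The difference $G-s_{ab}G$ is antisymmetric under swapping $z_a\leftrightarrow z_b$ together with the tensor factors $a\leftrightarrow b$. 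Its restriction to $z_a=z_b$, multiplied by $\Delta_{ab}$, vanishes because of the diagonal identity
\[
\Delta_{ab}\cdot(x\otimes 1)_a=\Delta_{ab}\cdot(1\otimes x)_b,
\]
which lets us move cohomology classes across factors $a$ and $b$ modulo $\Delta_{ab}$. It also uses $\Delta_{ab}\cdot\Delta_{ac}=\Delta_{ab}\cdot\Delta_{bc}$. So $\Delta_{ab}\,s_{ab}(G)|_{z_a=z_b}=\Delta_{ab}\,G|_{z_a=z_b}$, and the pole cancels.

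\textbf{Main obstacle.} The hard step is exactly this cancellation. We must check that the swap $s_{ab}$, acting on the super tensor factors, really matches $G$ modulo $\Delta_{ab}$, including Koszul signs for odd classes. Alternatively, following the footnote, one can avoid this bookkeeping altogether by first identifying the shuffle product with the CoHA product (computed via localization or an explicit resolution), which gives polynomiality for free.
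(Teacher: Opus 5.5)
Your proof is correct, and it takes a different route from the paper. Both arguments start by symmetrizing over $S_{d+e}$. The paper then writes $f\star g=h/V_{d+e}$ and argues globally: $h$ is antisymmetric, hence divisible by the Vandermonde. You work one hyperplane $z_a=z_b$ at a time, pair $\sigma$ with $s_{ab}\sigma$, and cancel the simple pole using $\Delta_{ab}\cdot s_{ab}(c)=\Delta_{ab}\cdot c$.

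Your local route is actually the more careful one. Because $S_{d+e}$ also permutes the tensor factors of $H^*(C)^{\otimes(d+e)}$, antisymmetry alone does not give divisibility by $V_{d+e}$. For $d+e=2$ and $0\neq\alpha\in H^{>0}(C)$, the constant $\alpha\otimes 1-1\otimes\alpha$ is antisymmetric but not divisible by $z_2-z_1$. Likewise, the global argument would apply verbatim to a symmetric kernel such as $1+1/(z_j-z_i)$, yet there $[\mathrm{pt}]\star 1$ (with $[\mathrm{pt}],1\in\Sym_1(H^*(C)[z])$, $C$ projective) has a genuine pole. The vanishing of $h$ along $z_a=z_b$ really needs the diagonal identity, which is exactly what your pairing argument isolates. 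The footnoted alternative, identifying $\star$ with the CoHA product first, is the other way to get it.

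A few things to tidy.

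\textbf{The aside about $\Delta_{ab}^2$.} Drop it. For projective $C$ one has $\Delta^2=(2-2g)[\mathrm{pt}]\otimes[\mathrm{pt}]$, which is nonzero in general. It is also not needed: each unordered pair $\{a,b\}$ contributes at most one factor to $\sigma(K_{d,e})$, so every $\sigma(F)$ has at most a simple pole along $z_a=z_b$.

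\textbf{Koszul signs.} This worry disappears if you argue geometrically. The operator $s_{ab}$ acts on $H^*(C)^{\otimes n}=H^*(C^n)$ as pullback along the swap $\tau_{ab}$, with signs built in. Also $\Delta_{ab}\cdot c=(\iota_{ab})_*\iota_{ab}^*c$ for the partial diagonal $\iota_{ab}$. Since $\tau_{ab}\circ\iota_{ab}=\iota_{ab}$, you get $\Delta_{ab}\cdot s_{ab}(c)=\Delta_{ab}\cdot c$ for every $c$. This covers the factors $\Delta_{ac},\Delta_{bc}$ inside $G$ and the quasi-projective case at once.

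\textbf{From local to global.} To pass from the absence of poles along each $z_a=z_b$ to polynomiality, expand in a basis of $H^*(C)^{\otimes(d+e)}$. Then use that the distinct linear factors of $V_{d+e}$ are pairwise coprime primes in $\BQ[z_1,\dots,z_{d+e}]$.
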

\begin{proof}

    Using the $S_d$ and $S_e$ invariance of $f$ and $g$, respectively, we can rewrite the shuffle product as \begin{equation} \label{shufflesymmetrization}
        f \star g = \frac{1}{d!e!}\sum_{\sigma\in S_{d+e}} \sigma\left(  f(z_1,\cdots,z_d)\cdot g(z_{d+1},\cdots,z_{d+e})\cdot\hspace{-10pt}\prod_{\substack{1 \leq i \leq d \\ d+1 \leq j \leq d+e}}\left(1+\frac{\Delta_{ij}}{z_j-z_i}\right)\right). 
    \end{equation} 
    Consider the Vandermonde polynomial
    $$V_{d+e}(z_1,\dots,z_{d+e}):=\prod_{1\leq i<j\leq d+e}(z_j-z_i).
    $$
    Since $f$ and $g$ are polynomials, we have 
    $$f\star g=\frac{h(z_1,\dots,z_{d+e})}{V_{d+e}(z_1,\dots,z_{d+e})}
    $$
    for some polynomial $h(z_1,\dots,z_{d+e})$. Since $f\star g$ is symmetric and $V_{d+e}$ is anti-symmetric, $h$ is anti-symmetric. However, any anti-symmetric polynomial is a product of the Vandermonde polynomial and a symmetric polynomial, hence proving the proposition. 
\end{proof}

From now on, we shall refer to the shuffle algebra $\Sh_{*}(H^*(C)[z])$ with usual shuffle product as the shuffle algebra with trivial kernel.

\subsection{Cohomology of torsion stacks}

The cohomology of the moduli stack of coherent sheaves on a smooth projective curve is calculated by Heinloth in \cite{Heinloth_curve}. We recall the argument for torsion sheaves, which also works for quasi-projective cases. Since we will be working with torsion sheaves on a fixed curve $C$, from now on we shall simply write $\Coh_d=\Coh_{(0,d)}(C)$. 

Given any torsion sheaf $F$ on $C$, its degree is defined as \[ \deg(F) = \sum_{p \in \mathrm{Supp}(F)} \mathrm{length}_{\mathcal{O}_{C,p}}(F_{p}).\] 
Since every torsion sheaf of degree $1$ is a skyscraper sheaf, there is an isomorphism of stacks
\[ \Coh_{1} \simeq C \times \BGm. \]
Therefore, its cohomology is canonically identified with $H^*(\Coh_1)\simeq H^*(C)[z]$. In order to study $H^*(\Coh_d)$ for higher degrees, it is useful to consider the stack $\tilde{\Coh}_{d}$ parametrizing flags of torsion sheaves
$$F_\bullet=(0=F_0\subseteq F_1\subseteq F_2\subseteq \cdots\subseteq F_d),\quad \deg(F_i)=i.
$$
We consider two morphisms from this stack
\[\begin{tikzcd}[ampersand replacement=\&]
	\& {\tilde{\Coh}_{d} } \\
	{\prod_{i=1}^d\Coh_{1}} \&\& {\Coh_{d}}
	\arrow["{\mathrm{gr}}"', from=1-2, to=2-1]
	\arrow["p", from=1-2, to=2-3]
\end{tikzcd}\]
where 
$$\gr(F_\bullet)=(F_{d}/F_{d-1},\dots, F_{1}/F_{0}),\quad p(F_\bullet)=F_d.
$$
The morphism $\gr$ is an iterated vector bundle stack, hence induces an isomorphism
\begin{equation}\label{eq: gr iso}
\mathrm{gr}^{*}: H^{*}(\prod_{i=1}^{d} \Coh_{1}) \simeq H^{*}(\tilde{\Coh}_{d}).
\end{equation}  
On the other hand, it is proved in \cite[Thm. 3.3.1]{laumon} that the forgetful map $p$ is small and generically a Galois covering with the Galois group $S_d$. Thus it follows that 
\begin{equation*}\label{eq: Galois iso}
p^{*}: H^{*}(\Coh_{d}) \simeq (H^{*}(\tilde{\Coh}_{d}))^{S_d}
\end{equation*} 
Under the isomorphism \eqref{eq: gr iso} the action of $S_d$ corresponds to swapping factors of $H^*(\prod_{i=1}^d \Coh_1)$ according to the Koszul sign rule. Therefore, we conclude that
\begin{equation}\label{eq: heinloth iso} 
H^{*}(\Coh_{d}) \simeq \Sym_d(H^{*}(\Coh_{1}(C))) \simeq \Big((H^{*}(C)^{\otimes d}) [z_1,\cdots,z_d]\Big)^{S_d}.
\end{equation}

\subsection{Proof of the comparison theorem}

In this section, we prove the following theorem. 
\begin{theorem}\label{thm: CoHA equals shuffle}
The identification \eqref{eq: heinloth iso} induces an isomorphism of doubly graded algebras 
$$\BH\simeq \Sh_*^{\Delta}(H^{*}(C)[z]). 
$$
\end{theorem}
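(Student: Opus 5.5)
We prove Theorem \ref{thm: CoHA equals shuffle} by working with the cohomological version of the product $\star=p_!\circ q^*$ on $H^*(\Coh_d)$. The key move is to compare the stack $\Coh_{d,e}$ of short exact sequences with the flag stacks $\tilde{\Coh}_{d}$, $\tilde{\Coh}_e$ and $\tilde{\Coh}_{d+e}$. Once the product is expressed as a pushforward along a map between flag stacks, it can be computed by localization/Weyl-type symmetrization with respect to the $\BGm^{d+e}$ directions. The diagonal classes then enter through the Euler classes of the relevant $\Ext$ complexes.

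\textbf{Step 1: reduction to flags.} Form the fibre product $\tilde{\Coh}_{d,e}$ of $\Coh_{d,e}$ with $\tilde{\Coh}_d\times\tilde{\Coh}_e$ over $\Coh_d\times\Coh_e$. Its points are a short exact sequence $0\to F\to G\to F'\to 0$ together with complete flags on $F$ and on $F'$. This gives a complete flag on $G$, so there is a map $\tilde{\Coh}_{d,e}\to\tilde{\Coh}_{d+e}$.

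Conversely, $\tilde{\Coh}_{d+e}$ maps to $\Coh_{d,e}$ by sending $F_\bullet$ to $0\to F_d\to F_{d+e}\to F_{d+e}/F_d\to0$. This identifies $\tilde{\Coh}_{d,e}\simeq\tilde{\Coh}_{d+e}$ compatibly with all maps to $\Coh_{d+e}$ and to $\Coh_d\times\Coh_e$.

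Using $p^*:H^*(\Coh_n)\simeq H^*(\tilde{\Coh}_n)^{S_n}$ together with proper base change, the CoHA product $f\star g$ pulled back to $\tilde{\Coh}_{d+e}$ is the sum, over shuffles, of the images of $f\otimes g$ under the correspondence. Equivalently, one uses the formula $p_!p^*=$ symmetrization, divided by the appropriate orders.

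\textbf{Step 2: computing the virtual pushforward.} Since $\tilde{\Coh}_{d+e}\to\prod\Coh_1$ is an iterated vector bundle stack, everything reduces to pushforward along $\prod_{i=1}^{d+e}\Coh_1\to\Coh_{d+e}$ through the flag stack. The map $p:\tilde{\Coh}_n\to\Coh_n$ on cohomology acts as the divided-difference-type symmetrizer
$$\phi\mapsto\sum_{\sigma\in S_n}\sigma\Big(\phi\cdot\prod_{i<j}\frac{e(\Ext^1\textnormal{-}\Hom \textnormal{ between } i,j)}{\ldots}\Big).$$

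Concretely, the virtual normal data for the pair $(i,j)$ is $-\RHom(\CO_{x_j},\CO_{x_i})$ twisted by the $\BGm$ weights. Its $K$-theory class is $\CO_{\Delta_{ij}}\otimes(\text{weight } z_i-z_j)$, and by Grothendieck--Riemann--Roch on $C\times C$ its equivariant Euler class is
$$\frac{z_j-z_i+\Delta_{ij}}{z_j-z_i}=1+\frac{\Delta_{ij}}{z_j-z_i}.$$
This is $c^{\BGm}$ of a class of rank $0$ whose $c_1$ is $\Delta_{ij}$, so it is a ratio of an $\Ext^1$ contribution to a $\Hom$ contribution. The $\Ext$-pairs internal to the first $d$ factors or the last $e$ factors are absorbed into $f$ and $g$, since $f$ and $g$ already lie in the symmetrized image. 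Only the cross terms $i\le d<j$ remain, and these produce exactly $K_{d,e}$.

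Summing over cosets $S_{d+e}/(S_d\times S_e)\leftrightarrow\Sh_{d,e}$ then gives Definition \ref{defnofshuffleproduct}, with Koszul signs inherited from the Künneth identification. The grading shift $-2\chi(\alpha,\alpha+\beta)$ is $0$ for torsion, so the doubly graded structure is preserved.

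\textbf{Main obstacle.} The delicate step is justifying the localization-type formula for $p_!$ on stacks with no torus fixed-point theory available directly. My first attempt would be to check the formula after pulling back along the open locus of $\prod\Coh_1$ where all points are distinct. There the computation is a Galois-cover statement, and the kernel is verified directly: for distinct points $\Delta_{ij}$ restricts to $0$, and the $\Ext$ terms reduce to $\BGm$-weights.

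This locus does not detect diagonal classes, however. So instead I would compute with the equivariant Euler class of the perfect complex $\RHom(\CF_j,\CF_i)$ on $\Coh_1\times\Coh_1=C\times C\times\BGm^2$ via Grothendieck--Riemann--Roch. I would then invoke excess intersection/the virtual Umkehr formula for the quasi-smooth proper map $\tilde g$ in \eqref{eq: morphism g}, reducing to the case $d=e=1$.

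Associativity on both sides, together with the fact that $H^*(\Coh_n)$ injects into $H^*(\prod\Coh_1)$, would then propagate the $d=e=1$ computation to general $d,e$. The $d=e=1$ computation itself, $\Coh_{1,1}\to\Coh_2$, would be done explicitly via the resolution by $\tilde{\Coh}_2$.
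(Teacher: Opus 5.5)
There is a genuine gap: the proposal never proves the one formula that carries the diagonal kernel, and the repairs you suggest do not supply it.

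\textbf{What Step 1 actually gives.} Your identification $\tilde{\Coh}_{d,e}\simeq\tilde{\Coh}_{d+e}$ is correct. With base change and functoriality of Umkehr maps it gives $f\star g=\tfrac{1}{d!\,e!}\,(\pi_{d+e})_!\big(f(z_1,\dots,z_d)\,g(z_{d+1},\dots,z_{d+e})\big)$, where $\pi_n:\tilde{\Coh}_n\to\Coh_n$ is the flag forgetful map. But this moves the whole content of the theorem into the formula
\[
\pi_n^*(\pi_n)_!(\phi)=\sum_{\sigma\in S_n}\sigma\Big(\phi\cdot\prod_{i<j}\Big(1+\frac{\Delta_{ij}}{z_j-z_i}\Big)\Big),
\]
which you never prove. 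If your ``sum over shuffles''/``symmetrization'' is read with the naive Koszul permutation action, it produces the trivial kernel. The $S_n$-action on $H^*(\tilde{\Coh}_n)$ coming from Laumon's smallness is not the naive one. For $n=2$, $\pi_2^*(\pi_2)_!-1$ is the operator $R=\sigma^C\sigma-\Delta^C\partial$ of Definition~\ref{def: curve YB operator}; for example $(\pi_2)_!(z_1)=z_1+z_2-\Delta_{12}$. Relatedly, $\Coh_{d,e}\times_{\Coh_{d+e}}\tilde{\Coh}_{d+e}$ splits into shuffle components only over the locus of distinct points.

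\textbf{Step 2 is a heuristic.} It writes the answer as an Euler class of ``virtual normal data'' along $\prod\Coh_1$. You never exhibit a space with a torus action whose fixed locus is $\prod\Coh_1$ (or $C^n$), so no localization theorem is being applied. A smaller slip: the class with Euler class $1+\Delta_{ij}/w$ is $(\CO(\Delta_{ij})-\CO)\otimes t$, not $\CO_{\Delta_{ij}}\otimes t$. The latter gives $w/(w-\Delta_{ij})$, which differs by a term in $\Delta_{ij}^2=(2-2g(C))\,[\pt\times\pt]$ for projective $C$.

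\textbf{Why the repairs fail.} The distinct-points locus, as you say, cannot see $\Delta_{ij}$. Associativity cannot propagate a $d=e=1$ computation. The product $\BH_1\otimes\BH_1\to\BH_2$ gives only the degree-two relations. The maps $\BH_{n-1}\otimes\BH_1\to\BH_n$, written in the coordinates of \eqref{eq: heinloth iso}, are new data that associativity only constrains. Spherical generation of $\BH$ is also only known a posteriori from the shuffle description. The base case is not addressed either: $\Coh_{1,1}$ \emph{is} $\tilde{\Coh}_2$, so computing $\Coh_{1,1}\to\Coh_2$ ``via the resolution by $\tilde{\Coh}_2$'' is circular. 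Moreover, the diagonal correction is a global phenomenon on $C\times C$ that local models do not detect (e.g. $C=\BA^1$, where $\Delta=0$).

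\textbf{The missing idea.} The paper uses the presentations $\Coh_d\simeq[Q_d/\GL_d]$, with $Q_d\subseteq\Quot_d(\BC^d\otimes\CO_C)$ open, and $\Coh_{d,e}\simeq[(\GL_{d+e}\times_{\GL_{d,e}}Q_{d,e})/\GL_{d+e}]$. These involve smooth varieties with honest $T_{d+e}$-actions. The fixed loci are $C^{d+e}$ and $\bigsqcup_I C^{d+e}_I$. The normal bundles are built from $\CO(\Delta_{ij})\bt_i^{-1}\bt_j$ and the weights of $\mathfrak{gl}_{d+e}/\mathfrak{p}_{d,e}$. The ratio of Euler classes is exactly $K_{d,e}$. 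Localization is injective because $H^*(C)^{\otimes(d+e)}[z_1,\dots,z_{d+e}]$ is torsion-free. So torus localization is available once you pass to these atlases.

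If you want to stay with flag stacks, a legitimate reduction to $n=2$ goes through the Springer action: $\pi_n^*(\pi_n)_!=\sum_{w\in S_n}w$, where $s_i$ acts by $\rho_i^*(\rho_i)_!-1$. Here $\rho_i$ forgets the $i$-th step of the flag and is a base change of $\pi_2$. The $n=2$ case would still need an actual computation, for instance by the localization above.
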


The proof is done by carefully analyzing the geometry of the correspondence diagram
\begin{equation*}
\begin{tikzcd}
  & \Coh_{d,e} \arrow[ld, "q"'] \arrow[rd, "p"] &   \\
\Coh_d\times \Coh_e &                                    & \Coh_{d+e}.
\end{tikzcd}
\end{equation*}
More precisely, we will realize all the stacks in the above diagram as explicit global quotient stacks, apply the torus localization formula, and then compute the normal bundle in order to compute the pushforward $p_*$.

\subsubsection{Global quotient stack descriptions}\label{sec: quotient stack description} 

We start by explaining how to realize $\Coh_d$ as an explicit global quotient stack. Consider $\Quot_d(\BC^d\otimes\CO_C)$ parametrizing quotients $\phi:\BC^d\otimes\CO_C\twoheadrightarrow F_d$ where $F_d$ is a torsion sheaf of degree $d$. Note that this is a smooth quasi-projective variety of dimension $d^2$. Let $Q_d\subseteq \Quot_d(\CO^{\oplus d})$ be an open subscheme parametrizing quotients such that the adjunction map $\BC^d\rightarrow H^0(C,F_d)$ induced by $\phi$ is an isomorphism. Note that there exists a natural left action of $\GL_d$ on $\Quot_d(\BC^d\otimes\CO_C)$ defined by $g\cdot \phi:=\phi\circ g^{-1}$, under which $Q_d$ is invariant. Since every torsion sheaf of degree $d$ is globally generated, there exists a natural isomorphism of stacks 
\begin{equation}\label{eq: quotient description for Coh_d}
    \Coh_d\simeq[Q_d/\GL_d],
\end{equation}
whose proof we refer to \cite[Prop. 1.3]{Minets}. 

We now explain a global quotient stack description for $\Coh_{d,e}$. Fix an embedding of standard vector spaces $\BC^d\subseteq \BC^{d+e}$ by using the first $d$ coordinates.  For each point $[\phi:\BC^{d+e}\otimes \CO_C\twoheadrightarrow F_{d+e}]\in Q_{d+e}$, define
\begin{align}\label{subquotient}
F_d:=\textnormal{Im}(\BC^d\otimes\CO_C\subseteq \BC^{d+e}\otimes \CO_C\xrightarrow{\phi}F_{d+e}).
\end{align} 

Since $F_d$ is a torsion sheaf generated by $d$ sections, its degree is at most $d$. We define $Q_{d,e}\subseteq Q_{d+e}$ to be the locus where the degree of the resulting $F_d$ is exactly $d$. It was proved in \cite[Prop. 1.8]{Minets} that $Q_{d,e}\subseteq Q_{d+e}$ is a smooth closed subvariety of codimension $d\cdot e$. 

Let $\GL_{d,e}\subseteq \GL_{d+e}$ be a parabolic subgroup consisting of $g\in \GL_{d+e}$ such that $g(\BC^d)=\BC^d$, according to the fixed embedding $\BC^d\subseteq \BC^{d+e}$. Then $Q_{d,e}\subseteq Q_{d+e}$ is invariant under the $\GL_{d,e}$-action. It was shown in \cite[Sec. 3.2.1]{Sala-Schiffmann}\footnote{Much of the discussion in the loc. cit. greatly simplifies once we work with torsion sheaves. } that we have a natural isomorphism of stacks
\begin{equation}\label{eq: quotient description for Coh_d,e}
\Coh_{d,e}\simeq[Q_{d,e}/\GL_{d,e}]. 
\end{equation}

\subsubsection{Torus fixed locus descriptions}  Cohomology of global quotient stacks can be studied via torus localization formula. In order to apply the localization technique, we discuss the fixed locus of $Q_d$ and a certain space obtained from $Q_{d,e}$ according to the action of the maximal diagonal torus $$T_d\subseteq \GL_{d},\quad T_{d+e}\subseteq\GL_{d,e},$$ 
respectively.

\begin{proposition}\label{prop: fixed locus of Q_d}
    For any $d\geq 0$, we have $(Q_d)^{T_d}=C^d$. 
\end{proposition}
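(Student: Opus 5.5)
We identify $(Q_d)^{T_d}$ directly by decomposing a $T_d$-fixed quotient along the weight spaces of $\BC^d$. A point of $Q_d$ is a quotient $\phi:\BC^d\otimes\CO_C\twoheadrightarrow F$ with $\BC^d\xrightarrow{\sim}H^0(C,F)$. It is fixed by $t\in T_d$ iff there is an automorphism $\psi_t$ of $F$ with $\phi\circ t^{-1}=\psi_t\circ\phi$ (equality as points of the Quot scheme, i.e.\ equality of kernels). Since $H^0$ determines $F$ for torsion sheaves and $\phi$ induces an isomorphism on $H^0$, $\psi_t$ is unique and $t\mapsto\psi_t$ is a homomorphism $T_d\to\mathrm{Aut}(F)$.

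Decompose $F$ into $T_d$-weight spaces: $F=\bigoplus_\chi F_\chi$ as $\CO_C$-modules, since $T_d$ acts $\CO_C$-linearly. Equivariance of $\phi$ forces $\phi(e_i\otimes\CO_C)\subseteq F_{\chi_i^{-1}}$, where $\chi_i$ is the $i$-th coordinate character. Hence $F=\bigoplus_{i=1}^d F_i$ with $F_i:=\phi(e_i\otimes\CO_C)$ a quotient of $\CO_C$. Taking $H^0$ gives $\BC^d=\bigoplus_i H^0(F_i)$ compatibly with the coordinates, so $h^0(F_i)=1$. A torsion quotient of $\CO_C$ with one-dimensional $H^0$ is $\CO_C/\mathfrak m_p=\CO_p$ for a unique point $p\in C$, because $\CO_C/I$ has $H^0$ of dimension equal to its length. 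So each fixed point is $\phi=\bigoplus_i(\CO_C\twoheadrightarrow\CO_{p_i})$. Conversely, any such direct sum of skyscraper quotients lies in $Q_d$, since $H^0$ is $\BC^d$ via the coordinate sections, and it is visibly $T_d$-fixed, with $\psi_t$ the diagonal rescaling.

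This gives a bijection on points between $(Q_d)^{T_d}$ and $C^d$. To upgrade it to an isomorphism of schemes, construct the morphism $C^d\to Q_d$ from the family $\bigoplus_i\CO_{\Delta_i}$ on $C^d\times C$, where $\Delta_i$ is the graph of the $i$-th projection. It lands in the fixed locus. The inverse $(Q_d)^{T_d}\to C^d$ comes from the same weight decomposition applied to the universal quotient over the fixed locus: each summand $\CF_i$ is a flat family of length-one quotients of $\CO$, i.e.\ a map to $\Quot_1(\CO_C)=C$.

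The only delicate point is whether the fixed locus is reduced, or whether one must work scheme-theoretically as above. Since $Q_d$ is smooth, the fixed locus of a torus action is smooth, so the point-level bijection together with the explicit family already suffices. As a consistency check, the tangent space at a fixed point is the weight-zero part of $\Hom(\ker\phi,F)$. This part equals $\bigoplus_i\Hom(\mathfrak m_{p_i},\CO_{p_i})$, which is $d$-dimensional and matches $\dim C^d$.
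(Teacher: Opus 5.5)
Your proof is correct, but it is organized differently from the paper's. The paper does not analyze fixed points of $Q_d$ directly. It quotes Bifet's description of the fixed locus of the whole Quot scheme, $\Quot_d(\BC^d\otimes\CO_C)^{T_d}=\bigsqcup_{n_1+\cdots+n_d=d}\prod_i C^{[n_i]}$. It then observes that $Q_d$ is a $T_d$-invariant open, so $(Q_d)^{T_d}$ is the part of this locus where $\BC^d\to H^0(F)$ is an isomorphism; this forces every $n_i\geq 1$, hence $n_i=1$.

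You instead re-derive the needed special case of Bifet's theorem. Your decomposition $F=\bigoplus_i\phi(e_i\otimes\CO_C)$ is exactly the mechanism behind that result. The difference is that you impose the $H^0$-condition on each summand, rather than afterwards on whole components.

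What each route buys:
\begin{itemize}
\item The paper's route is brief and gets the scheme-theoretic identification for free. Bifet's statement is already an isomorphism of schemes, and fixed loci commute with restriction to invariant opens.
\item Your route is self-contained, but you must handle the scheme structure yourself. You do this adequately, via the family $\bigoplus_i\CO_{\Delta_i}$ and the weight decomposition of the universal quotient over the fixed locus.
\end{itemize}
Once you have that explicit inverse, the smoothness-plus-bijectivity remark is redundant. On its own it would also need more justification, such as Zariski's main theorem in characteristic zero.

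One simplification is available. You need neither the $H^0$ argument for uniqueness of $\psi_t$ (uniqueness already follows from $\phi$ being surjective) nor that $t\mapsto\psi_t$ is an algebraic homomorphism. A single $t\in T_d$ with pairwise distinct coordinates suffices. Then $\psi_t$ acts on $\phi(e_i\otimes\CO_C)$ by the scalar $t_i^{-1}$, so these subsheaves are eigen-subsheaves for distinct eigenvalues and their sum is direct.
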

\begin{proof}
    Recall the result of \cite{Bifet} on the fixed loci of the Quot scheme
$$\Quot_d(\BC^d\otimes\CO_C)^{T_d}=\bigsqcup_{n_1+\dots+n_d=d} \prod_{i=1}^dC^{[n_i]},
$$
where $C^{[n]}=\Quot_n(\CO_C)$ is the Hilbert scheme of $n$ points on $C$. Because of the open condition defining $Q_d\subseteq \Quot_d(\BC^d\otimes\CO_C)$, only $n_1=\cdots=n_d=1$ case occurs as the fixed locus $(Q_d)^{T_d}$.
\end{proof}

We will need to discuss one more type of the torus fixed locus which we discuss now. Consider the balanced quotient space
$$\GL_{d+e}\times_{\GL_{d,e}}Q_{d,e}:=\GL_{d+e}\times Q_{d,e}/(g\cdot p,\phi)\sim (g, p\cdot\phi)
$$
where $(g,\phi)\in \GL_{d+e}\times Q_{d,e}$ and $p\in \GL_{d,e}$. On this quotient space, we define the left $T_{d+e}$-action via inclusion $T_{d+e} \subseteq \mathrm{GL}_{d+e}$. More precisely, we have 
$$t\cdot \overline{(g,\phi)}:=\overline{(t\cdot g, \phi)},\quad t\in T_{d+e}.
$$

We introduce some notations required to state the proposition below regarding the $T_{d+e}$-fixed loci of the balanced quotient. We write $[n]=\{1,\dots,n\}$ for any integer $n\geq 0$. For each subset $I\subseteq [d+e]$ of size $d$, we denote by $\sigma_I\in \mathrm{Sh}_{d,e} \subseteq S_{d+e}$ the unique permutation such that 
$$\sigma_I(\{1,\dots,d\})=I,\quad \sigma_I(1)<\cdots<\sigma_I(d),\quad \sigma_I(d+1)<\cdots<\sigma_I(d+e). 
$$
For each such $I$, we also define a subscheme
$$C^{d+e}_I:=\{\overline{(\sigma_I, (p_1,\dots,p_{d+e}))}\,|\, p_1,\dots, p_{d+e}\in C\}\hookrightarrow \GL_{d+e}\times_{\GL_{d,e}} Q_{d,e}.
$$

\begin{proposition}\label{prop: fixed locus of balanced quotient}
For any $d,e \geq 0$, we have
\[ (\GL_{d+e}\times_{\GL_{d,e}}Q_{d,e})^{T_{d+e}} = \bigsqcup_{I \subseteq [d+e],\ |I|=d} C^{d+e}_I.\] 
\end{proposition}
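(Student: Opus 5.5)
We will compute the fixed locus by pulling back along the projection to the partial flag variety. The balanced quotient $X:=\GL_{d+e}\times_{\GL_{d,e}}Q_{d,e}$ carries a $\GL_{d+e}$-equivariant map
$$\pi:X\rightarrow \GL_{d+e}/\GL_{d,e}=\Gr(d,d+e),\quad \overline{(g,\phi)}\mapsto g(\BC^d),$$
whose fibre over the base point $\BC^d$ is $Q_{d,e}$. Any $T_{d+e}$-fixed point of $X$ maps to a $T_{d+e}$-fixed point of the Grassmannian. These are the coordinate subspaces $\BC^I=\sigma_I(\BC^d)$ for $I\subseteq[d+e]$ with $|I|=d$. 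Here we view $\sigma_I$ as a permutation matrix in $\GL_{d+e}$; it normalizes $T_{d+e}$.

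The fixed locus therefore decomposes as a disjoint union over $I$ of the fixed loci in the fibres $\pi^{-1}(\BC^I)$. Translating by $\sigma_I$ identifies $\pi^{-1}(\BC^I)$ with $Q_{d,e}$ via $\phi\mapsto\overline{(\sigma_I,\phi)}$. Under this identification, $t\in T_{d+e}$ acts on $Q_{d,e}$ through the parabolic action of $\sigma_I^{-1}t\sigma_I\in T_{d+e}\subseteq \GL_{d,e}$. This is just $T_{d+e}$ acting through a permuted coordinate labelling, so the fixed locus in this fibre equals $\sigma_I\cdot (Q_{d,e})^{T_{d+e}}$. We are reduced to proving
$$(Q_{d,e})^{T_{d+e}}=\{(p_1,\dots,p_{d+e})\}\simeq C^{d+e},$$
where we identify points of $C^{d+e}=(Q_{d+e})^{T_{d+e}}$ with the quotients $\BC^{d+e}\otimes\CO_C\twoheadrightarrow\bigoplus_i\CO_{p_i}$ sending the $i$-th basis vector to the $i$-th summand.

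Since $Q_{d,e}\subseteq Q_{d+e}$ is a closed $T_{d+e}$-invariant subvariety, we have $(Q_{d,e})^{T_{d+e}}=Q_{d,e}\cap (Q_{d+e})^{T_{d+e}}$. Proposition \ref{prop: fixed locus of Q_d} gives $(Q_{d+e})^{T_{d+e}}=C^{d+e}$. It then remains to check that every such fixed quotient lies in $Q_{d,e}$. For the quotient above, the image of $\BC^d\otimes\CO_C$ is $\bigoplus_{i\le d}\CO_{p_i}$, which has degree exactly $d$. This holds even when some of the points coincide, because the summands are indexed separately. Hence $(Q_{d,e})^{T_{d+e}}=C^{d+e}$, and translating back gives the components $C^{d+e}_I$.

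The one delicate step is the set-theoretic versus scheme-theoretic nature of the statement. To conclude that the fixed locus is the disjoint union of the closed subschemes $C^{d+e}_I$ scheme-theoretically, we will use two facts. First, fixed loci of torus actions on smooth varieties are smooth. Second, $\pi$ is a $\GL_{d+e}$-equivariant fibre bundle, so it is locally trivial near each fixed point of the Grassmannian in a $T_{d+e}$-compatible way. The $T$-fixed points of the Grassmannian are isolated and reduced, so the fixed locus of $X$ is scheme-theoretically the union of the fixed loci of the fibres. Each of these is identified with $C^{d+e}$ by the equality of smooth fixed loci inside the smooth $Q_{d+e}$.
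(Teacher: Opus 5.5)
Your proposal is correct and follows essentially the same route as the paper. The paper cites Minets for the identification of the fixed locus with $S_{d+e}\times_{(S_d\times S_e)}(Q_{d,e})^{T_{d+e}}$, which is exactly what your projection to $\GL_{d+e}/\GL_{d,e}$ and its coordinate-subspace fixed points establish, and it then uses $(Q_{d,e})^{T_{d+e}}=(Q_{d+e})^{T_{d+e}}=C^{d+e}$; your explicit check that every fixed quotient already lies in $Q_{d,e}$, together with your remark on the scheme structure, fills in steps the paper states without comment.
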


\begin{proof}
This follows from a standard fact about the action by parabolic subgroups like $\GL_{d,e}\subseteq \GL_{d+e}$, see for example \cite[Prop. A.18, Rem. A.19]{Minets}. In loc. cit., it is proved that 
$$(\GL_{d+e}\times_{\GL_{d,e}}Q_{d,e})^{T_{d+e}}=S_{d+e}\times_{(S_d\times S_e)} (Q_{d,e})^{T_{d+e}},
$$
where we used the fact that taking Weyl groups of $\GL_{d,e}\subseteq \GL_{d+e}$ yields $S_d\times S_e\subseteq S_{d+e}$. Here $\sigma\in S_d\times S_e$ if $\sigma(\{1,\dots,d\})=\{1,\dots,d\}$. By Proposition \ref{prop: fixed locus of Q_d}, we have $(Q_{d,e})^{T_{d+e}}=(Q_{d+e})^{T_{d+e}}=C^{d+e}$, so we can further identify the fixed locus with 
$$S_{d+e}\times_{(S_d\times S_e)}C^{d+e},
$$
where the left action of $S_d\times S_e$ on $C^{d+e}$ is given by
$$\sigma\cdot (p_1,\dots,p_{d+e}):=(p_{\sigma(1)},\dots,p_{\sigma(d+e)}).
$$
Note that every orbit of the right action of $S_d\times S_e$ on $S_{d+e}$ can be uniquely represented by a shuffle $\sigma\in \Sh_{d,e}$. Writing $I:=\sigma(\{1,\dots,d\})$, we obtain
$$S_{d+e}\times_{(S_d\times S_e)} C^{d+e}=\bigsqcup_{I\subseteq[d+e],\ |I|=d} C^{d+e}_I. 
$$

\end{proof}

\subsubsection{Normal bundle computations}
In this section, we compute normal bundles appearing in the localization formula to be applied later in the proof. For the diagonal torus $T_d$, we denote by $\bt_i$ the weight $1$ representation with respect to the $i$'th coordinate of $T_d$.

\begin{proposition} \label{prop: normalofqd}
    The $K$-theory class of the normal bundle of the fixed locus $C^d=(Q_d)^{T_d}\hookrightarrow Q_d$ is given by 
    $$N_{C^d/Q_d}=\bigoplus_{1\leq i\neq j\leq d}\CO_{C^d}(\Delta_{ij})\bt_i^{-1}\bt_j.
    $$

\end{proposition}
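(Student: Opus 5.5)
The plan is to compute the tangent bundle of $Q_d$ at a fixed point via deformation theory of quotients, restrict it to $C^d$ as an equivariant $K$-theory class, and then subtract the tangent bundle of the fixed locus. Since $Q_d$ is open in $\Quot_d(\BC^d\otimes\CO_C)$, the tangent space at $[\phi:\BC^d\otimes\CO_C\twoheadrightarrow F]$ is $\Hom(S,F)$, where $S=\ker\phi$. Because $C$ is a curve and $F$ is torsion, $\Ext^1(S,F)=0$. Hence in $K$-theory
$$T_{Q_d}=\RHom(\BC^d\otimes\CO_C,F)-\RHom(F,F),$$
using $\RHom(S,F)=\RHom(\BC^d\otimes \CO_C,F)-\RHom(F,F)$. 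The first term is $(\BC^d)^\vee\otimes H^0(F)$, since $H^1(F)=0$. I will globalize this over $C^d$ using the universal family.

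At the fixed point $(p_1,\dots,p_d)\in C^d$, the quotient is $F=\bigoplus_i \CO_{p_i}\bt_i$: the $i$-th coordinate vector maps onto the skyscraper at $p_i$. With the convention $g\cdot\phi=\phi\circ g^{-1}$, I need to check that $H^0(F)\simeq \BC^d$ equivariantly. Then the first term is $\sum_{i,j}\bt_i^{-1}\bt_j$, and it has trivial line bundle factors over $C^d$, since $H^0(\CO_{p_j})$ is trivialized by the section $\phi(e_j)$. The second term is
$$\RHom(F,F)=\sum_{i,j}\RHom(\CO_{p_i},\CO_{p_j})\,\bt_i^{-1}\bt_j.$$
Over $C^d$, the class $\RHom_{\pi}(\CO_{\Delta_i},\CO_{\Delta_j})$ is computed by Grothendieck–Riemann–Roch or Koszul resolution, $\CO_{\Delta_i}=[\CO(-\Delta_i)\to\CO]$. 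This gives $\RHom(\CO_{p_i},\CO_{p_j})=\CO_{C^d}-\CO_{C^d}(\Delta_{ij})$ for $i\neq j$, as a family version of $\Ext^0-\Ext^1$ with the correction encoded by the line bundle of the diagonal divisor. For $i=j$ it gives $\CO-T_{p_i}C$.

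Assembling, the weight-zero part ($i=j$) is $1-(1-T_{C,i})=T_{C,i}$, which recovers $T_{C^d}$ as expected. The moving part is
$$\sum_{i\neq j}\bigl(1-(1-\CO(\Delta_{ij}))\bigr)\bt_i^{-1}\bt_j=\sum_{i\neq j}\CO(\Delta_{ij})\bt_i^{-1}\bt_j.$$
This is exactly the claimed normal bundle.

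I expect the main obstacle to be the careful identification of the off-diagonal $\RHom_\pi(\CO_{\Delta_i},\CO_{\Delta_j})$ over $C^d$, including the sign and dual conventions for the weights. I would handle it by pushing forward along $\pi:C^d\times C\to C^d$ the Koszul complex restricted to $\Delta_j$: $\CO_{\Delta_j}\to\CO_{\Delta_j}(\Delta_i)$. Since $\Delta_i|_{\Delta_j}$ pulls back to $\Delta_{ij}$, this yields $\CO-\CO(\Delta_{ij})$ with the correct shift. As a final step I would cross-check the weights against the $d=1,2$ cases.
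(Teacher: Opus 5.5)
Your proposal is correct and is essentially the paper's argument: identify $T(Q_d)$ with $\pi_*\mathcal{H}om(\CS,\CQ)$, restrict the universal sequence to $C^d\times C$, decompose by torus weights, and keep the moving part. The only difference is that you detour through $[\CS]=[\CO^d]-[\CQ]$ and $\RHom_\pi(\CQ,\CQ)$, whereas the paper uses directly that $\CS|_{C^d\times C}=\bigoplus_i\CO(-\Delta_{i\bullet})\bt_i$, so that $\pi'_*\mathcal{H}om(\CO(-\Delta_{i\bullet}),\CO_{\Delta_{j\bullet}})=\CO_{C^d}(\Delta_{ij})$. That is exactly the computation you planned as your last step, and your worry about weight conventions is moot because $\Delta_{ij}=\Delta_{ji}$.
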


\begin{proof}
Recall that the Quot scheme $\Quot_{d}(\BC^d\otimes \CO_C)$ comes with the universal quotient and subsheaf which fit into the universal exact sequence 
\begin{equation}\label{eq: universal exact sequence for Q_d}
    0 \rightarrow \mathcal{S}_{d} \rightarrow \mathcal{O}^{d}_{\Quot_{d}(\BC^d\otimes \CO_C) \times C} \rightarrow \mathcal{Q}_{d} \rightarrow 0.
\end{equation}
The tangent bundle of the Quot scheme is given by 
\[ T(\Quot_{d}(\BC^d\otimes \CO_C)) =  \pi_{*}(\mathcal{H}\mathrm{om}(\mathcal{S},\mathcal{Q}))\]
where $\pi:\Quot_{d}(\BC^d\otimes \CO_C)\times C\rightarrow \Quot_{d}(\BC^d\otimes \CO_C)$ is the projection map. Restricting to the open subscheme $Q_d\subseteq \Quot_{d}(\BC^d\otimes \CO_C)$, we obtain the same description for the tangent bundle $T(Q_d)$. 
Restriction of the universal exact sequence \eqref{eq: universal exact sequence for Q_d} to $C^d\times C$ becomes 
$$0\rightarrow \bigoplus_{i=1}^d \CO_{C^d\times C}(-\Delta_{i\bullet})\bt_i\rightarrow \bigoplus_{i=1}^d \CO_{C^d\times C}\bt_i\rightarrow \bigoplus_{i=1}^d\CO_{\Delta_{i\bullet}}\bt_i\rightarrow 0
$$
where $\Delta_{i\bullet}\subseteq C^d\times C$ is the pullback of the diagonal along the projection map 
$$p_{i\bullet}:C^d\times C\rightarrow C\times C,\quad ((p_1,\dots, p_d),x)\mapsto (p_i,x). 
$$
Therefore, restriction of the tangent bundle to $C^d$ is given by 
$$T(Q_d)\big|_{C^d}=\bigoplus_{1\leq i,j\leq d}
\pi'_*\big(\mathcal{H}\mathrm{om}(\CO(-\Delta_{i\bullet}), \CO_{\Delta_{j\bullet}})\big)\bt_i^{-1}\bt_j
$$
where $\pi':C^d\times C\rightarrow C^d$ is the projection map. Note that 
$$\pi'_*\big(\mathcal{H}\mathrm{om}(\CO(-\Delta_{i\bullet}), \CO_{\Delta_{j\bullet}})\big)
=\pi'_*\big(\CO_{\Delta_{j\bullet}}(\Delta_{i\bullet})\big)=\CO_{C^d}(\Delta_{ij}). 
$$
Since the normal bundle is the moving part of the restriction of the tangent bundle, the proposition follows.

\end{proof}

Recall that for each subset $I\subseteq [d+e]$ of size $d$, we have a shuffle $\sigma\in \Sh_{d,e}\subseteq S_{d+e}$ and a component $C^{d+e}_I$ of the $T_{d+e}$-fixed locus of $(\GL_{d+e}\times_{\GL_{d,e}} Q_{d,e})$.

\begin{proposition} \label{prop: normalbundleext}
    The $K$-theory class of the normal bundle 
    of $C^{d+e}_I\subseteq (\GL_{d+e}\times_{\GL_{d,e}} Q_{d,e})$ is given by\footnote{We identify $C^{d+e}_I\simeq C^{d+e}$ via $\overline{(\sigma_I, (p_1,\dots,p_{d+e}))}\mapsto (p_1,\dots,p_{d+e})$.} 
\begin{align*}
\bigoplus_{\substack{1 \leq i \leq d \\ d+1 \leq j \leq d+e}} \bt_{\sigma_I(i)}^{-1}\bt_{\sigma_I(j)}  \ \bigoplus_{1 \leq i \neq j \leq d} \mathcal{O}_{C^{d+e}}(\Delta_{ij})\bt_{\sigma_I(i)}^{-1}\bt_{\sigma_I(j)} \ \bigoplus_{\substack{d+1 \leq i \leq d+e \\ 1 \leq j \leq d+e\\ i\neq j}} \mathcal{O}_{C^{d+e}}(\Delta_{ij}) \bt_{\sigma_I(i)}^{-1}\bt_{\sigma_I(j)}.  
\end{align*}

\end{proposition}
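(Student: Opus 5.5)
The plan is to compute the $T_{d+e}$-equivariant $K$-theory class of the tangent bundle of $X:=\GL_{d+e}\times_{\GL_{d,e}}Q_{d,e}$ restricted to $C^{d+e}_I$, and then discard the fixed part. The structural input is the fibration
$$X\rightarrow \GL_{d+e}/\GL_{d,e},\quad \overline{(g,\phi)}\mapsto g\GL_{d,e},$$
with fibre $Q_{d,e}$. It gives a short exact sequence of equivariant bundles on $X$:
$$0\rightarrow T_{\textnormal{vert}}\rightarrow TX\rightarrow \pi^*T(\GL_{d+e}/\GL_{d,e})\rightarrow 0 .$$
At a point $\overline{(\sigma_I,\phi)}$ with $\phi\in C^{d+e}=(Q_{d,e})^{T_{d+e}}$, left multiplication by the permutation matrix $\sigma_I$ identifies the $T_{d+e}$-action with the $T_{d+e}$-action on the fibre over the base point, twisted by $\sigma_I$. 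Concretely, a weight $\bt_i^{-1}\bt_j$ computed at the base point becomes $\bt_{\sigma_I(i)}^{-1}\bt_{\sigma_I(j)}$. So I will first do the computation at the identity coset and then conjugate.

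\emph{Base direction.} The tangent space is $T_{e}(\GL_{d+e}/\GL_{d,e})=\mathfrak{gl}_{d+e}/\mathfrak{gl}_{d,e}\simeq\Hom(\BC^d,\BC^{d+e}/\BC^d)$. This is a trivial bundle over $C^{d+e}$. Its torus weights are $\bt_i^{-1}\bt_j$ for $1\leq i\leq d<j\leq d+e$; these are the matrix entries below the parabolic block. This produces the first summand in the statement.

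\emph{Fibre direction.} Here I compute $T(Q_{d,e})|_{C^{d+e}}$.
\begin{itemize}
\item By the proof of Proposition \ref{prop: normalofqd} applied to $Q_{d+e}$, we have $T(Q_{d+e})|_{C^{d+e}}=\bigoplus_{1\leq i,j\leq d+e}\CO(\Delta_{ij})\bt_i^{-1}\bt_j$. The summand $(i,j)$ is $\pi'_*\mathcal{H}\mathrm{om}(\CO(-\Delta_{i\bullet}),\CO_{\Delta_{j\bullet}})$, with the convention $\Delta_{ii}$ giving $T_{p_i}C$.
\item Deformation-theoretically, $T(Q_{d,e})\subseteq T(Q_{d+e})$ consists of those $\xi\in\Hom(S,Q)$ that map $S\cap(\BC^d\otimes\CO)$ into the subsheaf $F_d$. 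This is exactly the condition that the image of $\BC^d\otimes\CO$ keeps degree $d$ to first order.
\item At a fixed point, $S=\bigoplus_i\CO(-p_i)\bt_i$, $Q=\bigoplus_j\CO_{p_j}\bt_j$, $S\cap(\BC^d\otimes\CO)=\bigoplus_{i\leq d}\CO(-p_i)$ and $F_d=\bigoplus_{j\leq d}\CO_{p_j}$.
\item The condition therefore kills precisely the $de$ summands with $i\leq d<j$. This is consistent with the codimension $de$ of $Q_{d,e}\subseteq Q_{d+e}$ from \cite[Prop. 1.8]{Minets}.
\end{itemize}
Hence
$$T(Q_{d,e})|_{C^{d+e}}=\bigoplus_{(i,j)\,:\,\neg(i\leq d<j)}\CO(\Delta_{ij})\bt_i^{-1}\bt_j .$$
To justify this rigorously, I would write $Q_{d,e}$ as the zero locus of the natural section of the rank $de$ bundle whose fibre is the composite $\Hom(S\cap(\BC^d\otimes\CO),\,Q/F_d)$ at the relevant point. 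Alternatively, I would realize $Q_{d,e}$ as a nested Quot scheme and use the tangent-obstruction sequence there.

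\emph{Assembly.} Adding the two pieces in $K$-theory and conjugating by $\sigma_I$ gives the full class of $TX|_{C^{d+e}_I}$. The fixed part is exactly the diagonal terms $i=j$, namely $\bigoplus_i\CO(\Delta_{ii})=TC^{d+e}$, which is the tangent bundle of the fixed component. Removing it leaves the three summands in the statement:
\begin{itemize}
\item $i\leq d<j$, which comes only from the base;
\item $i\neq j\leq d$, which comes from the fibre;
\item $i>d$, $j\neq i$, which comes from the fibre.
\end{itemize}

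The main obstacle is bookkeeping of conventions rather than geometry. I need to fix the sign of the weights in $\mathfrak{gl}_{d+e}/\mathfrak{gl}_{d,e}$ under the action $g\cdot\phi=\phi\circ g^{-1}$, and to check that the balanced-product twist really replaces indices by $\sigma_I(\cdot)$ rather than $\sigma_I^{-1}(\cdot)$. I would pin both down first by checking the case $d=e=1$ directly. In that case $X$ is a $\BP^1$-bundle-like space and the answer should give the familiar weights $\bt_1^{-1}\bt_2$ and $\bt_2^{-1}\bt_1$ on the two fixed components.
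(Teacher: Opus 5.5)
Your proposal is correct and follows essentially the same route as the paper: you use the tangent sequence of the fibration $\GL_{d+e}\times_{\GL_{d,e}}Q_{d,e}\to\GL_{d+e}/\GL_{d,e}$ at the identity coset, where the base contributes the weights of $\mathfrak{gl}_{d+e}/\mathfrak{p}_{d,e}$ and the fibre contributes the moving part of $T(Q_{d,e})|_{C^{d+e}}$, and then you transport the result along left multiplication by $\sigma_I$. The only difference is that the paper takes the fibre term directly from the exact sequence $0\to T(Q_{d,e})\to\pi_*\mathcal{H}om(\CG_{d+e},\CF_{d+e})\to\pi_*\mathcal{H}om(\CG_d,\CF_e)\to 0$ of \cite[Prop. 1.10]{Minets}, which is exactly your condition $\xi(S\cap(\BC^d\otimes\CO))\subseteq F_d$; your nested-Quot justification is the right way to make this rigorous, whereas the ``zero locus of a natural section'' alternative would need rephrasing, since the composite you describe vanishes identically on $Q_{d+e}$.
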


\begin{proof}

On $Q_{d,e}\times C$, the universal objects yield the following diagram
\begin{center}
    \begin{tikzcd}
            & 0 \arrow[d]           & 0 \arrow[d]           & 0 \arrow[d]           &   \\
0 \arrow[r] & \CG_d \arrow[r] \arrow[d] & \CO^{d} \arrow[r] \arrow[d] & \CF_d \arrow[r] \arrow[d] & 0 \\
0 \arrow[r] & \CG_{d+e} \arrow[r] \arrow[d] & \CO^{d+e} \arrow[r] \arrow[d] & \CF_{d+e} \arrow[r] \arrow[d] & 0 \\
0 \arrow[r] & \CG_{e} \arrow[r] \arrow[d] & \CO^e \arrow[r] \arrow[d] & \CF_e \arrow[r] \arrow[d] & 0 \\
            & 0                     & 0                     & 0                     &  
\end{tikzcd}
\end{center}
where all rows and columns are exact. Let $\pi:Q_{d,e}\times C\rightarrow Q_{d,e}$ be the projection map. By \cite[Prop. 1.10]{Minets}, the tangent bundle of $Q_{d,e}$ sits inside the short exact sequence
$$0\rightarrow T(Q_{d,e})\rightarrow \pi_*\CH om(\CG_{d+e}, \CF_{d+e})\rightarrow \pi_*\CH om(\CG_d,\CF_e)\rightarrow 0,
$$
where the third arrow is induced by $\CG_d\subseteq \CG_{d+e}$ and $\CF_{d+e}\twoheadrightarrow \CF_e$. By an analysis similar to that in the proof of Proposition \ref{prop: normalofqd}, it follows that the normal bundle of $C^{d+e} \subseteq Q_{d,e}$ is given by 
\begin{equation}\label{eq: normal of Q_d,e}
    \bigoplus_{1 \leq i \neq j \leq d} \mathcal{O}_{C^{d+e}}(\Delta_{ij})\bt_{i}^{-1}\bt_{j} \ 
\bigoplus_{\substack{d+1 \leq i \leq d+e \\ 1 \leq j \leq d+e\\ i\neq j}} \mathcal{O}_{C^{d+e}}(\Delta_{ij}) \bt_{i}^{-1}\bt_{j}.
\end{equation}

Consider a fiber diagram  
\begin{center}
    \begin{tikzcd}
{\{\id\}\times Q_{d,e}} \arrow[d] \arrow[r] & {\GL_{d+e}\times_{\GL_{d,e}}Q_{d,e}} \arrow[d, "f"] \\
\{\id\} \arrow[r]                           & {\GL_{d+e}/\GL_{d,e}} .                             
\end{tikzcd}
\end{center}
Note that the top horizontal map is $T_{d+e}$-equivariant where the action of $T_{d+e}$ on $\{\mathrm{id} \}  \times Q_{d,e}$ is defined by $(\mathrm{id},\phi)\mapsto (\mathrm{id},t \cdot \phi)$ for any $t \in T$ and $\phi \in Q_{d,e}$. This is because for $(\mathrm{id},\phi) \in \mathrm{GL}_{d+e} \times _{\mathrm{GL}_{d,e}}Q_{d,e}$, $t\cdot(\mathrm{id},\phi) = (t,\phi) = (\mathrm{id}, t \cdot \phi)$. Similarly the projection morphism $f$ is $T_{d+e}$ equivariant for the action of $T_{d+e} $ on $\mathrm{GL}_{d+e}/\mathrm{GL}_{d,e}$ defined by $t \cdot \overline{g} = \overline{t\cdot g}$ for any orbit $\overline{g}$. The tangent bundle short exact sequence, restricted to the fiber, becomes
\begin{equation}\label{eq: fiber sequence}
    0\rightarrow T({\{\id\}\times Q_{d,e}})\rightarrow T(\GL_{d+e}\times_{\GL_{d,e}}Q_{d,e})\big|_{\{\id\}\times Q_{d,e}}\rightarrow \mathfrak{gl}_{d+e}/\mathfrak{p}_{d,e}\otimes \CO\rightarrow 0
\end{equation}
where $\mathfrak{gl}_{d+e}$ and $\mathfrak{p}_{d,e}$ are the Lie algebras of $\GL_{d+e}$ and $\GL_{d,e}$, respectively. Let $I_0:=\{1,2,\dots,d\}$. Then $C_{I_0}^{d+e}=\{\id\}\times C^{d+e}\subseteq (\GL_{d+e}\times_{\GL_{d,e}} Q_{d,e})$. Further restricting the sequence \eqref{eq: fiber sequence} to $C^{d+e}_{I_0}$ and taking the moving part, we obtain
$$0\rightarrow N_{C^{d+e}/Q_{d,e}}\rightarrow N_{C^{d+e}_{I_0}/(\GL_{d+e}\times_{\GL_{d,e}} Q_{d,e})}\rightarrow \mathfrak{gl}_{d+e}/\mathfrak{p}_{d,e}\otimes \CO\rightarrow 0. 
$$
By the torus weight of the normal bundle formula \eqref{eq: normal of Q_d,e} and the well-known weight decomposition of $\mathfrak{gl}_{d+e}/\mathfrak{p}_{d,e}$, we obtain that the normal bundle of $C^{d+e}_{I_0}$ is equal to  
\[ \bigoplus_{\substack{1 \leq i \leq d\\ d+1 \leq j \leq d+e}} \bt_i^{-1}\bt_j \bigoplus_{1 \leq i \neq j \leq d} \mathcal{O}_{C^{d+e}}(\Delta_{ij})\bt_{i}^{-1}\bt_{j} \ \bigoplus_{\substack{d+1 \leq i \leq d+e \\ 1 \leq j \leq d+e\\ i\neq j}} \mathcal{O}_{C^{d+e}}(\Delta_{ij}) \bt_{i}^{-1}\bt_{j} \] 
in $K$-theory.

Now we consider a general subset $I\subset[d+e]$ of size $d$. The permutation $\sigma=\sigma_I$ can be thought of as a permutation matrix in $\GL_{d+e}$. The left action by $\sigma$ on ${\GL_{d+e}\times_{\GL_{d,e}}Q_{d,e}}$ induces a commuting diagram
\begin{equation}\label{eq: useful diagram}
    \begin{tikzcd}
{{\GL_{d+e}\times_{\GL_{d,e}}Q_{d,e}}} \arrow[r, "\sigma", "\simeq"'] & {{\GL_{d+e}\times_{\GL_{d,e}}Q_{d,e}}} \\
C^{d+e}_{I_0} \arrow[r, leftrightarrow, "\simeq"'] \arrow[u, hook]                    & C^{d+e}_I. \arrow[u, hook]       
\end{tikzcd}
\end{equation}
The isomorphism $\sigma$ becomes $T_{d+e}$-equivariant if we consider the usual $T_{d+e}$-action on the codomain and the conjugate torus action by $\sigma^{-1} T_{d+e}\sigma$ on the domain. Since $$\sigma^{-1} (t_1,\dots,t_{d+e})\sigma = (t_{\sigma(1)}, \dots, t_{\sigma(d+e)}),$$
the normal bundle of $C^{d+e}_I$ is the same as the normal bundle of $C^{d+e}_{I_0}$ up to permutation of the torus weights by $\sigma_I$. This completes the proof.

\end{proof}

\subsubsection{Proof of Theorem~\ref{thm: CoHA equals shuffle}}

The proof follows the same strategy as in \cite[Thm. 2]{Kontsevich_Soibelman}. Using the quotient stack description in Section \ref{sec: quotient stack description}, we can rewrite the correspondence diagram as 
\[\begin{tikzcd}[ampersand replacement=\&]
	\& {Q_{d,e}/\GL_{d,e}} \& {(\GL_{d+e}\times_{\GL_{d,e}}Q_{d,e})/\GL_{d+e}} \\
	{Q_{d}/\mathrm{GL}_{d} \times Q_{e}/\mathrm{GL}_{e}} \&\& {Q_{d+e}/\mathrm{GL}_{d+e}}
	\arrow["\simeq", from=1-2, to=1-3]
	\arrow["q"', from=1-2, to=2-1]
	\arrow["p", from=1-2, to=2-3]
	\arrow["\rho", from=1-3, to=2-3]
\end{tikzcd}\] where $Q_{d,e}/\GL_{d,e}\simeq (\GL_{d+e}\times_{\GL_{d,e}}Q_{d,e})/\GL_{d+e}$ is the usual isomorphism and $\rho$ is induced from the $\GL_{d+e}$-equivariant morphism 
$$\tilde\rho:\GL_{d+e}\times_{\GL_{d,e}}Q_{d,e}\rightarrow Q_{d+e},\quad \overline{(g,\phi)}\mapsto g\cdot \phi.
$$

The Hall algebra product is the composition
$$H^*_{\GL_d}(Q_d)\otimes H^*_{\GL_e}(Q_e)\xrightarrow{q^*}
H^*_{\GL_{d+e}}(\GL_{d+e}\times_{\GL_{d,e}}Q_{d,e})
\xrightarrow{\rho_*}H^*_{\GL_{d+e}}(Q_{d+e}). 
$$
We compute the pushforward $\rho_*$ after localizing $T_{d+e}$-equivariant cohomology groups. For any $H^*_T$-module $M$, we denote its localization by $M_\loc:=M\otimes_{H^*_T} \textnormal{Frac}(H^*_T)$. Note the following commutative diagram 
\begin{center}
    \begin{tikzcd}
\GL_{d+e}\times_{\GL_{d,e}}Q_{d,e} \arrow[r, "\tilde \rho"]                 & Q_{d+e}                 \\
C^{d+e}_I \arrow[r, "f_I", "\simeq"'] \arrow[u, hook, "\iota_I"] & C^{d+e}, \arrow[u, hook, "\iota"]
\end{tikzcd}
\end{center}
where
$$f_I(\overline{(\sigma_I,(p_1,\dots,p_{d+e}))})=(p_{\sigma_I(1)},\dots, p_{\sigma_{I}(d+e)}).$$
The localization formula applied to both $\GL_{d+e}\times_{\GL_{d,e}}Q_{d,e}$ and $Q_{d+e}$ shows that the following diagram commutes
\begin{center}
    \begin{tikzcd}
H^*_{T_{d+e}}(\GL_{d+e}\times_{\GL_{d,e}}Q_{d,e})_\loc \arrow[r, "\rho_*"] \arrow[d, "(\iota^*_I)"] & H^*_{T_{d+e}}(Q_{d+e})_\loc \arrow[d, "\iota^*"] \\
\bigoplus\limits_{I\subseteq[d+e],\ |I|=d}\!\!\!\! H^*_{T_{d+e}}(C^{d+e}_I)_\loc \arrow[r, "\rho^\loc_*"]                     & H^*_{T_{d+e}}(C^{d+e})_\loc              
\end{tikzcd}
\end{center}
where 
$$\rho^\loc_*((x_I))=\sum_{I\subseteq[d+e],\ |I|=d} e_{T_{d+e}}(N_\iota)\cdot (f_I)_*\left(\frac{x_I}{e_{T_{d+e}}(N_{\iota_I})}\right). 
$$

After using the identification \eqref{eq: heinloth iso}, consider 
$$f(z_1,\dots, z_d)\otimes g(z_1,\dots, z_e)\in H^*_{\GL_d}(Q_d)\otimes H^*_{\GL_e}(Q_e). 
$$
If $I_0=\{1, \dots, d\}$, then 
$$\iota_{I_0}^* q^*(f\otimes g)= f(z_1,\dots, z_d)\cdot g(z_{d+1},\dots, z_{d+e})\in H^*_{T_{d+e}}(C^{d+e}_{I_0})=H^*(C)^{\otimes d+e}[z_1,\dots, z_{d+e}]. 
$$
For general $I\subseteq[d+e]$ of size $d$, denote by $\sigma_I^z$ the automorphism of $H^*_{T_{d+e}}(C^{d+e}_{I_0})=H^*(C)^{\otimes d+e}[z_1,\dots, z_{d+e}]$ permuting the variables $z_i\mapsto z_{\sigma_I(i)}$ while keeping the cohomological coefficients unchanged. Then Proposition \ref{prop: normalbundleext} implies that $e(N_{\iota_I})=\sigma_I^z(e(N_{\iota_{I_0}}))$. Similarly, we can use diagram \eqref{eq: useful diagram} to show that 
$$\iota_{I}^* q^*(f\otimes g)= \sigma^z_I(f(z_1,\dots, z_d)\cdot g(z_{d+1},\dots, z_{d+e})). 
$$

Combining the previous discussions, we can compute the Hall algebra multiplication after the localization as follows:
\begin{align*}
    f\star g
    &=\sum_{I\subseteq[d+e],\ |I|=d} e_{T_{d+e}}(N_\iota)\cdot (f_I)_*\,\sigma_I^z\left(\frac{f(z_1,\dots, z_d)\cdot g(z_{d+1},\dots, z_{d+e})}{e_{T_{d+e}}(N_{\iota_{I_0}})}\right)\\
    &=\sum_{I\subseteq[d+e],\ |I|=d} e_{T_{d+e}}(N_\iota)\cdot \sigma_I\left(\frac{f(z_1,\dots, z_d)\cdot g(z_{d+1},\dots, z_{d+e})}{e_{T_{d+e}}(N_{\iota_{I_0}})}\right)\\
    &=\sum_{\sigma\in \Sh_{d,e}} \sigma\left(\frac{e_{T_{d+e}}(N_\iota)}{e_{T_{d+e}}(N_{\iota_{I_0}})}\cdot f(z_1,\dots, z_d)\cdot g(z_{d+1},\dots, z_{d+e})\right)\\
    &=\sum_{\sigma\in \Sh_{d,e}} \sigma\Big(K_{d,e}(z_1,\dots, z_{d+e})\cdot f(z_1,\dots, z_d)\cdot g(z_{d+1},\dots, z_{d+e})\Big).
\end{align*}
The second equality holds because $f_I$ and $\sigma^z_I$ permute the cohomological coefficients and $z_i$ variables, respectively, according to the shuffle $\sigma_I\in \Sh_{d,e}$. The third equality holds because $e_{T_{d+e}}(N_\iota)$ is $S_{d+e}$-invariant. The last equality follows directly from the description of normal bundles in Proposition \ref{prop: normalofqd} and \ref{prop: normalbundleext}. 

We have shown that the Hall algebra multiplication equals the multiplication of the shuffle algebra of the curve in Definition \ref{defnofshuffleproduct}, after the localization. However, the map induced by the localization is injective because the composition
$$H^*_{\GL_{d+e}}(Q_{d+e})=H^*_{T_{d+e}}(Q_{d+e})^{S_{d+e}}\xrightarrow{\iota^*} H^*_{T_{d+e}}(C^{d+e})^{S_{d+e}}\rightarrow H^*_{T_{d+e}}(C^{d+e})^{S_{d+e}}_\loc
$$
is injective since $H^*(C)^{\otimes (d+e)}[z_1,\dots, z_{d+e}]$ is a torsion-free $H^*_{T_{d+e}}$-module. Therefore, the Hall algebra product and shuffle product of curve match before localization as well. \qed
\medskip

\subsection{Presentation of the torsion CoHA}

In this subsection, we use the comparison isomorphism $\BH\simeq \Sh^{\Delta}(H^*(C)[z])$ to prove several structural results for the torsion CoHA including spherical generation and a presentation by generators and relations. In order to do so, we introduce a degree filtration. 

\begin{definition} \label{defn:filtration}A degree filtration $F^\bullet$ on the torsion CoHA $\BH$ is the increasing filtration
\[ 0=F^{-1} \subseteq F^0\subseteq \cdots \subseteq  F^i \subseteq \cdots \subseteq \mathbb{H}\] 
where $F^i$ is spanned by $f \in (H^{*}(C)^{\otimes d}[z_1,\cdots,z_d])^{S_d}$ of total degree $\leq i$ in variables $z_1, \cdots,z_d$ for all $d\geq 0$.

\end{definition}

The degree filtration is multiplicative with respect to the Hall algebra product. 

\begin{proposition}
For any $l, m \geq 0$, we have $F^l \star F^m \subseteq F^{l+m}$. 
\end{proposition}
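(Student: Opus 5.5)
Via Theorem \ref{thm: CoHA equals shuffle} I compute $\star$ as the shuffle product of Definition \ref{defnofshuffleproduct}. By bilinearity it suffices to take $f\in \Sym_d(H^*(C)[z])$ homogeneous of $z$-degree $l'\leq l$ and $g\in \Sym_e(H^*(C)[z])$ homogeneous of $z$-degree $m'\leq m$; the claim is that $f\star g$ has total $z$-degree at most $l'+m'$.

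The key step is to expand the kernel $K_{d,e}$. Each $\Delta_{ij}$ lies in $H^2(C^{d+e})$, i.e.\ it is a cohomological coefficient, not a $z$-variable. Expanding the product gives
$$K_{d,e}=\sum_{S}\prod_{(i,j)\in S}\frac{\Delta_{ij}}{z_j-z_i},$$
where $S$ runs over subsets of $\{1,\dots,d\}\times\{d+1,\dots,d+e\}$. The term indexed by $S$ is a rational function in the $z$'s, homogeneous of degree $-|S|\leq 0$, with coefficients in $H^*(C)^{\otimes(d+e)}$. The symmetrizer and the shuffle permutations preserve $z$-homogeneity. Therefore $f\star g$ is a sum of rational functions homogeneous of degrees $l'+m'-|S|$.

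By Proposition \ref{prop: no pole}, the total $f\star g$ is a polynomial. I also need that each homogeneous component of a polynomial is itself a polynomial. To see this, group the terms by $|S|$ and use the argument of Proposition \ref{prop: no pole} with the common denominator $V_{d+e}$. The numerator $h$ splits into homogeneous pieces, and each piece is antisymmetric because the $S_{d+e}$-action preserves degree. Each piece is therefore divisible by $V_{d+e}$, so each $z$-homogeneous component of $f\star g$ is a symmetric polynomial of degree $l'+m'-|S|\leq l'+m'$.

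The components of negative degree must vanish, since a nonzero polynomial cannot have negative degree. Hence $f\star g\in F^{l+m}$.

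The only subtle point is keeping the Koszul signs and the coefficient ring $H^*(C)^{\otimes(d+e)}$ straight in the divisibility argument. The ring $H^*(C)^{\otimes(d+e)}[z_1,\dots,z_{d+e}]$ is free over $\BQ[z_1,\dots,z_{d+e}]$. An antisymmetric element in the twisted $S_{d+e}$-action is still divisible by the Vandermonde polynomial: this follows coefficientwise after choosing a basis and noting that transpositions act on $z$'s by swapping variables. Then $h(z)$ vanishing on $z_i=z_j$ forces divisibility by $(z_j-z_i)$.

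The remaining argument is bookkeeping of degrees. As a byproduct, the top-degree part $|S|=0$ reduces to the trivial-kernel shuffle product, which anticipates $\Gr^F\BH\simeq\Sh_*(H^*(C)[z])$.
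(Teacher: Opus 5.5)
Your strategy is the same as the paper's. Its proof is the one-line observation that every term of $K_{d,e}=\prod(1+\Delta_{ij}/(z_j-z_i))$ has non-positive $z$-degree, and you are right to also explain why this bounds the degree of the polynomial $f\star g$. The problem is the justification you give for that extra step. The lemma in your last paragraph is false. Under the Koszul-twisted $S_{d+e}$-action, a transposition also permutes, with signs, a basis of the coefficient ring $A:=H^*(C)^{\otimes(d+e)}$. So antisymmetry is not a coefficientwise condition, and it does not force vanishing on $z_i=z_j$. For example, take $d+e=2$ and $h=1\otimes[\pt]-[\pt]\otimes 1$ (or $h=\xi\otimes\xi$ with $\xi$ odd). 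Then $\sigma_{12}(h)=-h$, but $h$ has $z$-degree $0$, so it is not divisible by $z_2-z_1$. Hence \emph{each piece is antisymmetric, therefore divisible by $V_{d+e}$} does not follow as written.

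The step itself is true and needs no antisymmetry. Proposition \ref{prop: no pole} already gives $f\star g\in A[z_1,\dots,z_{d+e}]$, so work in the $\BZ$-graded ring $A[z_1,\dots,z_{d+e}][V_{d+e}^{-1}]$. Because $V_{d+e}$ is homogeneous and a non-zero-divisor, $A[z_1,\dots,z_{d+e}]$ embeds in it as a graded subring. Therefore the homogeneous components of the polynomial $f\star g$ are exactly your $|S|$-groups, which live in degrees $l'+m'-|S|\le l'+m'$. Equivalently, compare homogeneous components of $V_{d+e}\cdot(f\star g)$ inside $A[z_1,\dots,z_{d+e}]$. With this replacement your proof is complete.

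The paper's written proof of Proposition \ref{prop: no pole} uses the same shortcut, but polynomiality there still holds for either of two reasons. One is Theorem \ref{thm: CoHA equals shuffle}, as the footnote indicates. The other is that the polar part along $z_i=z_j$ carries the factor $\Delta_{ij}$, and $\Delta_{ij}\cdot x$ is invariant under the coefficient swap $\sigma^C_{ij}$, so antisymmetry does force the residue to vanish.
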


\begin{proof}
Given polynomials $f$ and $g$ of total degree $l$ and $m$, the shuffle product $f \star g$ from \eqref{shufflesymmetrization} has degree at most $l+m$ since the shuffle kernel is of the form $\prod (1+ \Delta_{ij}/(z_j-z_i))$. 
\end{proof}

Thanks to the multiplicativity of the degree filtration, the associated graded of the torsion CoHA
\[\mathrm{Gr}_F(\mathbb{H}) := \bigoplus_{i \geq 0 } F^{i}(\mathbb{H})/F^{i-1}(\mathbb{H})\] 
is endowed with an algebra structure. 

\begin{proposition} \label{prop: associatedgraded}
There exists an isomorphism $\mathrm{Gr}_F(\mathbb{H})\simeq \Sh_{*}(H^*(C)[z])$ of algebras. 
\end{proposition}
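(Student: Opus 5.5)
The plan is to compare the shuffle product of Definition \ref{defnofshuffleproduct} with the trivial-kernel shuffle product, using the fact that the degree filtration is just the total $z$-degree filtration. As vector spaces, $\BH=\Sh^\Delta_*(H^*(C)[z])$ and $\Sh_*(H^*(C)[z])$ coincide, by Theorem \ref{thm: CoHA equals shuffle} and the identification \eqref{eq: heinloth iso}. On each summand $\Sym_d(H^*(C)[z])$, the filtration $F^\bullet$ splits: the subspace of symmetric elements that are homogeneous of $z$-degree exactly $i$ is $S_d$-stable, because the Koszul-signed action only permutes variables and coefficients. Hence $F^i/F^{i-1}$ is canonically identified with the homogeneous degree $i$ part. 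So $\Gr_F(\BH)\simeq \Sh_*(H^*(C)[z])$ as graded vector spaces, and I take this linear map as the candidate isomorphism.

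It then remains to check that the map is multiplicative. Take $f\in\Sym_d$ homogeneous of $z$-degree $l$ and $g\in\Sym_e$ homogeneous of $z$-degree $m$. Expand the kernel as
\[
K_{d,e}=\prod_{i,j}\Big(1+\frac{\Delta_{ij}}{z_j-z_i}\Big)=\sum_{S}\prod_{(i,j)\in S}\frac{\Delta_{ij}}{z_j-z_i},
\]
where $S$ runs over subsets of pairs. The term with $S=\emptyset$ is $1$. Every other term is a rational function that is homogeneous of $z$-degree $-|S|<0$.

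Substituting this expansion into $f\star g$ and symmetrizing, the $S=\emptyset$ term is exactly the trivial-kernel shuffle product $f\cdot g$, homogeneous of degree $l+m$. The remaining terms, summed over shuffles, give a homogeneous rational function of degree strictly less than $l+m$. That sum equals $f\star g-f\cdot g$, a difference of two polynomials: $f\star g$ is polynomial by Proposition \ref{prop: no pole}, and $f\cdot g$ is polynomial. So $f\star g-f\cdot g$ is a polynomial of degree $<l+m$, i.e. it lies in $F^{l+m-1}$. Therefore the class of $f\star g$ in $F^{l+m}/F^{l+m-1}$ is the class of the ordinary shuffle product, which is the required multiplicativity.

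The one subtle point is the claim that a rational function which is a sum of homogeneous pieces of negative relative degree, and which equals a difference of polynomials, is a polynomial of lower degree. I would settle this by grading by total $z$-degree in the localized ring $H^*(C)^{\otimes(d+e)}(z_1,\dots,z_{d+e})$. Degree is well defined on ratios of homogeneous polynomials, and every denominator here is a product of linear forms $z_j-z_i$. Comparing homogeneous components then shows that the degree $l+m$ component of $f\star g$ is exactly $f\cdot g$, and that all other components have lower degree. Since $f\star g$ is a polynomial, each of its homogeneous components is a polynomial, and the lower-degree ones lie in $F^{l+m-1}$. Passing from homogeneous $f,g$ to arbitrary elements follows by bilinearity, which completes the argument.
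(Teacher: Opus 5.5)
Your proposal is correct and follows the paper's argument: expanding the kernel $\prod(1+\Delta_{ij}/(z_j-z_i))$ gives the trivial-kernel shuffle product plus terms of strictly smaller total $z$-degree. You also spell out two points the paper leaves implicit: that $F^\bullet$ splits by homogeneous $z$-degree, so $\Gr_F(\BH)$ is linearly identified with $\Sh_*(H^*(C)[z])$, and that the a priori rational lower-order part is a polynomial of lower degree, by Proposition \ref{prop: no pole} together with the grading on the localization at the linear forms $z_j-z_i$.
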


\begin{proof}
Given polynomials $f$ and $g$ of total degree $l$ and $m$, the shuffle product $f \star g$ from \eqref{shufflesymmetrization} is equal to the usual shuffle product together with other terms of total degree strictly less than $l+m$. Therefore, the associated graded of the shuffle product is equal to the usual shuffle product with trivial kernel.
\end{proof}

It is a natural question to ask whether a given cohomological Hall algebra is spherically generated, i.e., generated by a subspace corresponding to minimal topological types. In our case, we consider $\BH_1=H^*(C)[z].$ For every $\alpha \in H^*(C)$ and $i \geq 0$, we denote $e^{\alpha}_i = \alpha z^i \in \BH_1$.

\begin{proposition} \label{prop: sphericalgeneration}
The algebra $\BH$ is spherically generated, i.e., the elements $e^{\alpha}_i$ where $\alpha \in H^*(C)$ and $i \geq 0$ generate $\BH$ as an associative algebra. 
\end{proposition}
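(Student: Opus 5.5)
We reduce spherical generation to the analogous statement for the associated graded algebra, using the degree filtration and Proposition \ref{prop: associatedgraded}. Let $\BH'\subseteq \BH$ be the subalgebra generated by the elements $e^\alpha_i$. We show by induction on the filtration index $i$, and for each $i$ on $d$, that $F^i\cap \BH_d\subseteq \BH'$. Since the filtration is exhaustive, this gives $\BH'=\BH$.

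\textbf{Step 1: the associated graded is spherically generated.} By Proposition \ref{prop: associatedgraded}, $\Gr_F(\BH)\simeq \Sh_*(H^*(C)[z])$, the shuffle algebra with trivial kernel. By Lemma \ref{lem: isomorphismofshufflealgebraandcoalgebra}, this algebra is isomorphic to $\Sym^*(H^*(C)[z])$, which is generated in tensor degree one. Concretely, every element of $\Sym_d(H^*(C)[z])$ is a linear combination of the symmetrizations $\sum_{\sigma\in S_d}\sigma(\alpha_1z^{i_1}\otimes\cdots\otimes\alpha_dz^{i_d})$. Each such symmetrization equals the trivial-kernel shuffle product $e^{\alpha_1}_{i_1}\cdots e^{\alpha_d}_{i_d}$. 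We work over $\BQ$, so no factorials obstruct this.

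\textbf{Step 2: lift from the associated graded.} Take a homogeneous $f\in \BH_d$ of total $z$-degree exactly $i$. We may assume it is a symmetrization as above with $i_1+\cdots+i_d=i$. Consider the Hall product $e^{\alpha_1}_{i_1}\star\cdots\star e^{\alpha_d}_{i_d}$, computed with the kernel $K$. The key observation is already in the proof of Proposition \ref{prop: associatedgraded}. Expanding $\prod(1+\Delta_{ij}/(z_j-z_i))$, the term $1$ reproduces the trivial-kernel shuffle product. Every other term carries at least one factor $\Delta_{ij}/(z_j-z_i)$ and so lowers the $z$-degree by at least one. By Proposition \ref{prop: no pole} the result is a genuine symmetric polynomial. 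Hence
$$f-e^{\alpha_1}_{i_1}\star\cdots\star e^{\alpha_d}_{i_d}\in F^{i-1}\cap\BH_d.$$
By induction on $i$, this difference lies in $\BH'$, so $f\in\BH'$. The base case $F^{-1}=0$ is trivial. Note that $d$ stays fixed throughout, so only the induction on $i$ is really needed.

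\textbf{Main obstacle.} The one point requiring care is the degree bookkeeping in Step 2. The correction terms are rational functions whose numerators involve the diagonal classes $\Delta_{ij}$. The claim that they land in $F^{i-1}$ relies on two facts:
\begin{itemize}
\item after symmetrization, the sum of these terms is a polynomial (Proposition \ref{prop: no pole} applied iteratively, or equivalently the identification with the CoHA product in Theorem \ref{thm: CoHA equals shuffle});
\item a polynomial arising as a homogeneous rational function of negative $z$-degree shift has total degree at most $i-1$.
\end{itemize}
Writing the result as $h/V_{d}$ with $h$ antisymmetric of degree at most $i-1+\deg V_d$ handles the second fact. Signs from the super-grading do not affect the argument, since they appear identically in the leading terms on both sides.
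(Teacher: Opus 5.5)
Your proposal is correct and follows the paper's argument: you pass to the associated graded of the degree filtration, use Proposition \ref{prop: associatedgraded} and Lemma \ref{lem: isomorphismofshufflealgebraandcoalgebra} to see that $\Gr_F\BH$ is spherically generated, and lift by induction on the filtration index. The only difference is presentation: the paper does the lifting with a map of short exact sequences $0\to F^{n-1}\to F^n\to \Gr^n_F\to 0$ rather than by explicitly subtracting $e^{\alpha_1}_{i_1}\star\cdots\star e^{\alpha_d}_{i_d}$, and your extra degree bookkeeping is already contained in the multiplicativity of $F^\bullet$ together with Proposition \ref{prop: associatedgraded}.
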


\begin{proof} Let $\BH^{\textnormal{sph}}\subseteq \BH$ be a subalgebra generated by $e_i^\alpha\in F_i\BH$ for all $\alpha\in H^*(C)$ and $i\geq 0$. Since $F^{-1}\BH=0$ and $\cup_{n\geq 0} F^n \BH=\BH$, we can prove $\BH^{\textnormal{sph}}=\BH$ by induction. Assume that $F^{n-1}\BH^{\textnormal{sph}}= F^{n-1}\BH$ and consider a map between short exact sequences
\begin{center}
    \begin{tikzcd}
0 \arrow[r] & F^{n-1}\BH^{\textnormal{sph}} \arrow[r] \arrow[d, equal] & F^n\BH^{\textnormal{sph}} \arrow[r] \arrow[d] & \Gr_F^n\BH^{\textnormal{sph}} \arrow[r] \arrow[d] & 0 \\
0 \arrow[r] & F^{n-1}\BH \arrow[r]                                        & F^{n}\BH \arrow[r]                            & \Gr_F^{n}\BH \arrow[r]                            & 0.
\end{tikzcd}
\end{center}
By Proposition \ref{prop: associatedgraded}, $\Gr_F\BH$ is isomorphic to the shuffle algebra with trivial kernel, which is in turn isomorphic to the coinvariant ring by Lemma \ref{lem: isomorphismofshufflealgebraandcoalgebra}. Since the coinvariant ring is spherically generated by definition, this implies that the last vertical map in the above diagram is surjective. By using the long exact sequence, this implies that the middle vertical map is also surjective, hence completing the proof.

\end{proof}

\begin{notation}
Given any $\alpha \in  H^*(C)$ and any $i,j \geq 0$, we define
 \[ e_i \star^{\alpha} e_j:= \sum e^{\alpha^{(1)}}_i \star e^{\alpha^{(2)}}_j \in \BH_2\] 
where $\Delta_{*}(\alpha) = \sum \alpha^{(1)} \otimes \alpha^{(2)}$ using Sweedler's notation. Similarly, we define 
 \[ e_i \otimes^{\alpha} e_j:= \sum e^{\alpha^{(1)}}_i \otimes e^{\alpha^{(2)}}_j\in \BH_1\otimes \BH_1. \] 
\end{notation}

\begin{theorem}\label{relations:torsioncoha}
$\BH$ admits an algebra presentation by generators $\{e^\alpha_i\,|\, \alpha\in H^*(C),\ i\geq 0\}$ and relations
\begin{enumerate}
    \item [(i)] $e^{\alpha+\beta}_i=e^\alpha_i+e^\beta_i$,
    \item [(ii)] $[e^{\alpha}_i, e^{\beta}_j ] := e^{\alpha}_i \star e^{\beta}_j - (-1)^{|\alpha| |\beta|} e^{\beta}_j \star e^{\alpha}_i =  
\begin{cases}
-\sum_{k=1}^{i-j} e_{i-k} \star^{\alpha \cup \beta} e_{j+k-1}, & \textnormal{if}\quad  i\geq j, \\[\verticaldistance] \sum_{k=1}^{j-i} e_{j-k} \star^{\alpha \cup \beta} e_{i+k-1} , & \textnormal{if}\quad  i <j,    
\end{cases} $
\end{enumerate}
for each $\alpha, \beta\in H^*(C)$ and $i, j\geq 0$.
\end{theorem}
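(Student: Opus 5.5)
The proof has two parts. First I check that relations (i) and (ii) hold in $\BH$, using Theorem \ref{thm: CoHA equals shuffle}. Then I show that no further relations are needed, using the degree filtration and Proposition \ref{prop: associatedgraded}.

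\textbf{Step 1: the relations hold.} Relation (i) is just linearity of $\alpha\mapsto \alpha z^i$ in $\BH_1=H^*(C)[z]$. For (ii), I compute in $\Sh^\Delta_*(H^*(C)[z])$ in degree $2$:
$$e^\alpha_i\star e^\beta_j=(\alpha\otimes\beta)\,z_1^iz_2^j\Big(1+\tfrac{\Delta_{12}}{z_2-z_1}\Big)+\sigma_{12}\Big((\alpha\otimes\beta)\,z_1^iz_2^j\Big(1+\tfrac{\Delta_{12}}{z_2-z_1}\Big)\Big).$$
In the graded commutator $[e^\alpha_i,e^\beta_j]$ the kernel-free parts cancel by the Koszul sign rule. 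The remaining terms involve $(\alpha\otimes\beta)\cdot\Delta_{12}$, which I rewrite as $\Delta_*(\alpha\cup\beta)$ via the projection formula, keeping track of the sign. This gives
$$[e^\alpha_i,e^\beta_j]=\pm\,\Delta_*(\alpha\cup\beta)\,\frac{z_1^iz_2^j-z_1^jz_2^i}{z_2-z_1},$$
a symmetric polynomial. The divided difference $(z_1^iz_2^j-z_1^jz_2^i)/(z_2-z_1)$ equals $\mp\sum_{k=1}^{i-j} z_1^{i-k}z_2^{j+k-1}$ for $i\ge j$, and similarly for $i<j$.

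I then compare this with $\sum_k e_{i-k}\star^{\alpha\cup\beta}e_{j+k-1}$, using
$$e_a\star^\gamma e_b=\Delta_*\gamma\,(z_1^az_2^b+z_1^bz_2^a)+(\text{kernel correction}).$$
The leading parts match, since summing over $k$ symmetrizes the terms $z_1^{i-k}z_2^{j+k-1}$.

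\textbf{Main obstacle: the kernel corrections.} These are of the form $\Delta_*\gamma\cdot\Delta_{12}\,(\cdots)/(z_2-z_1)=\Delta_*(\gamma\cup e(T_C))\,(\cdots)$. They vanish when $\gamma$ has positive degree, or when $H^2$ restricts to zero, but not for $\gamma=1$ on a projective curve. My plan is to show that in $\sum_k e_{i-k}\star^{\gamma} e_{j+k-1}$ these corrections telescope. Each correction is a multiple of the divided difference of $z_1^{i-k}z_2^{j+k-1}$ and $z_1^{j+k-1}z_2^{i-k}$, and the pairing $k\leftrightarrow i-j+1-k$ makes these cancel pairwise. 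The sign conventions for odd classes need to be checked throughout. This explicit bookkeeping is where I expect the most care to be needed.

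\textbf{Step 2: presentation.} Let $A$ be the algebra defined by the generators and relations in the statement. By Step 1 there is a homomorphism $A\to\BH$, and it is surjective by Proposition \ref{prop: sphericalgeneration}. I filter $A$ by declaring $e^\alpha_i$ to have degree $i$; the map $A\to\BH$ is then filtered with respect to the degree filtration $F^\bullet$ of Definition \ref{defn:filtration}.

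The right-hand side of (ii) is a sum of products of total $z$-degree $i+j-1$, which is strictly less than $i+j$. Hence in $\Gr A$ the generators supercommute, so $\Sym^*(H^*(C)[z])$ surjects onto $\Gr A$. The composite
$$\Sym^*(H^*(C)[z])\twoheadrightarrow\Gr A\to\Gr_F\BH\simeq\Sh_*(H^*(C)[z])$$
is the isomorphism $F$ of Lemma \ref{lem: isomorphismofshufflealgebraandcoalgebra}, by Proposition \ref{prop: associatedgraded}. Therefore $\Gr A\to\Gr_F\BH$ is an isomorphism. Both filtrations are exhaustive and bounded below, so $A\to\BH$ is itself an isomorphism.
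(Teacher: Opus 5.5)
Your proposal is correct and follows the paper's proof essentially step for step. The paper verifies (ii) by the same degree-two shuffle computation, and its remark that the $\Delta\cdot\Delta_*(\alpha\cup\beta)$ terms vanish when summed over $k$ is exactly your pairwise cancellation under $k\leftrightarrow i-j+1-k$; it then concludes, as you do, from spherical generation and the isomorphism of associated gradeds for the $z$-degree filtration. One bookkeeping correction: both shuffle terms contribute, so the commutator is $2\Delta_*(\alpha\cup\beta)\,(z_1^iz_2^j-z_1^jz_2^i)/(z_2-z_1)$, not $\pm\Delta_*(\alpha\cup\beta)$ times the divided difference, and this factor $2$ is precisely what the symmetrized leading parts $\Delta_*\gamma\,(z_1^az_2^b+z_1^bz_2^a)$, summed over $k$, reproduce.
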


\begin{proof}
Let $A_C$ be the associative algebra generated by $e^{\alpha}_i$ with the relations (i) and (ii) as given in the statement. In order to construct an algebra homomorphism
\[ \Phi: A_C \rightarrow \BH,\quad e^\alpha_i\mapsto \alpha z^i,\] 
we check that the relations (i) and (ii) are satisfied by $\BH$. The relation (i) holds trivially. To check the relation (ii), assume $i \geq j$. By Theorem \ref{thm: CoHA equals shuffle}, we have
\begin{align*} e^{\alpha}_{i} \star e^{\beta}_j &= (\alpha \otimes \beta) z_1^i z_2^j \left(1+\frac{\Delta_{12}}{z_2-z_1} \right) + (-1)^{|\alpha||\beta|}(\beta \otimes \alpha) z_1^j z_2^i \left(1+\frac{\Delta_{12}}{z_1-z_2} \right)\\
&=(\alpha \otimes \beta) z_1^i z_2^j+(-1)^{|\alpha||\beta|}(\beta\otimes\alpha)z_1^jz_2^i+\Delta_*(\alpha\cup\beta)\left( \frac{z_1^iz_2^j-z_1^jz_2^i}{z_2-z_1}\right).
\end{align*}
Taking the supercommutator, we obtain
\begin{equation}\label{eq: commutator explicit}
    [e^{\alpha}_{i}, e^{\beta}_j]= 2 \Delta_*(\alpha\cup \beta) \left(\frac{z_1^iz_2^j-z_1^jz_2^i}{z_2-z_1} \right)=-2\Delta_*(\alpha\cup \beta)\sum_{k=1}^{i-j} z_1^{i-k} z_2^{j+k-1}.
\end{equation}
On the other hand, 
$$e_{i-k}\star^{\alpha\cup \beta}e_{j+k-1}=\Delta_*(\alpha\cup\beta)\left(z_1^{i-k}z_2^{j+k-1}+z_1^{j+k-1}z_2^{i-k}\right)+\Delta\cdot\Delta_*(\alpha\cup\beta)\left(\frac{z_1^{i-k}z_2^{j+k-1}-z_1^{j+k-1}z_2^{i-k}}{z_2-z_1}
\right).
$$
Summing this expression over $k=1, \dots, i-j$, the second term vanishes and the first term matches precisely with \eqref{eq: commutator explicit}. The case when $i<j$ is similar. 

Therefore, the algebra homomorphism $\Phi$ is well-defined. Note that Proposition \ref{prop: sphericalgeneration} says that $\Phi$ is surjective. Consider the smallest multiplicative filtration $\mathcal{F}_\bullet$ on $A_C$ such that $e^{\alpha}_i\in \CF_i$. Furthermore, the relation (ii) becomes the supercommutativity relation for the associated graded algebra 
\[ \mathrm{Gr}_{\mathcal{F}}(A_C) = \bigoplus_{i \geq 0} \mathcal{F}^{i}(A_C)/\mathcal{F}^{i-1}(A_C). \] 
By definition of the filtrations on both sides of $\Phi$, $\Phi$ is a homomorphism of filtered algebras. The induced homomorphism on the associated graded algebras
\[ \mathrm{Gr}(\Phi) \colon \mathrm{Gr}_{\mathcal{F}}(A_C) \rightarrow \mathrm{Gr}_F(\mathbb{H}) \]
is an isomorphism by Lemma \ref{lem: isomorphismofshufflealgebraandcoalgebra} and Proposition \ref{prop: associatedgraded}. Since $\Gr(\Phi)$ is an isomorphism, $\Phi$ is also an injection and thus an isomorphism of algebras. 
\end{proof}

\subsection{Yang--Baxter equation and braided symmetric algebra} 

In this section, we prove that the torsion cohomological Hall algebra $\BH$ is isomorphic to the braided symmetric algebra of $\BH_1=H^*(C)[z]$ with respect to a certain choice of the braiding operator.

We briefly recall the Yang--Baxter equation. Let $V$ be any vector space and $R\in \End(V\otimes V)$ be an operator acting on $V\otimes V$. We say that $R$ satisfies the Yang--Baxter equation if 
\begin{align}\label{equation: Yang--Baxter}
 R_{12}R_{23}R_{12}=R_{23}R_{12}R_{23}\in \End(V\otimes V\otimes V)
\end{align}
where $R_{ij}$ applies the operator $R$ to the corresponding factors in $V\otimes V\otimes V$. If $R$ satisfies the Yang--Baxter equation, then it induces an action of the braid group $B_d$ on $T_d(V)=\underbrace{V \otimes \cdots \otimes V}_{d \ \mathrm{times}}$. If $R$ additionally satisfies that $R^2=\id_{V\otimes V}$, then the action of the braid group $B_d$ on $T_d(V)$ factors through the symmetric group $B_d\twoheadrightarrow S_d$. 

\begin{example}\label{ex: YB classical}
Let $V=\BQ[z]$ and identify $V\otimes V\simeq \BQ[z_1,z_2]$. Define the swapping operator
$$\sigma:\BQ[z_1,z_2]\rightarrow \BQ[z_1,z_2],\quad \sigma(f(z_1,z_2)):=f(z_2,z_1)
$$
and the divided difference operator
$$\partial:\BQ[z_1,z_2]\rightarrow \BQ[z_1,z_2],\quad \partial(f(z_1,z_2)):=\frac{f(z_1,z_2)-f(z_2,z_1)}{z_1-z_2}. 
$$
Combining these operators, we define the operator
$$R:=\sigma-\partial:\BQ[z_1,z_2]\rightarrow \BQ[z_1,z_2],\quad R(f(z_1,z_2)):=f(z_2,z_1)-\frac{f(z_1,z_2)-f(z_2,z_1)}{z_1-z_2}. 
$$
On the level of basis of $\BQ[z_1,z_2]$, we have 
$$R(z_1^iz_2^j)=\begin{cases}
z_1^jz_2^i-\sum_{k=1}^{i-j}z_1^{i-k}z_2^{j+k-1},&\quad\textnormal{if}\ i\geq j,\\[\verticaldistance]
z_1^jz_2^i+\sum_{k=1}^{j-i}z_1^{j-k}z_2^{i+k-1},&\quad\textnormal{if}\ i<j.\\
\end{cases}
$$
It is a classical result that the operator $R$ satisfies the Yang--Baxter equation and $R^2=\id$, see for example \cite{LLT} and references therein. In fact, $R^2=\id$ follows from the identities 
\begin{equation}\label{eq: square identity}
    \sigma^2=\id,\quad \sigma\partial+\partial \sigma=0,\quad \partial^2=0,
\end{equation}
and the Yang--Baxter equation follows from the identities
\begin{enumerate}
    \item [YB-0)] $\sigma_{12}\sigma_{23}\sigma_{12}=\sigma_{23}\sigma_{12}\sigma_{23},$
    \item [YB-1)] $\sigma_{12}\sigma_{23}\partial_{12}=\partial_{23}\sigma_{12}\sigma_{23}$,\quad $\sigma_{12}\partial_{23}\sigma_{12}=\sigma_{23}\partial_{12}\sigma_{23}$,\quad  $\partial_{12}\sigma_{23}\partial_{12}=\partial_{23}\sigma_{12}\partial_{23}$,
    \item [YB-2)] $\sigma_{12}\partial_{23}\partial_{12}+\partial_{12}\sigma_{23}\partial_{12}+\partial_{12}\partial_{23}\sigma_{12}
=\sigma_{23}\partial_{12}\partial_{23}+\partial_{23}\sigma_{12}\partial_{23}+\partial_{23}\partial_{12}\sigma_{23}$,
    \item [YB-3)] $\partial_{12}\partial_{23}\partial_{12}=\partial_{23}\partial_{12}\partial_{23}$.
\end{enumerate}

\end{example}

Motivated by the above example, we define the analogous operator when the underlying vector space $\BQ[z]$ is replaced by $\BH_1=H^*(C)[z]$. 

\begin{definition}\label{def: curve YB operator}
Define a linear map
\[R:  \mathbb{H}_1 \otimes \mathbb{H}_1 \rightarrow  \mathbb{H}_1\otimes \mathbb{H}_1\] 
such that
$$
R(e^{\alpha}_i \otimes e^{\beta}_j)=
\begin{cases}
(-1)^{|\alpha||\beta|} e^{\beta}_j \otimes e^{\alpha}_i - \sum_{k=1}^{i-j} e_{i-k} \otimes^{\alpha \cup \beta} e_{j+k-1},&\quad\textnormal{if}\ i\geq j, \\[\verticaldistance]
 (-1)^{|\alpha||\beta|} e^{\beta}_j \otimes e^{\alpha}_i + \sum_{k=1}^{j-i} e_{j-k} \otimes^{\alpha \cup \beta} e_{i+k-1},&\quad\textnormal{if}\ i<j.\\
\end{cases}
$$
\end{definition}

\begin{proposition} \label{prop:twist}
The operator $R$ in Definition \ref{def: curve YB operator}
satisfies that
$$R_{12}R_{23}R_{12}=R_{23}R_{12}R_{23},\quad R^2=\id.
$$
\end{proposition}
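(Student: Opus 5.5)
We write $R$ in operator form, mirroring Example \ref{ex: YB classical}. Identify $\BH_1\otimes\BH_1=(H^*(C)^{\otimes 2})[z_1,z_2]$ and set
$$R=\sigma-\Delta_{12}\,\partial,$$
where $\sigma$ is the Koszul-signed swap of both the cohomology factors and the variables. The operator $\partial$ is the divided difference in $z_1,z_2$, and $\Delta_{12}\cdot$ denotes multiplication by the diagonal class in $H^*(C\times C)$. The first step is to check that this agrees with Definition \ref{def: curve YB operator}. Multiplication by $\Delta_{12}$ sends $\alpha\otimes\beta$ to $\Delta_*(\alpha\cup\beta)$, using $\Delta_{12}\cdot(\alpha\otimes\beta)=\Delta_*(\alpha\cup\beta)$ up to the standard sign convention. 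Applied to $z_1^iz_2^j$, the divided difference gives exactly the sums $\sum_k z_1^{i-k}z_2^{j+k-1}$ with the signs listed in Example \ref{ex: YB classical}. On a triple tensor product we similarly write
$$R_{12}=\sigma_{12}-\Delta_{12}\partial_{12},\qquad R_{23}=\sigma_{23}-\Delta_{23}\partial_{23}.$$

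We collect the basic commutation rules. Multiplication by any class $\Delta_{ab}$ commutes with every $\partial_{cd}$, because $\partial$ acts only on the $z$-variables and is linear over $H^*(C)^{\otimes 3}$. Moreover $\Delta_{ab}$ is even, so no Koszul signs arise. Conjugation by $\sigma$ permutes the indices: $\sigma_{12}\Delta_{23}\sigma_{12}=\Delta_{13}$, and similarly for the other cases. The key geometric input is the identity
$$\Delta_{12}\Delta_{23}=\Delta_{13}\Delta_{23}=\Delta_{12}\Delta_{13}$$
in $H^*(C^3)$, since all three equal the class of the small diagonal. In addition $\Delta_{ab}^2=\Delta_{ab}\cdot e(T_C)$, but we will only need the small-diagonal identity.

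For $R^2=\id$ we expand
$$R^2=\sigma^2-\sigma\Delta\partial-\Delta\partial\sigma+\Delta\partial\Delta\partial.$$
Here $\sigma\Delta_{12}=\Delta_{12}\sigma$ because the diagonal class is symmetric, and $\partial^2=0$. So the expansion reduces to \eqref{eq: square identity} with the factor $\Delta_{12}$ pulled out.

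For the Yang--Baxter equation we expand both sides by degree in $\Delta$ and match each degree separately with YB-0 through YB-3.

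At degree 0 this is YB-0 for the signed swaps.

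At degree 1, a typical term is $\sigma_{12}\sigma_{23}\Delta_{12}\partial_{12}$. Moving $\Delta$ to the left gives $\Delta_{13}\sigma_{12}\sigma_{23}\partial_{12}$, while the matching term on the other side is $\Delta_{23}\partial_{23}\sigma_{12}\sigma_{23}$. We must check that the $\Delta$-indices agree after conjugation, and this is where care with the permutations is needed. We expect each of the three YB-1 identities to pair the terms with the same $\Delta_{ab}$.

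At degree 2, the terms carry products such as $\Delta_{12}\Delta_{13}$ or $\Delta_{12}\Delta_{23}$. All of these equal the small diagonal class, so the degree-2 part reduces to $(\text{small diagonal})\cdot$YB-2.

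At degree 3, the terms carry $\Delta_{12}\Delta_{23}\Delta_{12}$ and $\Delta_{23}\Delta_{12}\Delta_{23}$. By the small-diagonal identity both equal (small diagonal)$\cdot\Delta_{12}$, so YB-3 applies.

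The main obstacle is the degree-1 and degree-2 bookkeeping. The degree-1 terms involve different $\Delta_{ab}$, so the identities YB-1 must be refined: we need them to hold with the correct $\Delta$-decoration, not merely after forgetting it. To handle this we will verify the refined identities directly on monomials, using that $\sigma_{ab}$ conjugates $\partial_{cd}$ to $\partial$ on the permuted pair. Failing that, the fallback is to check both sides on the basis $\alpha\otimes\beta\otimes\gamma\, z_1^iz_2^jz_3^k$ with generating functions. A second point to verify is that the odd cohomology signs never interfere with $\Delta$. This holds because $\Delta$ has even degree and the Koszul signs sit entirely in $\sigma$, which is a genuine signed permutation action.
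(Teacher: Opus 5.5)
Your strategy matches the paper's: write $R$ as a swap minus $\Delta\cdot\partial$, expand both sides of the braid relation into eight terms, and match them degree by degree using YB-0--YB-3 and diagonal identities on $C^3$. The paper keeps the cohomological operators $\sigma^C,\Delta^C$ separate from the $z$-operators, which commute with them; you fold the cohomological swap into $\sigma$ and move the $\Delta$'s to the left. Your $R^2=\id$ argument and your degree $0$ and $3$ checks are fine. Degrees $1$ and $2$, however, are not established as written.

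\textbf{Degree one.} The obstacle you describe comes from an arithmetic slip. In fact $\sigma_{12}\sigma_{23}\Delta_{12}=\sigma_{12}\Delta_{13}\sigma_{23}=\Delta_{23}\sigma_{12}\sigma_{23}$, not $\Delta_{13}\sigma_{12}\sigma_{23}$. With correct bookkeeping, the degree-one part of $R_{12}R_{23}R_{12}$ is
$$-\Delta_{12}\,\partial_{12}\sigma_{23}\sigma_{12}-\Delta_{13}\,\sigma_{12}\partial_{23}\sigma_{12}-\Delta_{23}\,\sigma_{12}\sigma_{23}\partial_{12},$$
and that of $R_{23}R_{12}R_{23}$ is
$$-\Delta_{23}\,\partial_{23}\sigma_{12}\sigma_{23}-\Delta_{13}\,\sigma_{23}\partial_{12}\sigma_{23}-\Delta_{12}\,\sigma_{23}\sigma_{12}\partial_{23}.$$
So the terms pair off with identical decorations. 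You need exactly three identities:
\begin{itemize}
\item $\sigma_{12}\sigma_{23}\partial_{12}=\partial_{23}\sigma_{12}\sigma_{23}$;
\item $\sigma_{12}\partial_{23}\sigma_{12}=\sigma_{23}\partial_{12}\sigma_{23}$;
\item $\partial_{12}\sigma_{23}\sigma_{12}=\sigma_{23}\sigma_{12}\partial_{23}$, which is the first multiplied on both sides by $(\sigma_{12}\sigma_{23})^{-1}=\sigma_{23}\sigma_{12}$.
\end{itemize}
These hold for your signed swaps, because the cohomological swaps commute with $\partial$ and with the $z$-swaps and square to the identity. No refined identities or monomial checks are needed.

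Be aware that the third identity printed under YB-1 in Example~\ref{ex: YB classical}, $\partial_{12}\sigma_{23}\partial_{12}=\partial_{23}\sigma_{12}\partial_{23}$, is a misprint. Applied to $z_1^2$, the left side gives $1$ and the right side gives $0$. The identity actually needed is the third one above, which matches the paper's cohomological identity $\Delta^C_{12}\sigma^C_{23}\sigma^C_{12}=\sigma^C_{23}\sigma^C_{12}\Delta^C_{23}$.

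\textbf{Degree two.} Write your swap as $\sigma_{ij}=\sigma^C_{ij}\sigma^z_{ij}$, the cohomological swap times the variable swap. Pulling the diagonals to the left gives
$$\Delta_{123}\bigl(\sigma_{12}\partial_{23}\partial_{12}+\partial_{12}\sigma_{23}\partial_{12}+\partial_{12}\partial_{23}\sigma_{12}\bigr)$$
on one side. The three words carry different cohomological swaps, namely $\sigma^C_{12}$, $\sigma^C_{23}$, $\sigma^C_{12}$. YB-2 is a statement about the $z$-swaps only, so the small-diagonal identity alone does not reduce this to $\Delta_{123}\cdot$YB-2.

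The missing input is absorption: $\Delta_{123}\cdot\sigma^C_{ij}=\Delta_{123}$ as operators. This holds because multiplication by the small diagonal class is $x\mapsto\iota_*\iota^*x$ for the small diagonal $\iota\colon C\to C^3$, and $\iota^*\tau_{ij}^*=\iota^*$. It is the three-factor form of $\sigma^C\Delta^C=\Delta^C$, which the paper records in its $R^2$ computation. It is also what makes the paper's six degree-two $C$-operators coincide.

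With absorption, the cohomological swaps drop out and YB-2 applies verbatim. With both fixes, your argument coincides with the paper's proof.
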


\begin{proof}
Recall the operators 
$$\sigma,\ \partial\in \End(\BQ[z_1,z_2])$$ 
from Example \ref{ex: YB classical}. By identifying $\BH_1\otimes \BH_1\simeq H^*(C)^{\otimes 2}[z_1,z_2]$, we may regard $\sigma$ and $\partial$ as elements in $\End(\BH_1\otimes \BH_1)$ which keeps the cohomological coefficients from $H^*(C)^{\otimes 2}$ unchanged. On the other hand, we define 
$$\sigma^C,\ \Delta^C\in \End(H^*(C)^{\otimes 2})
$$
such that 
$$\sigma^C(\alpha\otimes \beta):=(-1)^{|\alpha||\beta|}\beta\otimes \alpha,\quad \Delta^C(\alpha\otimes \beta):=\Delta\cup(\alpha\otimes \beta)=\Delta_*(\alpha\cup \beta). 
$$
Similarly, we may regard $\sigma^C$ and $\Delta^C$ as an element in $\End(\BH_1\otimes \BH_1)$ which keeps the variables $z_1$ and $z_2$ unchanged. 

The operator $R$ in Definition \ref{def: curve YB operator} can be written as
$$R=\sigma^C\sigma-\Delta^C\partial\in \End(\BH_1\otimes \BH_1).
$$
We first check that $R^2=\id$. Note that
\begin{align*}
    R^2
    &=(\sigma^C\sigma-\Delta^C\partial)^2\\
    &=(\sigma^C)^2\sigma^2-\sigma^C\Delta^C\sigma\partial-\Delta^C\sigma^C\partial\sigma+(\Delta^C)^2\partial^2
\end{align*}
because the operators in $\{\sigma,\partial\}$ commute with the operators in $\{\sigma^C, \Delta^C\}$. The identities in \eqref{eq: square identity} together with basic identities in the cohomology ring
\begin{equation}\label{eq: diagonal identities}
    (\sigma^C)^2=\id,\quad \sigma^C\Delta^C=\Delta^C\sigma^C=\Delta^C,
\end{equation} 
imply that $R^2=\id$. 

Now we prove the Yang--Baxter equation. Again, using the fact that the operators in $\{\sigma,\partial\}$ commutes with the operators in $\{\sigma^C, \Delta^C\}$, we can write
\begin{align*}
    R_{12}R_{23}R_{12}=\ &\sigma^C_{12}\sigma_{23}^C\sigma_{12}^C\sigma_{12}\sigma_{23}\sigma_{12}\\
    &-\Delta^C_{12}\sigma_{23}^C\sigma_{12}^C\partial_{12}\sigma_{23}\sigma_{12}
    -\sigma^C_{12}\Delta_{23}^C\sigma_{12}^C\sigma_{12}\partial_{23}\sigma_{12}
    -\sigma^C_{12}\sigma_{23}^C\Delta_{12}^C\sigma_{12}\sigma_{23}\partial_{12}\\
    &+\sigma^C_{12}\Delta_{23}^C\Delta_{12}^C\sigma_{12}\partial_{23}\partial_{12}
    +\Delta^C_{12}\sigma_{23}^C\Delta_{12}^C\partial_{12}\sigma_{23}\partial_{12}
    +\Delta^C_{12}\Delta_{23}^C\sigma_{12}^C\partial_{12}\partial_{23}\sigma_{12}\\
    &-\Delta^C_{12}\Delta_{23}^C\Delta_{12}^C\partial_{12}\partial_{23}\partial_{12},
\end{align*}
and similarly for $R_{23}R_{12}R_{23}$. Then $R_{12}R_{23}R_{12}=R_{23}R_{12}R_{23}$ follows from the identities YB-0, $\dots$, YB-3 from Example \ref{ex: YB classical}, together with the following identities for the operators acting on $H^*(C\times C\times C)$:
\begin{align*}
    \sigma^C_{12}\sigma^C_{23}\sigma^C_{12} &= \sigma^C_{23}\sigma^C_{12}\sigma^C_{23}, \\
    \sigma^C_{12}\sigma^C_{23}\Delta^C_{12} &=  \Delta^C_{23}\sigma^C_{12}\sigma^C_{23},\quad
    \sigma^C_{12}\Delta^C_{23}\sigma^C_{12}= \sigma^C_{23}\Delta^C_{12}\sigma^C_{23},\quad
    \Delta^C_{12}\sigma^C_{23}\sigma^C_{12} = \sigma^C_{23}\sigma^C_{12}\Delta^C_{23}, \\
    \sigma^C_{12}\Delta^C_{23}\Delta^C_{12} &= \Delta^C_{12}\sigma^C_{23}\Delta^C_{12} = \Delta^C_{12}\Delta^C_{23}\sigma^C_{12} = \sigma^C_{23}\Delta^C_{12}\Delta^C_{23} = \Delta^C_{23}\sigma^C_{12}\Delta^C_{23} = \Delta^C_{23}\Delta^C_{12}\sigma^C_{23}, \\
    \Delta^C_{12}\Delta^C_{23}\Delta^C_{12} &= \Delta^C_{23}\Delta^C_{12}\Delta^C_{23}.
\end{align*}
Note that these are consequences of the following basic facts:
$$\sigma^C_{ij}\Delta^C_{ab}=\Delta^C_{a'b'}\sigma^C_{ij}
$$
where $\{a',b'\}$ is the image of $\{a,b\}$ under the transposition $(ij)$, and 
$$\Delta^C_{12}\Delta^C_{13}=\Delta^C_{12}\Delta^C_{23}=\Delta^C_{13}\Delta^C_{23}=\Delta^C_{123}.
$$
\end{proof}

\begin{definition}
Let $R\in \End(V\otimes V)$ be an operator satisfying the Yang--Baxter equation and $R^2=\id$. We define the $R$-braided symmetric algebra of $V$ as 
$$\Sym^R(V):=T_*(V)/\langle v_1\otimes v_2=R(v_1\otimes v_2)\rangle. 
$$
\end{definition}

We prove that the torsion CoHA is a braided symmetric algebra. 

\begin{corollary}\label{cor: YB-twisted symmetric}
    There is an isomorphism of algebras 
\[ \mathbb{H} \simeq \Sym^{R}(H^{*}(C)[z])   \]
where $R$ is the operator in Definition \ref{def: curve YB operator}. 
\end{corollary}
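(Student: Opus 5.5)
The plan is to deduce the corollary directly from the presentation in Theorem \ref{relations:torsioncoha}, by observing that its relations are exactly the defining relations of $\Sym^R(\BH_1)$. Since $\BH_1=H^*(C)[z]$ has basis-free elements $e^\alpha_i=\alpha z^i$, the tensor algebra $T_*(\BH_1)$ is the free associative algebra on the symbols $e^\alpha_i$ modulo the linearity relation (i): $e^{\alpha+\beta}_i=e^\alpha_i+e^\beta_i$, together with scalar linearity. So it suffices to show that the two-sided ideal generated by relation (ii) coincides with the ideal generated by $\{v_1\otimes v_2-R(v_1\otimes v_2)\}$.

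First, by bilinearity of $R$ (Definition \ref{def: curve YB operator}), the ideal $\langle v_1\otimes v_2-R(v_1\otimes v_2)\rangle$ is generated by the elements $e^\alpha_i\otimes e^\beta_j-R(e^\alpha_i\otimes e^\beta_j)$ for $\alpha,\beta\in H^*(C)$ and $i,j\ge 0$. I will then write out this element in the case $i\ge j$:
$$e^\alpha_i\otimes e^\beta_j-(-1)^{|\alpha||\beta|}e^\beta_j\otimes e^\alpha_i+\sum_{k=1}^{i-j}e_{i-k}\otimes^{\alpha\cup\beta}e_{j+k-1}.$$
Under the quotient map from $T_*(\BH_1)$, where $\otimes$ becomes the product $\star$, this element is exactly the difference between the left and right sides of (ii), namely $[e^\alpha_i,e^\beta_j]+\sum_{k}e_{i-k}\star^{\alpha\cup\beta}e_{j+k-1}$. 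The case $i<j$ is identical, with the sign of the sum flipped, matching the second line of (ii).

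Hence $\Sym^R(\BH_1)$ and the algebra $A_C$ from the proof of Theorem \ref{relations:torsioncoha} have the same generators and the same relations. Therefore $\Sym^R(\BH_1)\simeq A_C\simeq\BH$ via $e^\alpha_i\mapsto\alpha z^i$. The only point needing care, which I expect to be a mild bookkeeping obstacle rather than a real one, is the Sweedler-notation term. I will check that $e_a\otimes^{\gamma}e_b=\sum e_a^{\gamma^{(1)}}\otimes e_b^{\gamma^{(2)}}$ maps to $e_a\star^\gamma e_b$, which is immediate from the two definitions in the Notation above. I will also confirm that Koszul signs in $T_*(\BH_1)$ agree with those in the supercommutator of (ii). Proposition \ref{prop:twist}, which gives the Yang--Baxter equation and $R^2=\id$, is needed only so that $\Sym^R$ is defined in the sense of the preceding definition; it is not used in the identification itself.
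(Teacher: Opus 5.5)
Your proposal is correct and follows the paper's proof. Both arguments observe that, by Definition \ref{def: curve YB operator}, the relation $e^\alpha_i\star e^\beta_j=\star\circ R(e^\alpha_i\otimes e^\beta_j)$ is exactly relation (ii) of Theorem \ref{relations:torsioncoha}, so that presentation identifies $\BH$ with $\Sym^R(H^*(C)[z])$ via $\alpha z^i\mapsto e^\alpha_i$. Your explicit checks of the signs, the two cases $i\geq j$ and $i<j$, and the Sweedler term are the details the paper leaves implicit.
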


\begin{proof}
This is essentially a rephrasing of Theorem \ref{relations:torsioncoha} which characterizes $\BH$ by generators and relations. By definition of the operator $R$ and Theorem \ref{relations:torsioncoha}, we have 
$$e_i^\alpha \star e^\beta_j= \star\circ R(e^\alpha_i\otimes e^\beta_j)\in \BH_2.
$$
Furthermore, Theorem \ref{relations:torsioncoha} says that this is the complete set of relations up to $H^*(C)$-linearity. By the universal property of the $R$-braided symmetric algebra, we obtain an isomorphism of algebras
$$\Sym^R(H^*(C)[z])\xrightarrow{\simeq} \BH,\quad \alpha z^i\mapsto e_i^\alpha. 
$$
\end{proof}

\subsection{CoHA of $\mathbb{P}^{1}$ and Kronecker quiver}\label{sec: Kronecker}

In this section, we prove that the torsion CoHA of $\BP^1$ is isomorphic to a certain semistable CoHA of the Kronecker quiver. Upon this identification, our Proposition \ref{prop:twist} about Yang--Baxter equation in the case of $\BP^1$ proves a conjecture of Franzen--Reineke \cite[Conj. 5]{franzen2019cohomological}. 

By Beilinson's theorem \cite{beilinson}, the bounded derived category $\Db(\Coh(\PP^1))$ of coherent sheaves on $\BP^1$ admits a strong full exceptional collection $(\CO,\ \CO(1))$. This induces an equivalence of categories
\begin{equation}\label{eq: Beilinson iso}
\Phi: \Db(\Coh(\PP^1)) \simeq \Db(\Rep(K)),
\end{equation}
where $\Db(\Rep(K))$ is the bounded derived category of finite dimensional representations of the Kronecker quiver $K$ consisting of two vertices and two arrows as below:
\begin{center}\vspace{5pt}
    \begin{tikzcd}
\tikz \node[draw, circle]{$0$}; \arrow[r, bend left] \arrow[r, bend right]   & \tikz \node[draw, circle]{1};
\end{tikzcd}\vspace{5pt}
\end{center}
Precisely, the equivalence $\Phi$ is defined as
$$\Phi(F)=\begin{tikzcd}
	{\RHom(\CO(1),F)} & {\RHom(\CO, F)}
	\arrow["x_1", curve={height=-15pt}, from=1-1, to=1-2]
	\arrow["x_2"', curve={height=+15pt}, from=1-1, to=1-2]
\end{tikzcd}
$$
where $x_1,x_2$ are global sections of $\mathcal{O}(1)$.

In order to study the torsion CoHA of $\BP^1$ via Kronecker quiver $K$, we need to understand the image of torsion coherent sheaves under the derived equivalence \eqref{eq: Beilinson iso}. To state the result and set up notations, we recall some basic notions in quiver representations. Let $Q=(Q_0,Q_1)$ be any quiver. For any dimension vector $\bd=(d_i)_{i\in Q_0} \in \mathbb{N}^{Q_0}$, let $\mathfrak{M}_{\bd}(Q)$ denote the stack of $\bd$-dimensional representations of $Q$. Let $\theta: \mathbb{Z}^{Q_0} \rightarrow \mathbb{Z}$ be a group homomorphism, called the stability condition. For any $\bd$-dimensional representation $\rho$ with $\bd\neq 0$, the slope of $\rho$ with respect to $\theta$ is defined as
\[ \mu^\theta(\bd) = \frac{\theta(\bd)}{\sum_{i\in Q_0} d_i}.\]
We say that a representation $\rho$ is $\theta$-semistable if for every subrepresentation $0\subsetneq \rho^{\prime}\subsetneq \rho$, we have $\mu^\theta(\rho^\prime) \leq \mu^\theta(\rho)$, and it is called stable if the inequality is strict. Let $\mathfrak{M}^{\theta-\mathrm{ss}}_{\bd}(Q) \subseteq \mathfrak{M}_{\bd}(Q)$ be the open substack of $\theta$-semistable $\bd$-dimensional representations of the quiver $Q$.

The following result is well known to experts\footnote{Timm Peerenboom has informed us that it is shown in \cite[Prop. 4.3]{GeigleLenzing} and \cite[Prop. 2.20]{Timm} that the derived equivalence restricts to an equivalence between torsion sheaves and regular representations. The notion of regular representation is a priori different from semi-stability but it can be shown that these two definitions coincide. } but we record the proof since we could not find a precise reference.  

\begin{proposition} \label{prop:belinsonisoofstacks} Let $\theta(d_0,d_1)=d_0$ be a stability condition of the Kronecker quiver $K$. The derived equivalence \eqref{eq: Beilinson iso} restricts to an equivalence between the abelian category of torsion sheaves on $\mathbb{P}^1$ and the abelian category of $\theta$-semistable, slope $1/2$ representations of the Kronecker quiver $K$. In particular, for any $d \geq 0$, there is an isomorphism of moduli stacks
   \[  \Coh_{d}(\mathbb{P}^1) \simeq \mathfrak{M}^{\theta-\mathrm{ss}}_{(d,d)}(K). \]
\end{proposition}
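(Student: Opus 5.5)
Throughout, $\Phi(F)$ denotes the two-term complex $\RHom(\CO(1),F)\rightrightarrows\RHom(\CO,F)$, placed with $\RHom(\CO(1),F)$ at vertex $0$ and $\RHom(\CO,F)$ at vertex $1$. We first identify the image of a torsion sheaf, then characterize that image by semistability, and finally upgrade the equivalence to families.

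\emph{Torsion sheaves land in the heart.} For a torsion sheaf $F$ of degree $d$ we have $H^1(C,F\otimes L)=0$ for every line bundle $L$, and $h^0(F(-1))=h^0(F)=d$. Hence $\Phi(F)$ is concentrated in degree $0$ and is an honest Kronecker representation of dimension vector $(d,d)$. Since $\Phi$ is exact and torsion sheaves form an abelian subcategory closed under extensions, the restriction of $\Phi$ is a fully faithful exact functor into $\Rep(K)$ with image in dimension vectors $(d,d)$.

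\emph{Semistability of the image.} For $\theta(d_0,d_1)=d_0$, the slope of $(d,d)$ is $1/2$, so semistability means that every subrepresentation $(a,b)$ satisfies $a\leq b$. Suppose $\rho'\subseteq\Phi(F)$ is a subrepresentation with $a>b$. Because $\Phi$ is an equivalence, $\rho'$ corresponds to an object $G=\Phi^{-1}(\rho')$ with a map $G\to F$, and $\Phi^{-1}$ of the quotient also lies in the heart. Taking the long exact cohomology sequence in $\Coh(\BP^1)$ shows that $G$ is a sheaf, possibly up to a shift issue that we would check via the tilting description: the heart $\Rep(K)$ corresponds to the tilt of $\Coh(\BP^1)$ at the torsion pair generated by $\CO(-1)$.

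A cleaner route avoids this. A sheaf $E$ with $H^1(E(-1))=0$ has Euler characteristics $\chi(E(-1))=\deg E$ and $\chi(E)=\deg E+\rk E$, so $\dim\Phi(E)=(\deg E,\deg E+\rk E)$, and the slope condition becomes a sign condition on rank. Concretely, we would argue on the quiver side: if $\rho'$ has $a>b$, compose the inclusion with the map to $\Phi(F)$ and apply $\Phi^{-1}$. The objects of $\Rep(K)$ with $d_0>d_1$ correspond to complexes whose $H^0$ has negative rank contributions, that is, to objects involving $\CO(-1)[1]$ summands. Since $\Hom(\CO(-1)[1],F)=\Ext^{-1}=0$ and $\Hom$ from a vector bundle of positive rank to a torsion sheaf respects the filtration, we obtain a contradiction. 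I expect this step to be the main obstacle. The most robust approach is to use the Harder--Narasimhan filtration of $\rho'$ and the classification of indecomposable Kronecker representations (preprojective, regular, preinjective): the preinjectives have $d_0>d_1$, and $\Hom(\text{preinjective},\text{regular})=0$.

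\emph{Essential surjectivity.} Conversely, a semistable representation of dimension $(d,d)$ has no preprojective or preinjective summands or subquotients in its HN/Kronecker decomposition, so it is regular. Regular indecomposables are exactly the $\Phi$ of torsion sheaves $\CO_{kp}$, since they are parametrized by $p\in\BP^1$ and length $k$. Equivalently, $\Phi^{-1}(\rho)$ is a sheaf of rank $d_1-d_0=0$ with no $\CO(-1)[1]$ part, hence torsion.

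\emph{Stacks.} The functor $\Phi$ is defined in families via $R\pi_*$ of $\CF(-1)$ and of $\CF$, which are vector bundles by cohomology and base change. This gives a morphism $\Coh_d(\BP^1)\to\mathfrak{M}^{\theta-\ss}_{(d,d)}(K)$. Its inverse is the family version of Beilinson's resolution, sending a representation to the cokernel of $V_0\otimes\CO(-1)\to V_1\otimes\CO$, which is flat because its fibers are torsion of constant length $d$. Both compositions are naturally isomorphic to the identity, which establishes the isomorphism of moduli stacks.
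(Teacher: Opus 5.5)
Your route differs from the paper's and can be completed, but as written its two load-bearing steps are asserted rather than proved.

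\textbf{The semistability step.} You leave a ``shift issue'' open. The ``cleaner route'' makes incorrect claims: the negative rank of a subobject with $d_0>d_1$ comes from $H^{-1}$, not $H^0$, and $H^{-1}$ can be any sum of $\CO(m)$ with $m\le -1$, not just copies of $\CO(-1)$. The ``respects the filtration'' step is not an argument.

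\textbf{The classification route.} It needs $\Phi(F)$ to be regular, and essential surjectivity needs every regular indecomposable to be some $\Phi(\CO_{kp})$. For the latter you only observe that both are parametrized by the same data, which does not identify them.

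\textbf{How to close both gaps.} Your first idea works, with no shift issue. Every object $G$ of the heart $\mathcal{A}:=\Phi^{-1}(\Rep(K))$ is quasi-isomorphic to a two-term complex of vector bundles in degrees $[-1,0]$: apply $\Phi^{-1}$ to a projective resolution, using that the indecomposable projectives are $\Phi(\CO(1))$ and $\Phi(\CO)$. Moreover $d_1-d_0=\chi(G)-\chi(G(-1))=\rk G$ in $K$-theory.
\begin{itemize}
\item \emph{Forward direction.} Suppose $\Phi(G)\subseteq\Phi(F)$ with cokernel $\Phi(G'')$. In the cohomology sequence $H^{-2}(G'')\to H^{-1}(G)\to H^{-1}(F)$ of the triangle $G\to F\to G''$, both outer terms vanish. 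Hence $G$ is a sheaf and $d_1-d_0=\rk G\geq 0$, which is exactly semistability.
\item \emph{Essential surjectivity.} Let $\rho=\Phi(G)$ be semistable of dimension $(d,d)$. Since $\Coh(\BP^1)$ is hereditary, $G\simeq H^{-1}(G)[1]\oplus H^0(G)$. So $\Phi(H^{-1}(G)[1])$ is a direct summand, hence a subrepresentation, of $\rho$, with $d_1-d_0=-\rk H^{-1}(G)$. As $H^{-1}(G)$ is a subsheaf of a vector bundle, it is torsion-free, so semistability forces $H^{-1}(G)=0$. Then $G$ is a sheaf of rank $0$, i.e.\ a torsion sheaf of length $d_0=d$.
\end{itemize}
This is what your ``Equivalently'' sentence was pointing at, and it makes the Kronecker classification and the Harder--Narasimhan filtration unnecessary.

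\textbf{Comparison with the paper.} The paper never describes the heart $\mathcal{A}$ and does not use the classification.
\begin{itemize}
\item For the forward direction it inducts on length. $\Phi(\CO_x)$ is a $(1,1)$-representation with $(x_1,x_2)\neq 0$, hence semistable, and semistable representations of slope $1/2$ are closed under extensions.
\item For essential surjectivity it uses elementary linear algebra. Either some $v\neq 0$ has $\Span(a(v))=\Span(b(v))$, or one normalizes $a-b=\textnormal{Id}$ and uses a kernel vector of $a$ or $b$, or an eigenvector of $a^{-1}b$. This shows every semistable $(d,d)$-representation with $d\geq 2$ has a $(1,1)$-subrepresentation. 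Hence it is an iterated extension of representations $\Phi(\CO_x)$, and the essential image of torsion sheaves is closed under extensions.
\end{itemize}
That argument is self-contained. Yours is more structural: it reads the slope condition as nonnegativity of rank on the heart, and together with Grothendieck's theorem it would recover the Kronecker classification rather than rely on it.

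\textbf{The stack statement.} Your family-level construction (pushforwards of $\CF(-1)$ and $\CF$, and the Beilinson cokernel) spells out the isomorphism of stacks that the paper only asserts. There, deduce flatness of the cokernel from fiberwise injectivity of $V_0\otimes\CO(-1)\to V_1\otimes\CO$ via the local criterion. Constancy of fiber length alone suffices only over reduced bases.
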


\begin{proof}

Let $\underline{\Coh}_d(\BP^1)$ be the subcategory of torsion sheaves of length $d$, and $\Rep^{\theta-\mathrm{ss}}_{(d,d)}(K)$ be the subcategory of $\theta$-semistable $(d,d)$-dimensional representations of $K$. We show that the derived equivalence $\Phi$ from \eqref{eq: Beilinson iso} restricts to an equivalence 
$$\Phi:\underline{\Coh}_d(\BP^1)\simeq \Rep^{\theta-\mathrm{ss}}_{(d,d)}(K). 
$$
First, we show that if $F\in \underline{\Coh}_d(\BP^1)$ then $\Phi(F)$ is a $\theta$-semistable $(d,d)$-dimensional representation of $K$ by induction on $d$. If $d=1$, then $F=\CO_x$ for some $x=[x_1:x_2]\in \BP^1$. Then $\RHom(\CO(1),F) = \RHom(\CO,F)= \mathbb{C}$ and $x_1,x_2$ are not both zero. Thus $\Phi(F)$ is $\theta$-semistable representations of dimension vector $(d,d)$. Now let $d>1$. Then any torsion sheaf $F$ of length $d>1$ on $\PP^1$ is an extension 
$$0\rightarrow F'\rightarrow F\rightarrow F''\rightarrow 0,
$$
where $F', F''$ are torsion sheaves of length less than $d$. Now applying $\Phi$ gives an exact triangle 
\[ \Phi(F') \rightarrow \Phi(F) \rightarrow \Phi(F'') \rightarrow \Phi(F')[1]. \]
By induction hypothesis, $\Phi(F')$ and $\Phi(F'')$ are both $\theta$-semistable representations of slope $1/2$. Since such representations are closed under exact triangles, this implies that $\Phi(F)$ is $\theta$-semistable. 

In the above, we proved that the derived equivalence restricts to a fully faithful functor
$$\Phi:\underline{\Coh}_d(\BP^1)\rightarrow \Rep^{\theta-\mathrm{ss}}_{(d,d)}(K). 
$$
We now show that this is essentially surjective. Note that a $(1,1)$-dimensional representation
\[\rho = \begin{tikzcd}
	{\mathbb{C}^1} & {\mathbb{C}^1}
	\arrow["b"', curve={height=12pt}, from=1-1, to=1-2]
	\arrow["a", curve={height=-12pt}, from=1-1, to=1-2]
\end{tikzcd}\]
is $\theta$-semistable if and only if $a$ and $b$ are not both zero. Such a representation is isomorphic to $\Phi(\CO_x)$ where $x=[a:b]\in \BP^1$. Now we prove the essential surjectivity for $d\geq 2$. By the same strategy as in the previous paragraph, it suffices to show that every $\theta$-semistable representation of the Kronecker quiver of dimension $(d,d)$ with $d\geq 2$ is strictly semistable, hence built from iterated extensions of stable representations of dimension $(1,1)$. Consider a $\theta$-semistable representation 
    \item \[\rho = \begin{tikzcd}
	{\mathbb{C}^d} & {\mathbb{C}^d}
	\arrow["b"', curve={height=12pt}, from=1-1, to=1-2]
	\arrow["a", curve={height=-12pt}, from=1-1, to=1-2]
\end{tikzcd}\]
with $d\geq 2$. We show that this is strictly semistable. We first consider the case where there exists a vector $0\neq v\in \BC^d$ such that $\Span(a(v))=\Span(b(v))$. In this case, $v$ spans a subrepresentation of dimension vector either $(1,1)$ or $(1,0)$. The second case of $(1,0)$ cannot occur because $\rho$ is semistable and the case with $(1,1)$ shows that $\rho$ is strictly semistable. Now we assume that there is no nonzero vector $v\in \BC^d$ such that $\Span(a(v))=\Span(b(v))$. This implies that $a-b:\BC^d\rightarrow \BC^d$ is injective, hence an isomorphism. Since we consider quiver representations up to conjugation, we may assume that $a-b=\textnormal{Id}_{\BC^d}$. If there is $v\neq 0$ such that $b(v)=0$, then $a(v)=v$ so $v$ spans a subrepresentation of dimension vector $(1,1)$. So we may assume that $b$ has no kernel hence is invertible. The same argument shows that we may assume $a$ is invertible. Since both $a,b$ are invertible, there exists an eigenvector $v$ for $a^{-1}\circ b$ with nonzero eigenvalue $\lambda$, i.e., $b(v)=\lambda a(v)$. But we have already considered the case where $\Span(a(v))=\Span(b(v))$. This completes the proof.

\end{proof}

Since we have an isomorphism of moduli stacks induced by an equivalence of abelian categories, this induces an isomorphism of cohomological Hall algebras. Denote the cohomological Hall algebra of $\theta$-semistable, slope $1/2$ representations of the Kronecker quiver $K$ by 
$$\BH^{\theta}_{1/2}(K):=\left(\bigoplus_{d=0}^\infty H^*(\mathfrak{M}^{\theta-\mathrm{ss}}_{(d,d)}(K))\,,\, \star\right).$$

\begin{corollary}\label{cor: Kronecker CoHA}
    The derived equivalence \eqref{eq: Beilinson iso} induces an isomorphism of cohomological Hall algebras
    \[\BH_{\PP^1}  \simeq  \BH^{\theta}_{1/2}(K). \]
\end{corollary}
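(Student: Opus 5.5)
The plan is to upgrade the equivalence of abelian categories from Proposition \ref{prop:belinsonisoofstacks} to an isomorphism of the whole CoHA correspondence diagrams, and then use functoriality of the Hall product.

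First, the equivalence $\Phi$ restricts to an exact equivalence between the exact (in fact abelian, closed under extensions) subcategories of torsion sheaves and of $\theta$-semistable slope $1/2$ representations. It therefore induces isomorphisms of the stacks of objects, $\Coh_d(\BP^1)\simeq \mathfrak{M}^{\theta-\mathrm{ss}}_{(d,d)}(K)$, and also of the stacks of short exact sequences. For the second point, note that a short exact sequence of torsion sheaves goes to an exact triangle whose three terms all lie in the heart $\Rep(K)$, hence to a short exact sequence of representations. Conversely, since slope-$1/2$ semistables are closed under extensions and subobjects/quotients of the same slope, $\Coh_{d,e}(\BP^1)\simeq \mathfrak{M}^{\theta-\mathrm{ss}}_{(d,d),(e,e)}(K)$. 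Here the right-hand side is the open substack of the extension stack of $K$ where the middle term (equivalently, both outer terms) is semistable.

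To make these isomorphisms hold on the level of families rather than just points, I would apply $\Phi$ relatively. For a family $\mathcal F$ on $S\times\BP^1$, the object $R\pi_*(\mathcal F\otimes\CO(-1))\rightrightarrows R\pi_*\mathcal F$ gives vector bundles of rank $d$ over $S$, since higher cohomology of torsion sheaves vanishes. The inverse is given by the Beilinson resolution $\CO(-1)^d\xrightarrow{x_1a-x_2b}\CO^d$, whose cokernel is flat torsion exactly when the determinant is nonzero on each fiber. This is the semistability condition shown in the proof of Proposition \ref{prop:belinsonisoofstacks}.

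Next, I would check that the maps $q,p$ match under these isomorphisms, which is immediate from functoriality. Then I would compare degree shifts and orientations. The Hall product in both cases is $p_*q^*$ (or $p_*q^!$) with $q$ smooth, since it is a vector bundle stack, and $p$ proper. The only possible discrepancy is the normalization shift by Euler forms. I would check that $\chi_{\BP^1}(F,G)=0$ for torsion sheaves, and that $\chi_K((d,d),(e,e))=de+de-2de=0$. The vdims and shifts then agree, and no sign twist is needed.

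The main obstacle I expect is the family version of the equivalence, namely identifying the semistable locus with the open locus where $\det(x_1a-x_2b)\not\equiv 0$ and verifying flatness of the cokernel. I would handle this via the pointwise argument already in Proposition \ref{prop:belinsonisoofstacks} together with openness of both conditions.
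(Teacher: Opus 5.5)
Your proposal is correct and follows the same route as the paper, which justifies the corollary in one line: the equivalence of abelian categories from Proposition~\ref{prop:belinsonisoofstacks} identifies the moduli stacks and hence the Hall correspondences, so the CoHAs agree. Your treatment of the extension stacks, the family version of the equivalence, and the vanishing Euler forms makes explicit what the paper leaves implicit. (One cosmetic point: with the paper's convention $x=[a:b]$, the Beilinson map should read $x_1 b - x_2 a$ rather than $x_1 a - x_2 b$; as written, your ``inverse'' differs from the true inverse by the coordinate swap on $\PP^1$, which does not affect the conclusion.)
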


\begin{remark}
    This precisely describes the semistable CoHA of the Kronecker quiver with slope $1/2$, which was previously computed in \cite{franzen2019cohomological} and \cite{Timm}. Since both sides of the isomorphism in Corollary \ref{cor: Kronecker CoHA} are spherically generated, the isomorphism is uniquely specified by the vector space isomorphism
    \begin{align*}
        H^*(\Coh_1(\BP^1))&\simeq H^*(\mathfrak{M}_{(1,1)}^{\theta-\ss}(K))\\
        z^i&\mapsto e_i\\
        [\pt]z^i&\mapsto f_{i+1}
    \end{align*}
    where $e_i$ and $f_i$ on the right hand side follow the notation used in loc. cit. Under this isomorphism, Proposition \ref{prop:twist} proves a conjecture of Franzen--Reineke \cite[Conj. 5]{franzen2019cohomological} and its generalized version for arbitrary smooth quasi-projective curves. 

\end{remark}

\section{Cohomology ring of punctual Quot schemes}
  
In this section, we determine the cohomology ring of punctual Quot schemes in terms of the torsion CoHA of curves, computed in Theorem \ref{thm: CoHA equals shuffle}. In particular, we obtain a natural basis of the cohomology of punctual Quot schemes. Our approach is motivated by a work of Franzen \cite{franzen} which corresponds to the $C=\BA^1$ case. For this section, we let $C$ be a smooth connected quasi-projective curve.

\subsection{Kernel of the cyclic module}

In this subsection, we characterize the cohomology of punctual Quot schemes purely in terms of the torsion CoHA. A key idea is to use a stratification by open loci of the total space of a certain tautological bundle on $\Coh_d$. 

Let $V$ be a fixed vector bundle of rank $N$ throughout the section. We write $\Quot_d:=\Quot_d(V)$ for the punctual Quot scheme and consider 
$$\BV:=\bigoplus_{d=0}^\infty \BV_d=\bigoplus_{d=0}^\infty H^*(\Quot_d)\simeq \bigoplus_{d=0}^\infty H^\BM_*(\Quot_d). 
$$
Define a tautological bundle on $\Coh_d$ as
$$V^{[d]}:=\pr_{1*}(\mathcal{H}om(\pr_2^*V,\CF))$$ 
where $\CF$ is a universal sheaf on $\Coh_d\times C$ and $\pr_1, \pr_2$ are the two projections from $\Coh_d\times C$. Note that the total space $\Tot(V^{[d]})$ parametrizes a pair $(F,\phi)$ of a length $d$ torsion sheaf $F$ and any morphism $\phi:V\rightarrow F$. Then $\Quot_d$ is an open subset of $\Tot(V^{[d]})$ where $\phi$ is surjective. More generally, we define the intermediate open subsets as below.

\begin{definition}
For each $0\leq k\leq d$, let $Q_d^{\geq k}\subseteq \Tot(V^{[d]})$ be the open locus parametrizing pairs $(F,\phi)$ where the image of $\phi$ is of length at least $k$.  
\end{definition}

These intermediate open loci interpolate the total space of a tautological bundle and the Quot scheme as follows
$$\Quot_d=Q_d^{\geq d}\subseteq Q_d^{\geq d-1}\subseteq \cdots\subseteq Q_d^{\geq 1}\subseteq Q_d^{\geq 0}=\Tot(V^{[d]}). 
$$
Let $\pi_d:\Quot_d\rightarrow \Coh_d$ be a forgetful map. For each $0\leq k<d$, the complement of the successive open subsets $Q^{\geq k+1}_d\subseteq Q^{\geq k}_d$, denoted by $Q^{=k}_d$, is defined as a fiber product
\begin{equation*}
    \begin{tikzcd}
Q^{=k}_d \arrow[d, "\pi_{k, d-k}"'] \arrow[r, "\overline{q}"] & \Quot_k\times \Coh_{d-k} \arrow[d, "\pi_k\times \id"] \\
{\Coh_{k, d-k}} \arrow[r, "q"]    & \Coh_k\times \Coh_{d-k}.       
\end{tikzcd}
\end{equation*}
By definition, the stack $Q^{=k}_d$ is smooth of dimension $N\cdot k$ and parametrizes a pair consisting of a short exact sequence and a quotient
\begin{equation}\label{eq: Q=k}
    \begin{tikzcd}
            & V \arrow[d, two heads, "\phi'"] &             &               &   \\
0 \arrow[r] & F' \arrow[r]           & F \arrow[r] & F'' \arrow[r] & 0
\end{tikzcd}
\end{equation}
where $F'$ and $F''$ are torsion sheaves of length $k$ and $d-k$, respectively. There exists a natural morphism 
$$\iota:Q^{=k}_d\rightarrow Q_{d}^{\geq k}
$$
by sending the data \eqref{eq: Q=k} to the composition $\phi:V\xrightarrow{\phi'}F'\rightarrow F$. The morphism $\iota$ is a monorphism whose image is the complementary closed locus $Q_d^{\geq k}\backslash Q_d^{\geq k+1}$, hence explaining the notation $Q^{=k}_d$.

In order to understand the morphism $\iota$ better, we consider a diagram 
\begin{equation}\label{eq: key diagram}
    \begin{tikzcd}
Q^{=k}_d \arrow[rd, "\widetilde\iota"] \arrow[rdd, bend right, "\pi_{k, d-k}"'] \arrow[rrd, bend left, "\iota"] &                                         &                        \\
                                                                   & \widetilde Q^{=k}_d \arrow[d, "\overline{\pi}_{d}^{\geq k}"'] \arrow[r, "\overline{p}"] & Q^{\geq k}_d \arrow[d, "\pi_{d}^{\geq k}"] \\
                                                                   & {\Coh_{k, d-k}} \arrow[r, "p"]               & \Coh_d                
\end{tikzcd}
\end{equation}
where $\widetilde Q^{=k}_d$ is the fiber product of a square. By definition, $\widetilde Q^{=k}_d$ parametrizes 
\begin{equation}\label{eq: point on tilde Q}
    \begin{tikzcd}
            &  &     V \arrow[d, "\phi"]        &               &   \\
0 \arrow[r] & F' \arrow[r]           & F \arrow[r] & F'' \arrow[r] & 0
\end{tikzcd}
\end{equation}
where $F'$, $F''$ are torsion sheaves of length $k$ and $d-k$, respectively, and $\phi$ is a morphism whose image is of length at least $k$. Let us denote the pullback of the tautological bundle $V^{[d-k]}$ via the forgetful morphism $\widetilde{Q}^{=k}_d\rightarrow \Coh_{d-k}$ by the same notation. There exists a tautological section 
\begin{equation}\label{eq: tautological section}
    \tau\in H^0(\widetilde Q^{=k}_d, V^{[d-k]})
\end{equation}
given by the composition $[V\xrightarrow{\phi}F\rightarrow F'']\in \Hom(V,F'')$ over the point corresponding to \eqref{eq: point on tilde Q}.

\begin{lemma}\label{lem: tautological zero}
In diagram \eqref{eq: key diagram}, $\widetilde \iota$ is a closed embedding onto the zero locus of the tautological section $\tau$ from \eqref{eq: tautological section}. Furthermore, $\iota$ in the same diagram is a regular embedding. 

\end{lemma}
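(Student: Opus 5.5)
The plan is to first identify the zero locus $Z(\tau)\subseteq \widetilde Q^{=k}_d$ functorially with $Q^{=k}_d$, and then deduce regularity of $\iota$ from a dimension count together with the smoothness of the stacks involved.

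\textbf{Step 1: $\widetilde\iota$ lands in $Z(\tau)$.} A point of $Q^{=k}_d$ is the data \eqref{eq: Q=k}. Its image under $\widetilde\iota$ is the same short exact sequence together with $\phi:V\xrightarrow{\phi'}F'\hookrightarrow F$. The composition $V\to F\to F''$ vanishes, so $\tau\circ\widetilde\iota=0$.

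\textbf{Step 2: the inverse map $Z(\tau)\to Q^{=k}_d$.} Take a family over a base $S$ on which $\tau=0$. Then $\phi$ factors uniquely through $F'=\ker(F\to F'')$, giving $\phi':V\to F'$. Since the image of $\phi$ has length at least $k$ fiberwise and $F'$ has length $k$, the map $\phi'$ is surjective on every fiber. By Nakayama it is surjective, so $V\twoheadrightarrow F'$ is a point of $\Quot_k$.

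These constructions are mutually inverse on $S$-points, so $\widetilde\iota$ is an isomorphism onto $Z(\tau)$. The scheme-theoretic zero locus represents exactly the condition that the composite vanishes, so this gives a closed embedding.

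\textbf{Step 3: regularity of $\iota$ by dimension count.} Since $p$ is proper and representable, and the only extra datum recovered is the filtration $F'=\mathrm{Im}\,\phi\subseteq F$, the map $\overline p$ restricted to $Z(\tau)$ is injective on points. Moreover $\overline p$ is unramified there, because the filtration is determined by $\phi$. Hence $\iota$ is a monomorphism and a closed immersion onto $Q^{\geq k}_d\setminus Q^{\geq k+1}_d$.

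Both source and target are smooth: $Q^{=k}_d$ is smooth of dimension $Nk$, and $Q^{\geq k}_d$ is open in $\Tot(V^{[d]})$, which has dimension $Nd$. A closed immersion between smooth stacks is automatically a regular embedding. Its codimension should be $N(d-k)=\rk V^{[d-k]}$, consistent with Step 2.

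\textbf{Main obstacle.} The delicate point is the closed-immersion claim for $\overline p|_{Z(\tau)}$, i.e.\ that $\iota$ is a closed immersion rather than merely a monomorphism. I would check this on tangent spaces.

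At a point of $Q^{=k}_d$, a first-order deformation of $(F,\phi)$ extends the quotient $V\twoheadrightarrow F'$ uniquely as the image subsheaf. This works because flatness of the image is forced by constancy of length $k$, which holds on $Q^{\geq k}_d$ near the stratum only after restricting to the stratum. I would make this precise using the universal property of Quot schemes, applied to the image of $\phi$ over bases where it has constant length.

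Combining this with properness of $p$ gives that $\iota$ is proper, a monomorphism, and unramified, hence a closed immersion.
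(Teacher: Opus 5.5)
Your proposal is correct and is essentially the paper's argument. You identify $Z(\tau)$ with $Q^{=k}_d$ by factoring $\phi$ through $F'$ and comparing lengths, then note that $\iota=\overline p\circ\widetilde\iota$ is a proper monomorphism, hence a closed embedding, which is regular because $Q^{=k}_d$ and $Q^{\geq k}_d$ are smooth.

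The tangent-space detour in your last paragraph is unnecessary, and its claim that constant length forces flatness is not meaningful over non-reduced bases such as $\mathrm{Spec}\,\BC[\epsilon]$. Monomorphy is immediate: over any base $S$, a lift of $(F,\phi)$ must have $F'=\mathrm{Im}\,\phi$. A proper monomorphism is already a closed immersion, so no separate unramifiedness check is needed.
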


\begin{proof}

Over the zero locus of the tautological section $\tau$, the morphism $\phi$ from \eqref{eq: point on tilde Q} factors through the subsheaf $F'$, i.e., 
\begin{equation}\label{eq: point on zero locus of tilde Q}
    \begin{tikzcd}
            &  &     V \arrow[d, "\phi"] \arrow[ld, dashed, "\phi'"']        &               &   \\
0 \arrow[r] & F' \arrow[r]           & F \arrow[r] & F'' \arrow[r] & 0.
\end{tikzcd}
\end{equation}
Since the image of $\phi$ has length at least $k$ and $F'$ is of length $k$, the factoring morphism $\phi'$ has to be surjective. Therefore, the zero locus of the tautological section identifies with $Q^{=k}_{d}$ via $\iota$. 

Note that $\widetilde \iota$ is proper as a closed embedding and $\overline{p}$ is a pullback of a proper morphism $p$. Therefore, the composition $\iota=\overline{p}\circ \widetilde{\iota}\,$ is also proper. Since $\iota$ is a proper monomorphism, it is a closed embedding which is also regular since both $Q^{=k}_d$ and $Q^{\geq k}_d$ are smooth stacks.

\end{proof}

The regular embedding $\iota$ of codimension $N\cdot (d-k)$ and a complementary open embedding $j$
$$Q^{=k}_d \overset{\iota}{\hookrightarrow} Q^{\geq k}_d \overset{j}{\supseteq} Q^{\geq k+1}_d
$$
induce a Gysin long exact sequence
\begin{equation*}
    \cdots \rightarrow H^{*-2N(d-k)}(Q^{=k}_d)\xrightarrow{\iota_*} H^*(Q^{\geq k}_d)\rightarrow H^*(Q^{\geq k+1}_d)\rightarrow \cdots. 
\end{equation*}

\begin{lemma}\label{lem: Gysin splitting}
The above Gysin long exact sequence splits into short exact sequences
$$0\rightarrow H^{*-2N(d-k)}(Q^{=k}_d)\xrightarrow{\iota_*} H^*(Q^{\geq k}_d)\rightarrow H^*(Q^{\geq k+1}_d)\rightarrow 0.$$
\end{lemma}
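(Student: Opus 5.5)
The plan is to show that $\iota_*$ is injective. Once it is, the long exact sequence breaks into the stated short exact sequences, since every connecting map must vanish. The standard route is the self-intersection formula: $\iota^*\iota_*(x)=e(N_\iota)\cup x$. So injectivity of $\iota_*$ follows if cup product with the Euler class of the normal bundle of $\iota$ is injective on $H^*(Q^{=k}_d)$.

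First I will identify the normal bundle. By Lemma \ref{lem: tautological zero}, $\widetilde\iota$ is the zero locus of the section $\tau$ of $V^{[d-k]}$ on $\widetilde Q^{=k}_d$. The map $\overline p$ is the base change of $p$, and $p$ is generically a closed embedding on the locus where the filtration is determined. I would rather compute directly in $K$-theory. The tangent complex of $Q^{\geq k}_d$ (open in $\Tot(V^{[d]})$) at $(F,\phi)$ is built from $\Ext^\bullet(F,F)$ and $\Hom(V,F)$. For $Q^{=k}_d$ it is built from the short exact sequence data, $\Ext(F'',F')$-type terms and $\Hom(V,F')$. The difference is
\[
[\Hom(V,F'')] - \big([\Ext^\bullet(F',F'')]\ \text{terms}\big),
\]
and these Ext terms cancel because $\chi(F',F'')=0$ for torsion sheaves on a curve. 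The resulting normal bundle has rank $N(d-k)$, matching the codimension. I therefore expect $N_\iota \simeq V^{[d-k]}$, pulled back along $Q^{=k}_d\to\Coh_{d-k}$, up to a class that does not affect the Euler class. The conclusion $e(N_\iota)=e(V^{[d-k]})$ is what the later kernel description suggests.

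Second, I will show that multiplication by $e(V^{[d-k]})$ is injective on $H^*(Q^{=k}_d)$. Via the fiber square defining $Q^{=k}_d$, the map $\overline q$ is a vector bundle stack over $\Quot_k\times\Coh_{d-k}$, so
\[
H^*(Q^{=k}_d)\simeq H^*(\Quot_k)\otimes H^*(\Coh_{d-k}),
\]
and the class $e(V^{[d-k]})$ comes from the second factor. It then suffices to know that $e(V^{[d-k]})$ is a non-zero-divisor in $H^*(\Coh_{d-k})$. Using Heinloth's identification \eqref{eq: heinloth iso}, this ring injects into $H^*(C)^{\otimes(d-k)}[z_1,\dots,z_{d-k}]$. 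Pulled back to $C^{d-k}\times \BGm^{d-k}$, the bundle $V^{[d-k]}$ splits as $\bigoplus_i V^\vee|_{p_i}\otimes\bt_i$. Hence $e(V^{[d-k]})=\prod_i\big(z_i^N+\text{(nilpotent terms from }H^{>0}(C))\big)$. Each factor is monic in $z_i$ with coefficients in the finite-dimensional algebra $H^*(C)^{\otimes(d-k)}$, so it is a non-zero-divisor in the polynomial ring, and so is the product. Flatness of $H^*(\Quot_k)$ over $\BQ$ preserves injectivity after tensoring.

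The main obstacle is the first step: pinning down $e(N_\iota)$ rigorously, especially the cancellation of the $\Ext$ contributions. If that identification is only up to multiplication by a unit in $H^*$, the argument still works, since any class of the form $\prod_i(z_i^N+\text{nilpotent})$ times a unit is a non-zero-divisor. Failing a clean identification, I would use excess intersection on the square \eqref{eq: key diagram}. There $\widetilde\iota$ is cut out by a section of $V^{[d-k]}$, and $\overline p$ is generically of degree one near the image. This yields $\iota^*\iota_*x = e(V^{[d-k]})\cdot x$ up to degree corrections, which I expect to vanish.
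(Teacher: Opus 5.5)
There is a genuine gap in your identification of $e(N_\iota)$, and it breaks the rest of the argument. The $\Ext$ terms do not ``cancel because $\chi(F',F'')=0$''. Vanishing of $\chi$ only says that the $K$-theory class $\R\pr_{1*}\RHHom(\CF',\CF'')$ has rank zero, and a rank-zero class still changes Chern classes. What your own bookkeeping actually gives is the following (the paper extracts the same thing from the triangle $\pi_{k,d-k}^*\mathbb{T}_p\to N_{\widetilde\iota}\to N_\iota$):
\[
N_\iota=V^{[d-k]}-\R\pr_{1*}\RHHom(\CF',\CF'')=\R\pr_{1*}\RHHom(\CS',\CF''),\qquad \CS'=\ker(\pr_2^*V\twoheadrightarrow \CF').
\]
This already differs from your guess for $C$ projective, $V=\CO$, $k=1$, $d=2$. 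Here $Q^{=1}_2\to\Quot_1\times\Coh_1\simeq C\times C\times\BGm$ and $\CS'=\CO(-\Delta)$. Hence $N_\iota=L''(\Delta)$, with $L''$ the universal line bundle of the $\BGm$ factor and $z_2=c_1(L'')$. So $e(N_\iota)=z_2+[\Delta]$, while $e(V^{[1]})=z_2$. This is not a unit multiple of $z_2$ in $H^*(C\times C)[z_2]$. Your excess-intersection fallback fails for the same reason: the excess contribution is exactly that of the rank-zero class $\R\pr_{1*}\RHHom(\CF',\CF'')$, and it does not vanish.

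Consequently, your second step proves injectivity of the wrong operator. The true Euler class is not pulled back from $\Coh_{d-k}$ alone. It mixes the two K\"unneth factors of $H^*(\Quot_k)\otimes H^*(\Coh_{d-k})$; the term $[\Delta]$ above couples the support of $F'$ with that of $F''$. So ``non-zero-divisor in $H^*(\Coh_{d-k})$, then tensor with $H^*(\Quot_k)$'' does not apply.

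The paper handles the correct class by noting two things. It is pulled back from $\Quot_k\times\Coh_{d-k}$. It has weight one for the $\mathbb{G}_m$-gerbe $\Quot_k\times\Coh_{d-k}\to\Quot_k\times(\Coh_{d-k})^{\mathrm{rig}}$. Heinloth's lemma then makes its Euler class a non-zero-divisor.

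Alternatively, your monic-polynomial idea can be repaired, but only on the correct class and in the larger ring. Pull back along the injection
\[
H^*(\Quot_k\times\Coh_{d-k})\hookrightarrow H^*(\Quot_k)\otimes H^*(C)^{\otimes(d-k)}[z_{k+1},\dots,z_d],
\]
which is injective because $H^*(\Coh_{d-k})$ is the $S_{d-k}$-invariants. The $K$-theory class of $N_\iota$ becomes $\sum_b(\CS'^\vee)_b\otimes L_b$. Here $(\CS'^\vee)_b$ is the pullback of $\CS'^\vee$ via the $b$-th coordinate of $C^{d-k}$, and $z_b=c_1(L_b)$. Therefore
\[
e(N_\iota)=\prod_{b=k+1}^{d}\Bigl(z_b^N+\sum_{i=1}^{N}c_i\bigl((\CS'^\vee)_b\bigr)\,z_b^{N-i}\Bigr).
\]
The coefficients are even, positive-degree, hence nilpotent classes in the finite-dimensional $H^*(\Quot_k\times C^{d-k})$. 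So each factor is monic in $z_b$, and the product is a non-zero-divisor. With this correction, your route would give an alternative, more elementary proof.
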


\begin{proof}
    The splitting holds if $\iota_*$ is injective. We show injectivity by proving that the composition $\iota^*\iota_* = e(N_\iota)$ is injective. We use the factorization $\iota=\overline{p}\circ \widetilde \iota$ to describe the normal bundle $N_\iota$. Since $\iota$ and $\widetilde\iota$ are regular embeddings, we have 
    $$\mathbb{T}_{\widetilde{\iota}}\simeq N_{\widetilde{\iota}}\,[-1],\quad \mathbb{T}_{\iota}\simeq N_{\iota}[-1]. 
    $$
    On the other hand, since $\overline{p}$ is the pullback of $p:\Coh_{k,d-k}\rightarrow \Coh_d$ via flat morphism $\pi_d^{\geq k}:Q^{\geq k}_d\rightarrow \Coh_d$, we have 
$$\widetilde\iota^*\mathbb{T}_{\overline{p}}\simeq \pi_{k, d-k}^*\mathbb{T}_p. 
    $$
    Therefore, the exact triangle of tangent complexes induced from the factorization $\iota=\overline{p}\circ \widetilde \iota$ reads
    \begin{equation}\label{eq: exact triangle}
        \pi_{k, d-k}^*\mathbb{T}_p\rightarrow N_{\widetilde \iota}\rightarrow N_\iota\xrightarrow{[1]}. 
    \end{equation}
    Denote by $\pr_1, \pr_2$ the two projections from $Q^{=k}_d\times C$ and the universal objects by 
\begin{equation*}
    \begin{tikzcd}
            & \pr_2^*V \arrow[d, two heads, "\Phi'"] &             &               &   \\
0 \arrow[r] & \CF' \arrow[r]           & \CF \arrow[r] & \CF'' \arrow[r] & 0.
\end{tikzcd}
\end{equation*}
Since $p$ is the relative Quot scheme, its tangent complex can be described by the relative RHom complex from the universal subsheaf to the universal quotient. On the other hand, the normal bundle of $\widetilde\iota$ is the restriction of the tautological bundle $V^{[d-k]}$ by Lemma \ref{lem: tautological zero}. Therefore, the exact triangle \eqref{eq: exact triangle} can be identified with
$$\R\pr_{1*}\R\mathcal{H}om(\CF',\CF'')\rightarrow
\R\pr_{1*}\R\mathcal{H}om(\pr_2^*V,\CF'')\rightarrow N_\iota\xrightarrow{[1]}. 
$$
Writing $\CS':=\ker(\Phi':\pr_2^*V\rightarrow \CF'')$ for the kernel vector bundle, we have
$$N_\iota=\R\pr_{1*}\R\mathcal{H}om(\CS',\CF'')
$$
in $K$-theory of $Q^{=k}_d$.\footnote{We expect that the equality holds on the level of vector bundles, but we do not need this fact.} Note that the vector bundle on the right hand side is naturally a pullback from $\Quot_{k}\times \Coh_{d-k}$. Since $\overline{q}$ is a vector bundle stack morphism, it induces an isomorphism 
$$\overline{q}^*:H^*(\Quot_k\times\Coh_{d-k})\xrightarrow{\simeq}H^*(Q^{=k}_d). 
$$
Therefore, it suffices to show that the Euler class of the vector bundle $\R\pr_{1*}\R\mathcal{H}om(\CS',\CF'')$ acts injectively on the cohomology ring $H^*(\Quot_k\times \Coh_{d-k})$. Since $d-k\geq 1$, we have a $\mathbb{G}_m$-gerbe morphism
$$\Quot_k\times \Coh_{d-k}\rightarrow \Quot_k\times (\Coh_{d-k})^{\textnormal{rig}}
$$
where $(\Coh_{d-k})^{\textnormal{rig}}$ is the rigidification of the stack $\Coh_{d-k}$ by removing the constant stabilizer group $\BC^*$ at every point; see \cite{AOV} for the precise definition. Since the vector bundle $\R\pr_{1*}\R\mathcal{H}om(\CS',\CF'')$ has weight $1$ with respect to the above $\mathbb{G}_m$-gerbe morphism, its Euler class acts injectively by \cite[Lem. 5.13]{Heinloth_lecturenote}.

\end{proof}

By the above lemma, we have a sequence of surjective ring homomorphisms
$$H^*(\Coh_d)\xrightarrow{\ \simeq\ } H^*(\Tot(V^{[d]}))=H^*(Q^{\geq 0}_d)\twoheadrightarrow H^*(Q^{\geq 1}_d)\twoheadrightarrow\cdots \twoheadrightarrow H^*(Q^{\geq d}_d)=H^*(\Quot_{d}) 
$$
which in turn defines a filtration of ideals
\begin{equation}\label{eq: filtration of the ideal}
    0=I_d^{\geq 0}\subseteq I_d^{\geq 1}\subseteq\cdots\subseteq I_d^{\geq d}=I_d \subseteq H^*(\Coh_{d})
\end{equation}
where $$I_d:= \ker\Big(\pi_d^*:H^*(\Coh_d)\twoheadrightarrow H^*(\Quot_d)\Big),\quad I^{\geq k}_d:=\ker\Big(\pi^{\geq k}_d: H^*(\Coh_d)\twoheadrightarrow H^*(Q^{\geq k}_d)\Big).$$
To study the ideal $I_d$, we first describe the associated graded pieces of the filtration of $I_d$ in terms of the Hall product $\star$.

\begin{proposition}\label{prop: main proposition for basis}

For each $0\leq k<d$, the cohomological Hall algebra product satisfies
$$\BH_k\star(e(V^{[d-k]})\cdot \BH_{d-k})\subseteq I^{\geq k+1}_d,$$
and it induces an isomorphism 
$$\star: \Big(\BH_k/I_k\Big)\otimes \Big(e(V^{[d-k]})\cdot \BH_{d-k}\Big)\overset{\simeq}{\longrightarrow} I^{\geq k+1}_d/I^{\geq k}_d. 
$$
\end{proposition}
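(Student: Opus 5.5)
The plan is to read the Hall product $\BH_k\star(-)$ off the Gysin sequence of Lemma \ref{lem: Gysin splitting}, using diagram \eqref{eq: key diagram}. Throughout, identify $H^*(\Coh_d)\simeq H^*(Q^{\geq 0}_d)$ and $H^*(Q^{\geq k}_d)\simeq H^*(\Coh_d)/I^{\geq k}_d$. Lemma \ref{lem: Gysin splitting} gives an isomorphism
$$\iota_*:H^{*-2N(d-k)}(Q^{=k}_d)\xrightarrow{\simeq}\ker\big(H^*(Q^{\geq k}_d)\to H^*(Q^{\geq k+1}_d)\big)=I^{\geq k+1}_d/I^{\geq k}_d.$$
Since $\overline q^*$ is an isomorphism, the source is $H^*(\Quot_k)\otimes H^*(\Coh_{d-k})\simeq (\BH_k/I_k)\otimes\BH_{d-k}$, where I use that $\pi_k^*$ is surjective with kernel $I_k$. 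It therefore suffices to compute $\iota_*\overline q^*(x\otimes y)$ and compare it with $(\pi^{\geq k}_d)^*(x\star(e(V^{[d-k]})\,y))$.

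The key computation uses $\iota=\overline p\circ\widetilde\iota$. Lemma \ref{lem: tautological zero} shows that $\widetilde\iota$ is the zero locus of the section $\tau$ of $V^{[d-k]}$. Both $\widetilde Q^{=k}_d$ and $Q^{=k}_d$ are smooth, and $\iota$ is regular, so $\widetilde\iota$ is a regular embedding of codimension $\rk V^{[d-k]}=N(d-k)$; equivalently, $\tau$ is a regular section. Hence
$$\widetilde\iota_*\widetilde\iota^*(w)=e(V^{[d-k]})\cdot w\quad\text{for } w\in H^*(\widetilde Q^{=k}_d).$$
Next I describe $H^*(\widetilde Q^{=k}_d)$. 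The space $\widetilde Q^{=k}_d$ is an open subset of the vector bundle $\Tot$ of $\Hom(V,F)$ over $\Coh_{k,d-k}$. Its restriction map need not be injective, but the classes I need are pullbacks of classes $q^*(x\otimes y)$ along $\overline\pi^{\geq k}_d$, so it is enough to work with those. For $x\in\BH_k$ and $y\in\BH_{d-k}$, set $w=(\overline\pi^{\geq k}_d)^*q^*(x\otimes y)$. Then $\widetilde\iota^*w=\pi_{k,d-k}^*q^*(x\otimes y)=\overline q^*(\pi_k^*x\otimes y)$. Consequently
$$\iota_*\overline q^*(\pi_k^*x\otimes y)=\overline p_*\big(e(V^{[d-k]})\,w\big)=\overline p_*(\overline\pi^{\geq k}_d)^*q^*\big(x\otimes e(V^{[d-k]})y\big)=(\pi^{\geq k}_d)^*p_*q^*\big(x\otimes e(V^{[d-k]})y\big).$$
The middle step uses that $V^{[d-k]}$ on $\widetilde Q^{=k}_d$ is pulled back from $\Coh_{d-k}$ via $q$. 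The last step is proper base change for the Cartesian square in \eqref{eq: key diagram}, where $\pi^{\geq k}_d$ is flat (smooth). The right-hand side is exactly the image of $x\star(e(V^{[d-k]})y)$ in $H^*(Q^{\geq k}_d)$, with the Hall product taken in its cohomological normalization $p_!q^*$ and the degree shift matching $2N(d-k)$ up to the $\chi$-shift.

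Both conclusions now follow. The class $\iota_*(\dots)$ lies in the kernel of restriction to $Q^{\geq k+1}_d$, so $\BH_k\star(e(V^{[d-k]})\BH_{d-k})\subseteq I^{\geq k+1}_d$. The formula also shows that the induced map
$$(\BH_k/I_k)\otimes\BH_{d-k}\to I^{\geq k+1}_d/I^{\geq k}_d,\qquad x\otimes y\mapsto x\star(e(V^{[d-k]})y),$$
equals $\iota_*\circ\overline q^*$, which is an isomorphism. It remains to replace $\BH_{d-k}$ by $e(V^{[d-k]})\cdot\BH_{d-k}$. For this I use that multiplication by $e(V^{[d-k]})$ is injective on $\BH_{d-k}$; this is the weight-one gerbe argument of \cite[Lem. 5.13]{Heinloth_lecturenote}, already used in Lemma \ref{lem: Gysin splitting}. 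Finally, the map is well defined on $\BH_k/I_k$ because $I_k$ is killed by $\pi_k^*$.

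I expect the main obstacle to be justifying $\widetilde\iota_*\widetilde\iota^*=e(V^{[d-k]})\cdot$ on stacks. This requires checking that the zero scheme of $\tau$ is exactly $Q^{=k}_d$ with no excess dimension, which holds because the dimensions match: $\dim\widetilde Q^{=k}_d-\dim Q^{=k}_d=N(d-k)$. It also requires tracking the signs and degree shifts in the Hall product conventions.
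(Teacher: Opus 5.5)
Your proposal is correct and follows the paper's proof essentially step for step. The paper proves the same identity $\iota_*\circ\overline{q}^*\circ(\pi_k\times\id)^*=(\pi^{\geq k}_d)^*\circ\star\circ e(V^{[d-k]})$, using $\widetilde\iota_*\widetilde\iota^*=e(V^{[d-k]})\cdot$ and base change along the Cartesian square. It then reads off both claims from four ingredients you also use: the Gysin splitting, surjectivity of $\pi_k^*$, $\overline{q}^*$ being an isomorphism, and injectivity of multiplication by $e(V^{[d-k]})$. Your dimension count showing that $\tau$ has no excess zero locus is a detail the paper leaves implicit. Note that you still need Lemma \ref{lem: tautological zero}'s scheme-theoretic identification of the zero locus with $Q^{=k}_d$.
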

\begin{proof}
We use the commutative diagram below.  
\begin{equation*}
\begin{tikzcd}
                                                        &  & Q^{=k}_d \arrow[rd, "\widetilde \iota"'] \arrow[rrd, "\iota"] \arrow[dd, "{\pi_{k, d-k}}"'] \arrow[lld, "\overline{q}"'] &                                                                                        &                                           \\
\Quot_k\times \Coh_{d-k} \arrow[dd, "\pi_k\times \id"'] &  &                                                                                                                          & \widetilde Q^{=k}_d \arrow[r, "\overline{p}"'] \arrow[ld, "\overline{\pi}_d^{\geq k}"] & Q^{\geq k}_d \arrow[dd, "\pi_d^{\geq k}"] \\
                                                        &  & {\Coh_{k, d-k}} \arrow[rrd, "p"] \arrow[lld, "q"']                                                                       &                                                                                        &                                           \\
\Coh_k\times \Coh_{d-k}                                 &  &                                                                                                                          &                                                                                        & \Coh_d                                   
\end{tikzcd}
\end{equation*}
Recall that the square at the right bottom corner is Cartesian and $\widetilde{\iota}$ is identified with a zero locus of a tautological section of $V^{[d-k]}$ pulled back from $\Coh_{d-k}$. Therefore, we have
\begin{align*}
    \iota_*\circ \overline{q}^*\circ (\pi_k\times \id)^*
    &=\overline{p}_*\circ \widetilde{\iota}_*\circ \widetilde{\iota}^*\circ (\overline{\pi}^{\geq k}_d)^*\circ q^*\\
    &=\overline{p}_*\circ e((\overline{\pi}_d^{\geq k})^*q^*V^{[d-k]})\circ (\overline{\pi}^{\geq k}_d)^*\circ q^*\\
    &=\overline{p}_*\circ (\overline{\pi}^{\geq k}_d)^*\circ q^*\circ e(V^{[d-k]})\\
    &=(\pi_d^{\geq k})^*\circ p_*\circ q^*\circ e(V^{[d-k]})\\
    &=(\pi_d^{\geq k})^*\circ \star \circ e(V^{[d-k]}). 
\end{align*}
In other words, we have a commutative diagram 
\begin{equation}\label{eq: key comm diagram}
    \begin{tikzcd}
H^{*-2N(d-k)}(\Quot_k\times \Coh_{d-k}) \arrow[r, "\simeq"', "\overline{q}^*"]                                     & H^{*-2N(d-k)}(Q^{=k}_d) \arrow[r, hook, "\iota_*"]                  & H^*(Q_d^{\geq k})                           \\
H^{*-2N(d-k)}(\Coh_k\times\Coh_{d-k}) \arrow[u, "(\pi_k\times\id)^*", two heads] \arrow[r, hook, "e(V^{[d-k]})"] & H^*(\Coh_k\times\Coh_{d-k}) \arrow[r, "\star"] & H^*(\Coh_d). \arrow[u, two heads, "(\pi_d^{\geq k})^*"]
\end{tikzcd}
\end{equation}
Note that surjectivity of the vertical arrows and injectivity of $\iota_*$ and $e(V^{[d-k]})$ follow from Lemma \ref{lem: Gysin splitting}. By the definition of the ideal $I_d^{\geq k}$, the Gysin short exact sequence from Lemma \ref{lem: Gysin splitting} can be rewritten as 
$$0\rightarrow H^*(Q^{=k}_d)\xrightarrow{\iota_*} \BH_d/I_d^{\geq k}\rightarrow \BH_d/I_d^{\geq k+1}\rightarrow 0. 
$$
In other words, the image of the map from the bottom-left corner to the upper-right corner of the diagram \eqref{eq: key comm diagram} identifies with $I^{\geq k+1}_d/I^{\geq k}_d$. The proposition then follows by using the lower part of the diagram. 

\end{proof}

\begin{theorem} \label{thm:quotschemekernel}
    Let $C$ be a smooth quasi-projective curve and $V$ be a vector bundle on $C$. Then $\BV$ is a cyclic $\BH$-module such that the kernel
$$I:=\ker\Big(\BH\twoheadrightarrow\BV,\quad \alpha\mapsto \alpha|0\rangle\Big)
    $$
    is equal to 
    $$\sum_{d_1\geq 0,\ d_2>0} \BH_{d_1}\star(e(V^{[d_2]})\cdot\BH_{d_2}). 
    $$
\end{theorem}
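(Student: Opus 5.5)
We would first identify the map $\BH\to\BV$, $\alpha\mapsto\alpha|0\rangle$, with the restriction map $\pi_d^*:H^*(\Coh_d)\to H^*(\Quot_d)$ in each degree $d$. This is the torsion version of the computation in Proposition \ref{prop: cyclic by vacuum}: with $\beta=(0,0)$ the correspondence reduces to $f=\pi_d$, $g=\id$. Here $\Quot_d$ is smooth and $[\Quot_d]^\vir=[\Quot_d]$, so after Poincaré duality the action on the vacuum is exactly $\pi_d^*$. Cyclicity is then surjectivity of $\pi_d^*$, which is the last step of the chain of surjections $H^*(\Coh_d)\twoheadrightarrow H^*(Q^{\geq 1}_d)\twoheadrightarrow\cdots\twoheadrightarrow H^*(\Quot_d)$ obtained from Lemma \ref{lem: Gysin splitting}. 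This also identifies $I\cap\BH_d=I_d=I^{\geq d}_d$, so it remains to show
$$I_d=\sum_{0\le k<d}\BH_k\star\big(e(V^{[d-k]})\cdot\BH_{d-k}\big)$$
for each $d$. This is exactly the degree-$d$ part of the claimed sum, with $d_1=k$ and $d_2=d-k>0$.

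For the inclusion $\supseteq$, we use Proposition \ref{prop: main proposition for basis}, which gives $\BH_k\star(e(V^{[d-k]})\cdot\BH_{d-k})\subseteq I^{\geq k+1}_d\subseteq I^{\geq d}_d=I_d$ for every $0\le k<d$. The inclusions $I^{\geq k+1}_d\subseteq I_d$ hold by the monotonicity of the filtration \eqref{eq: filtration of the ideal}.

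For $\subseteq$, we argue by induction along the filtration $0=I^{\geq0}_d\subseteq\cdots\subseteq I^{\geq d}_d$. Let $J_d^{(m)}:=\sum_{k<m}\BH_k\star(e(V^{[d-k]})\cdot\BH_{d-k})$; we claim $I^{\geq m}_d\subseteq J^{(m)}_d$. The case $m=0$ is trivial. For the inductive step, take $x\in I^{\geq k+1}_d$. The isomorphism in Proposition \ref{prop: main proposition for basis} says that the induced map from $(\BH_k/I_k)\otimes(e(V^{[d-k]})\cdot\BH_{d-k})$ is surjective onto $I^{\geq k+1}_d/I^{\geq k}_d$. Hence there is an element $y$ of $\BH_k\otimes e(V^{[d-k]})\BH_{d-k}$ (choosing lifts from $\BH_k/I_k$) with $x-\star(y)\in I^{\geq k}_d\subseteq J^{(k)}_d$. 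Since $\star(y)$ lies in the $k$-th summand, $x\in J^{(k+1)}_d$. Taking $m=d$ gives $I_d\subseteq J^{(d)}_d$, which is the claimed sum. Only surjectivity is needed here; injectivity of the isomorphism is not used.

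The substantive inputs, namely the Gysin splitting and the commutative diagram \eqref{eq: key comm diagram}, are already in place. The main point needing care is the first step: checking that the module action on $|0\rangle$ really agrees with $\pi_d^*$ under the Poincaré duality identifications, including the homological shifts, and that $I_k$ in Proposition \ref{prop: main proposition for basis} is the same kernel as $I\cap\BH_k$. We would also double-check that the Hall product $\star$ on cohomology used in that proposition, $p_*q^*$, matches the Borel--Moore definition on $\BH$. The paper asserts this compatibility in Section \ref{sec: annihilation}, and we would simply invoke it. The remaining work is index bookkeeping.
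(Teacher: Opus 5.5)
Your proposal is correct and is essentially the paper's proof. Cyclicity comes from surjectivity of $\pi_d^*$, and Proposition \ref{prop: main proposition for basis} gives $I^{\geq k+1}_d=I^{\geq k}_d+\BH_k\star(e(V^{[d-k]})\cdot\BH_{d-k})$, which telescopes along the filtration \eqref{eq: filtration of the ideal} to the stated sum for $I_d$. Your split into two inclusions, using only the containment and the surjectivity part of that proposition, is the same induction written out.
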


\begin{proof}
    The cyclicity follows from the fact that $\pi_d^*:\BH_d=H^*(\Coh_d)\twoheadrightarrow \BV_d=H^*(\Quot_d)$ is surjective. Recall the filtration \eqref{eq: filtration of the ideal} of the ideal $I_d$. By Proposition \ref{prop: main proposition for basis}, we have 
    $$I_d^{\geq k+1}
        =I_d^{\geq k}+\Big(\BH_k\star(e(V^{[d-k]})\cdot\BH_{d-k})\Big)
    $$
    for any $0\leq k<d$. By applying this identity inductively, we obtain
    \begin{align*}
        I_d
        &=I_d^{\geq d-1}+\Big(\BH_{d-1}\star(e(V^{[1]})\cdot\BH_{1})\Big)\\
        &=I_{d}^{\geq d-2}+\Big(\BH_{d-2}\star(e(V^{[2]})\cdot\BH_2)\Big)+\Big(\BH_{d-1}\star(e(V^{[1]})\cdot\BH_{1})\Big)\\
        &\hspace{7
        pt}\vdots\\
        &=\sum_{s=1}^{d} \BH_{d-s}\star(e(V^{[s]})\cdot\BH_{s}).
    \end{align*}

\end{proof}

\subsection{Ring presentation and basis}

Since $\BH_d\simeq \big(H^*(C)^{\otimes d}[z_1,\dots,z_d]\big)^{S_d}$ is an explicit algebra and $I_d\subseteq \BH_d$ admits a formula given in terms of the CoHA product by Theorem \ref{thm:quotschemekernel}, we get a presentation of the cohomology ring of $\Quot_d$ by
$$H^*(\Quot_d)\simeq \BH_d/I_d. 
$$
We illustrate how to work with this presentation in the following two examples of $d=1$ and $d=2$.

\begin{example} \label{ex: d=1 ring}
When $d=1$, the Quot scheme $\Quot_{d}(V)$ is isomorphic to a projective bundle $\mathbb{P}(V)$. Then Theorem \ref{thm:quotschemekernel} implies that we have an isomorphism of rings 
\[ H^{*}(\mathbb{P}(V)) \simeq \mathbb{H}_1/I_1\] 
with 
\[  I_1= \mathbb{H}_0 \star \big(e(V^{[1]}) \cdot \mathbb{H}_1\big) = e(V^{[1]})\cdot \mathbb{H}_1.\] 
We may identify $\mathbb{H}_1 \simeq H^*(C)[z]$ where $z=c_1(\mathcal{L})$ and $\mathcal{L}$ is the universal line bundle on $\mathrm{B}\mathbb{G}_m$. But $e(V^{[1]}) = e(V^{*} \otimes \mathcal{L}) = \prod_{i=1}^{N}(z-v_i)$, where $v_1,\cdots, v_N$ are the Chern roots of $V$.  Thus \[ H^{*}(\mathbb{P}(V)) \simeq H^*(C)[z]/\prod_{i=1}^N (z-v_i) \] recovering the well known projective bundle formula. 
\end{example}

\begin{example} \label{ex: d=2 ring}

When $d=2$, we have isomorphism of rings \[ H^*(\mathrm{Quot}_2(V)) \simeq \BH_2/I_2\] with $\mathbb{H}_2 \simeq (H^*(C)^{\otimes 2}[z_1,z_2])^{S_2}$ and $I_2= e(V^{[2]}) \cdot \mathbb{H}_2+ \mathbb{H}_1 \star (e(V^{[1]}) \cdot \BH_1). $ We describe the quotient more explicitly. Let $P(z) = e(V^{[1]}) = \prod_{i=1}^N (z-v_i)$. Then \[ e(V^{[2]})=P(z_1)P(z_2).\]
For $\alpha,\beta \in H^*(C)$ and integers $n_1,n_2 \geq 0$, define \[ A^{\alpha,\beta}_{n_1,n_2} := \alpha z^{n_1} \star P(z) \beta z^{n_2}. \] Then by definition of shuffle product
\[ A^{\alpha,\beta}_{n_1,n_2}= (\alpha \otimes \beta) z_1^{n_1} P(z_2)z_2^{n_2}+(-1)^{|\alpha||\beta|}(\beta\otimes\alpha)z_2^{n_1} P(z_1)z_1^{n_2}+\Delta_*(\alpha\cup\beta)\left( \frac{z_1^{n_1}z_2^{n_2}P(z_2)-z_1^{n_2}z_2^{n_1}P(z_1)}{z_2-z_1}\right). \] The kernel $I_2$ is generated by $P(z_1)P(z_2)$ and $A^{\alpha,\beta}_{n_1,n_2}$. Since \[ P(z) \star P(z) = 2P(z_1)P(z_2),\] $P(z_1)P(z_2)$ is contained in the ideal generated by $A^{\alpha,\beta}_{n_1,n_2}$. Note that\[ (z_1+z_2) A^{\alpha,\beta}_{n_1,n_2} =  A^{\alpha,\beta}_{n_1+1,n_2} +  A^{\alpha,\beta}_{n_1,n_2+1}\] and \[ (z_1z_2) A^{\alpha,\beta}_{n_1,n_2}= A^{\alpha,\beta}_{n_1+1,n_2+1}.\] So $A^{\alpha,\beta}_{0,0}$ and $A^{\alpha,\beta}_{0,1}$ generate the ideal $I_2$. Thus we have an isomorphism of rings \[ H^*(\mathrm{Quot}_2(V)) \simeq \big(H^*(C)^{\otimes 2}[z_1,z_2]\big)^{S_2}/\langle A^{\alpha,\beta}_{0,0}, A^{\alpha,\beta}_{0,1} \mid \alpha,\beta \in \CB \rangle\]
where $\CB$ is a basis of $H^*(C)$. 
\end{example}

We now use Theorem \ref{thm:quotschemekernel} to give a natural basis of the cohomology of punctual Quot schemes and thereby calculate the Poincar\'e polynomial of Quot schemes for quasi-projective curves. For this application, we need another input from cohomological Hall algebras. Note that each $\mathbb{H}_d$ is equipped with another product structure, given by the cup product in the cohomology ring. We then have the following compatibility between cup product and CoHA algebra structure:

\begin{proposition} \label{CoHAcupcompatibility}
    Let $d=d_1+d_2$ and consider the direct sum morphism
\[ \oplus_{d_1,d_2}: \Coh_{d_1} \times \Coh_{d_2} \rightarrow \Coh_{d}. \]
    If $x\in \BH_d,\ y\in \BH_{d_1},\ z\in \BH_{d_2}$, then we have 
    $$x\cup(y\star z)=\sum (-1)^{|x_{d_2}||y|}(x_{d_1}\cup y)\star (x_{d_2}\cup z),
    $$
    where $\oplus_{d_1,d_2}^*(x)=\sum x_{d_1}\otimes x_{d_2}\in \BH_{d_1}\otimes \BH_{d_2}$ using the Sweedler notation.

\end{proposition}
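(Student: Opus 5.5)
The proof is a projection formula argument on the correspondence $\Coh_d\xleftarrow{p}\Coh_{d_1,d_2}\xrightarrow{q}\Coh_{d_1}\times\Coh_{d_2}$. Since $y\star z=p_*q^*(y\otimes z)$ in cohomology (with $p$ proper and quasi-smooth), the projection formula gives
$$x\cup(y\star z)=p_*\big(p^*x\cup q^*(y\otimes z)\big).$$
So the whole problem is to rewrite $p^*x$ as a class pulled back along $q$. There is a section $s:\Coh_{d_1}\times\Coh_{d_2}\to\Coh_{d_1,d_2}$, sending $(F_1,F_2)$ to the split sequence $0\to F_1\to F_1\oplus F_2\to F_2\to 0$. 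It satisfies $p\circ s=\oplus_{d_1,d_2}$ and $q\circ s=\id$.

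The key claim is $p^*x=q^*\oplus_{d_1,d_2}^*x$. By Heinloth, $H^*(\Coh_d)$ is generated by Künneth components of Chern classes of the universal sheaf $\CF_d$. On $\Coh_{d_1,d_2}\times C$ we have the universal sequence $0\to\CF_{1}\to p^*\CF_d\to\CF_2\to 0$. By the Whitney formula, $c(p^*\CF_d)=c(\CF_1)c(\CF_2)=q^*c(\CF_1\boxplus\CF_2)$. The sheaf $\CF_1\boxplus\CF_2$ is exactly $\oplus^*\CF_d$, so $p^*$ and $q^*\oplus^*$ agree on generators. Both are ring homomorphisms, so they agree everywhere. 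Alternatively, $q^*$ is an isomorphism (vector bundle stack), and composing with $s^*$ gives $s^*p^*=\oplus^*$, which proves the claim directly without generators. I would use this cleaner argument.

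Then
$$p^*x\cup q^*(y\otimes z)=q^*\Big(\sum x_{d_1}\otimes x_{d_2}\Big)\cup q^*(y\otimes z)=q^*\Big(\sum \pm\,(x_{d_1}\cup y)\otimes(x_{d_2}\cup z)\Big).$$
The sign is the Koszul sign from moving $x_{d_2}$ past $y$ in the tensor product of graded rings, namely $(-1)^{|x_{d_2}||y|}$. Applying $p_*$ yields the formula.

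The main point needing care is that $p_*$ here is the virtual Umkehr map and that the projection formula holds for it. This is standard and is listed in the Appendix. The remaining checks are sign bookkeeping, and ensuring that $\oplus^*$ matches the Sweedler decomposition convention, which follows under the Künneth isomorphism $H^*(\Coh_{d_1}\times\Coh_{d_2})=\BH_{d_1}\otimes\BH_{d_2}$.
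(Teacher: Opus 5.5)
Your proposal is correct and follows the paper's proof. The paper also uses the direct sum section $s$ with $q\circ s=\id$ and the fact that $q^*$ is an isomorphism to get $s^*=(q^*)^{-1}$, hence $q^*\oplus_{d_1,d_2}^*(x)=q^*s^*p^*(x)=p^*(x)$, and then concludes by the projection formula for $p$, with the sign $(-1)^{|x_{d_2}||y|}$ coming from the Koszul rule in $\BH_{d_1}\otimes\BH_{d_2}$.
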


\begin{proof}
Recall the CoHA diagram
\begin{center}
    \begin{tikzcd}
	& {\Coh_{d_1,d_2}} \\
	{\Coh_{d_1} \times \Coh_{d_2}} && {\Coh_{d},}
	\arrow["q"{description}, from=1-2, to=2-1]
	\arrow["p", from=1-2, to=2-3]
	\arrow["s", shift left=3, dotted, from=2-1, to=1-2]
	\arrow["{\oplus_{d_1,d_2}}"', dotted, from=2-1, to=2-3]
\end{tikzcd}
\end{center}
where $s$ is the direct sum section and we have $p\circ s=\oplus_{d_1,d_2}$. Since $q\circ s=\mathrm{id}$, we have $s^{*}q^{*}=\id$. Since $q^{*}$ is an isomorphism, $s^{*} = (q^{*})^{-1}$. The proposition follows from a direct computation
\begin{align*}
   p_{*}q^{*}(\oplus^{*}(x) \cup (y \otimes z)) &=  p_{*}(q^{*}s^{*}p^{*}(x) \cup q^{*}(y \otimes z)) \\ 
   &= p_{*}(p^{*}(x) \cup q^{*}(y \otimes z)) \\ 
   &= x \cup (y \star z).
\end{align*}
\end{proof}

\begin{theorem}\label{thm: Nakajima type basis} 
Fix an ordered basis $\alpha_1 < \alpha_2 <  \cdots< \alpha_m $ of $H^{*}(C)$. Then $\mathbb{V}$ has a basis given by monomials \[ e^{\alpha_{k_1}}_{i_1} \star e^{\alpha_{k_2}}_{i_2} \star \cdots \star e^{\alpha_{k_d}}_{i_d} \vac \] satisfying 
\[ d\geq 0,\quad 0\leq i_1 \leq i_2\leq \cdots \leq i_d < N\] and, whenever $i_j=i_{j+1}$ then one has $\alpha_{k_j} \leq \alpha_{k_{j+1}}$ with strict inequality when both classes are odd. 
\end{theorem}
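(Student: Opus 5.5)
We use the filtration \eqref{eq: filtration of the ideal} together with Proposition \ref{prop: main proposition for basis}, and induct on $d$ to compute a basis of $\BV_d=\BH_d/I_d$.

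\emph{Step 1: reduce to a dimension count.} Proposition \ref{prop: main proposition for basis} gives isomorphisms $I^{\geq k+1}_d/I^{\geq k}_d\simeq (\BH_k/I_k)\otimes(e(V^{[d-k]})\cdot\BH_{d-k})$. Multiplication by $e(V^{[d-k]})$ is injective, so in each bidegree
$$\dim \BV_d=\dim\BH_d-\sum_{k=0}^{d-1}\dim \BV_k\cdot \dim \BH_{d-k}[\textnormal{shift by }2N(d-k)].$$
As generating series in $t$ (degree $d$) and $q$ (cohomological degree) this reads
$$\sum_d P(\BV_d)t^d=\frac{\sum_d P(\BH_d)t^d}{1+\sum_{s\geq1}P(\BH_s)q^{2Ns}t^s}.$$
We have $\BH\simeq \Sym(H^*(C)[z])$ as graded vector spaces, by Theorem \ref{thm: CoHA equals shuffle} and Lemma \ref{lem: isomorphismofshufflealgebraandcoalgebra}. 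Hence $\sum_d P(\BH_d)t^d$ is the super-symmetric algebra series $\prod_{i\geq0}Z(q^{2i}t)$, where $Z$ is the symmetric product series of $H^*(C)$. The same argument shows that the denominator $\sum_{s\ge0} P(\BH_s)q^{2Ns}t^s$ equals $\prod_{i\geq 0}Z(q^{2N+2i}t)$. The quotient is therefore $\prod_{i=0}^{N-1}Z(q^{2i}t)$. This is exactly the generating series of the proposed monomials: weakly increasing $i_j<N$, and for fixed $i$ a super-symmetric monomial in the ordered basis $\alpha$, with strictness for odd classes. It remains to show that the monomials span $\BV$.

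\emph{Step 2: spanning.} The main tool is the associated graded $\Gr_F\BH\simeq\Sh_*(H^*(C)[z])$ of Proposition \ref{prop: associatedgraded}. By spherical generation and the commutation relations of Theorem \ref{relations:torsioncoha}, every element of $\BH$ is a combination of ordered monomials $e^{\alpha_{k_1}}_{i_1}\star\cdots\star e^{\alpha_{k_d}}_{i_d}$ with $i_1\leq\cdots\leq i_d$ and the stated ordering. Indeed, by the PBW-type argument in the proof of Theorem \ref{relations:torsioncoha}, commutators lower the filtration degree, so reordering terminates by induction on $z$-degree.

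It then suffices to show that an ordered monomial with $i_d\geq N$ lies in $I$ modulo lower-filtration terms and ordered monomials with smaller top index. Since $e(V^{[1]})=P(z)=\prod(z-v_i)$ is monic of degree $N$ in $z$, we have
$$e^{\alpha}_{i}=\alpha z^{i-N}P(z)+(\textnormal{terms } \beta z^{j},\ j<i).$$
By Theorem \ref{thm:quotschemekernel}, $(\textnormal{anything})\star(\alpha z^{i-N}P(z))|0\rangle=0$. Thus the last factor $e^{\alpha_{k_d}}_{i_d}$ with $i_d\ge N$ may be replaced by terms of strictly smaller $z$-degree. We then reorder again and induct on the pair (total $z$-degree, multiset of indices). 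The rightmost placement matters here, and it is the reason the order $i_1\leq\cdots\leq i_d$ puts the largest index last.

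\emph{Main obstacle.} The delicate point is making the induction in Step 2 well-founded. The substitution lowers the $z$-degree, but reordering introduces commutator terms $e\star^{\alpha\cup\beta}e$, which are products of two generators with lower total $z$-degree, and these can again carry an index $\geq N$. We will handle this by ordering first on total filtration degree $F$: every operation either strictly lowers it or preserves it while strictly decreasing the number of indices $\geq N$. Spanning together with the matching dimension count from Step 1 then gives linear independence.
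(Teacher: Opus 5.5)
Your proof is correct, but it takes a different route from the paper's.

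\textbf{The paper's route.} The paper does no dimension counting. It trades the PBW basis $\CB$ for a second PBW basis $\widetilde\CB$, in which each $e^{\alpha}_i$ with $i\geq N$ is replaced by $e(V^{[1]})\cup e^{\alpha}_{i-N}$. It uses Proposition~\ref{CoHAcupcompatibility} to make $x\mapsto e(V^{[d]})\cup x$ an algebra endomorphism $\Phi$ of $\BH$, which gives $\star\colon\textnormal{span}(\CB_N)\otimes\Phi(\BH)\simeq\BH$. Combined with the full kernel description $I=\BH\star\Phi(\BH_+)$ from Theorem~\ref{thm:quotschemekernel}, this shows directly that $\textnormal{span}(\CB_N)$ is a complement of $I$. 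Corollary~\ref{cor: Poincare} is deduced afterwards.

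\textbf{Your route.} You reverse the order and extract the Poincar\'e series first from the graded pieces in Proposition~\ref{prop: main proposition for basis}. This is legitimate: the bigraded pieces are finite-dimensional, the torsion Hall product preserves cohomological degree, and cup product with $e(V^{[d-k]})$ is injective of degree $2N(d-k)$. For spanning you then need only the easy inclusion $\BH_{d-1}\star(e(V^{[1]})\cdot\BH_1)\subseteq I_d$.

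\textbf{What each buys.} Your approach bypasses Proposition~\ref{CoHAcupcompatibility} and the change of basis. Together with the dimension count it also shows in passing that $I=\BH\star(e(V^{[1]})\cdot\BH_1)$. The paper's approach avoids generating-function bookkeeping and gives an explicit splitting $I\simeq\textnormal{span}(\CB_N)\otimes\Phi(\BH_+)$.

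\textbf{A simplification.} The obstacle you flag in Step 2 is not really there. Ordered monomials of total $z$-degree $n$ map to a basis of $\Gr^n_F\BH$, so $F^{n-1}\BH$ is spanned by ordered monomials of degree at most $n-1$. Your substitution sends a degree-$n$ ordered monomial with $i_d\geq N$ into $F^{n-1}\BH$ modulo $I$; indeed, on a curve $e^{\alpha}_i-\alpha z^{i-N}P(z)=(\alpha\cup c_1(V))z^{i-1}$. Plain induction on $n$ for the statement $F^n\BH\subseteq\textnormal{span}(\textnormal{good monomials})+I$ therefore closes the argument. Your secondary measure is unnecessary. As phrased it also does not work, because reordering keeps the multiset of indices in the leading term and does not reduce the number of indices $\geq N$.
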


\begin{proof}

By Theorem \ref{thm: CoHA equals shuffle}, $\BH$ admits a PBW type basis 
\[
\CB:=\left\{
e^{\alpha_{k_1}}_{i_1}\star\cdots\star e^{\alpha_{k_d}}_{i_d}\,\big|\, 0\leq i_1\leq\cdots\leq i_d\ \textnormal{and ($\dagger$)} 
\right\}
\]
where 
\begin{center}
    ($\dagger$)\quad if $i_j=i_{j+1}$ then $\alpha_{k_j}\leq \alpha_{k_{j+1}}$ with strict inequality when both $\alpha_{k_j}$, $\alpha_{k_{j+1}}$ are odd. 
\end{center}
We define another PBW type basis 
\[
\widetilde\CB:=\left\{
\tilde e^{\alpha_{k_1}}_{i_1}\star\cdots \star \tilde e^{\alpha_{k_d}}_{i_d}\,\big|\, 0\leq i_1\leq\cdots\leq i_d\ \textnormal{and ($\dagger$)} 
\right\}
\]
where
$$\tilde e_i^{\alpha_k}:=
\begin{cases}
    e_i^{\alpha_k}&\textnormal{if}\quad i<N,\\
    e(V^{[1]})\cup e_{i-N}^{\alpha_k}&\textnormal{if}\quad i\geq N. 
\end{cases}
$$
Since
\[ \tilde{e}^{\alpha_{k_1}}_{i_1} \star \tilde{e}^{\alpha_{k_2}}_{i_2} \star \cdots \star \tilde{e}^{\alpha_{k_d}}_{i_d}  = e^{\alpha_{k_1}}_{i_1} \star e^{\alpha_{k_2}}_{i_2} \star \cdots \star e^{\alpha_{k_d}}_{i_d} +\textrm{(element of smaller filtered degree)},\] 
the transition matrix between $\CB$ and $\widetilde\CB$ is invertible, hence $\widetilde{\mathcal{B}}$ indeed forms a basis of $\BH$. 

By a simple application of Proposition \ref{CoHAcupcompatibility}, the morphism 
$$\Phi:\BH\rightarrow \BH,\quad x=\sum_{d=0}^\infty x_d \mapsto \sum_{d=0}^\infty e(V^{[d]})\cup x_d
$$
is an algebra homomorphism. For every element $\tilde e^{\alpha_{k_1}}_{i_1}\star\cdots \star \tilde e^{\alpha_{k_d}}_{i_d}\in \widetilde\CB$, there exists a unique $0\leq \ell\leq d$ such that
$$i_1\leq \cdots \leq i_\ell<N\leq i_{\ell+1}\leq \cdots \leq i_d
$$
and so 
\begin{align}\label{eq: new PBW}
    \tilde e^{\alpha_{k_1}}_{i_1}\star\cdots \star \tilde e^{\alpha_{k_d}}_{i_d}
    &=e^{\alpha_{k_1}}_{i_1}\star\cdots \star e^{\alpha_{k_\ell}}_{i_\ell}\star \Phi(e^{\alpha_{k_{\ell+1}}}_{i_{\ell+1}-N})\star \cdots \star 
    \Phi(e^{\alpha_{k_d}}_{i_{d}-N}) \notag\\
    &=e^{\alpha_{k_1}}_{i_1}\star\cdots \star e^{\alpha_{k_\ell}}_{i_\ell}\star \Phi(e^{\alpha_{k_{\ell+1}}}_{i_{\ell+1}-N}\star \cdots \star 
    e^{\alpha_{k_d}}_{i_{d}-N}).
\end{align}
Define a subset 
\[
\CB_N:=\left\{
\tilde e^{\alpha_{k_1}}_{i_1}\star\cdots \star \tilde e^{\alpha_{k_d}}_{i_d}\,\big|\, 0\leq i_1\leq\cdots\leq i_d<N\ \textnormal{and ($\dagger$)}
\right\} \subseteq \widetilde \CB.
\]
Since every element in the basis $\widetilde{\CB}$ can be written uniquely in the form of \eqref{eq: new PBW}, the CoHA multiplication induces an isomorphism 
\begin{equation}\label{eq: key isomorphism}
    \star:\textnormal{span}(\CB_N)\otimes \Phi(\BH)\xrightarrow{\simeq}\BH. 
\end{equation}
Consider a splitting 
$$\BH=\BQ\cdot 1\oplus \BH_+,\quad \BH_+:=\bigoplus_{d=1}^\infty \BH_d. 
$$
Using this notation, Theorem \ref{thm:quotschemekernel} says $I=\BH\star \Phi(\BH_+)$. Therefore, the isomorphism \eqref{eq: key isomorphism} restricts to an injective map
$$\star:\textnormal{span}(\CB_N)\otimes \Phi(\BH_+)\rightarrow I. 
$$
This map is also surjective because every element $x\star \Phi(y)\in \BH\star \Phi(\BH_+)=I$ can be written as 
$$
    x\star \Phi(y)
    =\sum x_1\star \Phi(x_2)\star \Phi(y)
    =\sum x_1\star \Phi(x_2\star y)\in \textnormal{span}(\CB_N)\star \Phi(\BH_+)
$$
where $x=\sum x_1\otimes \Phi(x_2)$ with $x_1\in \textnormal{span}(\CB_N)$ and $x_2\in \BH$ using Sweedler's notation according to the isomorphism \eqref{eq: key isomorphism}. Therefore, the isomorphism \eqref{eq: key isomorphism} induces a splitting 
$$\textnormal{span}(\CB_N)\otimes\big(\BQ\cdot 1\oplus \BH_+)\xrightarrow{\simeq} \textnormal{span}(\CB_N)\oplus I=\BH. 
$$
Since $\textnormal{span}(\CB_N)$ gives a splitting of the kernel of the surjective map $\BH\twoheadrightarrow \BV$, this completes the proof. 

\end{proof}

\begin{corollary}\label{cor: Poincare}
For any quasi-projective curve $C$ and a vector bundle $V$ of rank $N$, the generating series of the Poincar\'e polynomial of punctual Quot schemes
\[ P_{\mathrm{Quot}(V)}(t,z) := \sum_{d=0}^{\infty}\sum_{k=0}^{\infty} \dim(H^{k}(\Quot_{d}(V)))\,z^kt^d \] is given by 
\[ \prod_{i=0}^{N-1} \frac{(1+tz^{2i+1})^{b_1(C)}}{(1-tz^{2i})^{b_0(C)}(1-tz^{2i+2})^{b_2(C)}}. \] 
\end{corollary}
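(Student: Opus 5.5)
The plan is to count the basis of Theorem~\ref{thm: Nakajima type basis} with the right weights. By that theorem, $\BV_d=H^*(\Quot_d(V))$ has a basis of monomials $e^{\alpha_{k_1}}_{i_1}\star\cdots\star e^{\alpha_{k_d}}_{i_d}\vac$ with $0\leq i_1\leq\cdots\leq i_d<N$ and the condition ($\dagger$). This index set is exactly the set of monomials of degree $d$ in the free supercommutative algebra $\Sym^*(W)$, where
$$W:=\bigoplus_{i=0}^{N-1} H^*(C)\, z^i \subseteq \BH_1 .$$
The ordering condition with strict inequality for odd classes is precisely the standard PBW normal form for a supercommutative polynomial algebra: even generators may repeat and odd generators may not. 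So the generating series will be the graded character of $\Sym^*(W)$, with $t$ recording the number of factors.

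The step that needs care is the cohomological degree of each basis vector. I would work with the cohomology description $\BV_d=H^*(\Quot_d)$ and the surjection $\pi_d^*:\BH_d=H^*(\Coh_d)\twoheadrightarrow H^*(\Quot_d)$ used in Theorem~\ref{thm:quotschemekernel}. This surjection is degree preserving, and $\vac$ is $1\in H^0(\Quot_0)$. For the torsion CoHA on cohomology, $\star=p_*q^*$ shifts degree by $-2\chi$ of the torsion pair, and $\chi(\text{torsion},\text{torsion})=0$, so the product preserves cohomological degree. Since $H^*(\Coh_1)=H^*(C)[z]$ with $\deg z=2$, each generator $e^{\alpha}_i$ has degree $|\alpha|+2i$. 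It follows that the monomial above lies in $H^{\sum_j(|\alpha_{k_j}|+2i_j)}(\Quot_d)$. The check I expect to be the only real obstacle is confirming that this degree bookkeeping is consistent with the Heinloth identification \eqref{eq: heinloth iso} under which $\BH\simeq\Sh^\Delta_*$ is graded (Theorem~\ref{thm: CoHA equals shuffle} asserts doubly graded). I would simply invoke that grading statement.

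The counting is then routine. Choose the ordered basis of $H^*(C)$ to consist of $b_0$ classes of degree $0$, $b_1$ classes of degree $1$ and $b_2$ classes of degree $2$. For each $0\leq i<N$, the generators $\alpha z^i$ contribute as follows:
\begin{itemize}
\item $b_0$ even generators of degree $2i$, each giving a factor $(1-tz^{2i})^{-1}$;
\item $b_2$ even generators of degree $2i+2$, each giving a factor $(1-tz^{2i+2})^{-1}$;
\item $b_1$ odd generators of degree $2i+1$, each giving a factor $(1+tz^{2i+1})$, since odd generators appear at most once.
\end{itemize}
Because $\Sym^*(W)$ is the tensor product of the symmetric algebras on these one-dimensional pieces, its character is the product of these factors. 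Taking the product over $i=0,\dots,N-1$ gives the claimed formula.

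When $C$ is not projective, we have $b_2=0$ (or $b_0$ is adjusted accordingly), and the formula still holds as stated with the appropriate Betti numbers.
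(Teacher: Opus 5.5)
Your proposal is correct and is essentially the intended argument. The paper gives no separate proof: the corollary is read off from Theorem~\ref{thm: Nakajima type basis} exactly as you do. The basis monomials are homogeneous of degree $\sum_j(|\alpha_{k_j}|+2i_j)$, since the torsion CoHA product preserves cohomological degree because $\chi$ vanishes on pairs of torsion sheaves. Their index set is the PBW normal form of the free supercommutative algebra on $\bigoplus_{i=0}^{N-1}H^*(C)z^i$, whose character is the stated product. Your closing remark about adjusting $b_0$ is unnecessary, since the formula holds verbatim with $b_k(C)=\dim H^k(C)$.
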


\begin{remark}\label{rem: Poincare}
For smooth projective curves with trivial vector bundles, the above formula was established in \cite{Bifet}. Since the motive of punctual Quot schemes depends on the vector bundle $V$ only through its rank $N$ by \cite{BFP}, the same formula holds for any smooth projective curves with arbitrary vector bundles. However, we note that for smooth quasi-projective curves, the motivic computation in \cite{BFP} recovers the generating series of virtual Poincar\'e polynomials, but not necessarily actual Poincar\'e polynomials. For this reason, the above formula appears to be new for general smooth quasi-projective curves. 

\end{remark}

\section{Commutator relations} \label{sec: Commutator relations}

In this section, we compute the commutators between the creation, annihilation, and multiplication operators acting on the virtual homology of Quot schemes. The main idea is to describe the Hecke modification as a derived projective bundle. This then allows us to compute the one-point creation and annihilation operators explicitly via residues, leading to the commutator formulas between them. The commutator relations proven in this section will motivate the definition of the double of torsion CoHA in Section \ref{sec: Drinfeld double}. For the remaining sections, we let $C$ be a smooth connected projective curve.

\subsection{Hecke correspondence}

In this subsection, we identify the Hecke correspondence diagram with a certain derived projective bundle. This description is well known and has been exploited in geometric representation theory in the work of Negu\c t \cite{N1, N2, N3}. It will be our main tool to compute the creation and annihilation operators via the residue formula. 

Consider the one-point modification nested Quot scheme
$$\Quot_{(r,[d,d+1])}:=\Quot_{(0,1),(r,d)}.$$ 
The Hecke correspondence refers to the correspondence diagram 
\begin{center}
\begin{tikzcd}
  & \Quot_{(r,[d,d+1])} \arrow[ld, "f"'] \arrow[rd, "g"] &   \\
\Coh_{(0,1)}\times \Quot_{(r,d)} &                                    & \Quot_{(r,d+1)}.
\end{tikzcd}
\end{center}
Since $\Coh_{(0,1)}\simeq C\times \BGm$, we may equivalently understand the above diagram using the following geometric structures of the nested Quot scheme
\begin{equation}\label{eq: geometric data for Hecke}
    \begin{tikzcd}
  & \CL \arrow[d]                                         &   \\
  & \Quot_{(r,[d,d+1])} \arrow[ld, "p_-"'] \arrow[d, "\rho"] \arrow[rd, "p_+"] &   \\
\Quot_{(r,d)} & C                                                   & \Quot_{(r,d+1)}.
\end{tikzcd}
\end{equation}
Precisely, if we write
$$(\rho,\CL):\Quot_{(r,[d,d+1])}\rightarrow C\times \BGm\simeq \Coh_{(0,1)},
$$
then 
\begin{equation}\label{eq: comparison for Hecke and CoHA}
    f=\big((\rho,\CL),p_-\big),\quad g=p_+. 
\end{equation}

\begin{definition}
We define the following complexes on $\Quot_{(r,d)}\times C_*$\footnote{The subscript on $C_*$ is used to distinguish it from other copies of the curve $C$ when there are multiple factors of $C$.}
\begin{align*}
    \BK^+_{(r,d),*}&:=\R p_*\RHHom(\CS_\bullet, \CO_{\Delta_{*\bullet}}),\\
    \BK^-_{(r,d),*}&:=\R p_*\RHHom(\CO_{\Delta_{*\bullet}}, \CQ_\bullet),
\end{align*}
where $p:\Quot_{(r,d)}\times C_*\times C_\bullet\rightarrow \Quot_{(r,d)}\times C_*$ is the projection, $\CS_\bullet$ and $\CQ_\bullet$ are the pullbacks of the universal subsheaf and quotient on $\Quot_{(r,d)}\times C_\bullet$, respectively, and $\Delta_{*\bullet}\hookrightarrow C_*\times C_\bullet$ is the diagonal.
\end{definition}

\begin{remark}\label{rem: explicit K complex}
Using the Fourier-Mukai transform of the diagonal and Serre duality, we can rewrite the above complexes as
$$\BK^+_{(r,d),*}\simeq \CS_*^\vee,\quad \BK^-_{(r,d),*}\simeq \CQ_*\otimes K_{C_*}^\vee[-1],
$$
where $(-)^\vee$ denotes the derived dual and $K_{C_*}$ denotes the canonical bundle of the curve $C_*$. 
\end{remark}

Since we are working with a smooth projective curve, $\BK^{\pm}_{(r,d),*}$ is a complex of perfect-amplitude contained in $[0,1]$. We can describe the nested Quot scheme $\Quot_{(r,[d,d+1])}$ explicitly as a derived projective bundle in the sense of Jiang \cite{Jiang}.

\begin{proposition}\label{prop: virtual projective bundle}
    There are isomorphisms $\phi^{\pm}$ forming the commutative diagrams
    \begin{center}
        \begin{tikzcd}
\BP((\BK^+_{(r,d),*})^\vee) \arrow[d, "\pi_+"'] \arrow[r, "\phi^+", , "\simeq"'] & \Quot_{(r,[d,d+1])} \arrow[ld, "p_-\times \rho"] \arrow[rd, "p_+\times \rho"'] & \BP((\BK^-_{(r,d+1),*})^\vee) \arrow[d, "\pi_-"] \arrow[l, "\phi^-"', "\simeq"] \\
\Quot_{(r,d)}\times C_*                        &                                        & \Quot_{(r,d+1)}\times C_*                      
\end{tikzcd}
    \end{center}
    such that $\big(\phi^{\pm}\big)^*\CL\simeq \CO(\pm 1)$.
\end{proposition}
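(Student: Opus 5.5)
We prove both identifications by comparing functors of points. We use Jiang's description of the derived projectivization: for a perfect complex $E$ of amplitude $[0,1]$ (or more generally connective) on a derived stack $Y$, maps $T\to\BP(E^\vee)$ over $Y$ correspond to a line bundle $L$ on $T$ together with a surjection $\pi_0$-surjective map $E^\vee|_T\to L$. Equivalently, they correspond to a nowhere vanishing cosection $L^\vee\to E|_T$, i.e. a map $\CO\to E|_T\otimes L$ which is nonzero on every fibre in the sense of $H^0$. We will therefore show that a nested quotient over $T$ is the same datum as such a line bundle with a nondegenerate element.

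\textbf{The $+$ side.} Given a $T$-point of $\Quot_{(r,[d,d+1])}$, we have $S'\subseteq S$ with quotient $F=S/S'$, which is a length one sheaf supported on the graph of $\rho:T\to C_*$. We write $F\simeq\CO_{\Gamma_\rho}\otimes\CL$, which defines $\CL$. The surjection $S\twoheadrightarrow \CO_{\Gamma_\rho}\otimes \CL$ is a section of
$$\R p_*\RHHom(\CS_\bullet,\CO_{\Delta_{*\bullet}})|_T\otimes\CL=\BK^+|_T\otimes \CL.$$
Surjectivity is exactly fibrewise nonvanishing. Conversely, such a section determines $S'$ as a kernel, and the kernel is flat because the target is flat of length one over $T$. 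This gives $\phi^+$ with $(\phi^+)^*\CL\simeq\CO(1)$, and it is compatible with $p_-\times\rho$.

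\textbf{The $-$ side.} Here we use $Q'\twoheadrightarrow Q$ with kernel $F\simeq\CO_{\Gamma_\rho}\otimes\CL$. The inclusion $F\hookrightarrow Q'$ is a section of $\BK^-_{(r,d+1)}|_T\otimes\CL^{\vee}$, and injectivity is again fibrewise nonvanishing, since $F$ is a length one sheaf. This yields $\phi^-$ with $\CL\mapsto\CO(-1)$. To match conventions we may need to dualize (by Remark \ref{rem: explicit K complex} this amounts to swapping $\CO(\pm1)$), and we fix the sign so that $(\phi^\pm)^*\CL\simeq\CO(\pm1)$ as stated.

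\textbf{Main obstacle.} The substance is upgrading these classical bijections to isomorphisms of \emph{derived} stacks. The nested Quot scheme carries the derived structure of Monavari--Pavia--Ricolfi, and Jiang's projectivization carries its own. Our plan is to compare the two via the Cartesian diagrams \eqref{eq: morphism g} and \eqref{eq: morphism f}, working relative to $\Quot_{(r,d)}$ (respectively $\Quot_{(r,d+1)}$).

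1. We first observe that on the $+$ side the relative derived Quot scheme $\Quot_{(0,1)}(\CS_\bullet)$ of length one quotients of the bundle $\CS$ is by construction the derived projective bundle of $\CS^\vee$. In fact it is classical: it is the smooth $\BP(\CS)$ over $\Quot\times C$, matching the identification $\BK^+\simeq\CS_*^\vee$.

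2. We then check that the relative tangent complexes agree, namely $\R p_*\RHHom(\CS',F)$ against the tangent complex of the projective bundle, $\BK^+\otimes\CO(1)/\CO$.

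3. Since both sides are quasi-smooth and the map between them induces an isomorphism on classical truncations and on tangent complexes, it is an equivalence.

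For the $-$ side, the complex $\BK^-$ has genuine $H^1$. This is where the derived structure matters, and it is where we expect the main difficulty in the tangent-complex comparison. The identification should follow from the exact triangle $\RHom(F,Q)\to\RHom(F,Q')\to\RHom(F,F)$ over the nested Quot scheme.
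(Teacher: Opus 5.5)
Your classical identifications are correct. Over a classical $T$, a surjection $\CS_T\twoheadrightarrow\CO_{\Gamma_\rho}\otimes\CL$ is the same as a section of $\BK^+|_T\otimes\CL$ that is nonzero on every derived fibre. Likewise, an inclusion $\CO_{\Gamma_\rho}\otimes\CL\hookrightarrow\CQ'_T$ with $T$-flat cokernel is the same as a section of $\BK^-|_T\otimes\CL^{-1}$ that is nonzero on every fibre. The latter already gives $(\phi^-)^*\CL\simeq\CO(-1)$, so no dualization is needed, and dualizing would not swap $\CO(\pm1)$ anyway.

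The gap is that this identifies only the classical truncations. The proposition is an isomorphism of derived stacks, and it is the derived structure that is used afterwards to compute virtual Umkehr maps via Lemma~\ref{lem: virtual pushforward}. Your upgrade plan does not close this gap:
\begin{enumerate}
\item[(i)] Step~3 needs an actual morphism of derived stacks whose induced map on cotangent complexes is an equivalence. You never construct a derived morphism; the classical bijection only gives a map between truncations. Checking that two relative tangent complexes `agree' as objects or $K$-classes does not show that a comparison map is a quasi-isomorphism.
\item[(ii)] In Step~2, the relative tangent complex of $p_-\times\rho$ is the fibre of $\R p_*\RHHom(\CS',F)\to\rho^*T_{C_*}$, not $\R p_*\RHHom(\CS',F)$ itself.
\item[(iii)] Step~1 assumes $\CS$ is locally free. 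In this section $V$ is an arbitrary coherent sheaf, so $\CS$ can have torsion, $\BK^+\simeq\CS^\vee$ has amplitude $[0,1]$, and $\BP((\BK^+)^\vee)$ is only quasi-smooth over $\Quot_{(r,d)}\times C_*$. The identification of the derived nested Quot scheme with a relative derived Quot scheme of $\CS$ is asserted, not proved.
\item[(iv)] On the minus side, where $\BK^-$ genuinely has $H^1$, there is no argument beyond the expectation that it follows from a triangle.
\end{enumerate}

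The missing ingredient is to work with derived test schemes, or to reduce to a universal situation where the derived statement is known. The paper does this with the derived Cartesian squares \eqref{eq: morphism f} and \eqref{eq: morphism g}.

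For the plus side, $\rho\times p_-$ is the base change of $\Coh_{[V]-(r,d+1),\delta}\to C_*\times\Coh_{[V]-(r,d)}$. Jiang's Prop.~8.5 identifies this map with the derived projectivization of the universal sheaf, with $\CO(1)=\CL$. Since derived projectivization commutes with base change and $\psi^*\CF_*\simeq\CS_*\simeq(\BK^+)^\vee$, this gives $\phi^+$ with no tangent-complex computation and no local-freeness assumption.

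For the minus side, $\rho\times p_+$ is the base change of $\Coh_{\delta,(r,d)}\to C\times\Coh_{(r,d+1)}$. The paper shows this is $\BP(\CF^\vee\otimes K_C[1])$ with $\CO(-1)=\CL$, by a functor-of-points argument valid for derived $T$:
\begin{enumerate}
\item[(a)] By adjunction, $\iota\colon(\id,x)_*\CL\to\CF_T$ is an element of $\RHom_T(\CL,\R p_*\RHHom((\id,x)_*\CO,\CF_T))$; this is exactly your section of $\BK^-\otimes\CL^{-1}$.
\item[(b)] Relative Serre duality turns it into a map $(\id,x)^*(\CF_T^\vee\otimes K_C[1])\to\CL^\vee$.
\item[(c)] The two open conditions, flatness of $\cone(\iota)$ and surjectivity on $H^0$, depend only on classical truncations, where your fibrewise argument applies.
\end{enumerate}
Your morphism-level identifications are the right ones. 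What the proposal lacks is carrying them out in the derived category for derived $T$, or an equivalent reduction; as written it proves only the classical statement.
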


\begin{proof}

We first consider the case of $(p_-\times \rho)$. Recall from \eqref{eq: morphism f} the Cartesian diagram
\begin{equation*}
    \begin{tikzcd}
\Quot_{(r,[d,d+1])} \arrow[r] \arrow[d, "f"] & \Coh_{[V]-(r,d+1), \delta} \arrow[d] \\
\Coh_\delta\times \Quot_{(r,d)} \arrow[r]                 & \Coh_\delta\times \Coh_{[V]-(r,d)}     
\end{tikzcd}
\end{equation*}
where we write $\delta=(0,1)$. Since $\Coh_\delta\simeq C\times \BGm$, we may use the above diagram to obtain another Cartesian diagram 
\begin{equation*}
    \begin{tikzcd}
\Quot_{(r,[d,d+1])} \arrow[r] \arrow[d, "\rho\times p_-"'] & \Coh_{[V]-(r,d+1), \delta} \arrow[d] \\
C_*\times \Quot_{(r,d)} \arrow[r, "\psi"]                 & C_*\times \Coh_{[V]-(r,d)}     
\end{tikzcd}
\end{equation*}
Following the notation of \cite[Sec. 8.1]{Jiang}, the stack $\Coh_{[V]-(r,d+1), \delta}$ parametrizes a short exact sequence $0\rightarrow F'\rightarrow F\rightarrow x_*L\rightarrow 0$ and the right vertical map sends it to $(x,F)$. Then \cite[Prop. 8.5]{Jiang} identifies the right vertical map with the derived projectivization of the universal sheaf $\CF_*$ on $C_*\times \Coh_{[V]-(r,d)}$ such that the relative ample line bundle $\CO(1)$ identifies with the universal line bundle $\CL$ on $\Coh_{[V]-(r,d+1), \delta}$ pulled back from the $\BGm$ factor. Since $\psi^*\CF_*\simeq \CS_*$ and $\CS_*\simeq \big(\BK^+_{(r,d),*}\big)^\vee$ by Remark \ref{rem: explicit K complex}, we obtain the desired derived projectivization description of the morphism $(p_-\times \rho)$. 

Now we consider the case of $(p_+\times \rho)$. As above, by refining the diagram \eqref{eq: morphism g}, we have a Cartesian diagram 
\begin{equation*}
    \begin{tikzcd}
\Quot_{(r,[d,d+1])} \arrow[r] \arrow[d, "\rho\times p_+"'] & \Coh_{\delta,(r,d)} \arrow[d] \\
C_*\times \Quot_{(r,d+1)} \arrow[r]                 & C_*\times \Coh_{(r,d+1)}       
\end{tikzcd}
\end{equation*}
where the right vertical map sends a short exact sequence $0\rightarrow x_*L\rightarrow F\rightarrow F'\rightarrow 0$ to $(x,F)$. Just as before, the desired description of $(\rho\times p_+)$ would follow if we show that the right vertical map identifies with the derived projectivization of the $(\CF_*)^\vee\otimes K_{C_*}[1]$ such that the universal line bundle $\CL$ identifies with $\CO(-1)$. This can be done closely following the proof of \cite[Prop. 8.5]{Jiang} up to application of relative Serre duality as in the proposition below. This completes the proof.

\end{proof}

\begin{proposition}
    Let $C$ be a smooth projective curve and
    $$f:\Coh_{\delta, (r,d)}\rightarrow C\times \Coh_{(r,d+1)}
    $$
    be a morphism sending a short exact sequence $0\rightarrow x_*L\rightarrow F\rightarrow F'\rightarrow 0$ to $(x,F)$. Then we have an isomorphism $\phi$ making the diagram 
    \begin{center}
\begin{tikzcd}
    \Coh_{\delta,(r,d)}\arrow[rd,"f"']\arrow[rr,"\phi","\simeq"']&&\BP(\CF^\vee\otimes K_{C}[1])\arrow[ld,"\pi"]\\
    &C\times \Coh_{(r,d+1)}&
\end{tikzcd}
\end{center}
commutes and $\phi^*\CO(-1)\simeq \CL$.
\end{proposition}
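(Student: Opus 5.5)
Write $\pi_C,\pi_{\Coh}$ for the projections from $C\times \Coh_{(r,d+1)}$, and $\CF$ for the universal sheaf on $C\times\Coh_{(r,d+1)}$. I will follow the strategy of \cite[Prop. 8.5]{Jiang}, but with the roles of sub and quotient exchanged. The key point is that a point of $\Coh_{\delta,(r,d)}$ over $(x,F)$ is an injection $x_*L\hookrightarrow F$ up to scaling. So it is a nowhere vanishing element of
$$\Hom(x_*L,F)=\Hom(L,\, x^!F)\simeq \Hom\big(L,\ (\CF^\vee\otimes K_C[1])|_x^{\vee}\big),$$
taken up to scaling. Here $x^!F=Lx^*F\otimes \omega_x[-1]$ is Grothendieck duality for the closed point $x\hookrightarrow C$, and $\omega_x=T_xC=(K_C|_x)^\vee$.

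\textbf{Step 1.} Identify the relative linear stack. Let $\iota:\Delta\hookrightarrow C\times C$ be the diagonal. Using $\iota$ and relative Grothendieck duality along $C\times C\times\Coh\to C\times\Coh$ (first factor), I will establish an equivalence
$$\R\mathcal{H}om_{\pi}(\CO_{\Delta}\boxtimes \CL,\ \CF)\simeq \CL^\vee\otimes \CF\otimes K_C^\vee[-1]$$
on $C\times\Coh_{(r,d+1)}\times \BGm$. This is a family version of $\RHom(\CO_x,F)=Lx^*F\otimes T_xC[-1]$, and it is the same computation as Remark \ref{rem: explicit K complex}, where $\BK^-$ was computed via Serre duality. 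The resulting complex is $\CL^\vee\otimes(\CF^\vee\otimes K_C[1])^\vee$. Its total space over $C\times\Coh\times\BGm$ is then the derived stack of pairs consisting of a point $x$, a sheaf $F$, a line $L$ and a morphism $x_*L\to F$.

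\textbf{Step 2.} Show that $\Coh_{\delta,(r,d)}$ is the open locus of this derived linear stack where the morphism is injective, i.e.\ nonzero. A map $x_*L\to F$ is injective exactly when it is nonzero, since $x_*L$ is simple. Next, I will check that the derived structures agree. Both sides are the derived mapping stack of extensions, and the tangent complexes agree: for $\Coh_{\delta,(r,d)}$ it is given by $\RHom$ of the filtered object, and for the linear stack it is given by $\RHom(x_*L,F)$ together with the base directions. Compatibility is via the standard cone sequence, exactly as in Jiang. Quotienting by the $\BGm$ that rescales $L$ identifies the nonzero locus with $\BP\big((\CL^\vee\text{-twist})\big)$. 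In Jiang's convention, $\BP(\mathcal{E})$ parametrizes surjections $\mathcal{E}\twoheadrightarrow \CO(1)$, equivalently nonzero sections of $\mathcal{E}^\vee\otimes\CO(1)$. Matching this with $\Hom(L,\mathcal{E}^\vee)$ for $\mathcal{E}=\CF^\vee\otimes K_C[1]$ forces $L\simeq\CO(-1)$. This gives $\phi^*\CO(-1)\simeq\CL$, and commutativity with $f$ holds by construction.

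\textbf{Main obstacle.} I expect Step 2 at the derived level to be the hardest part: ensuring that the derived structure on $\Coh_{\delta,(r,d)}$ (the stack of short exact sequences) coincides with Jiang's derived projectivization of a complex of amplitude $[-1,0]$, rather than just matching classical truncations. I would handle it by quoting Jiang's Prop. 8.5 in its general form, which treats the stack of maps from a family of simple sheaves $\CO_x\otimes L$ into $\CF$. This reduces the problem to the duality identification of Step 1, and the sign/shift bookkeeping there (the $[1]$ and the $K_C$ twist) is the remaining delicate point.
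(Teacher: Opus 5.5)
Your proposal is correct and follows essentially the same route as the paper. The paper describes $T$-points of $\Coh_{\delta,(r,d)}$ as pairs $(\CL,\iota\colon(\id,x)_*\CL\to\CF_T)$ with $\cone(\iota)$ flat, rewrites $\RHom((\id,x)_*\CL,\CF_T)\simeq\RHom((\id,\CF_T)^*(\CF^\vee\otimes K_C[1]),\CL^\vee)$ by adjunction and duality, and matches ``fiberwise injective $=$ fiberwise nonzero'' with ``surjective on $H^0$'' to get $\CL\simeq\CO(-1)$. You use Grothendieck duality for $x^!$ where the paper uses $p$-relative Serre duality, which is an inessential difference.

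The one point to adjust is your ``main obstacle''. The paper does not cite Jiang's Prop.~8.5 for this direction; it uses that result only for the quotient direction $F\twoheadrightarrow x_*L$ and merely imitates its proof here. Nor does it compare tangent complexes. Once the two functors of points are identified on arbitrary derived $T$, the derived structures agree, and the two open conditions only need to be compared on classical truncations.
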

\begin{proof}
    We note that $\CF^\vee\otimes K_C[1]$ is a connective perfect complex on $C\times \Coh_{(r,d+1)}$, hence Jiang's derived projectivization makes sense. We first construct a morphism $\phi$. Let $T$ be a derived scheme and $(x,\CF_T):T\rightarrow C\times \Coh_{(r,d+1)}$ be a morphism induced from $x:T\rightarrow C$ and a perfect complex $\CF_T$ over $T\times C$ which is flat over $T$ and of numerical type $(r,d+1)$. Lifting this morphism to $\Coh_{\delta, (r,d)}$ amounts to giving a line bundle $\CL$ on $T$ together with a morphism 
    \begin{equation}\label{eq: relative SES}
        \iota:(\id,x)_*\CL\rightarrow \CF_T
    \end{equation}
    such that the cone $\cone(\iota)$ is flat over $T$. Note that $\iota$ lies in $\RHom_{T\times C}((\id,x)_*\CL,\CF_T)$ which can be identified as follows:
    \begin{equation}\label{eq: long identification}
    \begin{aligned}
        \RHom_{T\times C}((\id,x)_*\CL,\CF_T)&
        \simeq \RHom_{T\times C}(p^*\CL\otimes (\id,x)_*\CO, \CF_T)\\
        &\simeq \RHom_{T\times C}(p^*\CL, \RHHom((\id,x)_*\CO,\CF_T))\\
        &\simeq \RHom_T(\CL, \R p_*\RHHom((\id,x)_*\CO,\CF_T))\\
        &\simeq \RHom_T(\big(\R p_*\RHHom((\id,x)_*\CO,\CF_T)\big)^\vee, \CL^\vee)\\
        &\simeq \RHom_T(\R p_*\RHHom(\CF_T, K_C[1]\otimes (\id,x)_*\CO),\CL^\vee)\\
        &\simeq \RHom_T((\id,x)^*(\CF_T^\vee\otimes K_C[1]),\CL^\vee)\\
        &\simeq \RHom_T((\id,\CF_T)^*(\CF^\vee\otimes K_C[1]), \CL^\vee)
    \end{aligned}
    \end{equation}
    where $p:T\times C\rightarrow T$ is the projection map. Note that we adjunctions and $p\circ (\id,x)=\id$ multiple times together with a $p$-relative Serre duality at the fifth isomorphism. Therefore, the morphism $\iota$ in \eqref{eq: relative SES} corresponds to a certain morphism 
    \begin{equation}\label{eq: derived projectivization morphism}
        q:(\id,\CF_T)^*(\CF^\vee\otimes K_C[1])\rightarrow \CL^\vee.
    \end{equation}
    Note that the procedure can be reversed, hence sending a morphism $q$ to $\iota$. Recall that lifting a morphism $(x,\CF_T)$ to the derived projectivization amounts to giving a morphism $q$ as in \eqref{eq: derived projectivization morphism} which is surjective after taking $H^0(-)$. 

    Based on the above discussion, in order to construct the isomorphism $\phi$ such that $\phi^*\CO(-1)\simeq \CL$, it suffices to show that $\cone(\iota)$ is flat over $T$ if and only if $q$ is surjective at $H^0(-)$. Since both properties can be checked after the base change to the classical truncation, it suffices to work on the level of classical stacks, hence assuming that $T$ is a classical scheme. Note that when $T$ is a closed point, the two conditions become both equivalent to $\iota$ and $q$ being a nonzero map, which are clearly intertwined under the isomorphism \eqref{eq: long identification}. When $T$ is a classical scheme, the condition of $\cone(\iota)$ being flat over $T$ is equivalent to the morphism $\iota$ being injective after arbitrary base change $T'\rightarrow T$ and the condition of $q$ being surjective can be checked at the level of closed points of $T$. Having this noted, the two conditions can be easily identified, hence completing the proof. 
    
\end{proof}

\begin{remark}\label{rem: derived projective bundle as derived zero}
When the complex $E^\bullet$ admits a 2-term resolution by vector bundles, we can describe the derived projective bundle explicitly. Let $X$ be a derived stack and $E^\bullet=[E^{-1}\xrightarrow{f}E^0]\in \Perf(X)$ be a perfect complex with a choice of a 2-term resolution by vector bundles in degrees $-1$ and $0$. By \cite[Prop. 4.33]{Jiang}, the derived projective bundle $\BP(E^\bullet)$ is equal to the derived zero locus 
\begin{equation*}
    \begin{tikzcd}
\BP(E^\bullet)=\Zero(s) \arrow[d, "\pi"'] \arrow[r, "i", hook] & \BP(E^0) \arrow[ld, "\tilde\pi"] \\
X                                       &                       
\end{tikzcd}
\end{equation*}
where $s$ is a section of a vector bundle defined as a composition
$$s:\tilde{\pi}^*E^{-1}\xrightarrow{\tilde{\pi}^*f}\tilde{\pi}^*E^0\twoheadrightarrow \CO(1).
$$
The tautological bundle $\CO(1)$ on the derived projective bundle $\BP(E^\bullet)$ is equal to the pullback $i^*\CO(1)$. By construction, the morphism $\pi$ is quasi-smooth, inducing a relative perfect obstruction theory for the classical truncation. 

\end{remark}

Just like the usual projective bundles, the virtual projective bundle admits a virtual pushforward formula in terms of a certain Segre class. 

\begin{definition}
    Define the shifted Segre series of the perfect complex $E^\bullet$ as
$$\widehat{s}_w(E^\bullet):=\prod_{i\in I}\frac{1}{w+x_i}\in H^*(X)(\!(w^{-1})\!)
$$
where $\{x_i\}_{i\in I}$ is the set of Chern roots of $E^\bullet$.\footnote{If $E^\bullet=E^0-E^1$ in $K$-theory for some vector bundles $E^0$ and $E^1$, then the set of Chern roots of $E^\bullet$ is defined to be $\{x_i\}_{i\in I}\ominus\{y_j\}_{j\in J}$ where $x_i$'s and $y_j$'s are Chern roots of $E^0$ and $E^1$, respectively, so that $\widehat{s}_w(E^\bullet)=\widehat{s}_w(E^0)/\widehat{s}_w(E^1)$.} This can be written in terms of the usual Segre polynomial as $\widehat s_w(E^\bullet)=w^{-\rk(E^\bullet)}\cdot s_{w^{-1}}(E^\bullet)$. 

\end{definition}

Since we often work with formal series involving infinitely many negative powers, such as the shifted Segre series, we introduce the following notion of residue.

\begin{definition}
    Let $R$ be a ring and $F(z)=\sum_{k\in \BZ} a_k z^k\in R(\!(z^{-1})\!)$, i.e., $a_k=0$ if $k$ is sufficiently positive. We define the residue of $F(z)$ as
    $$\Res_z F(z):=a_{-1}\in R. 
    $$
\end{definition}
\begin{remark}
    If $F(z)$ is a meromorphic function expanded at infinity, then the above definition equals the {\it minus} of the residue at infinity, or equivalently, the sum of residues of $F(z)$ at every finite poles. 
\end{remark}

\begin{lemma}
\label{lem: virtual pushforward}
    Let $X$ be a derived stack and $E^\bullet$ be a perfect complex admitting a $2$-term resolution by vector bundles in degrees $0$ and $1$. Let $\pi:\BP((E^\bullet)^\vee)\rightarrow X$ be a derived projective bundle, $z:=c_1(\CO(1))$. For any $f(z)\in H^*(X)[z]$, the virtual Umkehr map is given by
    $$\pi_!(f(z))=\Res_z\Big(f(z)\cdot \widehat{s}_z(E^\bullet)\Big)\in H^*(X).
    $$
\end{lemma}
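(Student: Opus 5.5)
Fix a resolution $E^\bullet=[E^0\xrightarrow{d}E^1]$ by vector bundles of ranks $e_0,e_1$. Then $(E^\bullet)^\vee=[(E^1)^\vee\to (E^0)^\vee]$ sits in degrees $-1,0$. By Remark \ref{rem: derived projective bundle as derived zero}, $\BP((E^\bullet)^\vee)$ is the derived zero locus $i:\Zero(s)\hookrightarrow \BP((E^0)^\vee)$ of a section $s$ of the vector bundle $\tilde\pi^*E^1\otimes\CO(1)$ on the honest projective bundle $\tilde\pi:\BP((E^0)^\vee)\to X$. The tautological class $z=c_1(\CO(1))$ on $\BP((E^\bullet)^\vee)$ is $i^*z$. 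Since $\pi=\tilde\pi\circ i$ and virtual Umkehr maps are functorial (Appendix), we get
\[
\pi_!(f(z))=\tilde\pi_!\,i_!\,i^*f(z)=\tilde\pi_!\Big(f(z)\cdot e\big(\tilde\pi^*E^1\otimes\CO(1)\big)\Big).
\]
Here $i_!i^*(-)=(-)\cdot e(N)$ for a derived zero locus of a section of a bundle $N$; this is the virtual analogue of the self-intersection formula, and we take it from the Appendix as a basic property of virtual Umkehr maps for quasi-smooth closed embeddings.

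Next we compute the smooth pushforward along an ordinary projective bundle $\tilde\pi:\BP((E^0)^\vee)\to X$ of rank $e_0-1$, with $\CO(1)$ the quotient of $\tilde\pi^*(E^0)^\vee$. This is the classical Segre-class formula
\[
\tilde\pi_*\big(z^{k}\big)=s_{k-e_0+1}\quad\Longleftrightarrow\quad \tilde\pi_*(g(z))=\Res_z\Big(g(z)\prod_{j}\frac{1}{z+a_j}\Big),
\]
where $a_j$ are the Chern roots of $E^0$. This follows from the relation $\prod_j(z+a_j)=c_{e_0}(\tilde\pi^*(E^0)^\vee\otimes\CO(1))=0$ up to sign conventions, together with $\tilde\pi_*z^{e_0-1}=1$. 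Concretely, reduce $g$ modulo $\prod_j(z+a_j)$ and observe that both sides are $H^*(X)$-linear and agree on $z^0,\dots,z^{e_0-1}$. The residue of $z^{k}/\prod(z+a_j)$ at infinity is $1$ for $k=e_0-1$ and $0$ for smaller $k$. The same expansion also gives the complete homogeneous symmetric functions of $-a_j$ for larger $k$, matching the Segre classes. Care with signs is needed here: the convention $\BP(F)$ is the space of rank one quotients of $F$, so with $F=(E^0)^\vee$ the Chern roots of $F$ are $-a_j$ and the relation reads $\prod(z-(-a_j))=0$.

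Combining the two steps, with $b_k$ the Chern roots of $E^1$,
\[
\pi_!(f(z))=\Res_z\Big(f(z)\frac{\prod_k(z+b_k)}{\prod_j(z+a_j)}\Big)=\Res_z\big(f(z)\,\widehat s_z(E^\bullet)\big),
\]
since the Chern roots of $E^\bullet=E^0-E^1$ are $\{a_j\}\ominus\{b_k\}$. The expression is independent of the resolution because $\widehat s_z$ depends only on the $K$-theory class. The main obstacle I expect is bookkeeping of signs and conventions. This means checking that $\CO(1)$ on $\BP((E^\bullet)^\vee)$ is the quotient line bundle, so that the normal bundle of $i$ is $E^1\otimes\CO(1)$ and not its dual, and that the Chern roots enter as $z+x_i$ rather than $z-x_i$. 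I would verify this on the case $E^1=0$, $X=\pt$, $E^0=\BC^n$, where $\pi_!z^{n-1}=1$.
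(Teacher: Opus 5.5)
Your proposal is correct and takes the same route as the paper. You factor $\pi=\tilde\pi\circ i$ through the honest projective bundle $\BP((E^0)^\vee)$, use $i_!1=e(\tilde\pi^*E^1\otimes\CO(1))$ with the virtual projection formula, and apply the Segre-class pushforward formula for $\tilde\pi$; the only cosmetic difference is that the paper reduces to monomials $z^k$ and recombines $\sum_i s_{k-i-r+1}(E^0)\,s_i(-E^1)=s_{k-r+1}(E^\bullet)$, while you write the smooth pushforward directly as a residue and multiply the integrands.
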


\begin{proof}

This is essentially proven in \cite[eq. (43)]{marian2026cohomologyquotschemesmooth}, but we record the proof for this version for completeness. Pick a resolution $E^\bullet=[E^0\xrightarrow{f}E^1]$ by vector bundles in degree $0$ and $1$. Then $(E^\bullet)^\vee=[(E^1)^\vee\rightarrow (E^0)^\vee]$ lies in degree $-1$ and $0$ for which the setting of Remark \ref{rem: derived projective bundle as derived zero} applies. By linearity and projection formula, we may assume that $f(z)=z^k$. In the case of the usual projective bundle, the formula we need to prove follows from the definition of the Segre class with the chosen shift. Setting $\tilde z:=c_1(\CO(1))\in H^2(\BP((E^0)^\vee))$, $r_i:=\rk(E^i)$ and $r:=\rk(E^\bullet)=r_0-r_1$, the general case follows by straightforward computation:
    \begin{align*}
        \pi_!(z^k)
        &=\tilde \pi_!(\tilde z^k\cdot i_!1)\\
        &=\tilde \pi_!\left(\tilde z^k\cdot e(\tilde\pi^*E^1(1))\right)\\
        &=\sum_{i=0}^{r_1}\tilde \pi_!(\tilde z^{k+r_1-i})\cdot c_i(E^1)\\
        &=\sum_{i=0}^{r_1}s_{k-i-r+1}(E^0)\cdot s_i(-E^1)\\
        &=s_{k-r+1}(E^\bullet)\\
        &=\Res_z\Big(z^k\cdot \widehat s_z(E^\bullet)\Big).
    \end{align*}
\end{proof}

\begin{remark}
    By the virtual projection formula (A13), the above lemma implies
    $$\pi_*(f(z)\cdot \pi^!\alpha)=\Res_z\Big(f(z)\cdot \widehat{s}_z(E^\bullet)\Big)\cdot \alpha,\quad \alpha\in H^{\BM}_*(X).
    $$
\end{remark}

\subsection{Generating series of the operators}\label{sec: generating series}

We now apply results from the previous section to the setting of one-point creation and annihilation operators. Using the geometric data of the one-pointed nested Quot scheme \eqref{eq: geometric data for Hecke}, we define the operators (without cohomological decoration)\footnote{In the definition of $e_i$ and $f_i$, $z^i=c_1(\CL)^i$ where $\CL$ is the line bundle on the relevant nested Quot scheme corresponding to a one-point modification. } 
\begin{align}
e_i&:=(p_+\times \rho)_*\circ z^i\circ (p_-)^!:\BV\rightarrow \BV\otimes H^*(C),\label{eaction}\\
f_i&:=(p_-\times \rho)_*\circ z^i\circ (p_+)^!:\BV\rightarrow \BV\otimes H^*(C) \label{faction},
\end{align} 
and their generating series
\begin{align*}
e(u)&:=\sum_{i=0}^\infty e_i u^{-i-1}:\BV\rightarrow \BV\otimes H^*(C)(\!(u^{-1})\!),\\
f(v)&:=\sum_{i=0}^\infty f_i v^{-i-1}:\BV\rightarrow \BV\otimes H^*(C)(\!(v^{-1})\!).
\end{align*}
It suffices to consider the operators $e_i$ and $f_i$ (and their generating series) because they recover the CoHA action operators $e_i^\alpha$ and $f_i^\alpha$ with a cohomological decoration as follows.

\begin{lemma}\label{lem: Hecke and CoHA actions}
    Let $\alpha\otimes z^i\in H^*(\Coh_{(0,1)})\simeq H^*(C)[z]$ and $e_i^\alpha\in \BH$ and $f_i^\alpha\in \BH^\op$ be the corresponding elements. For $x\in H_*(\Quot_{(r,d)})$, we have
    \begin{align*}
        e_i^\alpha\star x &= \pr_{1*}(\pr_2^*\alpha\cdot e_i(x)),\\
        f_i^\alpha\star x &= \pr_{1*}(\pr_2^*\alpha\cdot f_i(x)).
    \end{align*}
    
\end{lemma}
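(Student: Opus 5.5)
Both identities reduce to unwinding definitions, using the identification \eqref{eq: comparison for Hecke and CoHA}, $f=((\rho,\CL),p_-)$ and $g=p_+$. For the creation formula, I will work on the Borel--Moore side. Under Poincar\'e duality on the smooth stack $\Coh_{(0,1)}\simeq C\times\BGm$, the class $e^\alpha_i$ corresponds to $(\alpha\otimes z^i)\cap[\Coh_{(0,1)}]$. The virtual pullback then satisfies
$$f^!\big((\alpha z^i\cap[\Coh_{(0,1)}])\boxtimes x\big)=f^*(\alpha z^i)\cap p_-^!x=\rho^*\alpha\cdot z^i\cap p_-^!x.$$
This is the standard compatibility of virtual pullback with cap product by pulled-back classes, combined with the fact that $f$ factors as the quasi-smooth map $p_-\times(\rho,\CL)$ over the smooth factor. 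It can be checked by writing $f=(\mathrm{graph})\circ p_-$ and using functoriality of virtual pullbacks from the Appendix.

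Next I push forward along $p_+$. Since $p_+=\pr_1\circ(p_+\times\rho)$ and $\rho=\pr_2\circ(p_+\times\rho)$, the projection formula for the proper map $p_+\times\rho$ gives
$$p_{+*}(\rho^*\alpha\cdot z^i\cap p_-^!x)=\pr_{1*}\big(\pr_2^*\alpha\cdot(p_+\times\rho)_*(z^i\cap p_-^!x)\big)=\pr_{1*}(\pr_2^*\alpha\cdot e_i(x)).$$
Here $\BV\otimes H^*(C)$ is read as $H_*(\Quot\times C)$ via K\"unneth, and the Koszul sign from moving $\alpha$ past $x$ should vanish or be absorbed by the conventions. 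This proves the first identity.

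For the annihilation formula, I will use the definition through the Kronecker pairing. For every $y\in H^*(\Quot_{(r,d-1)})$ I need
$$(y,f^\alpha_i\star x)=(-1)^{|f||x|}(e^\alpha_i\star y,x)_{(r,d)},$$
where $e^\alpha_i\star y=p_{+!}(\rho^*\alpha\, z^i\, p_-^*y)$ is the creation action on cohomology. The plan is to rewrite the right side:
$$\deg\big(p_{+!}(\rho^*\alpha\, z^i\, p_-^*y)\cap x\big)=\deg\big(\rho^*\alpha\, z^i\, p_-^*y\cap p_+^!x\big),$$
using the virtual projection formula for $p_+$, which says $p_{+*}(c\cap p_+^!x)=p_{+!}(c)\cap x$, together with invariance of degree under proper pushforward. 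The last expression equals $\deg\big(y\cap p_{-*}(\rho^*\alpha\, z^i\cap p_+^!x)\big)$, and by the same projection-formula manipulation as in the creation case,
$$p_{-*}(\rho^*\alpha\, z^i\cap p_+^!x)=\pr_{1*}(\pr_2^*\alpha\cdot f_i(x)).$$
Nondegeneracy of the pairing then gives the identity.

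The main obstacle is the sign bookkeeping: the sign built into $(\,,\,)$, the Koszul sign in the definition of $\star^{\op}$, and the signs from moving $\alpha$ and $y$ past one another. The paper's footnote asserts that these were chosen to cancel. I would check this by noting that only $|y|=|f|+|x|$ contributes, and then verifying that the exponents $|y||f^\alpha_i\star x|$, $|f||x|$ and $|e\star y||x|$, together with the commutation of $\alpha$ past $y$, sum to an even number modulo $2$.
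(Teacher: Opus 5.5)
Your proposal is correct and follows the paper's proof. The creation identity comes from $f=((\rho,\CL),p_-)$, $g=p_+$ and the projection formula along $p_+\times\rho$. The annihilation identity comes from the Kronecker-pairing definition, the virtual projection formula for $p_+$, and the ordinary projection formula for $p_-\times\rho$.

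The sign check you defer is exactly what the paper carries out, and it goes through: on the only nonzero component $|y|\equiv|\alpha|+|x|$, both sides reduce to $(-1)^{|\alpha|+|y|}\deg(\rho^*\alpha\, z^i\, p_-^*y\cap p_+^!x)$. One small correction: the step $f^!([\Coh_{(0,1)}]\boxtimes x)=p_-^!x$ is best justified by $p_-=\pr_2\circ f$ together with (A9). The factorization ``$f=(\mathrm{graph})\circ p_-$'' you wrote does not typecheck.
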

\begin{proof}

The first equality readily follows from the equality \eqref{eq: comparison for Hecke and CoHA}. Indeed, we have 
\begin{align*}
    g_*f^!(e_i^\alpha\boxtimes x)
    &=(p_+)_*\Big((\rho,\CL)^*(\alpha\otimes z^i)\cdot (p_-)^!(x)\Big)\\
    &=\pr_{1*}\Big(\pr_2^*\alpha\cdot (\rho\times p_+)_*(z^i\cdot (p_-)^!x)\Big)\\
    &=\pr_{1*}(\pr_2^*\alpha\cdot e_i(x)).
\end{align*}

For the second equality, recall from Section \ref{sec: annihilation} that the annihilation operator $f_i^\alpha\star x $ is defined using the topological duality and replacing the right action by the left action of the Koszul signed opposite algebra. Therefore, it suffices to show that 
$$(e_i^\alpha\star\gamma, x)_{(r,d)} = (-1)^{|\alpha||x|}\Big(\gamma, \pr_{1*}\big(\pr_2^*\alpha\cdot f_i(x)\big)\Big)_{(r,d-1)},
$$
for all $\gamma\in H^*(\Quot_{(r,d-1)})$. The left hand side is given by 
\begin{equation}\label{eq: LHS with sign}
\begin{aligned}
    (e_i^\alpha\star\gamma, x)_{(r,d)}
    &=(-1)^{|\alpha|+|\gamma|}\cdot \deg\left(
    (e_i^\alpha\star \gamma)\cdot x
    \right)\\
    &=(-1)^{|\alpha|+|\gamma|}\cdot \deg\left(
    (p_+)_!(\rho^*\alpha\cdot z^i\cdot (p_-)^*\gamma)\cdot x
    \right)\\
    &=(-1)^{|\alpha|+|\gamma|}\cdot \deg\left(
    \rho^*\alpha\cdot z^i\cdot (p_-)^*\gamma\cdot (p_+)^!x
    \right)
\end{aligned}
\end{equation}
where the sign $(-1)^{|\alpha|+|\gamma|}$ comes from our convention for the topological duality and the last equality uses the virtual projection formula (A13). On the other hand, the right hand side is given by 
\begin{equation}\label{eq: RHS with sign}
\begin{aligned}
    (-1)^{|\alpha||x|}\Big(\gamma, \pr_{1*}\big(\pr_2^*\alpha\cdot f_i(x)\big)\Big)_{(r,d-1)}
    &=(-1)^{|\alpha||x|}\cdot (-1)^{|\gamma|}\cdot
    \deg\left(\gamma\cdot \pr_{1*}\big(\pr_2^*\alpha\cdot f_i(x)\big)
    \right)\\
    &=(-1)^{|\alpha||x|}\cdot (-1)^{|\gamma|}\cdot
    \deg\left(
    \pr_1^*\gamma\cdot \pr_2^*\alpha\cdot (\rho\times p_-)_*(z^i\cdot (p_+)^!x)
    \right)\\
    &=(-1)^{|\alpha||x|}\cdot (-1)^{|\gamma|}\cdot
    \deg\left(
    (p_-)^*\gamma\cdot \rho^*\alpha\cdot z^i\cdot (p_+)^!x
    \right)\\
    &=(-1)^{|\alpha||x|}\cdot (-1)^{|\gamma|}\cdot (-1)^{|\gamma||\alpha|}\cdot
    \deg\left(
    \rho^*\alpha\cdot (p_-)^*\gamma\cdot z^i\cdot (p_+)^!x
    \right)
\end{aligned}
\end{equation}
where the sign $(-1)^{|\gamma|}$ comes from the topological duality and the second equality uses the definition of $f_i(x)$. Note that both \eqref{eq: LHS with sign} and \eqref{eq: RHS with sign} vanish unless $(-1)^{|\alpha|+|\gamma|}=(-1)^{|x|}$, and when this holds the signs in front of the two expressions match. This completes the proof.

\end{proof}

\begin{lemma}
    The generating series of operators $e(u)$ and $f(v)$ preserve the virtual homology group, i.e., they restrict to 
    $$e(u):\BV^\vir\rightarrow \BV^\vir\otimes H^*(C)(\!(u^{-1})\!),\quad
    f(v):\BV^\vir\rightarrow \BV^\vir\otimes H^*(C)(\!(v^{-1})\!). 
    $$
\end{lemma}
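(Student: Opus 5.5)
The plan is to reduce the statement to Theorem \ref{thm: virtual homology is preserved} using Lemma \ref{lem: Hecke and CoHA actions}, together with Künneth decomposition over the extra factor $H^*(C)$. Fix a basis $\{\alpha_a\}$ of $H^*(C)$ and let $\{\alpha_a^\vee\}$ be the dual basis with respect to the Poincaré pairing, so that $\int_C \alpha_a\cup\alpha_b^\vee=\delta_{ab}$. Since $C$ is projective, $H_*(\Quot_{(r,d)}\times C)=H_*(\Quot_{(r,d)})\otimes H_*(C)$, and we identify $H_*(C)\simeq H^*(C)$ by Poincaré duality. Any class $y\in H_*(\Quot_{(r,d)}\times C)$ can then be written as $y=\sum_a y_a\otimes \alpha_a$, where the components are recovered by
$$y_a=\pm\,\pr_{1*}\big(\pr_2^*\alpha_a^\vee\cdot y\big),$$
up to a Koszul sign.

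Apply this to $y=e_i(x)$ with $x\in H^\vir_*(\Quot_{(r,d)})$. By Lemma \ref{lem: Hecke and CoHA actions}, each component is, up to sign, $e_i^{\alpha_a^\vee}\star x$. This element lies in $\BV^\vir$ by Theorem \ref{thm: virtual homology is preserved}, since $e_i^{\alpha_a^\vee}\in\BH^\full$. Hence $e_i(x)\in \BV^\vir\otimes H^*(C)$ for every $i$, and so the whole generating series $e(u)$ maps $\BV^\vir$ into $\BV^\vir\otimes H^*(C)(\!(u^{-1})\!)$. The annihilation side goes the same way: the components of $f_i(x)$ are $\pm f_i^{\alpha_a^\vee}\star x$, and these lie in $\BV^\vir$ because Theorem \ref{thm: virtual homology is preserved} also covers the annihilation action of $(\BH^\full)^\op$.

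The one genuine point to check is the identification of Künneth components. Concretely, we must verify that the map $y\mapsto (\pr_{1*}(\pr_2^*\alpha_a^\vee\cdot y))_a$ is injective and recovers the coefficients. This is a formal computation: write $y=\sum_b y_b\boxtimes[\alpha_b\cap C]$, so that $\pr_{1*}(\pr_2^*\beta\cdot y)=\sum_b\pm(\int_C\beta\cup\alpha_b)\,y_b$. Pairing with the dual basis then isolates each coefficient $y_b$. Signs play no role, since we only need membership in a subspace.

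A secondary remark, to address the series being infinite: the statement concerns formal series, so preservation is required coefficientwise only. There is therefore no finiteness issue, although $e_i(x)$ does vanish for large $i$ for degree reasons.
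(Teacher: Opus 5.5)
Your proposal is correct and follows the paper's argument, which deduces the lemma in one line from Theorem~\ref{thm: virtual homology is preserved} and Lemma~\ref{lem: Hecke and CoHA actions}. Your K\"unneth and Poincar\'e-dual-basis extraction of the components $\pm e_i^{\alpha_a^\vee}\star x$ and $\pm f_i^{\alpha_a^\vee}\star x$ spells out the step that the paper leaves implicit.
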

\begin{proof}
    This immediately follows from Theorem \ref{thm: virtual homology is preserved} and Lemma \ref{lem: Hecke and CoHA actions}.
\end{proof}

Besides the generating series of the creation and annihilation operators, we have two types of generating series of multiplication operators using tautological classes. Define
\begin{align} \label{haction}{}^{\pm}h(w):=\widehat s_{\pm w}(\BK^\pm)\circ \pr_1^!:\BV\rightarrow  \BV\otimes H^*(C)(\!(w^{-1})\!)
\end{align}
where $\pr_1:\Quot\times C\rightarrow \Quot$ is the projection. This defines the operators ${}^{\pm}h_i$ and ${}^{\pm}h^{\alpha}_i$ for all $i\in \BZ$ and $\alpha\in H^*(C)$ via expansion and integration 
$${}^{\pm}h(w)=\sum_{i\in \BZ}{}^{\pm}h_i w^{-i-1},\quad {}^{\pm}h^{\alpha}_i\star x:=\pr_{1*}\big(\pr_2^*\alpha\cdot {}^{\pm}h_i(x)\big). 
$$
Note that 
\begin{equation}\label{eq: vanishing for small enough i}
    \begin{cases}
        {}^+h_i=0,&\textnormal{if}\quad i<\rk(V)-r-1,\\
        {}^-h_i=0,&\textnormal{if}\quad i<r-1,
    \end{cases}
\end{equation}
as operators on $H_*(\Quot_{(r,d)})$ for cohomological degree reasons.

\begin{remark} \label{rem: relation with NM action}
Note that $\hat{s}_{z}(E) = 1/c(E^\vee,z)$ where \[ c(E,z) = \sum_{i=0}^{\mathrm{rk}(E)}(-1)^i z^{\mathrm{rk}(E)-i}c_i(E)\] is the universal Chern series in the notation of Marian--Negu\c t in \cite{marian2026cohomologyquotschemesmooth}. By Remark \ref{rem: explicit K complex}, we have \[ {}^{-}h(z) = (-1)^{r}\frac{c(\mathcal{V},z+c_1(K_C))}{c(\mathcal{S},z+c_1(K_C))}, \quad {}^{+}h(z) = \frac{1}{c(\mathcal{S},z)}.\] In particular when we are working with a punctual Quot scheme, i.e., in the $r=0$ case, we get 
\begin{equation} \label{eqn:relationwithalinanegut}
h^{MN}(z)  = {}^{-}h(z){}^{+}h(z)
\end{equation}
where $h^{MN}(z)$ is the formula in \cite[eq. (16)]{marian2026cohomologyquotschemesmooth}. Note that we have a relation between the ${}^+h$ and ${}^-h$ classes depending on the choice of $V$ and $r$:
\begin{equation}\label{eqn: h+ h- relation}
{}^-h(z-c_1(K_C)) =(-1)^{r} c (V,z) {}^+h(z)  = (-1)^r(z^{\mathrm{rk}(V)}-c_1(V)z^{\mathrm{rk}(V)-1}){}^+h(z).
\end{equation}
\end{remark}

\subsection{$[{}^{\pm}h,e]$, $[f,{}^{\pm}h]$ type commutator}

In this section, we compute the commutator between the creation and annihilation operators and the multiplication operators. We start with the $[{}^{\pm}h,e]$ type, i.e., comparison of 
$${}^{\pm}h(w)\circ e(u):\BV\rightarrow \BV\otimes H^*(C_e\times C_h)(\!(u^{-1},w^{-1})\!)
$$
and 
$$e(u)\circ {}^{\pm}h(w):\BV\rightarrow \BV\otimes H^*(C_e\times C_h)(\!(u^{-1},w^{-1})\!),
$$
where the subscripts on $C_e$ and $C_h$ indicate the factor of the curve added in the creation and multiplication operators, respectively. Consider the diagram for the creation operator with an extra copy of $C_h$
\begin{equation}\label{eq: [h,e] diagram}
\begin{tikzcd}
  & \Quot_{(r, [d,d+1])} \times C_h\arrow[ld, "p_-"'] \arrow[rd, "p_+\times \rho"] &   \\
\Quot_{(r,d)} \times C_h&                                    & \Quot_{(r,d+1)}\times C_e\times C_h.
\end{tikzcd}
\end{equation}
The next lemma compares $\BK^\pm_{(r,d),h}$ and $\BK^\pm_{(r,d+1),h}$ after the appropriate pullback.

\begin{lemma}\label{lem: comparing K complex}
In the $K$-theory of $\Quot_{(r, [d,d+1])} \times C_h$, we have
$$(p_-)^*\BK^\pm_{(r,d),h}-(p_+\times\rho)^*\BK^\pm_{(r,d+1),h}=\pm \CL^{\mp}\otimes(\CO - \CO(\Delta_{eh})),
$$
where $\Delta_{eh}\hookrightarrow C_e\times C_h$ is the diagonal. 
\end{lemma}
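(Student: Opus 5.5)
We work on $\Quot_{(r,[d,d+1])}\times C_e\times C_h$, with $C_e$ the factor carried by $\rho$. The plan is to compare the universal sheaves of the two Quot schemes pulled back to the nested Quot scheme, and then push this comparison through the definitions of $\BK^\pm$. By diagram \eqref{eq: nested Quot}, applied with $\alpha=\delta=(0,1)$, we have short exact sequences on $\Quot_{(r,[d,d+1])}\times C_\bullet$:
$$0\rightarrow \CS'\rightarrow \CS\rightarrow \CF_\delta\rightarrow 0,\qquad 0\rightarrow \CF_\delta\rightarrow \CQ'\rightarrow \CQ\rightarrow 0.$$
Here $\CS,\CQ$ are pulled back by $p_-$, and $\CS',\CQ'$ are pulled back by $p_+$. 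Under $(\rho,\CL)$, the universal length-one sheaf is $\CF_\delta\simeq \CL\otimes\CO_{\Delta_{e\bullet}}$, where $\Delta_{e\bullet}$ is the graph of $\rho$. This follows because $\Coh_{(0,1)}\simeq C\times\BGm$, with universal sheaf $\CO_\Delta\otimes\CL$. In $K$-theory this gives
$$\CS-\CS'=\CL\otimes\CO_{\Delta_{e\bullet}}=\CQ'-\CQ.$$

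Next we substitute these identities into the definitions. Using additivity of $\R p_*\RHHom$ in $K$-theory, we get
$$(p_-)^*\BK^+_{h}-(p_+\times\rho)^*\BK^+_{h}=\R p_*\RHHom(\CS-\CS',\CO_{\Delta_{h\bullet}})=\CL^{-1}\otimes \R p_*\RHHom(\CO_{\Delta_{e\bullet}},\CO_{\Delta_{h\bullet}}),$$
$$(p_-)^*\BK^-_{h}-(p_+\times\rho)^*\BK^-_{h}=\R p_*\RHHom(\CO_{\Delta_{h\bullet}},\CQ-\CQ')=-\CL\otimes \R p_*\RHHom(\CO_{\Delta_{h\bullet}},\CO_{\Delta_{e\bullet}}).$$
Here $p$ is the projection forgetting $C_\bullet$. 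Flat base change lets us compute $\RHHom$ after pullback, because the complexes are perfect and the universal sheaves are flat over the base.

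It remains to show that
$$\R p_*\RHHom(\CO_{\Delta_{e\bullet}},\CO_{\Delta_{h\bullet}})=\CO-\CO(\Delta_{eh})=\R p_*\RHHom(\CO_{\Delta_{h\bullet}},\CO_{\Delta_{e\bullet}})$$
in $K$-theory of $C_e\times C_h$, pulled back to the base. This step is the main obstacle, but it is a local computation on $C_e\times C_h\times C_\bullet$. We use the Koszul resolution $0\to\CO(-\Delta_{e\bullet})\to\CO\to\CO_{\Delta_{e\bullet}}\to0$. This gives $\RHHom(\CO_{\Delta_{e\bullet}},\CO_{\Delta_{h\bullet}})=\CO_{\Delta_{h\bullet}}-\CO_{\Delta_{h\bullet}}(\Delta_{e\bullet})$. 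Pushing forward along $\Delta_{h\bullet}\simeq C_e\times C_h$, the divisor $\Delta_{e\bullet}$ restricts to $\Delta_{eh}$, so the result is $\CO-\CO(\Delta_{eh})$. This is the same computation as in Proposition \ref{prop: normalofqd}. The other ordering follows from the symmetric computation with the roles of $e$ and $h$ swapped, since $\Delta_{eh}$ is symmetric.

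Substituting back gives $+\CL^{-1}\otimes(\CO-\CO(\Delta_{eh}))$ for the $+$ sign and $-\CL\otimes(\CO-\CO(\Delta_{eh}))$ for the $-$ sign, which is the claimed formula $\pm\CL^{\mp}\otimes(\CO-\CO(\Delta_{eh}))$. As a cross-check we can also use Remark \ref{rem: explicit K complex}, where $\BK^+=\CS_h^\vee$. Then $\CS_h-\CS'_h=\CL\otimes\CO_{\Delta_{eh}}$, and dualizing gives $\CL^{-1}\otimes\CO_{\Delta_{eh}}^\vee=\CL^{-1}\otimes(\CO-\CO(\Delta_{eh}))$, which agrees.
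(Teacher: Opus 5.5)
Your proposal is correct and takes essentially the same approach as the paper. Both arguments use the sequences $0\to\CS'\to\CS\to\CO_{\Delta_{e\bullet}}\otimes\CL\to 0$ and $0\to\CO_{\Delta_{e\bullet}}\otimes\CL\to\CQ'\to\CQ\to 0$, substitute them into the definitions of $\BK^\pm$, and evaluate $\R p_*\RHHom$ between the diagonal sheaves via $\CO_{\Delta_{e\bullet}}=\CO-\CO(-\Delta_{e\bullet})$ to obtain $\CO-\CO(\Delta_{eh})$. Your cross-check through the identification $\BK^+\simeq\CS^\vee$ is a valid consistency check but is not needed.
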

\begin{proof}
We first consider the case of $\BK^+$. By definition of the complex $\BK^+$, we have
$$\BK^+_{(r,d),h}-\BK^+_{(r,d+1),h}=\R p_*\RHHom(\CS_{(r,d),\bullet} - \CS_{(r,d+1),\bullet}, \CO_{\Delta_{\bullet h}})
$$
where we omit various pull backs from the notation. On the other hand, we have a short exact sequence
$$0\rightarrow \CS_{(r,d+1),\bullet}\rightarrow \CS_{(r,d),\bullet}\rightarrow \CO_{\Delta_{\bullet e}}\otimes \CL\rightarrow 0.
$$
Therefore, we obtain 
\begin{align*}
\BK^+_{(r,d),h}-\BK^+_{(r,d+1),h}
&=\R p_* \RHHom(\CO_{\Delta_{\bullet e}}\otimes \CL,\CO_{\Delta_{\bullet h}})\\
&=\R p_* \RHHom((\CO-\CO(-\Delta_{\bullet e}))\otimes \CL,\CO_{\Delta_{\bullet h}})\\
&=\CL^{-1}-\CL^{-1}\otimes \CO(\Delta_{eh}).
\end{align*}

Similarly, by definition of the complex $\BK^-$, we have
$$\BK^-_{(r,d),h}-\BK^-_{(r,d+1),h}=\R p_*\RHHom(\CO_{\Delta_{\bullet h}},\CQ_{(r,d),\bullet} - \CQ_{(r,d+1),\bullet}).
$$
On the other hand, we have a short exact sequence
$$0\rightarrow \CO_{\Delta_{\bullet e}}\otimes \CL\rightarrow \CQ_{(r,d+1),\bullet}\rightarrow \CQ_{(r,d),\bullet}\rightarrow 0
$$
Therefore, we obtain 
\begin{align*}
\BK^-_{(r,d),h}-\BK^-_{(r,d+1),h}
&=-\R p_*\RHHom(\CO_{\Delta_{\bullet h}},\CO_{\Delta_{\bullet e}}\otimes \CL)\\
&=-\CL+\CL\otimes \CO(\Delta_{eh}).
\end{align*}
\end{proof}

In the statement below, we denote 
$${}^{+}h_k \circ^\gamma e_\ell:=\sum_{s\in I} {}^{+}h_k^{\gamma_s^L} \circ e_\ell^{\gamma_s^R}
$$
where $\Delta_*\gamma=\sum_{s\in I} \gamma_s^L\otimes \gamma_s^R$. The other expressions
$$e_k^\gamma\circ {}^{-}h_\ell,\quad f_k\circ^\gamma {}^{+}h_\ell,\quad {}^{-}h_k\circ^\gamma f_\ell,\quad {}^{-}h_k\circ^\gamma {}^{+}h_\ell,\quad e_k\circ^\gamma e_\ell,\quad f_k\circ^\gamma f_\ell,\quad \dots $$ 
are defined in the same way. 

\begin{proposition}\label{prop: [e,m]}
As operators $\BV\rightarrow \BV\otimes H^*(C_e\times C_h)(\!(u^{-1},w^{-1})\!)$, we have
\begin{align*}
    [{}^{+}h(w), e(u)]&=\Delta_{eh}\circ {}^{+}h(w)\circ\left(\frac{e(u)-e(w)}{u-w}
\right),\\
    [{}^{-}h(w), e(u)]&=\Delta_{eh}\circ \left(\frac{e(u)-e(w)}{u-w}\right)\circ {}^{-}h(w).
\end{align*}
In particular, as operators $\BV\rightarrow \BV$, we have 
\begin{align*}
    [\+h_i^\alpha,e^\beta_j]=-\big(\+h_{i-1}\circ^{\alpha\cup\beta}e_{j}+\+h_{i-2}\circ^{\alpha\cup\beta}e_{j+1}+\cdots\big),\\
    [{}^{-}h_i^\alpha,e^\beta_j]=-\big(e_{j}\circ^{\alpha\cup\beta}{}^{-}h_{i-1}   +e_{j+1}\circ^{\alpha\cup\beta}{}^{-}h_{i-2}+\cdots\big).
\end{align*}
\end{proposition}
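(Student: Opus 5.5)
The plan is to compute both compositions through the Hecke diagram \eqref{eq: [h,e] diagram} and compare them with Lemma \ref{lem: comparing K complex}. Since ${}^{\pm}h(w)$ is just multiplication by the tautological class $\widehat s_{\pm w}(\BK^\pm_{h})$ after $\pr_1^!$, it commutes with pullback along $p_-$ and (by the virtual projection formula) with pushforward along $p_+\times\rho$. Fix $x\in\BV$. For $e(u)\circ{}^{\pm}h(w)$ we pull back $\widehat s_{\pm w}(\BK^\pm_{(r,d),h})$ along $p_-$. For ${}^{\pm}h(w)\circ e(u)$ we pull back $\widehat s_{\pm w}(\BK^\pm_{(r,d+1),h})$ along $p_+\times\rho$. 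Both end up as classes on $\Quot_{(r,[d,d+1])}\times C_h$ multiplied against $z^i\cdot (p_-)^!x$ and pushed forward. Since $\widehat s$ is multiplicative, Lemma \ref{lem: comparing K complex} gives the ratio
$$\frac{(p_+\times\rho)^*\widehat s_{\pm w}(\BK^\pm_{(r,d+1),h})}{(p_-)^*\widehat s_{\pm w}(\BK^\pm_{(r,d),h})}=\widehat s_{\pm w}\big(\mp\CL^{\mp}\otimes(\CO-\CO(\Delta_{eh}))\big).$$
Using $c_1(\CL)=z$ and $\Delta_{eh}^2=0$ in $H^*(C_e\times C_h)$ (it is pulled back from the diagonal class, whose self-intersection is $\chi(C)[\pt\times\pt]$; more precisely $\Delta_{eh}^2=2-2g$ times the point class), this ratio is the explicit rational function $\big(\frac{w-z+\Delta_{eh}}{w-z}\big)^{\pm1}$, read up to the sign conventions, with $\pm w$ placed accordingly.

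We first check the sign in the ${}^+$ case. The ratio is $\widehat s_w(-\CL^{-1}+\CL^{-1}(\Delta_{eh}))=\frac{w-z}{w-z+\Delta_{eh}}$. Inverting and expanding gives $1+\frac{\Delta_{eh}}{w-z}$ up to the $\Delta^2$ correction, which we control by $\Delta_{eh}^2=\chi\cdot[\pt]$ times a further $\Delta$-free factor. I expect the correct statement is to multiply and keep only the first-order term, because $\Delta_{eh}\cdot(\Delta_{eh}\text{-stuff})$ combined with the pushforward is absorbed. This needs care.

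Writing $e(u)=\sum z^i u^{-i-1}=\frac{1}{u-z}$ inside the pushforward, the commutator becomes the pushforward of
$$\widehat s\cdot\frac{\Delta_{eh}}{w-z}\cdot\frac{1}{u-z}\cdot(p_-)^!x .$$
The partial fraction identity $\frac{1}{(u-z)(w-z)}=\frac{1}{u-w}\big(\frac1{w-z}-\frac1{u-z}\big)$ then turns this into $\Delta_{eh}\circ{}^{\pm}h(w)\circ\frac{e(u)-e(w)}{u-w}$ up to sign. On $\Delta_{eh}$ the points of $C_e$ and $C_h$ agree, so one may use either $\BK$ in front. This is why ${}^+h$ ends up on the left in one formula and ${}^-h$ on the right in the other: in each case one chooses which side's $\widehat s$ survives according to which expansion is exact.

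The main obstacle is justifying that the second-order term in $\Delta_{eh}$ drops out, and fixing signs and the order of the factors; that is, checking that the formula holds exactly and not merely to first order. I would try to argue that, restricted to $\Delta_{eh}$, the factor $\Delta_{eh}\cdot\Delta_{eh}=\Delta_{eh}\cdot c_1(K)$-type classes combine with the difference between the two $\BK$'s evaluated on the diagonal. Lemma \ref{lem: comparing K complex} restricted to $\Delta_{eh}$ gives exactly this, since $\CO(\Delta)|_\Delta=K^\vee$. If that fails, I would instead test the formula on $d=0$ and $r=0$ and adjust.

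The coefficient extraction for $[{}^{\pm}h^\alpha_i,e^\beta_j]$ is routine. Expand $\frac{e(u)-e(w)}{u-w}=\sum_{a,b}e_{a+b+1}u^{-a-1}w^{-b-1}$, take the coefficient of $u^{-j-1}w^{-i-1}$, then apply Lemma \ref{lem: Hecke and CoHA actions} to integrate against $\alpha,\beta$. The diagonal class converts the two insertions into $\circ^{\alpha\cup\beta}$.
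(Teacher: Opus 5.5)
This is a genuine gap, although your outline is the paper's: both compositions through the Hecke diagram, comparison of Segre classes via Lemma \ref{lem: comparing K complex}, then partial fractions. The step you call the main obstacle is left open, and the remedies you suggest (truncating at first order, absorbing $\Delta^2$-terms, testing small cases) are not valid arguments. So as written you do not get an exact identity.

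The missing observation is elementary. $\widehat s_w$ is multiplicative on $K$-theory, and $\widehat s_w(L)=1/(w+c_1(L))$ exactly for a line bundle. So the comparison factor is an exact ratio of linear forms, and with the right orientation it is linear in $\Delta_{eh}$ on the nose. You half-say this yourself (``according to which expansion is exact''), but then treat exactness as unproven.

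For ${}^{+}h$, express the old class through the new one:
\[
(p_-)^*\widehat s_w(\BK^+_{(r,d),h})=(p_+\times\rho)^*\widehat s_w(\BK^+_{(r,d+1),h})\cdot\Bigl(1+\frac{\Delta_{eh}}{w-z}\Bigr).
\]
Pull $\widehat s_w(\BK^+_{(r,d+1),h})$ out of $(p_+\times\rho)_*$ by the projection formula; this is why ${}^{+}h(w)$ ends up on the left. Then use $\frac{1}{u-z}\bigl(1+\frac{\Delta_{eh}}{w-z}\bigr)=\frac{1}{u-z}-\Delta_{eh}\cdot\frac{1}{u-w}\bigl(\frac{1}{u-z}-\frac{1}{w-z}\bigr)$.

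For ${}^{-}h$ go the other way: $(p_+\times\rho)^*\widehat s_{-w}(\BK^-_{(r,d+1),h})=(p_-)^*\widehat s_{-w}(\BK^-_{(r,d),h})\cdot\frac{w-z-\Delta_{eh}}{w-z}$. Then move $\widehat s_{-w}(\BK^-_{(r,d),h})$ inside $(p_-)^!$ by (A14); this is why ${}^{-}h(w)$ ends up on the right.

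No $\Delta_{eh}^2$ ever appears. Your statement that inverting $\frac{w-z}{w-z+\Delta_{eh}}$ gives $1+\frac{\Delta_{eh}}{w-z}$ only ``up to the $\Delta^2$ correction'' is false: the inverse is exactly that. Only the opposite orientation yields $\frac{w-z}{w-z+\Delta_{eh}}=1-\frac{\Delta_{eh}}{w-z}+\frac{\Delta_{eh}^2}{(w-z)^2}$, whose quadratic term cannot be dropped.

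Two auxiliary claims should also be discarded.
\begin{enumerate}
\item $\Delta_{eh}^2=(2-2g)[\pt\times\pt]$ is not zero unless $g=1$.
\item For that reason, the Segre classes of $\BK^+_{(r,d),h}$ and $\BK^+_{(r,d+1),h}$ are not interchangeable after multiplying by $\Delta_{eh}$. After that multiplication they differ by $\frac{\Delta_{eh}^2}{w-z}(p_+\times\rho)^*\widehat s_w(\BK^+_{(r,d+1),h})$. Hence for $g\neq 1$ the operators $\Delta_{eh}\circ{}^{+}h(w)\circ X$ and $\Delta_{eh}\circ X\circ{}^{+}h(w)$, with $X=\frac{e(u)-e(w)}{u-w}$, differ in general. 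The order in the statement is forced rather than chosen, and ``test $d=r=0$ and adjust'' is not a substitute for the computation.
\end{enumerate}

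Finally, the expansion you use for the coefficient extraction is off. The operator $\frac{e(u)-e(w)}{u-w}$ corresponds to $-\frac{1}{(u-z)(w-z)}$, so it equals $-\sum_{a,b\ge 0}e_{a+b}u^{-a-1}w^{-b-1}$, not $\sum e_{a+b+1}u^{-a-1}w^{-b-1}$. With the correct sign and index, the coefficient of $u^{-j-1}w^{-i-1}$ is $-\sum_{b\ge 0}{}^{+}h_{i-1-b}\circ e_{j+b}$ (respectively $-\sum_{b\ge0}e_{j+b}\circ{}^{-}h_{i-1-b}$). This is before inserting $\alpha,\beta$ via Lemma \ref{lem: Hecke and CoHA actions}.
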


\begin{proof}
Let $x\in \BV_{(r,d)}$. Recall the diagram \eqref{eq: [h,e] diagram}. By definition of the operators, we have
\begin{align*}
    e(u)\circ {}^{+}h(w)(x)
    &=e(u)\left(s_w(\BK^+_{(r,d),h})\cdot (\pr_1)^!x\right)\\
    &=(p_+\times \rho)_*\Big(\frac{1}{u-z}\cdot (p_-)^!(\widehat s_w(\BK^+_{(r,d),h})\cdot (\pr_1)^!x)\Big)\\
    &=(p_+\times \rho)_*\Big(\frac{1}{u-z}\cdot (p_-)^*(\widehat s_w(\BK^+_{(r,d),h}))\cdot (p_-)^!x)\Big)
\end{align*}
where we used (A13) and $(p_-)^!(\pr_1)^!=(p_-)^!$ for the last equality. Using Lemma \ref{lem: comparing K complex}, we can further simplify this to 
\begin{align*}
    e(u)\circ {}^{+}h(w)(x)
    &=\widehat s_w(\BK^+_{(r,d+1),h})\cdot (p_+\times \rho)_*\Big(\frac{1}{u-z}\cdot \widehat s_w(\CL^{-1} - \CL^{-1}(\Delta_{eh}))\cdot  (p_-)^!(x)\Big)\\
    &=\widehat s_w(\BK^+_{(r,d+1),h})\cdot (p_+\times \rho)_*\left(\frac{1}{u-z}\cdot \frac{w- z+\Delta_{eh}}{w- z} \cdot (p_-)^!(x)\right).
\end{align*}
Using the identity
\begin{align*}
    \frac{1}{u-z}\cdot \frac{w- z+\Delta_{eh}}{w- z}=\frac{1}{u-z}-\Delta_{eh}\cdot \left(\frac{\frac{1}{u-z}-\frac{1}{w-z}}{u-w}\right), 
\end{align*}
we obtain
$$[{}^{+}h(w), e(u)](x)=\Delta_{eh}\circ {}^{+}h(w)\circ\left(\frac{e(u)-e(w)}{u-w}
\right)(x).
$$

Similarly, we have
\begin{align*}
{}^{-}h(w)\circ e(u) (x)
&=\widehat s_{-w}(\BK^-_{(r,d+1),h})\cdot (p_+\times \rho)_*\Big(
\frac{1}{u-z}\cdot (p_-)^!(x)\Big)\\
&=(p_+\times \rho)_*\Big(
\widehat s_{-w}(\CL-\CL(\Delta_{eh}))\cdot
\frac{1}{u-z}\cdot (p_-)^*(s_{-w}(\BK^-_{(r,d),h}))\cdot (p_-)^!x)\Big)\\
&=(p_+\times \rho)_*\Big(
\frac{w-z-\Delta_{eh}}{w-z}
\cdot \frac{1}{u-z}\cdot (p_-)^!(\widehat s_{-w}(\BK^-_{(r,d),h})\cdot x)\Big)
\end{align*}
which can be rewritten as 
$$[{}^{-}h(w), e(u)](x)=\Delta_{eh}\circ \left(\frac{e(u)-e(w)}{u-w}\right)\circ {}^{-}h(w)(x).
$$

\end{proof}

By the same computation, one can compute the $[f,{}^{\pm}h]$ case. 

\begin{proposition}\label{prop: [f,m]}
As operators $\BV\rightarrow \BV\otimes H^*(C_f\times C_h)(\!(v^{-1},w^{-1})\!)$, we have
\begin{align*}
    [f(v),{}^{+}h(w)]&=\Delta_{f,h}\circ \left(\frac{f(v)-f(w)}{v-w}
\right)\circ {}^{+}h(w),\\
    [f(v), {}^{-}h(w)]&=\Delta_{f,h}\circ {}^{-}h(w)\circ \left(\frac{f(v)-f(w)}{v-w}\right).
\end{align*}
In particular, as operators $\BV\rightarrow \BV$, we have 
\begin{align*}
    [f^\alpha_j,\+h_i^\beta]=-\big(f_{j}\circ^{\alpha\cup\beta}{}^{+}h_{i-1}   +f_{j+1}\circ^{\alpha\cup\beta}{}^{+}h_{i-2}+\cdots\big),\\
    [f^\alpha_j,{}^{-}h_i^\beta]=-\big({}^{-}h_{i-1}\circ^{\alpha\cup\beta}f_{j}+{}^{-}h_{i-2}\circ^{\alpha\cup\beta}f_{j+1}+\cdots\big).
\end{align*}
\end{proposition}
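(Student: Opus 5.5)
We mirror the proof of Proposition \ref{prop: [e,m]}, now working on the annihilation correspondence
\begin{center}
\begin{tikzcd}
  & \Quot_{(r, [d-1,d])} \times C_h\arrow[ld, "p_+"'] \arrow[rd, "p_-\times \rho"] &   \\
\Quot_{(r,d)} \times C_h&                                    & \Quot_{(r,d-1)}\times C_f\times C_h,
\end{tikzcd}
\end{center}
with $f(v)=(p_-\times\rho)_*\circ \frac{1}{v-z}\circ (p_+)^!$ and $z=c_1(\CL)$. For $x\in \BV_{(r,d)}$ we compare $f(v)\circ {}^{\pm}h(w)(x)$ and ${}^{\pm}h(w)\circ f(v)(x)$.

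\emph{First composition.} By the virtual projection formula (A13) and $(p_+)^!(\pr_1)^!=(p_+)^!$,
$$f(v)\circ {}^{\pm}h(w)(x)=(p_-\times\rho)_*\Big(\tfrac{1}{v-z}\cdot (p_+)^*\widehat s_{\pm w}(\BK^\pm_{(r,d),h})\cdot (p_+)^!x\Big).$$

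\emph{Second composition.} Here we pull $\widehat s_{\pm w}(\BK^\pm_{(r,d-1),h})$ inside the pushforward as $(p_-)^*$ of it. Lemma \ref{lem: comparing K complex}, applied with $d-1$ in place of $d$, gives in $K$-theory
$$(p_-)^*\BK^\pm_{(r,d-1),h}=(p_+)^*\BK^\pm_{(r,d),h}\pm\CL^{\mp}(\CO-\CO(\Delta_{fh})).$$
Since the Chern roots of the correction term are $\mp z$ and $\mp z+\Delta_{fh}$, this yields
$$\widehat s_{w}(\BK^+_{(r,d-1)})=\widehat s_{w}(\BK^+_{(r,d)})\cdot \frac{w-z+\Delta_{fh}}{w-z},\qquad \widehat s_{-w}(\BK^-_{(r,d-1)})=\widehat s_{-w}(\BK^-_{(r,d)})\cdot \frac{w-z-\Delta_{fh}}{w-z}.$$
These are exactly the ratios that appeared in Proposition \ref{prop: [e,m]}, but now they sit on the opposite side of the correspondence. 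This is why the two orderings in the final formulas are swapped relative to the $e$ case.

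\emph{Partial fractions and sign tracking.} Apply the identity
$$\frac{1}{v-z}\cdot\frac{w-z\pm\Delta}{w-z}=\frac{1}{v-z}\mp\Delta\Big(\tfrac{\frac1{v-z}-\frac1{w-z}}{v-w}\Big).$$
For ${}^+h$ we obtain
$${}^+h(w)f(v)-f(v){}^+h(w)=-\Delta_{fh}\,\frac{f(v)-f(w)}{v-w}\,{}^+h(w),$$
which is the stated formula for $[f(v),{}^+h(w)]$. The order is $f$ then ${}^+h$, because the $\widehat s$ factor was attached before pushforward. For ${}^-h$ the factor sits outside the pushforward, so ${}^-h$ appears on the left, and the sign flip from $\mp$ combines with the reversed commutator to give the stated formula. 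Throughout, we take the commutator as supercommutator and note that $\widehat s$ is even, so no Koszul signs appear.

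\emph{Main obstacle: the coefficient statement.} The generating-series identity is routine. The real work is extracting the coefficients carefully.
\begin{itemize}
\item Expanding $\frac{f(v)-f(w)}{v-w}$ with $f(v)=\sum f_jv^{-j-1}$ gives $-\sum_{a,b\ge0} f_{a+b+1}\,v^{-a-1}w^{-b-1}$.
\item Multiplying by ${}^{\pm}h(w)=\sum_i h_iw^{-i-1}$ and taking the coefficient of $v^{-j-1}w^{-i-1}$ produces $-\sum_{k\ge1} f_{j+k-1}\circ h_{i-k}$.
\item Finally, decorate with $\alpha,\beta$ via Lemma \ref{lem: Hecke and CoHA actions}: integrate against $\pr^*\alpha\otimes\pr^*\beta$ over $C_f\times C_h$. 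Using $\Delta_{fh}\cdot(\alpha\otimes\beta)=\Delta_*(\alpha\cup\beta)$ converts this into the $\circ^{\alpha\cup\beta}$ notation.
\end{itemize}
The sign convention for the Koszul ordering of $\alpha$ relative to $f$ must be checked against the one used for $e$ in Proposition \ref{prop: [e,m]}.
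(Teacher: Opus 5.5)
Your strategy is the paper's: its proof just says the argument is identical to Proposition~\ref{prop: [e,m]}. Your ${}^+h$ computation is correct.

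The ${}^-h$ half, however, fails as written. The $K$-theory correction for $\BK^-$ is $-\CL+\CL(\Delta_{fh})$. So the roots $z$ and $z+\Delta_{fh}$ enter with multiplicities $-1$ and $+1$, the opposite pattern from the ${}^+$ case; you dropped the overall $\pm$ when reading off the roots. The correct relation is
\[
\widehat s_{-w}(\BK^-_{(r,d-1),h})=\widehat s_{-w}(\BK^-_{(r,d),h})\cdot\frac{w-z}{w-z-\Delta_{fh}},\quad\text{i.e.}\quad \widehat s_{-w}(\BK^-_{(r,d),h})=\widehat s_{-w}(\BK^-_{(r,d-1),h})\cdot\frac{w-z-\Delta_{fh}}{w-z}.
\]
With your inverted version, the factor $\frac{w-z-\Delta_{fh}}{w-z}$ lands inside ${}^-h(w)\circ f(v)$. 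Your partial-fraction identity then gives $[f(v),{}^-h(w)]=-\Delta_{fh}\circ\frac{f(v)-f(w)}{v-w}\circ{}^-h(w)$. The leading term has the wrong sign, and the order cannot be swapped for free, since $\Delta_{fh}^2\neq 0$ when $g(C)\neq 1$.

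With the correct relation you rewrite $f(v)\circ{}^-h(w)$ instead. Replace $(p_+)^*\widehat s_{-w}(\BK^-_{(r,d),h})$ using the second form, and pull $\widehat s_{-w}(\BK^-_{(r,d-1),h})$ out of $(p_-\times\rho)_*$ by the projection formula. The lower-sign partial fraction then gives
\[
f(v)\circ{}^-h(w)={}^-h(w)\circ f(v)+\Delta_{fh}\circ{}^-h(w)\circ\frac{f(v)-f(w)}{v-w},
\]
which is the claim. Your sentence about the factor sitting outside the pushforward describes this correct computation. But it contradicts your displayed formula, and the step is never actually carried out.

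There is also an index slip in the coefficient extraction. The correct expansion is $\frac{f(v)-f(w)}{v-w}=-\sum_{a,b\ge 0}f_{a+b}\,v^{-a-1}w^{-b-1}$, not $f_{a+b+1}$: the $f_0$ term alone already gives $\frac{v^{-1}-w^{-1}}{v-w}=-v^{-1}w^{-1}$. Your final answer $-\sum_{k\ge 1}f_{j+k-1}\circ h_{i-k}$ is what the correct expansion produces; your displayed expansion would give $f_{j+k}$ instead. For ${}^-h$ the order in the coefficients is ${}^-h_{i-k}\circ f_{j+k-1}$, matching the statement.
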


\begin{proof}
The proof is identical to that of Proposition \ref{prop: [e,m]}.
\end{proof}

\subsection{$[e,f]$ type commutator}

Unlike the commutators of type $[{}^\pm h, e],\ [f,{}^\pm h]$, we prove the $[e,f]$ type commutators only after restricting to the virtual homology groups. It would be interesting to determine whether the same $[e,f]$ type commutator formula holds on the full homology groups. 

\begin{theorem} \label{thm: [e,f]}
    As operators $\BV^\vir\rightarrow \BV^\vir\otimes H^*(C_e\times C_f)(\!(u^{-1},v^{-1})\!)$, we have 
    $$[e(u), f(v)]=-\Delta_{e,f}\circ \Res_w\left(
    \frac{{}^{-}h(w)}{u-w}\circ \frac{{}^{+}h(w)}{v-w}\right).
    $$
    In particular, as operators $\BV^\vir\rightarrow \BV^\vir$, we have
    $$[e^\alpha_i, f^\beta_j]=-\hspace{-10pt}\sum_{\substack{n,m\in \BZ\\ n+m=i+j-1}} {}^{-}h_n\circ^{\alpha\cup \beta}{}^{+}h_m.
    $$
\end{theorem}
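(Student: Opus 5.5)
The plan is to compute both compositions $e(u)\circ f(v)$ and $f(v)\circ e(u)$ as explicit residue expressions over $\Quot_{(r,d)}\times C_e\times C_f$, using the derived projective bundle descriptions of Proposition \ref{prop: virtual projective bundle} and the pushforward formula of Lemma \ref{lem: virtual pushforward}. The commutator should then come entirely from the diagonal pole $z_e=z_f$.

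The restriction to $\BV^\vir$ is what makes this tractable. By Theorem \ref{thm: virtual homology is preserved} and Proposition \ref{prop: cyclic by vacuum}, every $x\in \BV^\vir_{(r,d)}$ has the form $\gamma\cap[\Quot_{(r,d)}]^\vir$ with $\gamma\in R^*(\Quot_{(r,d)})$. As in the proof of Theorem \ref{thm: virtual homology is preserved}, the identities $p_\pm^![\Quot]^\vir=[\Quot_{(r,[d,d+1])}]^\vir$ and the virtual projection formula turn each operator $e_i$, $f_j$ applied to $x$ into $(\text{virtual Umkehr of a cohomology class})\cap[\Quot]^\vir$. Thus both compositions become cohomological Umkehr maps along the composite correspondences, evaluated on $\gamma$ and capped with $[\Quot_{(r,d)}]^\vir$. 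Working with Umkehr maps rather than virtual pullbacks of arbitrary Borel--Moore classes is exactly what requires the virtual hypothesis.

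For $f(v)\circ e(u)$, write $Z_+$ for the derived fiber product $\Quot_{(r,[d,d+1])}\times_{\Quot_{(r,d+1)}}\Quot_{(r,[d,d+1])}$. It is an iterated derived projective bundle. First take $\BP((\BK^+_{(r,d),e})^\vee)$ over $\Quot_{(r,d)}\times C_e$, with $z_e=c_1(\CL_e)$. Over that, take the bundle $\BP((\BK^-_{(r,d+1),f})^\vee)$ pulled back along $p_+$, with $z_f=-c_1$ of its $\CO(1)$, so that $\CL_f$ corresponds to $\CO(-1)$. Applying Lemma \ref{lem: virtual pushforward} twice gives
$$f(v)e(u)(x)=\Res_{z_e}\Res_{z_f}\frac{\widehat s_{z_e}(\BK^+_{d,e})\,\widehat s_{-z_f}(\BK^-_{d+1,f})}{(u-z_e)(v-z_f)}\cdot x .$$
Lemma \ref{lem: comparing K complex} rewrites $\BK^-_{d+1,f}$ as $\BK^-_{d,f}$ plus a correction, which yields a factor $\frac{z_f-z_e-\Delta_{ef}}{z_f-z_e}$, up to a sign I will fix by the same computation as in Proposition \ref{prop: [e,m]}. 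Symmetrically, $e(u)f(v)$ goes through $\Quot_{(r,d-1)}$. Here $\BK^+_{d-1,e}$ is rewritten in terms of $\BK^+_{d,e}$, producing the inverse-type factor $\frac{z_e-z_f+\Delta_{ef}}{z_e-z_f}$. Both iterated residues then have the same integrand, apart from the expansion region: $|z_f|\gg|z_e|$ in one and $|z_e|\gg|z_f|$ in the other. The difference of the two residues is therefore $\Delta_{ef}$ times the residue at $z_e=z_f=w$. This is exactly
$$-\Delta_{ef}\,\Res_w\frac{\widehat s_{-w}(\BK^-)\widehat s_{w}(\BK^+)}{(u-w)(v-w)},$$
which gives the stated formula after matching ${}^{-}h(w)\circ{}^{+}h(w)$. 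Since $\Delta_{ef}^2=0$ in the relevant sense, the $\Delta$-expansion contributes only at first order. Extracting coefficients of $u^{-i-1}v^{-j-1}$ and integrating against $\alpha\otimes\beta$ via Lemma \ref{lem: Hecke and CoHA actions} then gives the component formula, with the sum over $n+m=i+j-1$.

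The main obstacle is the middle step. First, I have to justify that the composite correspondences really are these iterated derived projective bundles, with the Umkehr maps composing through base change. Second, I have to carry out the formal-residue bookkeeping: the two orders of expansion in $(\!(z_e^{-1},z_f^{-1})\!)$ must be compared honestly, which I would do by clearing the denominator $z_e-z_f$ and using $\Delta_{ef}\cdot(\text{function of }z_e,z_f)=\Delta_{ef}\cdot(\text{same at }z_e=z_f)$. I would also track the Koszul signs against the conventions of Lemma \ref{lem: Hecke and CoHA actions}.
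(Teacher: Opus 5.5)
\textbf{Genuine gap: your central formula for $f(v)e(u)(x)$ does not compute the operator.}

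The composite $f(v)\circ e(u)$ pulls back along the leg of $Z_+$ that remembers the first quotient (the source $\Quot_{(r,d)}$). It pushes forward along the leg that remembers the second quotient (the target). The iterated projective bundle you describe, $\BP((\BK^+_{(r,d),e})^\vee)$ over $\Quot_{(r,d)}\times C_e$ followed by the pullback of $\BP((\BK^-_{(r,d+1),f})^\vee)$, is a structure over the \emph{source}. So Lemma \ref{lem: virtual pushforward} computes a pushforward to the wrong factor.

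Your conclusion, that $f(v)e(u)$ (and likewise $e(u)f(v)$) acts on all of $\BV^\vir$ as multiplication by a tautological class, is false. Already for $V=\CO_C$, $r=0$, $d=1$ and $x=[\pt_p]$, the class $f_0e_0(x)$ has a component $\{(y,y,p)\}\subseteq \Quot_1\times C_e\times C_f$. That component is not supported over $p$, so $f_0e_0$ is not a multiplication operator. This is consistent with Propositions \ref{prop: [e,m]} and \ref{prop: [f,m]}: neither $e(u)$ nor $f(v)$ commutes with ${}^{+}h(w)$. Only the commutator is a multiplication operator, and that is exactly what must be proved.

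Even at $x=1_{(r,d)}$ your integrand is wrong. It pairs $\BK^+$ with $u$ and $\BK^-$ with $v$, whereas the target-side computation pairs $\widehat s(\BK^-_{(r,d),e})$ with $u$ and $\widehat s(\BK^+_{(r,d),f})$ with $v$. Your commutator at $1_{(r,d)}$ agrees with the true one only because, by the swap symmetry of $Z_\pm$, your source-side composites are the true ones with $(u,C_e)\leftrightarrow(v,C_f)$ exchanged, and the final answer is symmetric under that exchange.

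If you instead push to the target, then for $x=\gamma\cap 1_{(r,d)}$ the class $\gamma$ is pulled back from the source, not from the base of the bundle. Lemma \ref{lem: virtual pushforward} then no longer applies directly, and your proposal does not say how to handle this.

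\textbf{What is missing: the two-step reduction.}

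First, compute on $1_{(r,d)}$, using at each step the projective bundle over the target: $\BP((\BK^+_{(r,d-1),f})^\vee)$ for $f(v)$, then $\BP((\BK^-_{(r,d),e})^\vee)$ for $e(u)$. Use Lemma \ref{lem: comparing K complex} to move the intermediate Segre class to the target. This produces the factor $\frac{z_f+z_e+\Delta_{e,f}}{z_f+z_e}$, and the residue-commutator lemma finishes this step.

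Second, $\BV^\vir_{(r,d)}$ is spanned by K\"unneth components of $\prod_i{}^{+}h(w_i)\,1_{(r,d)}$, so you must show $\big[[e(u),f(v)],\prod_i{}^{+}h(w_i)\big]1_{(r,d)}=0$. Moving each $\widehat s_{w_i}(\BK^+)$ through the intermediate Quot scheme produces extra factors
\[
\prod_i\frac{w_i+z_e+\Delta_{e,m_i}}{w_i+z_e}\cdot\frac{w_i-z_f}{w_i-z_f+\Delta_{m_i,f}}.
\]
These drop out only because the residue commutator is proportional to $\Delta_{e,f}$, and $\Delta_{e,f}\Delta_{e,m_i}=\Delta_{e,f}\Delta_{m_i,f}$.

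A smaller point: $\Delta_{e,f}^2=(2-2g)[\pt\times\pt]\neq 0$. You do not need this, however, since your correction factor is already linear in $\Delta_{e,f}$.
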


\begin{remark}
Let $F(w)=\sum_{k\in \BZ} a_kw^{-k-1}\in R(\!(w^{-1})\!)$ be a formal series with coefficients in a ring $R$ such that $a_k=0$ for small enough $k\in \BZ$. If $u$ and $v$ are formal variables, then we have
\begin{align*}
    \Res_w \left(\frac{F(w)}{(u-w)(v-w)}\right)
    &=\Res_w \left(\sum_{i, j\geq 0,\ k\in \BZ}a_k\cdot u^{-i-1}v^{-j-1}w^{-k-1+i+j}\right)\\
    &=\sum_{i,j\geq 0} a_{i+j}\cdot u^{-i-1}v^{-j-1}\\
    &=-\frac{F_{<0}(u)-F_{<0}(v)}{u-v}
\end{align*}
where $F_{<0}(w):=\sum_{k=0}^\infty a_k w^{-k-1}$. Using this notation, we may rewrite the above theorem as 
$$[e(u), f(v)]=\Delta_{e,f}\circ\frac{H_{<0}(u)-H_{<0}(v)}{u-v},
$$
where $H(w):={}^- h(w)\circ {}^+ h(w)$. For punctual Quot schemes, $H(w)_{<0}=h^{MN}(w)$ by Remark \ref{rem: relation with NM action}, so this precisely matches with the formula \cite[eq. (23)]{marian2026cohomologyquotschemesmooth}.

\end{remark}

\begin{proof}
Recall that $\BV^\vir_{(r,d)}\subseteq H_*(\Quot_{(r,d)})$ is generated by the virtual fundamental class $1_{(r,d)}$ via tautological classes. Since tautological classes are K\"unneth components of the multiplication operator ${}^{+}h(w)$, it suffices to show that 
\begin{enumerate}
    \item [i)] $\textnormal{(LHS)}1_{(r,d)}=\textnormal{(RHS)}1_{(r,d)}$,
    \item [ii)] $\big[\textnormal{(LHS)}, \prod_{i\in I} {}^{+}h(w_i)\big]1_{(r,d)}=\big[\textnormal{(RHS)}, \prod_{i\in I} {}^{+}h(w_i)\big]1_{(r,d)}$
\end{enumerate}
where (LHS) and (RHS) denote the operator on the left and right hand side of the theorem, respectively. Since (RHS) only involves multiplication operators, the part ii) amounts to showing 
$$\big[\textnormal{(LHS)}, \prod_{i\in I} {}^{+}h(w_i)\big]1_{(r,d)}=0.$$ 

We start with the part i). In order to compute $e(u)\circ f(v)1_{(r,d)}$, we use the diagram
\begin{center}
    \begin{tikzcd}[column sep=11pt]
  & \Quot_{(r,[d-1,d])}\arrow[ld, "p_+^f"'] \arrow[rd, "p_-^f\times \rho^f=\pi_+"] &    & \Quot_{(r,[d-1,d])}\times C_f \arrow[ld, "p_-^e"'] \arrow[rd, "p_+^e\times \rho^e=\pi_-"] &     \\
\Quot_{(r,d)} &                                    & \Quot_{(r,d-1)}\times C_f &                                     & \Quot_{(r,d)}\times C_e\times C_f.
\end{tikzcd}
\end{center}
By Proposition \ref{prop: virtual projective bundle}, $\pi_+$ (resp. $\pi_-$) is identified with a derived projective bundle of the dual of $\BK^{+}_{(r,d-1), f}$ (resp. $\BK^-_{(r,d),e}$) and we write $z_f:=c_1(\CO(1))$ (resp. $z_e:=c_1(\CO(1))$). Note that $z_f=z$ and $z_e=-z$ where $z=c_1(\CL)$. By repeatedly using Proposition \ref{prop: virtual projective bundle}, Lemma \ref{lem: virtual pushforward} and Lemma \ref{lem: comparing K complex}, we obtain\footnote{Even though we only specify cohomology classes in the middle step, we actually cap it with the virtual fundamental class to obtain (Borel--Moore) homology class. These are omitted for simplicity of the notation.} 
\begin{align*}
    e(u)\circ f(v)\, 1_{(r,d)}
    &=e(u)\ (\pi_+)_*\left(\frac{1}{v-z_f}
    \right)\\
    &=e(u)\ \Res_{z_f}\left(\frac{\widehat s_{z_f}(\BK^+_{(r,d-1),f})}{v-z_f}
    \right)\\
    &=(\pi_-)_*\left(\frac{1}{u+z_e}\cdot 
    \Res_{z_f}\left((p_-^e)^*\frac{\widehat s_{z_f}(\BK^+_{(r,d-1),f})}{v-z_f}\right)
    \right)\\
    &=(\pi_-)_*\left(\frac{1}{u+z_e}\cdot 
    \Res_{z_f}\left(\widehat s_{z_f}(\CO(z_e)-\CO(z_e+\Delta_{e,f}))\cdot (\pi_-)^*\frac{\widehat s_{z_f}(\BK^+_{(r,d),f})}{v-z_f}\right)
    \right)\\
    &=(\pi_-)_*\left(\frac{1}{u+z_e}\cdot 
    \Res_{z_f}\left(\frac{z_f+z_e+\Delta_{e,f}}{z_f+z_e}\cdot (\pi_-)^*\frac{\widehat s_{z_f}(\BK^+_{(r,d),f})}{v-z_f}\right)
    \right)\\
    &=\Res_{z_e}\Res_{z_f}\left(
    \frac{\widehat s_{z_e}(\BK^-_{(r,d),e})}{u+z_e}\cdot \frac{\widehat s_{z_f}(\BK^+_{(r,d),f})}{v-z_f}\cdot \frac{z_f+z_e+\Delta_{e,f}}{z_f+z_e}
    \right)1_{(r,d)}.
\end{align*}
In the third equality we have $1/(u+z_e)$ because $z_e=-z$. The same computation shows that $f(v)\circ e(u)\, 1_{(r,d)}$ is equal to the above expression except for the order of taking residues. In other words, we have
\begin{align*}
     [e(u), f(v)] 1_{(r,d)} 
     &= [\Res_{z_e},\Res_{z_f}]\left(
    \frac{\widehat s_{z_e}(\BK^-_{(r,d),e})}{u+z_e}\cdot \frac{\widehat s_{z_f}(\BK^+_{(r,d),f})}{v-z_f}\cdot \frac{z_f+z_e+\Delta_{e,f}}{z_f+z_e}
    \right)1_{(r,d)}\\
    &= - \Delta_{e,f}\circ \Res_w\left(
    \frac{{}^{-}h(w)}{u-w}\cdot \frac{{}^{+}h(w)}{v-w}
    \right)1_{(r,d)}
\end{align*}
where the second equality follows from Lemma \ref{lem: commutator for the residue} below and definition of the multiplication operators ${}^{\pm}h(w)$.

Now we show the part ii). For readability, we write $\BK^\pm_{(r,d),*}=\BK^\pm_{d,*}$ and omit various pull backs below. Set $(\dagger):=e(u)\circ f(v)\circ \prod_{i\in I} {}^{+}h(w_i)\, 1_{(r,d)}$. Similar to the previous computation, we have
\begin{align*}
    (\dagger)
    &=e(u)\,(\pi_+)_*\left(\frac{1}{v-z_f}\cdot \prod_{i\in I} \widehat s_{w_i}(\BK^{+}_{d,m_i})
    \right)\\
    &=e(u)\,(\pi_+)_*\left(\frac{1}{v-z_f}\cdot \prod_{i\in I} 
    \widehat s_{w_i}(-\CO(-z_f)+\CO(-z_f+\Delta_{m_i,f})+\BK^{+}_{d-1,m_i})
    \right)\\
    &=e(u)\,(\pi_+)_*\left(\frac{1}{v-z_f}\cdot \prod_{i\in I} 
    \frac{w_i-z_f}{w_i-z_f+\Delta_{m_i,f}}
    \cdot \widehat s_{w_i}(\BK^{+}_{d-1,m_i})
    \right)\\
    &=e(u)\,\Res_{z_f}\left(\frac{\widehat s_{z_f}(\BK^+_{d-1,f})}{v-z_f}\cdot \prod_{i\in I} 
    \frac{w_i-z_f}{w_i-z_f+\Delta_{m_i,f}}
    \cdot \widehat s_{w_i}(\BK^{+}_{d-1,m_i})
    \right)\\
    &=(\pi_-)_*\left(
    \frac{1}{u+z_e}
    \cdot \Res_{z_f}\left(\frac{\widehat s_{z_f}(\BK^+_{d-1,f})}{v-z_f}\cdot \prod_{i\in I} 
    \frac{w_i-z_f}{w_i-z_f+\Delta_{m_i,f}}
    \cdot \widehat s_{w_i}(\BK^{+}_{d-1,m_i})
    \right)
    \right).
\end{align*}
If we replace $\BK^+_{d-1,*}$ by $\BK^+_{d,*}$ using Lemma \ref{lem: comparing K complex} again, then we obtain that $(\dagger)$ equals to 
$$\Res_{z_e}\Res_{z_f}\left(
    \frac{\widehat s_{z_e}(\BK^-_{d,e})}{u+z_e}\cdot \frac{\widehat s_{z_f}(\BK^+_{d,f})}{v-z_f}\cdot \frac{z_f+z_e+\Delta_{e,f}}{z_f+z_e}\cdot 
    \prod_{i\in I} 
    \frac{w_i+z_e+\Delta_{e,m_i}}{w_i+z_e}\frac{w_i-z_f}{w_i-z_f+\Delta_{m_i,f}}
    \cdot \widehat s_{w_i}(\BK^{+}_{d,m_i})
    \right).
$$
The same computation shows that $f(v)\circ e(u)\circ \prod_{i\in I} {}^{+}h(w_i)\, 1_{(r,d)}$ is the same except for the order of taking residues. Therefore, Lemma \ref{lem: commutator for the residue} implies that
\begin{multline*}
    [e(u),f(v)]\circ\prod_{i\in I}{}^{+}h(w_i)1_{(r,d)}\\
    =\Res_w\left(
    \frac{{}^{-}h_e(w)}{u-w}\frac{{}^{+}h_f(w)}{v-w}\cdot \Delta_{e,f}\cdot \prod_{i\in I} 
    \frac{w_i-w+\Delta_{e,m_i}}{w_i-w}\frac{w_i-w}{w_i-w+\Delta_{m_i,f}}
    \cdot \widehat s_{w_i}(\BK^{+}_{d,m_i})
    \right)
\end{multline*}
Since $\Delta_{e,f}\cdot \Delta_{e,m_i}=\Delta_{e,f}\cdot \Delta_{m_i,f}=\Delta_{e,f,m_i}$, 
$$\Delta_{e,f}\cdot \prod_{i\in I} 
    \frac{w_i-w+\Delta_{e,m_i}}{w_i-w}\frac{w_i-w}{w_i-w+\Delta_{m_i,f}}=\Delta_{e,f}.
$$
Therefore, we can further simplify the computation to 
$$\Res_w\left(
    \frac{{}^{-}h_e(w)}{u-w}\frac{{}^{+}h_f(w)}{v-w}\cdot \Delta_{e,f}\cdot \prod_{i\in I}\widehat s_{w_i}(\BK^{+}_{d,m_i})
    \right)=\prod_{i\in I}{}^{+}h(w_i)\circ [e(u),f(v)]1_{(r,d)}. 
$$
This completes the proof.
\end{proof}
\begin{lemma}\label{lem: commutator for the residue}
Let $R$ be a ring and consider 
$$F(x,y)\in R(\!(x^{-1},y^{-1})\!). 
$$
Then 
$$[\Res_{x},\Res_{y}]\frac{F(x,y)}{x+y}=- \Res_w F(-w,w).
$$
\end{lemma}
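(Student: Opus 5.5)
We reduce to monomials and expand the factor $1/(x+y)$ in the two regions where the two iterated residues make sense. By $R$-linearity and continuity in the $(x^{-1},y^{-1})$-adic topology, it suffices to treat $F(x,y)=x^ay^b$ for $a,b\in\BZ$; the general case follows by summing, since only finitely many monomials contribute to any fixed coefficient. The key point is that $\frac{1}{x+y}$ has no canonical expansion in $R(\!(x^{-1},y^{-1})\!)$. The expression $[\Res_x,\Res_y]$ is to be read with $\Res_x\Res_y$ computed by first taking $\Res_y$ with $\frac1{x+y}$ expanded in the region $|y|>|x|$, and $\Res_y\Res_x$ with the expansion in $|x|>|y|$. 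This is exactly how the two orders arose in the proof of Theorem \ref{thm: [e,f]}: the inner residue always comes from the later projective bundle, whose variable is the larger one.

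Concretely, write
$$\iota_{y}\frac1{x+y}=\sum_{n\ge0}(-1)^n x^n y^{-n-1},\qquad \iota_{x}\frac1{x+y}=\sum_{n\ge0}(-1)^n y^n x^{-n-1}.$$
Their difference is the formal delta function
$$\iota_y\frac1{x+y}-\iota_x\frac1{x+y}=\sum_{n\in\BZ}(-1)^n x^n y^{-n-1}=:\delta(x,-y).$$
Hence the commutator of residues equals $\Res_x\Res_y\big(F(x,y)\,\delta(x,-y)\big)$. On individual monomials this pairing is well defined, because $\Res_y$ picks out a single $n$.

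It remains to apply the substitution property of the delta function. For $F=x^ay^b$, the residue in $y$ forces $n=b$. The remaining residue in $x$ of $(-1)^b x^{a+b}$ is $(-1)^b$ if $a+b=-1$ and $0$ otherwise. On the other side, $F(-w,w)=(-1)^a w^{a+b}$, so $\Res_w F(-w,w)$ equals $(-1)^a$ when $a+b=-1$ and $0$ otherwise. Since $a+b=-1$ gives $(-1)^a=-(-1)^b$, we obtain
$$[\Res_x,\Res_y]\frac{F}{x+y}=-\Res_wF(-w,w).$$

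The main obstacle is the bookkeeping of conventions rather than the computation itself. We must check that the expansion regions match those implicitly used in the proof of Theorem \ref{thm: [e,f]}, namely $z_e$ versus $z_f$ and which projective bundle is taken first. We must also check that the overall sign lands as stated, since swapping the regions flips it. A secondary technical point is that for general $F$ the substitution $F(-w,w)$ and the iterated residues must converge. This holds because $F$ is bounded above in both $x$ and $y$ degrees, so each coefficient of $w^{-1}$ receives only finitely many contributions.
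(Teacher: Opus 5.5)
Your argument is correct and is essentially the paper's proof. The paper likewise expands $1/(x+y)$ as $\sum_{n\ge0}(-1)^nx^ny^{-n-1}$ for $\Res_x\circ\Res_y$ and in the opposite region for $\Res_y\circ\Res_x$, which for $F=\sum a_{i,j}x^iy^j$ gives $\sum_{n\ge0}(-1)^na_{-n-1,n}$ and $\sum_{n\ge0}(-1)^na_{n,-n-1}$ respectively. It then merges the difference of these into $\sum_{n\in\BZ}(-1)^na_{-n-1,n}$, which is exactly your formal delta-function identity; the only difference is that you package this via $\delta(x,-y)$ and check on monomials, whereas the paper works with the coefficients $a_{i,j}$ directly.
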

\begin{proof}
    Let $F(x,y)=\sum_{i,j}a_{i,j}x^iy^j$ such that there exists $N$ such that $a_{i,j}=0$ if either $i>N$ or $j>N$. Using the expansion 
    \begin{align*}
        \frac{1}{x+y}=\sum_{n=0}^\infty (-1)^n x^{n}y^{-n-1},
    \end{align*}
 we compute the iterated residue as
 \begin{align*}
     \Res_x\circ  \Res_y\frac{F(x,y)}{x+y}
     &=\Res_x \circ \Res_y \sum_{n=0}^\infty (-1)^n x^{n}y^{-n-1} F(x,y)\\ 
     &=\Res_x \sum_{n=0}^\infty\sum_{i\leq N} (-1)^n a_{i,n} x^{i+n}\\
     &=\sum_{n=0}^\infty (-1)^n a_{-n-1, n}. 
 \end{align*}
Similarly, we have
$$\Res_y\circ  \Res_x\frac{F(x,y)}{x+y}=\sum_{n=0}^\infty (-1)^n a_{n, -n-1}. 
$$
Therefore, 
\begin{align*}
    [\Res_x,\Res_y]\frac{F(x,y)}{x+y}
    &=\sum_{n\in\BZ}^\infty (-1)^n a_{-n-1, n}
\end{align*}
which is a finite sum since $a_{i,j}=0$ if $i>N$ or $j>N$. This matches precisely with 
\begin{align*}
    - \Res_w F(-w,w)&=- \Res_w\sum_{i\leq N} \sum_{j\leq N}(-1)^i a_{i,j} w^{i+j}.
\end{align*}

\end{proof}

\subsection{$[e,e], [f,f]$ type commutator}

In Theorem \ref{relations:torsioncoha}, we computed the $[e,e]$ type commutators on the level of CoHA. This would imply the same commutator formula for any $\BH$-modules. In what follows, we reprove such a commutator formula on the level of CoHA-module $\BV^\vir$, using the derived projective bundle method. 

\begin{theorem} \label{thm: [e,e]}
    As operators $\BV^\vir\rightarrow \BV^\vir\otimes H^*(C_1\times C_2)(\!(u^{-1},v^{-1})\!)$, we have
    $$\frac{[e(u), e(v)]}{u-v}=\Delta_{1,2}\circ \left(\frac{e(u)-e(v)}{u-v}\right)\circ \left(\frac{e(u)-e(v)}{u-v}\right).
    $$
    In particular, as operators $\BV^\vir\rightarrow \BV^\vir$, we have
    $$[e^\alpha_i, e^\beta_j]=-\big(e_{i-1}\circ^{\alpha\cup \beta}e_{j}+e_{i-2}\circ^{\alpha\cup \beta}e_{j+1}+\cdots + e_{j}\circ^{\alpha\cup \beta}e_{i-1}\big),\quad\textnormal{if}\quad i\geq j\geq 0.
    $$
    
\end{theorem}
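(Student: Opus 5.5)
We follow the strategy of the proof of Theorem \ref{thm: [e,f]}. Since $\BV^\vir_{(r,d)}$ is spanned by $\prod_{i\in I}{}^{+}h(w_i)\,1_{(r,d)}$, it suffices to check two things. First, both sides agree on $1_{(r,d)}$. Second, both sides have the same commutator with $\prod_{i\in I}{}^{+}h(w_i)$ when applied to $1_{(r,d)}$.

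For the second point we use Proposition \ref{prop: [e,m]}: commuting ${}^{+}h(w)$ past a single $e(u)$ produces $\Delta$-correction terms. Rather than tracking these directly, we rerun the residue computation below with the extra factors $\prod_i \widehat s_{w_i}(\BK^+)$ inserted. These factors yield rational multipliers of the form $\frac{w_i-z+\Delta}{w_i-z}$, and they enter symmetrically in $z_1,z_2$. Because $\Delta_{1,2}\Delta_{1,m}=\Delta_{1,2}\Delta_{2,m}$, the multipliers agree on the support of $\Delta_{1,2}$, exactly as in the last step of the proof of Theorem \ref{thm: [e,f]}. Hence the identity with the insertions reduces to the identity without them.

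For the first point we compute $e(u)\circ e(v)\,1_{(r,d)}$ through two successive derived projective bundles $\pi_2,\pi_1$, using Proposition \ref{prop: virtual projective bundle}. Each creation step is the projectivization of $(\BK^-_{(r,\cdot),*})^\vee$ over the target, with $z_*=-c_1(\CL)$. We push forward one step at a time with Lemma \ref{lem: virtual pushforward}. At each step we rewrite the Segre series of $\BK^-_{(r,d+1)}$ in terms of $\BK^-_{(r,d+2)}$ using Lemma \ref{lem: comparing K complex}, which produces the factor $\widehat s(\CL-\CL(\Delta_{12}))$. The expected result is
$$e(u)e(v)1=\Res_{z_1}\Res_{z_2}\left(\frac{\widehat s_{z_1}(\BK^-_1)\widehat s_{z_2}(\BK^-_2)}{(u+z_1)(v+z_2)}\cdot\frac{z_2-z_1-\Delta_{12}}{z_2-z_1}\right)1,$$
with signs to be fixed carefully. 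Here the ordering of the residues and the expansion direction of $1/(z_2-z_1)$ come from the order of the pushforwards.

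Swapping $u\leftrightarrow v$ gives $e(v)e(u)$. The difference comes from two sources: the kernel $1-\Delta_{12}/(z_2-z_1)$ versus $1-\Delta_{12}/(z_1-z_2)$, and the commutator of residues. The $1$-part cancels between the two orderings. Using $\Delta_{12}^2=\Delta_{12}\cdot\Delta$ and the residue-commutator Lemma \ref{lem: commutator for the residue}, with $x+y$ replaced by $z_2-z_1$, what remains should be $\Delta_{12}$ times a symmetrized divided difference. We would identify this with the right-hand side after reassembling it as $\frac{e(u)-e(v)}{u-v}\circ\frac{e(u)-e(v)}{u-v}$ applied to $1$. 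This uses $\frac{1}{(u+z)}-\frac{1}{(v+z)}=\frac{v-u}{(u+z)(v+z)}$, together with the fact that on the support of $\Delta_{12}$ the two $\BK^-$'s coincide. The coefficient extraction for $i\ge j$ is then a routine expansion in $u^{-1},v^{-1}$. As a consistency check, the result can be compared with Theorem \ref{relations:torsioncoha} via Lemma \ref{lem: Hecke and CoHA actions}.

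The main obstacle is the bookkeeping in the first point. The second pushforward introduces $\Delta_{12}$ into a Segre factor that then depends on both $z_1$ and $z_2$, so the two iterated residues differ not only by a simple pole at $z_1=z_2$. One must verify that after multiplying by $\Delta_{12}$ the remainder is exactly a product of two divided differences, rather than a single one as in the $[e,f]$ case. I would first test the computation in the case $\BV=\BV_{(0,0)}$, where the answer must reproduce the shuffle product formula, and use that to fix signs and expansion conventions.
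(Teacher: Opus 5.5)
You follow the paper's overall plan: evaluate on $1_{(r,d)}$ and on $\prod_i{}^{+}h(w_i)\,1_{(r,d)}$, push forward through two derived projective bundles, and compare with $\big(\frac{e(u)-e(v)}{u-v}\big)^2 1_{(r,d)}$. But the step you call the main obstacle is set up with the wrong kernel, and fixing signs cannot repair it. Lemma~\ref{lem: comparing K complex}, applied to the second Hecke step, gives $(p_-)^*\BK^-_{(r,d+1),2}=\BK^-_{(r,d+2),2}-\CL+\CL(\Delta_{1,2})$. With $c_1(\CL)=-z_1$, the correction is therefore $\widehat s_{z_2}(-\CL+\CL(\Delta_{1,2}))$, the inverse of the factor $\widehat s(\CL-\CL(\Delta_{1,2}))$ you named, and
\[
e(u)\circ e(v)\,1_{(r,d)}=\Res_{z_1}\Res_{z_2}\left(\frac{{}^{-}h_1(-z_1)}{u+z_1}\cdot\frac{{}^{-}h_2(-z_2)}{v+z_2}\cdot\frac{z_2-z_1}{z_2-z_1+\Delta_{1,2}}\right)1_{(r,d)}.
\]
Since $\Delta_{1,2}^3=0$, this kernel differs from your $1-\frac{\Delta_{1,2}}{z_2-z_1}$ by $\frac{\Delta_{1,2}^2}{(z_2-z_1)^2}$, where $\Delta_{1,2}^2=(2-2g)[\pt\times\pt]$. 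The identity is sensitive to exactly this term. After cancellation one gets
\[
[e(u),e(v)]\,1_{(r,d)}=-\Delta_{1,2}\circ\Res_{z_1}\Res_{z_2}\left(\frac{{}^{-}h_1(-z_1)\,{}^{-}h_2(-z_2)}{z_2-z_1+\Delta_{1,2}}\left(\frac{1}{(u+z_1)(v+z_2)}-\frac{1}{(u+z_2)(v+z_1)}\right)\right)1_{(r,d)}.
\]
The bracket equals $-\frac{(z_2-z_1)(u-v)}{(u+z_1)(v+z_1)(u+z_2)(v+z_2)}$. So $\frac{[e(u),e(v)]}{u-v}1_{(r,d)}$ is $\Delta_{1,2}$ times the double residue of $\frac{{}^{-}h_1(-z_1){}^{-}h_2(-z_2)}{(u+z_1)(v+z_1)(u+z_2)(v+z_2)}\cdot\frac{z_2-z_1}{z_2-z_1+\Delta_{1,2}}$. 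This is literally what the same computation gives for $\Delta_{1,2}\circ\big(\frac{e(u)-e(v)}{u-v}\big)^2 1_{(r,d)}$. With your linear kernel, the left side would lose the term $-\Delta_{1,2}^2/(z_2-z_1)$, which survives in $\Delta_{1,2}\cdot(\text{kernel})$ on the right. The two sides would then differ by $\Delta_{1,2}^2$ times a double residue with a $\frac{1}{z_2-z_1}$ pole, which is nonzero for $g\neq 1$ (e.g.\ already at order $u^{-2}v^{-2}$ on $\Quot_{(0,2)}(V)$).

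The subtraction also should not go through Lemma~\ref{lem: commutator for the residue}. That lemma turns a reordering of residues into a single residue $\Res_w$, which is the shape of the $[e,f]$ answer; here no nontrivial reordering is needed. Write $\sigma_{12}\circ e(v)\circ e(u)\,1_{(r,d)}$ by the same formula with $(u,C_1)\leftrightarrow(v,C_2)$, and split the kernel as $1-\frac{\Delta_{1,2}}{z_2-z_1+\Delta_{1,2}}$.
\begin{itemize}
\item The $1$-part is an iterated residue of a product of one-variable series, so renaming $z_1\leftrightarrow z_2$ cancels it against the $1$-part of $e(u)\circ e(v)\,1_{(r,d)}$.
\item In the $\Delta_{1,2}$-part one only swaps the curve factors $C_1\leftrightarrow C_2$. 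This is free because the term lies in $\Delta_*H^*(C)$; it is your ``the two $\BK^-$'s coincide on $\Delta_{1,2}$''. The residue order and the expansion of $\frac{1}{z_2-z_1+\Delta_{1,2}}$ stay unchanged, which yields the display above.
\end{itemize}
For the ${}^{+}h$-insertions, the multipliers $\frac{w_i+z_a+\Delta_{a,m_i}}{w_i+z_a}$, $a=1,2$, do not cancel as in Theorem~\ref{thm: [e,f]}. Instead they ride along on both sides as a common factor of product form, invariant under the swap once multiplied by $\Delta_{1,2}$.

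Finally, your ``consistency check'' is already a complete proof. Theorem~\ref{relations:torsioncoha} gives the relation in $\BH$, hence in every $\BH$-module. Lemma~\ref{lem: Hecke and CoHA actions} identifies $e_i^\alpha\star$ with the decorated Hecke operators. The decorated identities for all $\alpha,\beta$ then determine the generating-series identity by Poincar\'e duality on $C\times C$. The paper itself notes that the CoHA relation implies the module relation (on all of $\BV^\full$, in fact), and presents the residue computation as an independent geometric rederivation.
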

\begin{proof}
The proof strategy is the same as for Theorem \ref{thm: [e,f]}. We first compare both sides applied to the virtual fundamental class $1_{(r,d)}$. In order to compute $e(u)\circ e(v)1_{(r,d)}$, we use the diagram
\begin{center}
    \begin{tikzcd}[column sep=9pt]
  & \Quot_{(r,[d,d+1])}\arrow[ld, ""'] \arrow[rd, "\pi_-^{(2)}"] &    & \Quot_{(r,[d+1,d+2])}\times C_2 \arrow[ld, ""'] \arrow[rd, "\pi_-^{(1)}"] &     \\
\Quot_{(r,d)} &                                    & \Quot_{(r,d+1)}\times C_2 &                                     & \Quot_{(r,d+2)}\times C_1\times C_2.
\end{tikzcd}
\end{center}
By repeatedly using Proposition \ref{prop: virtual projective bundle}, Lemma \ref{lem: virtual pushforward} and Lemma \ref{lem: comparing K complex}, we obtain
\begin{align}\label{eq: e,e first}
    e(u)\circ e(v)\, 1_{(r,d)}
    &=e(u)\ \Res_{z_2}\left(\frac{\widehat s_{z_2}(\BK^-_{(r,d+1),2})}{v+z_2}
    \right)\\
    &=(\pi_-^{(1)})_*\left(\frac{1}{u+z_1}\cdot 
    \Res_{z_2}\left(\frac{z_2-z_1}{z_2-z_1+\Delta_{1,2}}\cdot \frac{\widehat s_{z_2}(\BK^-_{(r,d+2),2})}{v+z_2}\right)
    \right)\notag\\
    &=\Res_{z_1}\Res_{z_2}\left(\frac{\widehat s_{z_1}(\BK^-_{(r,d+2),1})}{u+z_1}\cdot \frac{\widehat s_{z_2}(\BK^-_{(r,d+2),2})}{v+z_2}\cdot \frac{z_2-z_1}{z_2-z_1+\Delta_{1,2}}
    \right)\notag\\
    &=\Res_{z_1}\Res_{z_2}\left(\frac{{}^{-}h_1(-z_1)}{u+z_1}\cdot \frac{{}^{-}h_2(-z_2)}{v+z_2}
    \right)\notag\\
    &\quad-\Delta_{1,2}\circ\Res_{z_1}\Res_{z_2}\left(\frac{{}^{-}h_1(-z_1)}{u+z_1}\cdot \frac{{}^{-}h_2(-z_2)}{v+z_2}\cdot \frac{1}{z_2-z_1+\Delta_{1,2}}
    \right).\notag
\end{align}
By replacing $u$ by $v$ and $C_1$ by $C_2$, we obtain 
\begin{align}\label{eq: e,e second}
    \sigma_{12}\circ e(v)\circ e(u)\, 1_{(r,d)}
    &=\Res_{z_1}\Res_{z_2}\left(\frac{{}^{-}h_2(-z_1)}{v+z_1}\cdot \frac{{}^{-}h_1(-z_2)}{u+z_2}
    \right)\\
    &\quad -\Delta_{1,2}\circ\Res_{z_1}\Res_{z_2}\left(\frac{{}^{-}h_2(-z_1)}{v+z_1}\cdot \frac{{}^{-}h_1(-z_2)}{u+z_2}\cdot \frac{1}{z_2-z_1+\Delta_{1,2}}
    \right)\notag\\
    &=\Res_{z_2}\Res_{z_1}\left(\frac{{}^{-}h_2(-z_2)}{v+z_2}\cdot \frac{{}^{-}h_1(-z_1)}{u+z_1}
    \right)\notag\\
    &\quad -\Delta_{1,2}\circ\Res_{z_1}\Res_{z_2}\left(\frac{{}^{-}h_1(-z_1)}{v+z_1}\cdot \frac{{}^{-}h_2(-z_2)}{u+z_2}\cdot \frac{1}{z_2-z_1+\Delta_{1,2}}
    \right).\notag
\end{align}
In the second equality, we simply renamed $z_1$ by $z_2$ and vice versa in the first term and swapped $C_1$ and $C_2$ in the second term without changing its value since $\Delta_{1,2}$ is multiplied. Note that the first term of \eqref{eq: e,e first} matches the first term of \eqref{eq: e,e second} since the order of taking residues does not affect its value when applied to the multiplication of one variable functions. Therefore, by subtracting \eqref{eq: e,e second} from \eqref{eq: e,e first}, we obtain 
\begin{align*}
    [e(u),e(v)]1_{(r,d)}
    &=-\Delta_{1,2}\circ\Res_{z_1}\Res_{z_2}\left(
    \frac{{}^{-}h_1(-z_1)\cdot {}^{-}h_2(-z_2)}{z_2-z_1+\Delta_{1,2}}\cdot \left(
    \frac{1}{(u+z_1)(v+z_2)}-\frac{1}{(u+z_2)(v+z_1)}
    \right)
    \right).
\end{align*}
Since 
$$\frac{1}{(u+z_1)(v+z_2)}-\frac{1}{(u+z_2)(v+z_1)}=\frac{-(z_2-z_1)(u-v)}{(u+z_1)(v+z_2)(u+z_2)(v+z_1)},
$$
we have
$$\frac{[e(u),e(v)]}{u-v}1_{(r,d)}=\Delta_{1,2}\circ\Res_{z_1}\Res_{z_2}\left(
    \frac{{}^{-}h_1(-z_1)\cdot {}^{-}h_2(-z_2)}{(u+z_1)(v+z_2)(u+z_2)(v+z_1)}\cdot \frac{z_2-z_1}{z_2-z_1+\Delta_{1,2}}
    \right).
$$

On the other hand, we have
\begin{align*}
    \left(
    \frac{e(u)-e(v)}{u-v}
    \right)1_{(r,d)}
    &=\Res_{z_2}\left(\frac{\frac{1}{u+z_2}-\frac{1}{v+z_2}}{u-v}\cdot\widehat s_{z_2}(\BK^-_{(r,d+1),2})
    \right)\\
    &=-\Res_{z_2}\left(\frac{1}{(u+z_2)(v+z_2)}\cdot\widehat s_{z_2}(\BK^-_{(r,d+1),2})
    \right).
\end{align*}
By applying $\frac{e(u)-e(v)}{u-v}$ again to this expression in a similar way, we get
$$\left(
    \frac{e(u)-e(v)}{u-v}
    \right)\circ\left(
    \frac{e(u)-e(v)}{u-v}
    \right)1_{(r,d)}
    =\Res_{z_1}\Res_{z_2}\left(
    \frac{{}^{-}h_1(-z_1)\cdot {}^{-}h_2(-z_2)}{(u+z_1)(v+z_2)(u+z_2)(v+z_1)}\cdot \frac{z_2-z_1}{z_2-z_1+\Delta_{1,2}}
    \right).
$$
We remark that a routine computation, as done in the proof of Theorem \ref{thm: [e,f]} and Theorem \ref{thm: [e,e]}, shows that 
$$\left[\frac{[e(u),e(v)]}{u-v}, \prod_{i\in I}{}^{+}h(w_i)\right]1_{(r,d)}
=\left[ \Delta_{1,2}\circ \left(\frac{e(u)-e(v)}{u-v}
    \right)\circ\left(
    \frac{e(u)-e(v)}{u-v}
    \right) , \prod_{i\in I}{}^{+}h(w_i)\right]1_{(r,d)},
$$
completing the proof.
\end{proof}

\begin{theorem} \label{thm: [f,f]}
    As operators $\BV^\vir\rightarrow \BV^\vir\otimes H^*(C_1\times C_2)(\!(u^{-1},v^{-1})\!)$, we have
    $$\frac{[f(u), f(v)]}{u-v}=-\Delta_{1,2}\circ \left(\frac{f(u)-f(v)}{u-v}\right)\circ \left(\frac{f(u)-f(v)}{u-v}\right).
    $$
    In particular, as operators $\BV^\vir\rightarrow \BV^\vir$, we have
    $$[f^\alpha_i, f^\beta_j]=f_{i-1}\circ^{\alpha\cup \beta}f_{j}+f_{i-2}\circ^{\alpha\cup \beta}f_{j+1}+\cdots + f_{j}\circ^{\alpha\cup \beta}f_{i-1},\quad\textnormal{if}\quad i\geq j\geq 0.
    $$
\end{theorem}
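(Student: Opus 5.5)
The plan is to mirror the proof of Theorem \ref{thm: [e,e]}, with the roles of $\pi_-$ and $\pi_+$ exchanged. Since $\BV^\vir_{(r,d)}$ is generated from $1_{(r,d)}$ by Künneth components of ${}^{+}h(w)$, it suffices to check two things. First, that both sides agree on $1_{(r,d)}$. Second, that their commutators with $\prod_{i\in I}{}^{+}h(w_i)$ agree on $1_{(r,d)}$.

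\textbf{Step 1: evaluation on the fundamental class.} I compute $f(u)\circ f(v)\,1_{(r,d)}$ using the chain of Hecke correspondences
$$\Quot_{(r,d)}\leftarrow \Quot_{(r,[d-1,d])}\rightarrow \Quot_{(r,d-1)}\times C_2\leftarrow \Quot_{(r,[d-2,d-1])}\times C_2\rightarrow \Quot_{(r,d-2)}\times C_1\times C_2.$$
By Proposition \ref{prop: virtual projective bundle}, each map $p_-\times\rho$ is a derived projective bundle of $(\BK^+)^\vee$ with $z=c_1(\CO(1))$. Lemma \ref{lem: virtual pushforward} then gives $f(v)1_{(r,d)}=\Res_{z_2}\big(\widehat s_{z_2}(\BK^+_{(r,d-1),2})/(v-z_2)\big)$.

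When I apply $f(u)$, the class $\widehat s_{z_2}(\BK^+_{(r,d-1),2})$ must be pulled back along $p_+$. Lemma \ref{lem: comparing K complex} rewrites it as $\widehat s_{z_2}(\BK^+_{(r,d-2),2})$ times a correction factor of the form $\frac{z_2-z_1}{z_2-z_1+\Delta_{1,2}}$; I expect the sign in this factor to be opposite to the $e$ case. The result is
$$f(u)f(v)1=\Res_{z_1}\Res_{z_2}\Big(\tfrac{{}^{+}h_1(z_1)}{u-z_1}\tfrac{{}^{+}h_2(z_2)}{v-z_2}\cdot\tfrac{z_2-z_1}{z_2-z_1\mp\Delta_{1,2}}\Big),$$
where ${}^{+}h$ is now evaluated on $\Quot_{(r,d-2)}$.

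Next I split the correction factor as $1$ plus a term proportional to $\Delta_{1,2}$. Using $\Delta_{1,2}^2$ relations together with symmetrization, the $1$-terms cancel in $[f(u),f(v)]$. For the remaining term, the identity
$$\frac{1}{(u-z_1)(v-z_2)}-\frac{1}{(u-z_2)(v-z_1)}=\frac{(z_2-z_1)(u-v)\cdot(\pm1)}{\prod(\cdot)}$$
yields a closed formula for $[f(u),f(v)]/(u-v)$.

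Separately, I compute $\frac{f(u)-f(v)}{u-v}1=\Res_z\big(\widehat s_z/((u-z)(v-z))\big)$ and iterate it using the same Lemma \ref{lem: comparing K complex} correction. This should produce the same double residue, up to the overall sign $-1$ in the statement.

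\textbf{Step 2: commutation with tautological multiplications.} Here I insert $\prod_i\widehat s_{w_i}(\BK^+_{d,m_i})$ and transport it through both Hecke steps via Lemma \ref{lem: comparing K complex}. This produces factors $\frac{w_i-z_k}{w_i-z_k+\Delta_{m_i,k}}$ for $k=1,2$.

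These factors are symmetric in $z_1,z_2$ except in how they are tied to $C_1$ or $C_2$. On the $\Delta_{1,2}$ term they become symmetric because $\Delta_{1,2}\Delta_{1,m}=\Delta_{1,2}\Delta_{2,m}$. On the symmetric term they are symmetric after the swap $\sigma_{12}$. So they commute past both sides in the same way, exactly as in Theorem \ref{thm: [e,f]}.

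\textbf{Obstacle and final step.} I expect the main obstacle to be sign bookkeeping. The relevant signs are $z=c_1(\CL)$ versus $-z$ in the two projective bundle descriptions, and the Koszul sign in the swap $\sigma_{12}$ when comparing $f(u)f(v)$ with $f(v)f(u)$. Together these must produce the overall $-\Delta_{1,2}$ rather than $+\Delta_{1,2}$.

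Finally, I extract the mode expansion by taking coefficients of $u^{-i-1}v^{-j-1}$ and integrating against $\alpha\otimes\beta$ via Lemma \ref{lem: Hecke and CoHA actions}. This follows the same divided-difference computation that gave the $[e,e]$ formula, with the sign flipped.
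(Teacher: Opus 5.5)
Your proposal is correct and follows the paper's route: the paper proves this by repeating the $[e,e]$ argument. It also notes, as an alternative, that the formula can be read off from the $\BH^\op$ relations of Theorem \ref{relations:torsioncoha}. On the sign you flag: with $z=c_1(\CL)=c_1(\CO(1))$, Lemma \ref{lem: comparing K complex} gives the correction factor $\frac{z_2-z_1}{z_2-z_1+\Delta_{1,2}}$, exactly as in the $[e,e]$ computation, so the overall $-\Delta_{1,2}$ comes solely from your divided-difference identity now carrying the sign $+1$; also, the $1$-terms cancel simply because iterated residues of products of one-variable series commute, and no $\Delta_{1,2}^2$ relation is needed.
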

\begin{proof}
    The proof is essentially the same as that of Theorem \ref{thm: [e,e]}. Alternatively, one can use the fact that annihilation operators satisfy the commutator formula for the opposite algebra $\BH^\op$, hence the same commutator formula as in Theorem \ref{thm: [e,e]} up to a sign. 
\end{proof}

\section{Double of torsion CoHA}\label{sec: Drinfeld double}

In this section, we define a double of the torsion CoHA, denoted by $D(\BH)$, and its action $D(\BH)\curvearrowright\BV^\vir$ with the following properties:
\begin{enumerate}
    \item [i)] $\BH$ and $\BH^{\op}$ are subalgebras of $D(\BH)$,
    \item [ii)] the action $D(\BH)\curvearrowright\BV^\vir$ extends the creation (resp. annihilation) action of $\BH$ (resp. $\BH^{\op}$). 
\end{enumerate}
For this section, we assume that $C$ is a smooth projective curve and fix a basis $\CB$ of $H^*(C)$. 

\subsection{The completed tensor product}

In this subsection, we define the {\it underlying vector space} of the doubled torsion CoHA as a certain completed tensor product. Before getting into the details, we first explain the motivation behind its definition. 

On the virtual homology group, we have constructed a creation action $\BH\curvearrowright \BV^\vir$ and an annihilation action $\BH^\op\curvearrowright\BV^\vir$. It is natural to ask whether there is an action of a bigger algebra, say $D(\BH)\curvearrowright \BV^\vir$, which contains $\BH,\ \BH^\op\subseteq D(\BH)$ as subalgebras and extends the creation and annihilation action. If such a structure exists, then $D(\BH)$ must contain elements of the form $[e_i^\alpha, f_j^\beta]$ which act on $\BV^\vir$ according to the formula 
\begin{equation}\label{eq: [e,f] again}
    [e^\alpha_i, f^\beta_j]=-\hspace{-10pt}\sum_{\substack{n,m\in \BZ\\ n+m=i+j-1}} {}^{-}h_n\circ^{\alpha\cup \beta}{}^{+}h_m
\end{equation}
proven in Theorem \ref{thm: [e,f]}. Since this formula involves the multiplication operators, it is natural to define $D(\BH)$ such that it contains mutually supercommuting elements of the form ${}^-h_n^\alpha$ and ${}^+h_m^\beta$ for all $n, m\in \BZ$ and $\alpha, \beta\in \CB\subseteq H^*(C)$. One subtlety is that the right hand side of \eqref{eq: [e,f] again} involves an infinite expression, so $D(\BH)$ should be completed with respect to these supercommuting elements. We note that completions are ubiquitous in the construction of coproducts and Drinfeld doubles; see, for instance, \cite{Schiffmann, YZ, N4}. 

We first define the positive and negative Cartan parts of the doubled algebra.
\begin{definition}\label{def: Cartan}
For each $a\in \BZ$, the $a$-truncated positive Cartan part is defined as a free supercommutative algebra
$$\BH^{0+}_{\geq a}:=\BQ[{}^+h_n^{\alpha}\,|\,n\geq a,\ \alpha\in \CB]. 
$$
The positive Cartan part is defined as the inverse limit
$$\BH^{0+}:=\lim_{\longleftarrow}\Big(\cdots\,\twoheadrightarrow\, \BH^{0+}_{\geq a}\overset{\ \pi_a\ }{\twoheadrightarrow} \BH^{0+}_{\geq a+1}\,\twoheadrightarrow\,\cdots\Big),
$$
where $\pi_a$ maps the generators ${}^+h_n^{\alpha}$ to the corresponding generator if $n\geq a+1$ and zero if $n=a$. The negative analogues $\BH^{0-}_{\geq a}$ and $\BH^{0-}$ are defined in the same way using negative symbols ${}^-h_n^\alpha$. 
\end{definition}

\begin{remark}\label{rem: extend beyond the basis}
    If $\gamma=\sum_{\alpha\in \CB}c_\alpha\cdot \alpha\in H^*(C)$ for uniquely defined $c_\alpha\in \BQ$, then we write 
    $${}^\pm h_n^\gamma:=\sum_{\alpha\in \CB} c_\alpha\cdot {}^\pm h_n^\alpha\in \BH^{0\pm}.$$ 
\end{remark}

\begin{example}\label{ex: a-truncation}
If $x\in \BH^{0\pm}$ is a possibly infinite expression in symbols ${}^\pm h_n^\alpha$, then its image under $\BH^{0\pm}\twoheadrightarrow \BH^{0\pm}_{\geq a}$ is obtained from $x$ by setting ${}^\pm h_{n}^\alpha=0$ for all $n<a$. We call such an element the $a$-truncation of $x$ and denote it by $x_{\geq a}\in \BH^{0\pm}_{\geq a}$. An element of $\BH^{0\pm}$ can be thought of as an expression $x$ in symbols ${}^\pm h_n^\alpha$ such that its $a$-truncation $x_{\geq a}$ is a polynomial for every $a\in \BZ$. For example, the first expression below defines an element in $\BH^{0\pm}$, but the second one does not:
$$\sum_{i+j=N} {}^\pm h_i^\alpha\star {}^\pm h_j^\beta,\quad\quad \sum_{i\in \BZ}{}^\pm h_i^\alpha. 
$$
\end{example}

Recall that ${}^{\pm}h_n\big|_{\Quot_\alpha}=0$ if $n$ is sufficiently small as explained in \eqref{eq: vanishing for small enough i}. This vanishing makes the definition below well-defined, i.e., independent of the choice of a small enough $a$.

\begin{definition}\label{def: Cartan part action via truncation}
    If $x\in \BH^{0\pm}$ and $y\in \BV^\vir$, then the multiplication action $\BH^{0\pm}\curvearrowright \BV^\vir$ is defined by
    $$x\star y:=x_{\geq a}\star y,
    $$
    where $a$ is chosen small enough according to $y\in \BV^\vir=\bigoplus H^\vir_*(\Quot_\alpha)$. 
\end{definition}

Roughly speaking, the underlying vector space of the doubled torsion CoHA is the tensor product of four algebras $\BH^\op$, $\BH^{0-}$, $\BH^{0+}$ and $\BH$. However, a naive tensor product does not produce the correct answer because it does not contain the right hand side of \eqref{eq: [e,f] again}, i.e., 
$$\sum_{\substack{n,m\in \BZ\\ n+m=i+j-1}} {}^{-}h_n\otimes^{\alpha\cup \beta}{}^{+}h_m
\notin \BH^{\op}\otimes \BH^{0-}\otimes \BH^{0+}\otimes\BH. 
$$
Since $\BH^{0\pm}$ are defined as an inverse limit, it is natural to define $D(\BH)$ as a certain inverse limit.  

\begin{definition}\label{def: double}
    The double of the torsion CoHA, as a vector space, is defined as the inverse limit
    $$D(\BH):=\lim_{\longleftarrow} \Big(\BH^{\op}\otimes \BH^{0-}_{\geq a}\otimes \BH^{0+}_{\geq b}\otimes \BH\Big)
    $$
    taken over the directed system $\BZ\times \BZ$ with $(a,b)\geq (a',b')$ if and only if $a\leq a'$ and $b\leq b'$.
\end{definition}

\begin{example}
If $x\in D(\BH)$, then its image under $D(\BH)\twoheadrightarrow \BH^{\op}\otimes \BH^{0-}_{\geq a}\otimes \BH^{0+}_{\geq b}\otimes \BH$ is denoted by $x_{\geq a,\geq b}$ and called the $(a,b)$-truncation of $x$. Elements in $D(\BH)$ are possibly infinite expressions whose $(a,b)$-truncation $x_{\geq a,\geq b}$ lies in $\BH^{\op}\otimes \BH^{0-}_{\geq a}\otimes \BH^{0+}_{\geq b}\otimes \BH$ for every $(a,b)$. For example, 
    $$\sum_{\substack{n,m\in \BZ\\ n+m=i+j-1}} {}^{-}h_n\otimes^{\alpha\cup \beta}{}^{+}h_m\in D(\BH)
$$
defines an element in $D(\BH)$. We also note that the right hand side of the formulas appearing in Proposition \ref{prop: [e,m]}, \ref{prop: [f,m]} also lie in $D(\BH)$. For example, 
$$-\big(\+h_{i-1}\otimes^{\alpha\cup\beta}e_{j}+\+h_{i-2}\otimes^{\alpha\cup\beta}e_{j+1}+\cdots\big)\in D(\BH). 
$$
\end{example}

\begin{remark}
The underlying vector space of $D(\BH)$ as in Definition \ref{def: double} is a completed tensor product 
$$D(\BH)=\BH^{\textnormal{op}}\ \widehat \otimes\ \BH^{0-}\ \widehat \otimes\ \BH^{0+}\ \widehat \otimes\ \BH
$$
in the sense of \cite[Tag 07E7, Tag 0AMQ]{stacks-project}. To make this precise, we need to endow each factor above with a structure of a linearly topologized $\BQ$-algebra. We consider the discrete topology for $\BH^\op$ and $\BH$. The positive and negative Cartan parts $\BH^{0\pm}$ are linearly topologized by a set of ideals 
$$\{I_{<a}^\pm\,|\,a\in \BZ\},\quad I_{<a}^\pm:=\ker\big(\BH^{0\pm}\twoheadrightarrow \BH^{\pm}_{\geq a}\big).
$$
    
\end{remark}

\subsection{The PBW theorem and algebra structure}

In this section, we endow $D(\BH)$ with an algebra structure such that $\BH^\op,\ \BH^{0-},\ \BH^{0+},\ \BH\subseteq D(\BH)$ are subalgebras, following the strategy summarized next. As a first step, we define an algebra $D(\BH)_{a,b}$ presented by generators and relations for each $(a,b)\in \BZ\times\BZ$. The second step is to prove a PBW type theorem giving an isomorphism of vector spaces
\begin{equation}\label{eq: PBW type iso}
    \Phi_{a,b}:\BH^{\op}\otimes \BH^{0-}_{\geq a}\otimes \BH^{0+}_{\geq b}\otimes \BH\xrightarrow{\ \simeq\ }D(\BH)_{a,b}. 
\end{equation}
Lastly, we can simply check that the inverse system of the left hand side of \eqref{eq: PBW type iso} over the directed system $\BZ\times\BZ$ is compatible with the algebra structure on the right hand side via the isomorphisms $\Phi_{a,b}$. This then defines an algebra structure on the inverse limit $D(\BH)$. 

Fix $(a,b)\in \BZ\times\BZ$. We define sets of generators by
\begin{align*}
    E&:=\{e^\alpha_i\,|\,i\geq 0,\ \alpha\in \CB\},\\
    {}^{+}H_b&:=\{{}^{+}h^\alpha_i\,|\,i\geq b,\ \alpha\in \CB\},\\
    {}^{-}H_a&:=\{{}^{-}h^\alpha_i\,|\,i\geq a,\ \alpha\in \CB\},\\
    F&:=\{f^\alpha_i\,|\,i\geq 0,\ \alpha\in \CB\}.
\end{align*}
We define a free associative $\BQ$-algebra generated by the above symbols, i.e., 
$$R_{a,b}:=\BQ[\,E\sqcup {}^{+}H_b\sqcup {}^{-}H_a\sqcup F\,]. 
$$
The multiplication of $R_{a,b}$ is denoted by $\star$. Following Remark \ref{rem: extend beyond the basis}, we define $e^\gamma_i\in R_{a,b}$ for arbitrary $\gamma\in H^*(C)$ as a linear combination of $e^\alpha_i$ with $\alpha\in \CB$; the same applies to other symbols as well.

We now introduce various types of commutator relations on the free algebra $R_{a,b}$, motivated by results from Section \ref{sec: Commutator relations}. We will repeatedly use the following notation: if $x, y$ are among $e_i, {}^{\pm}h_j, f_k$ and $\gamma\in H^*(C)$, then we write
$$x\star^{\gamma}y:=\sum x^{\gamma_{(1)}}\star y^{\gamma_{(2)}}\in R_{a,b} 
$$
where $\Delta_*\gamma=\sum \gamma_{(1)}\otimes \gamma_{(2)}$ using Sweedler's notation. We emphasize that all the commutators we use below are supercommutators.

We start with the commutators of $[e,e]$ type
\begin{equation*}
    EE^{\alpha,\beta}_{i,j}:=[e_i^\alpha,e^\beta_{j}]+\big(e_{i-1}\star^{\alpha\cup \beta}e_{j}+e_{i-2}\star^{\alpha\cup \beta}e_{j+1}\cdots + e_{j}\star^{\alpha\cup \beta}e_{i-1}\big),
\end{equation*}
where $i\geq j\geq 0$ and $\alpha,\beta\in \CB$. We consider similar relations for the opposite algebra, i.e., the commutators of $[f,f]$ type
\begin{equation*}
    FF^{\alpha,\beta}_{j,i}:=[f_j^\alpha,f^\beta_{i}]+\big(f_{i-1}\star^{\alpha\cup \beta}f_{j}+f_{i-2}\star^{\alpha\cup \beta}f_{j+1}\cdots + f_{j}\star^{\alpha\cup \beta}f_{i-1}\big),
\end{equation*}
where $i\geq j\geq 0$ and $\alpha,\beta\in \CB$. Since the Cartan part needs to be supercommutative, we have all combinations of commutators of $[{}^{\pm}h,{}^{\pm}h]$ type
\begin{align*}
    {}^{--}H{}H^{\alpha,\beta}_{i, i'}&:=[{}^{-}h^\alpha_{i},{}^{-}h^{\beta}_{i'}],\\
    {}^{++}HH^{\alpha,\beta}_{j, j'}&:=[{}^{+}h^\alpha_{j},{}^{+}h^{\beta}_{j'}],\\
    {}^{-+}HH^{\alpha,\beta}_{i,j}&:=[{}^{-}h^\alpha_{i},{}^{+}h^{\beta}_{j}],
\end{align*}
where $i\geq i'\geq a$, $j\geq j'\geq b$ and $\alpha,\beta\in \CB$. We now consider the $(a,b)$-truncated commutators of $[{}^{\pm}h,e]$ type
\begin{align*}
    {}^-HE^{\alpha,\beta}_{i,j}&:=[{}^{-}h_i^\alpha,e^\beta_j]+\big(e_{j}\star^{\alpha\cup\beta}{}^{-}h_{i-1}   +e_{j+1}\star^{\alpha\cup\beta}{}^{-}h_{i-2}+\cdots+e_{i+j-a-1}\star^{\alpha\cup\beta}{}^{-}h_{a}\big),\\
    {}^+HE^{\alpha,\beta}_{i',j}&:=[\+h_{i'}^\alpha,e^\beta_j]+\big(\+h_{{i'}-1}\star^{\alpha\cup\beta}e_{j}+\+h_{{i'}-2}\star^{\alpha\cup\beta}e_{j+1}+\cdots+{}^{+}h_{b}\star^{\alpha\cup\beta}e_{i+j-b-1}\big),
\end{align*}
where $i\geq a$, $i'\geq b$, $j\geq 0$ and $\alpha,\beta\in \CB$. Similarly, there are the $(a,b)$-truncated commutators of $[f,{}^{\pm}h]$ type
\begin{align*}
    {}^{-}FH_{j,i}^{\alpha,\beta}
    &:=[f^\alpha_j,{}^{-}h_i^\beta]
    +\big({}^{-}h_{i-1}\star^{\alpha\cup\beta}f_{j}+{}^{-}h_{i-2}\star^{\alpha\cup\beta}f_{j+1}+\cdots+{}^{-}h_{a}\star^{\alpha\cup\beta}f_{i+j-a-1}\big),\\
    {}^{+}FH_{j,i'}^{\alpha,\beta}
    &:=[f^\alpha_j,{}^{+}h_{i'}^\beta]
    +\big(f_{j}\star^{\alpha\cup\beta}{}^{+}h_{i'-1}   +f_{j+1}\star^{\alpha\cup\beta}{}^{+}h_{i'-2}+\cdots+f_{i'+j-b-1}\star^{\alpha\cup\beta}{}^{+}h_{b}\big),
\end{align*}
where $i\geq a$, $i'\geq b$, $j\geq 0$ and $\alpha,\beta\in \CB$. Finally, we have the $(a,b)$-truncated commutators of $[e,f]$ type
$$EF^{\alpha,\beta}_{i,j}:=[e^\alpha_i, f^\beta_j]+\sum_{\substack{n\geq a,\ m\geq b\\ n+m=i+j-1}} {}^{-}h_n\star^{\alpha\cup \beta}{}^{+}h_m,
$$
where $i, j\geq 0$ and $\alpha,\beta\in \CB$. We define $J_{a,b}\subseteq R_{a,b}$ to be the two-sided ideal generated by all the commutator relations described above. 

\begin{definition}
For each $(a,b)\in \BZ\times\BZ$, we define $D(\BH)_{a,b}:=R_{a,b}/J_{a,b}$ as a quotient algebra. 
\end{definition}

\begin{remark}\label{rem: each factor maps}
    Since $\BH$ is generated by $e_i^\alpha$'s with [e,e] type commutator relations by Theorem \ref{relations:torsioncoha}, there is an algebra homomorphism $\BH\rightarrow D(\BH)_{a,b}$ sending $e^\alpha_i$ to $e^\alpha_i$. Similarly, we have algebra homomorphisms $\BH^{\op},\ \BH^{0-}_{\geq a},\ \BH^{0+}_{\geq b}\rightarrow D(\BH)_{a,b}$. All these homomorphisms will be shown to be injective below. 
\end{remark}

\begin{theorem}[PBW type theorem] \label{thm: PBW}
    There is an isomorphism of vector spaces
    \begin{align*}
        \Phi_{a,b}:\BH^{\op}\otimes \BH^{0-}_{\geq a}\otimes \BH^{0+}_{\geq b}\otimes \BH&\rightarrow D(\BH)_{a,b}.\\
        v_1\otimes v_2\otimes v_3\otimes v_4&\mapsto v_1\star v_2\star v_3\star v_4
    \end{align*}
\end{theorem}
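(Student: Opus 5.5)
Surjectivity and injectivity of $\Phi_{a,b}$ will be handled separately, the first by a straightening argument and the second by a representation, in the spirit of the classical PBW proof via the Diamond Lemma or via a faithful module.

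\emph{Surjectivity.} Every element of $D(\BH)_{a,b}$ is a linear combination of words in the generators $E\sqcup{}^-H_a\sqcup{}^+H_b\sqcup F$. We call such a word \emph{ordered} if all $f$'s come first, then all ${}^-h$'s, then all ${}^+h$'s, then all $e$'s. The relations $EF$, ${}^\pm HE$, ${}^\pm FH$ and ${}^{-+}HH$ rewrite any out-of-order adjacent pair as the swapped pair plus correction terms. These corrections are quadratic words which are already in the correct order: ${}^-h\,{}^+h$, $e\,{}^-h$ (which must be moved further), ${}^+h\,e$, and so on. To show the procedure terminates, we introduce a filtration by the total index sum $\sum(\text{subscripts})$ together with the number of inversions, for instance measuring each $e_j$ and $f_j$ by $j$ and each ${}^\pm h_n$ by $n$. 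The truncations $n\ge a$ and $m\ge b$ in the definition of $J_{a,b}$ make every correction sum finite. The relation $EF^{\alpha,\beta}_{i,j}$ preserves the total index up to a shift of $-1$, since the sum $n+m$ is $i+j-1$. In ${}^-HE$ and ${}^+HE$, each correction has strictly fewer inversions of the relevant type, and a bounded total index is achieved because $i-k\ge a$ and $i'-k\ge b$. A double induction on (length, number of inversions, total index) will therefore show that every word reduces to an ordered one. The images of $\BH^{\op}$, $\BH^{0\pm}_{\ge\cdot}$ and $\BH$ are spanned by ordered words of each single type, so $\Phi_{a,b}$ is surjective.

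\emph{Injectivity.} This is the main obstacle. A faithful representation is not obviously available, because $\BV^\vir$ for a single $V$ kills many elements (for instance ${}^\pm h_n$ for small $n$). We will instead construct a module structure directly on $M:=\BH^{\op}\otimes\BH^{0-}_{\ge a}\otimes\BH^{0+}_{\ge b}\otimes\BH$, in such a way that the ordered word $v_1 v_2 v_3 v_4$ acts on $1\otimes1\otimes1\otimes1$ to give $v_1\otimes v_2\otimes v_3\otimes v_4$; injectivity then follows at once. The generators $f$ act by left multiplication on the first factor, using $\BH^{\op}$ and its presentation from Theorem \ref{relations:torsioncoha}. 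The generator ${}^-h$ acts by commuting past the $\BH^{\op}$ factor, applying the ${}^-FH$ relation inductively on length, and then multiplying into the second factor. The generator ${}^+h$ is treated similarly, and the generator $e$ moves past everything using $EF$, ${}^\pm HE$ and $EE$. We then have to verify that these operators satisfy all the defining relations $J_{a,b}$. The $EE$ and $FF$ relations reduce to Theorem \ref{relations:torsioncoha} combined with Jacobi-type compatibility: the mixed commutators must respect the shuffle relations. For example, $[e,[e,f]]$ must be consistent with $EE$. To organize this, we repackage the relations as generating-series identities, as in Propositions \ref{prop: [e,m]} and \ref{prop: [f,m]} and Theorem \ref{thm: [e,f]}: the operators $e$ and $f$ act as derivation-like maps twisted by ${}^\pm h(w)$ through the kernel $(w-z+\Delta)/(w-z)$. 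With this formulation, the consistency checks become identities of rational functions. These identities hold because, after restricting to the truncation $(a,b)$, they are the same identities that hold for the geometric operators of Section \ref{sec: Commutator relations}. Alternatively, we will appeal to Bergman's Diamond Lemma applied to the reduction system above. The overlap ambiguities $fef$, $ehf$, $e\,e\,f$, and so on must be resolved; each such check is a residue or rational-function identity of the type used in Lemma \ref{lem: commutator for the residue}. The remaining technical point is to confirm that the truncated sums are compatible at the boundary terms $n=a$ and $m=b$, and we expect this to work because truncation is a ring map.
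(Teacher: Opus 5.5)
Your surjectivity half is sound, and it is in substance the paper's whole argument. The paper filters both sides by the index sum $F_\bullet$ and the word length $G_\bullet$, notes that every defining relation has a supercommutator as its $F$-leading term, and inducts using the five lemma. One small correction: the corrections in ${}^-HE$ and ${}^-FH$, namely $e_{j+k-1}\,{}^-h_{i-k}$ and ${}^-h_{i-k}\,f_{j+k-1}$, are again out of order, so they do not reduce inversions. Termination comes from the index sum dropping by one and being bounded below by the truncation $n\geq a$; your measure does include this.

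The genuine gap is injectivity. Whether you build the module $M$ or invoke Bergman's Diamond Lemma, everything rests on one verification: your operators on $M$ must satisfy every relation in $J_{a,b}$. Equivalently, every overlap ambiguity must resolve, for example $e\,e\,f$, $e\,f\,f$, $e\,{}^{\pm}h\,f$, $e\,e\,{}^{\pm}h$, $e\,{}^{+}h\,{}^{-}h$ and ${}^{\pm}h\,f\,f$. You do not carry this out, and the reason you give for why it must hold is not valid.

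\emph{Why the geometric argument fails.} The identities of Section~\ref{sec: Commutator relations} are identities of operators on $\BV^\vir$, which, as you note yourself, is far from faithful:
\begin{enumerate}
\item On $\BV^\vir_{(r,*)}$ the ${}^{\pm}h_n$ vanish below the thresholds in \eqref{eq: vanishing for small enough i}, so each geometric module factors through the specific truncation $(a,b)=(r-1,\rk(V)-r-1)$.
\item ${}^-h(w)$ is determined by ${}^+h(w)$ through \eqref{eqn: h+ h- relation}, so the free factor $\BH^{0-}_{\geq a}\otimes\BH^{0+}_{\geq b}$ is only seen through a proper quotient.
\end{enumerate}
The two normal-ordered reductions of an ambiguity differ by a combination of irreducible words lying in $J_{a,b}$. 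Knowing that this combination acts by zero on such modules cannot show it is zero. That would need modules on which the ordered monomials act linearly independently, which is the PBW statement itself.

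\emph{Why the truncation remark fails.} ``Truncation is a ring map'' only transports consistency from an untruncated setting such as $\widehat R/\widehat J$. The paper builds that algebra out of the $D(\BH)_{a,b}$, so this appeal is circular.

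\emph{What is needed.} You must resolve the ambiguities algebraically, for instance in generating-series form. Conjugation by ${}^{\pm}h(w)$ acts on shuffle elements through symmetric factors of the form $\prod_i\bigl(1\pm\Delta_{i,h}/(w-z_i)\bigr)$, which are compatible with the kernel $K_{d,e}$. The overlaps involving $EF$ become residue identities like those in Lemma~\ref{lem: commutator for the residue}, and these have to be written out and checked.

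For comparison, the paper settles this step in one sentence, asserting that all generators supercommute in $\gr^F$. Read literally, that only gives a surjection from the free supercommutative algebra onto $\gr^F D(\BH)_{a,b}$. So you have correctly located where the real content lies, but your proposal does not provide it.
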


\begin{proof}
    By Remark \ref{rem: each factor maps}, each factor of $\BH^{\op}\otimes \BH^{0-}_{\geq a}\otimes \BH^{0+}_{\geq b}\otimes \BH$ maps to $D(\BH)_{a,b}$, hence the linear map $\Phi=\Phi_{a,b}$ is well-defined. In order to prove that $\Phi$ is an isomorphism of vector spaces, we will apply double induction using two increasing filtrations $F_\bullet$ and $G_\bullet$ on both sides of $\Phi$. Roughly speaking, $F_\bullet$ and $G_\bullet$ keep track of the sum of the subscript indices of $e, {}^{\pm}h, f$ and the number of symbols $e, {}^{\pm}h, f$ occurring in a monomial, respectively. Precisely, we first define such filtrations on the free algebra $R_{a,b}$ and consider the induced filtrations on the quotient $D(\BH)_{a,b}$ by taking image. Similarly, one can define $F_\bullet$ and $G_\bullet$ filtrations on $\BH^{\textnormal{op}}, \BH^{0-}_{\geq a}, \BH^{0+}_{\geq b}, \BH$ and then consider the tensor product filtration on $\BH^{\op}\otimes \BH^{0-}_{\geq a}\otimes \BH^{0+}_{\geq b}\otimes \BH$. Recall that given filtered vector spaces $F_\bullet V_i$ for $i=1, \dots, n$, the tensor product filtration is defined by 
    $$F_k (V_1\otimes \cdots \otimes V_n):=\sum_{k_1+\cdots k_n\leq k} (F_{k_1}V_1)\otimes \cdots\otimes (F_{k_n}V_n)\subseteq (V_1\otimes \cdots\otimes V_n).
    $$

    Note that $F_\bullet$ and $G_\bullet$ exhaust both sides of $\Phi$ and that $\Phi$ preserves both $F_\bullet$ and $G_\bullet$ filtrations. The outer induction will be in terms of the $G_\bullet$ filtration keeping track of the number of formal symbols multiplied. By definition, $G_{-1}=0$ for both sides of $\Phi$ and in particular $G_{-1}(\Phi)$ is an isomorphism. Assume that $G_{n-1}(\Phi)$ is an isomorphism for some $n\geq 0$. If we show that $G_{n}(\Phi)$ is an isomorphism, then this would imply that $\Phi$ is an isomorphism because $G_\bullet$ filtration exhausts both sides. For the inner induction, we consider $F_\bullet$ filtration on the restriction 
    $$G_{n}(\Phi):G_{n}(\BH^{\op}\otimes \BH^{0-}_{\geq a}\otimes \BH^{0+}_{\geq b}\otimes \BH)\rightarrow G_{n}(D(\BH)_{a,b}).
    $$
    If $k$ is sufficiently negative, then $F_{k}G_{n}=0$ for both sides of $G_{n}(\Phi)$ since we multiply at most $n$ formal symbols $e_{\geq 0}, {}^{-}h_{\geq a}, {}^{+}h_{\geq b}, f_{\geq 0}$ whose indices are bounded below. Assume that $F_{m-1}G_{n}(\Phi)$ is an isomorphism for some $m$. If we show that $F_{m}G_n(\Phi)$ is an isomorphism, then this would imply that $G_n(\Phi)$ is an isomorphism since $F_\bullet$ filtration exhausts $G_n$ of both sides. By the five lemma, it suffices to show that the associated graded map
    $$\gr^F_m G_n(\Phi):\gr^F_mG_{n}(\BH^{\op}\otimes \BH^{0-}_{\geq a}\otimes \BH^{0+}_{\geq b}\otimes \BH)\rightarrow \gr^F_mG_{n}(R_{a,b}/J_{a,b})
    $$
    is an isomorphism. Note that all the commutation relations we consider become the trivial commutation relations after taking the associated graded in terms of the $F_\bullet$ filtration. So we may assume that all the formal symbols in $e, {}^{\pm}h, f$ commute in $gr^F_m G_n$. This proves that $\gr^F_m G_n(\Phi)$ is an isomorphism, hence completing the proof.     
\end{proof}

If $(a,b)\geq (a',b')$, i.e., $a\leq a'$ and $b\leq b'$, then we have a diagram 
\begin{center}
    \begin{tikzcd}
\BH^{\op}\otimes \BH^{0-}_{\geq a}\otimes \BH^{0+}_{\geq b}\otimes \BH \arrow[d] \arrow[r, "{\Phi_{a,b}}", "\simeq"'] & D(\BH)_{a,b} \arrow[d, dotted] \\
\BH^{\op}\otimes \BH^{0-}_{\geq a'}\otimes \BH^{0+}_{\geq b'}\otimes \BH \arrow[r, "{\Phi_{a',b'}}", "\simeq"']        & D(\BH)_{a',b'}                 
\end{tikzcd}
\end{center}
where the left vertical map is the surjection killing the generators ${}^-h_n^\alpha$ with $a\leq n<a'$ and ${}^+h_m^\alpha$ with $b\leq m<b'$. This defines a unique right vertical map making the above diagram commutes. Such a map $D(\BH)_{a,b}\rightarrow D(\BH)_{a',b'}$ is an algebra homomorphism because all the $(a,b)$-truncated relations defining $D(\BH)_{a,b}$ are clearly sent to zero in $D(\BH)_{a',b'}$. Therefore, the double of the torsion CoHA
$$D(\BH)=\lim_{\longleftarrow} \Big(\BH^{\op}\otimes \BH^{0-}_{\geq a}\otimes \BH^{0+}_{\geq b}\otimes \BH\Big)\simeq \lim_{\longleftarrow} D(\BH)_{a,b}
$$
is given an algebra structure. It is straightforward to check that we have subalgebras 
$$\BH^{\op},\ \BH^{0-},\ \BH^{0+},\ \BH \subseteq D(\BH). $$

\subsection{Completed relations and $D(\BH)$-actions}

Previously, we have defined the double of the torsion CoHA as the inverse limit of the truncated algebras, i.e., 
$$D(\BH)=\lim_{\longleftarrow} R_{a,b}/J_{a,b}. 
$$
Since taking the inverse limit may obscure the resulting structure, one could reverse the procedure and first take the inverse limit $\widehat R:=\lim\limits_{\longleftarrow} R_{a,b}$ of free associative algebras and then quotient it by the two-sided ideal $\widehat J\subseteq \widehat R$ generated by the completed relations of type $[e,e]$, $[f,f]$, $[{}^\pm h,e]$, $[f, {}^\pm h]$ and $[e,f]$. For example, the completed relations of $[e,f]$ type are defined by 
$$\widehat EF^{\alpha,\beta}_{i,j}:=[e^\alpha_i, f^\beta_j]+\sum_{\substack{n, m\in \BZ\\n+m=i+j-1}} {}^{-}h_n\star^{\alpha\cup \beta}{}^{+}h_m\in \widehat R,
$$
where $i, j\geq 0$ and $\alpha,\beta\in \CB$.
The other cases are similar. Since completed relations are obtained as a limit of truncated relations, we have an inclusion $\widehat J\subseteq \lim\limits_{\longleftarrow} J_{a,b}$ of ideals in $\widehat R$. We can also prove the other inclusion:

\begin{proposition}
    We have $\displaystyle \widehat J= \lim_{\longleftarrow} J_{a,b}$. In particular, we have an algebra isomorphism 
    $$D(\BH)\simeq \widehat R/\widehat J.
    $$
\end{proposition}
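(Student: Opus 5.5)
To prove the reverse inclusion $\lim\limits_{\longleftarrow} J_{a,b}\subseteq \widehat J$, I will work one truncation level at a time. For each $(a,b)$ let $\tau_{a,b}:\widehat R\rightarrow R_{a,b}$ denote the projection. The first step is to observe that $\tau_{a,b}$ maps each completed relation onto the corresponding truncated relation. For instance,
$$\tau_{a,b}(\widehat{EF}{}^{\alpha,\beta}_{i,j})=EF^{\alpha,\beta}_{i,j},$$
because the truncation simply kills ${}^-h_n$ with $n<a$ and ${}^+h_m$ with $m<b$. The same holds for the completed relations of type $[{}^\pm h,e]$ and $[f,{}^\pm h]$, whose tails are cut off exactly at ${}^-h_a$ and ${}^+h_b$. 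Relations involving a generator ${}^\pm h_i$ whose index lies below the truncation level are sent to $0$. Consequently $\tau_{a,b}(\widehat J)\supseteq J_{a,b}$, and hence $\tau_{a,b}(\widehat J)=J_{a,b}$.

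The content of the proposition is therefore a Mittag-Leffler / closedness statement: an element $x=(x_{a,b})\in\lim J_{a,b}$ has to be written as a \emph{finite} combination
$$x=\sum_k r_k\star \widehat\rho_k\star s_k,\qquad r_k,s_k\in\widehat R,\ \widehat\rho_k \text{ a completed relation}.$$
A merely compatible family of such expressions, one for each $(a,b)$, is not enough. I plan to exploit the gradings to reduce to finitely many relations.

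Both $R_{a,b}$ and $\widehat R$ carry a multigrading by the number of $e$'s and $f$'s, i.e. the $P^{\tor}$-degrees $d_e$ and $d_f$. In addition, giving ${}^\pm h_n$ and $e_n,f_n$ the weight $n$ (compare the $F_\bullet$ filtration in the proof of Theorem \ref{thm: PBW}) defines a grading that is homogeneous for every relation. I will choose all expressions homogeneous with respect to $(d_e,d_f,\text{weight})$. Fix such a component. In this component the $e$- and $f$-symbols have bounded indices, since $e_i,f_i$ have $i\ge 0$. Only finitely many relations of type $EE$, $FF$ and $EF$ can then occur, together with relations of type $[{}^\pm h,e]$ and $[f,{}^\pm h]$ with bounded $e$/$f$ index.

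Coefficients in $\widehat R$ are allowed to be infinite series in the ${}^\pm h$'s. Hence, using the PBW normal ordering of Theorem \ref{thm: PBW} compatibly for all $(a,b)$, every element of $J_{a,b}$ in this component is congruent to a combination of the \emph{finitely many} relation types above, with coefficients that are power series in ${}^\pm h$. The coefficients of this normal form at different levels $(a,b)$ are then compatible under the transition maps, and the compatible family of coefficients assembles into elements of $\widehat R$.

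The main obstacle is to make this normal form canonical, so that the coefficients truly form an inverse system. The natural plan is to fix a straightening algorithm that moves the $f$'s to the left and the $e$'s to the right, as in the proof of Theorem \ref{thm: PBW}. This algorithm is functorial with respect to the truncation maps, because truncation commutes with every rewriting step. One also has to check that the infinite sums that appear, such as $\sum_{n+m=N}{}^-h_n\star{}^+h_m$, are legitimate elements of $\widehat R$; this follows from the description in Example \ref{ex: a-truncation}.

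Granting this, we get $\widehat J=\lim J_{a,b}$. Exactness of $\lim$ on the short exact sequences $0\rightarrow J_{a,b}\rightarrow R_{a,b}\rightarrow D(\BH)_{a,b}\rightarrow 0$ then gives $D(\BH)\simeq\widehat R/\widehat J$. The needed $\lim^1$-vanishing comes from the surjectivity of the transition maps $J_{a,b}\rightarrow J_{a',b'}$, which was established in the first step.
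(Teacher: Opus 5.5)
The gap is in your reduction to finitely many relations, which is exactly where you locate the content of the proposition.

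(i) The weight grading you propose is not compatible with the relations. In each of $EE$, $FF$, ${}^{\pm}HE$, ${}^{\pm}FH$ and $EF$, the correction terms have index sum one less than the commutator. This is why the paper uses $F_\bullet$ only as a filtration in Theorem~\ref{thm: PBW}. What is homogeneous is $2\cdot(\text{index sum})$ plus the cohomological degree of the decorations, since $\Delta_*$ raises the latter by $2$.

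(ii) Even for that grading, fixing $(d_e,d_f)$ and the degree does not bound the $e$- and $f$-indices. The reason is that $\widehat R$ contains ${}^{\pm}h_n$ for all $n\in\BZ$, so a large $e$-index can be compensated by a very negative $h$-index. For example, take $x=\sum_{i\geq 0}{}^-h^{\gamma}_{-i}\star EE^{\alpha,\beta}_{i,0}$. It is homogeneous, and its $(a,b)$-truncation is the finite sum over $0\leq i\leq -a$, which lies in $J_{a,b}$. So $x\in\lim\limits_{\longleftarrow}J_{a,b}$, yet it involves $EE^{\alpha,\beta}_{i,0}$ for every $i\geq 0$. Nothing in your argument writes $x$ as a finite two-sided combination of completed relations.

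(iii) The truncation-compatible straightening is assumed, not proved. Even if granted, its output at level $(a,b)$ uses a number of relations that grows as $(a,b)\to(-\infty,-\infty)$: moving ${}^-h_n$ past $e_j$ via ${}^-HE_{n,j}$ produces a tail of $n-a$ out-of-order terms that must be straightened again. So in the limit it produces exactly the level-wise finite but infinite combinations you yourself said are not enough. Moreover, Theorem~\ref{thm: PBW} describes $R_{a,b}/J_{a,b}$, not how elements of $J_{a,b}$ decompose into relations.

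The paper uses no normal form. Besides the surjectivity of $J_{a,b}\to J_{a+1,b}$ (your first step), it takes presentations $0\to K_{a,b}\to\bigoplus_{s\in S_{a,b}}R_{a,b}\to J_{a,b}\to 0$. It argues that the syzygy modules $K_{a,b}$ form a surjective inverse system, since $R_{a,b}$ is $R_{a+1,b}$ with the free symbols ${}^-h^\alpha_a$ adjoined. The limit of these presentations then stays exact, so every element of $\lim\limits_{\longleftarrow}J_{a,b}$ lifts to a compatible family of coefficients indexed by $\widehat S$.

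Such a family need only be finitely supported at each truncation level; the obvious lift of your $x$ has infinite support. So what this mechanism really produces are combinations of completed relations converging in the inverse-limit topology, i.e.\ the closed ideal they generate.

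For that reading of $\widehat J$, your first step already finishes the proof and no grading is needed. Let $I$ be the ideal generated by the completed relations. Its closure in $\widehat R$ is $\{y\in\widehat R:\tau_{a,b}(y)\in\tau_{a,b}(I)\text{ for all }(a,b)\}$. This equals $\lim\limits_{\longleftarrow}J_{a,b}$ because $\tau_{a,b}(I)=J_{a,b}$, and your $\lim^1$-argument then gives $D(\BH)\simeq\widehat R/\widehat J$. The attempt to force finitely many relations via the grading should be dropped, for the reasons above.
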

\begin{proof}
There exists an inverse system of short exact sequences
$$0\rightarrow J_{a,b}\rightarrow R_{a,b}\rightarrow R_{a,b}/J_{a,b}\rightarrow 0.
$$
Note that a natural map $h_{a,b}:J_{a,b}\rightarrow J_{a+1,b}$ is surjective because all $(a+1,b)$-truncated relations are obtained by some $(a,b)$-truncated relations via ring homomorphism $R_{a,b}\rightarrow R_{a+1,b}$ which sets ${}^{-}h_a^\alpha=0$ for all $\alpha\in \CB$. Similarly, $J_{a,b}\rightarrow J_{a,b+1}$ is also surjective. Since $J_{a,b}$'s form a surjective inverse system with respect to $\BZ\times\BZ$, the limits again define a short exact sequence
$$0\rightarrow \lim_{\longleftarrow}J_{a,b}\rightarrow \lim_{\longleftarrow}R_{a,b}\rightarrow \lim_{\longleftarrow}(R_{a,b}/J_{a,b})\rightarrow 0.
$$
Therefore, assuming the equality $\displaystyle \widehat J= \lim_{\longleftarrow} J_{a,b}$, the second statement of the proposition follows easily:
$$D(\BH)\simeq\lim_{\longleftarrow} D(\BH)_{a,b}\simeq \lim_{\longleftarrow} (R_{a,b}/J_{a,b})\simeq (\lim_{\longleftarrow}R_{a,b})/(\lim_{\longleftarrow}J_{a,b})\simeq \widehat{R}/\widehat{J}.
$$

We are left to prove $\displaystyle\lim_{\longleftarrow} J_{a,b}\subseteq \widehat J$. Let $S_{a,b}$ be the index set of $(a,b)$-truncated relations and for each $s\in S_{a,b}$ write $r_s^{a,b}\in R_{a,b}$ for the corresponding relation. Similarly, let $\widehat S$ be the index set of completed relations and for each $s\in \widehat S$ write $\widehat r_s\in \widehat R$ for the corresponding relation. Recall that $(a,b)\geq (a',b')$ if and only if $a\leq a'$ and $b\leq b'$. We write $(a,b)>(a',b')$ if $(a,b)\geq (a',b')$ and $(a,b)\neq (a',b')$. Since there are fewer relations for bigger truncation, if $(a,b)>(a',b')$, then $S_{a,b}\supsetneq S_{a',b'}$ and $\widehat S = \cup S_{a,b}$. For each $(a,b)\in \BZ\times \BZ$, consider
$$0\rightarrow K_{a,b}\rightarrow \bigoplus_{s\in S_{a,b}}R_{a,b}\xrightarrow{\phi_{a,b}} J_{a,b}\rightarrow 0
$$
where $\phi_{a,b}\big((x_s)\big):=\sum_{s\in S_{a,b}} x_s\cdot r^{a,b}_s$ and $K_{a,b}:=\ker(\phi_{a,b})$. For $(a,b)>(a+1,b)$, we have horizontal comparison maps between short exact sequences
\begin{equation}\label{eqn: fa diagram}
    \begin{tikzcd}
0 \arrow[r] & K_{a,b} \arrow[r] \arrow[d, "f_{a,b}"]   & \bigoplus_{s\in S_{a,b}} R_{a,b} \arrow[r] \arrow[d, two heads, "g_{a,b}"]   & J_{a,b} \arrow[r] \arrow[d, two heads, "h_{a,b}"]     & 0 \\
0 \arrow[r] & K_{a+1,b} \arrow[r]& \bigoplus_{s\in S_{a+1,b}}R_{a+1,b} \arrow[r] & J_{a+1,b} \arrow[r] &0
\end{tikzcd}
\end{equation}
where $g_{a,b}$ is defined on each summand by
$$g_{a,b}\big((\dots, 0, x_s, 0, \dots)\big)=\begin{cases}
    (\dots, 0, \overline{x_s}, 0, \dots) &\textnormal{if}\quad s\in S_{a+1,b},\\
    0&\textnormal{if}\quad s\in S_{a,b}\backslash S_{a+1,b},
\end{cases}
$$
and $h_{a,b}$ is induced from $R_{a,b}\twoheadrightarrow R_{a+1,b}$. It is straightforward to check that the right square commutes by definition. For $(a,b)>(a,b+1)$, similar vertical comparison maps between short exact sequences exist. Assume for the moment that $\{K_{a,b}\}_{(a,b)\in \BZ\times\BZ}$ forms a surjective inverse system, i.e., $f_{a,b}$ is surjective for all $(a,b)$ and the same holds for the vertical analogues. Then by taking limits, we obtain a short exact sequence 
$$0\rightarrow \lim_{\longleftarrow} K_{a,b}\rightarrow \lim_{\longleftarrow} \bigoplus_{s\in S_{a,b}}R_{a,b}\xrightarrow{\lim\phi_{a,b}} \lim_{\longleftarrow} J_{a,b}\rightarrow 0.
$$
By definition of comparison maps $g$'s, we can rewrite this as
$$0\rightarrow \lim_{\longleftarrow} K_{a,b}\rightarrow \bigoplus_{s\in \widehat S}\widehat R\xrightarrow{\lim\phi_{a,b}} \lim_{\longleftarrow} J_{a,b}\rightarrow 0
$$
where $\lim\phi_{a,b}$ is given by the completed relations. In other words, $\displaystyle \lim_{\longleftarrow} J_{a,b}$ is generated by completed relations, i.e., $\displaystyle\lim_{\longleftarrow} J_{a,b}= \widehat J$.

We now prove that the horizontal comparison map $f_{a,b}$ is surjective. The vertical case can be proven identically. Let $J_{a,b}'$ be the image of the composition
    $$ \bigoplus_{s\in S_{a+1,b}}R_{a,b}\hookrightarrow \bigoplus_{s\in S_{a,b}}R_{a,b}\xrightarrow{\phi_{a,b}}J_{a,b}
    $$
    and let $K_{a,b}'$ be the kernel of the surjection $\bigoplus_{s\in S_{a+1,b}} R_{a,b}\twoheadrightarrow J_{a,b}'$. Then surjectivity of $f_{a,b}$ would follow from surjectivity of $f_{a,b}'$ in the diagram below
    \begin{center}
        \begin{tikzcd}
    
0 \arrow[r] & K_{a,b}' \arrow[r] \arrow[d, "f_{a,b}'"]   & \bigoplus_{s\in S_{a+1,b}} R_{a,b} \arrow[r] \arrow[d, two heads, "g_{a,b}'"]   & J_{a,b}'\arrow[r] \arrow[d, two heads, "h_{a,b}'"]     & 0 \\
0 \arrow[r] & K_{a+1,b} \arrow[r]  & \bigoplus_{s\in S_{a+1,b}}R_{a+1,b} \arrow[r] & J_{a+1,b} \arrow[r]  & 0
\end{tikzcd}
    \end{center}
    Since $R_{a,b}$ is obtained by adding new formal symbols \{${}^- h^\alpha_a\,|\,\alpha\in \CB\}$ to $R_{a+1,b}$, both surjections $g'_{a,b}$ and $h'_{a,b}$ admit compatible sections. This clearly implies that $f'_{a,b}$ is also surjective by diagram chasing. This completes the proof. 
\end{proof}

\begin{proposition}\label{prop: doubled algebra action}
    There exists an action $D(\BH)\curvearrowright\BV^\vir$ such that it extends the action of subalgebras $\BH$, $\BH^\op$ and $\BH^{0\pm}$ via creation and annihilation operators and multiplication by tautological classes, respectively.
\end{proposition}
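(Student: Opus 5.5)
The plan is to construct the action level by level through the truncated algebras $D(\BH)_{a,b}$, and then pass to the inverse limit. Since $\BV^\vir=\bigoplus_\alpha H^\vir_*(\Quot_\alpha)$ is a direct sum, it suffices to define, for each $y\in H^\vir_*(\Quot_\alpha)$, the action of $D(\BH)$ on $y$. The action must factor through some $D(\BH)_{a,b}$ with $(a,b)$ depending only on the range of $\alpha$'s reached.

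\emph{Step 1: a generator-level action of $R_{a,b}$.} Fix $(a,b)$. Send $e^\alpha_i$ and $f^\alpha_i$ to the creation and annihilation operators of Lemma \ref{lem: Hecke and CoHA actions}. Send ${}^\pm h^\alpha_n$ to the multiplication operators ${}^\pm h^\alpha_n$ from \eqref{haction}. These preserve $\BV^\vir$ by Theorem \ref{thm: virtual homology is preserved} and the fact that the Künneth components of $\widehat s_{\pm w}(\BK^\pm)$ are tautological. This gives a homomorphism $R_{a,b}\to\End(\BV^\vir)$ by freeness.

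\emph{Step 2: the truncated relations hold locally.} The completed relations hold on $\BV^\vir$. The $[e,e]$ and $[f,f]$ relations hold by Theorem \ref{relations:torsioncoha}, and also by Theorems \ref{thm: [e,e]} and \ref{thm: [f,f]}. The $[{}^\pm h,e]$ and $[f,{}^\pm h]$ relations hold by Propositions \ref{prop: [e,m]} and \ref{prop: [f,m]}. The $[e,f]$ relation holds by Theorem \ref{thm: [e,f]}. Supercommutativity of the Cartan operators is clear, since they are cup products with cohomology classes. The infinite sums act finitely because of the vanishing \eqref{eq: vanishing for small enough i}.

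The key point is the following. Let $W\subseteq\BV^\vir$ be a finite sum of summands $H^\vir_*(\Quot_{(r,d)})$. Each relation in $J_{a,b}$ is a polynomial in finitely many generators, and each generator shifts $(r,d)$ by at most $(0,\pm1)$. So, for a fixed monomial length, only finitely many ranks $r$ occur. If $a\leq\min_r(r-1)$ and $b\leq\min_r(\rk V-r-1)$ over all ranks ever visited, then every ${}^-h_n$ with $n<a$ and every ${}^+h_m$ with $m<b$ acts by zero. Since the rank is unchanged by $e,f,h$, the rank of $y$ is preserved. Hence only the single rank $r$ of $y$ matters, and this condition is uniform.

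It follows that for a fixed $r$ and $(a,b)\leq(r-1,\rk V-r-1)$, the truncated relations agree with the completed ones on $\bigoplus_d H^\vir_*(\Quot_{(r,d)})$. So $D(\BH)_{a,b}$ acts on this rank-$r$ piece.

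\emph{Step 3: passing to the limit.} The action of $D(\BH)_{a,b}$ on the rank-$r$ piece is compatible with the transition maps $D(\BH)_{a,b}\to D(\BH)_{a',b'}$, because killing low ${}^\pm h$ does not change their (zero) action. So for $x\in D(\BH)$ and $y$ of rank $r$, define $x\star y:=x_{\geq a,\geq b}\star y$ for any sufficiently small $(a,b)$. This is independent of the choice, and it is multiplicative because truncation is an algebra map. It restricts to the given actions on $\BH$, $\BH^\op$ and $\BH^{0\pm}$; for $\BH^{0\pm}$ this is exactly Definition \ref{def: Cartan part action via truncation}.

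The main obstacle is Step 2. The issue is checking that the $(a,b)$-truncated $[{}^\pm h,e]$ and $[e,f]$ relations are exactly the completed ones once $(a,b)$ is small relative to $r$. I will verify this by comparing the truncated tails with \eqref{eq: vanishing for small enough i} applied to the ranks of the intermediate Quot schemes. I will also need to confirm that the $[e,f]$ formula of Theorem \ref{thm: [e,f]}, which was proved only on $\BV^\vir$, suffices. This is fine since we only act on $\BV^\vir$.
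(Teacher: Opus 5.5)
Your argument is correct, but it is organized differently from the paper's.

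\textbf{How the two routes differ.} The paper lets the completed free algebra $\widehat R=\lim_{\longleftarrow}R_{a,b}$ act on $\BV^\vir$, with each element acting through a sufficiently small truncation. It then observes that the completed relations proved in Section~\ref{sec: Commutator relations} hold. Finally it descends the action using the separately proved identification $D(\BH)\simeq\widehat R/\widehat J$, i.e.\ $\widehat J=\lim_{\longleftarrow}J_{a,b}$.

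You never pass through $\widehat R$. Using that $e$, $f$ and ${}^\pm h$ all preserve the rank $r$, you show the following: on each summand $\bigoplus_d H^\vir_*(\Quot_{(r,d)})$, the $(a,b)$-truncated relations coincide with the completed ones once $(a,b)$ is small relative to $r$. So $D(\BH)_{a,b}$ itself acts there, compatibly with the transition maps. You then let $x\in D(\BH)$ act through $x_{\geq a,\geq b}$, using only the definition $D(\BH)=\lim_{\longleftarrow}D(\BH)_{a,b}$ with its transported algebra structure.

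\textbf{What each route buys.} Your route is independent of the limit-interchange proposition $\widehat J=\lim_{\longleftarrow}J_{a,b}$, whose proof needs surjectivity of the inverse systems of relation modules. It also makes explicit the well-definedness that the paper calls ``clear''. The paper's route is shorter once that proposition is available, and it matches the completed form in which Section~\ref{sec: Commutator relations} states the relations.

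\textbf{A correction to your Step 2.} The threshold for ${}^-h$ that you took from \eqref{eq: vanishing for small enough i} is wrong for $r\geq 1$.
\begin{itemize}
\item Since $\BK^-_{(r,d),*}\simeq\CQ_*\otimes K_{C_*}^\vee[-1]$ has rank $-r$, we get ${}^-h(w)=\widehat s_{-w}(\BK^-)=(-1)^r w^{r}+O(w^{r-1})$. This is consistent with the formula in Remark~\ref{rem: relation with NM action}.
\item Hence ${}^-h_i$ vanishes only for $i<-r-1$, and the leading coefficient ${}^-h_{-r-1}=(-1)^r\,\pr_1^!$ is nonzero.
\item With your choice $a=r-1$ and $r\geq1$, the truncated $[e,f]$ and $[{}^-h,e]$ relations would in general omit nonzero terms ${}^-h_n$ with $-r-1\leq n<r-1$. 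They would then fail on the rank-$r$ piece.
\end{itemize}
Take $a\leq -r-1$ instead; your bound $b\leq\rk V-r-1$ for ${}^+h$ is correct, and nothing else in the argument changes.

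Note also that the paper's order on $\BZ\times\BZ$ is reversed ($(a,b)\geq(a',b')$ means $a\leq a'$, $b\leq b'$). Your ``$(a,b)\leq(r-1,\rk V-r-1)$'' should be read componentwise in the usual sense.
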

\begin{proof}
The inverse limit $\widehat R=\lim\limits_{\longleftarrow} R_{a,b}$ clearly acts on $\BV^\vir$ via creation and annihilation and multiplication by tautological classes. These actions satisfy the completed relations by Propositions \ref{prop: [e,m]}, \ref{prop: [f,m]}, \ref{thm: [e,f]}, \ref{thm: [e,e]} and \ref{thm: [f,f]}. Therefore, the action factors through the quotient $\widehat R/\widehat J\simeq D(\BH)$. 

\end{proof}

\begin{remark}\label{rem: relation to MN}
We emphasize that the definition of $D(\BH)$ is independent of the choices of $V$ and $r$ used to fix the type of the Quot schemes $\Quot_{(r,*)}(V)$. Once we fix $V$ and $r$, the action
$$D(\BH)\curvearrowright \BV^\vir_{(r,*)}:=\bigoplus_{d\in \BZ}\BV^\vir_{(r,d)}
$$
from the above proposition factors through a smaller algebra
$$D(\BH)\twoheadrightarrow D(\BH)_{V,r}
$$
defined by quotienting out the relations coming from \eqref{eq: vanishing for small enough i} and \eqref{eqn: h+ h- relation}. If $V$ is a vector bundle and $r=0$, then the action of $D(\BH)_{V,0}\curvearrowright\BV$ is essentially the same as the action of a shifted Yangian constructed by Marian--Negu\c t \cite{marian2026cohomologyquotschemesmooth}. 
    
\end{remark}

\appendix

\section{Bivariant intersection theory}

In this section, we collect basic definitions and results we need from bivariant intersection theory. Such a theory was originally developed by Fulton--MacPherson \cite{FM} for classical schemes and then extended to derived Artin stacks by Khan \cite{Khan}. In this section, all spaces are derived Artin stacks over $\pt=\textnormal{Spec}(\BC)$ and all morphisms are locally of finite type. 

For any morphism $f:X\rightarrow Y$, the relative Borel--Moore homology group is defined as\footnote{The notation $X/Y$ omits the structural morphism $f:X\rightarrow Y$ when it is understood.}
$$H^k(X/Y):=H^k(X,f^!\underline{\BQ}_Y),\quad k\in \BZ.
$$
The relative Borel--Moore homology group interpolates the cohomology ring and Borel--Moore homology group, i.e., 
$$H^k(X)=H^k(X/X),\quad H^{\BM}_k(X)=H^{-k}(X/\pt),
$$
where $X/X$ and $X/\pt$ are with respect to the identity morphism and the structural morphism to $\pt$, respectively. We remark that the relative Borel--Moore homology groups depend only on the classical truncations, i.e., 
$$H^*(X/Y)=H^*(X_{\textnormal{cl}}/Y_{\textnormal{cl}})
$$
even though functorial properties we explain below will depend on the derived structures.

The relative Borel--Moore homology groups are endowed with the following structures.
\begin{enumerate}
    \item [(i)] Product: For any $X\rightarrow Y\rightarrow Z$, there is a natural map
    $$H^{a}(X/Y)\otimes H^{b}(Y/Z)\rightarrow H^{a+b}(X/Z),\quad \alpha\otimes\beta\mapsto \alpha\cdot \beta.$$
    \item [(ii)] Proper pushforward: For any $X\xrightarrow{f}Y\rightarrow Z$ where $f$ is proper, there is a natural map 
    $$f_*:H^{a}(X/Z)\rightarrow H^a(Y/Z).$$
    \item [(iii)] Pullback: For any Cartesian diagram
    \begin{center}
        \begin{tikzcd}
X' \arrow[d] \arrow[r] & Y' \arrow[d,"g"] \\
X \arrow[r]            & Y      
\end{tikzcd}
    \end{center}
    there is a natural map
    $$g^*:H^{a}(X/Y)\rightarrow H^{a}(X'/Y').
    $$
\end{enumerate}
These structures satisfy various properties. The first four formulas below hold whenever the expression makes sense according to the setting of the three structures. We give the precise settingsa for the rest of the formulas.
\begin{enumerate}
    \item [(A1)] Associativity of products: $\alpha\cdot(\beta\cdot\gamma)=(\alpha\cdot\beta)\cdot\gamma$
    \item [(A2)] Functoriality of proper pushforwards: $(fg)_*\alpha=f_*(g_*\alpha)$
    \item [(A3)] Functoriality of pullbacks: $(fg)^*\alpha=g^*(f^*\alpha)$
    \item [(A4)] Product and proper pushforward commute: $(f_*\alpha)\cdot \beta=f_*(\alpha\cdot \beta)$
    \item [(A5)] Product and pullback commute: For any Cartesian diagram
    \begin{center}
        \begin{tikzcd}
X' \arrow[d] \arrow[r] & Y' \arrow[d,"g'"] \arrow[r]&Z'\arrow[d,"g"]\\
X \arrow[r]            & Y \arrow[r]&Z     
\end{tikzcd}
    \end{center}
    and $\alpha\in H^*(X/Y)$, $\beta\in H^*(Y/Z)$, we have $g^*(\alpha\cdot\beta)=((g')^*\alpha)\cdot (g^*\beta)$. 
    \item [(A6)] Proper pushforward and pullback commute: For any Cartesian diagram
    \begin{center}
        \begin{tikzcd}
X' \arrow[d] \arrow[r,"f'"] & Y' \arrow[d,"g'"] \arrow[r]&Z'\arrow[d,"g"]\\
X \arrow[r,"f"]            & Y \arrow[r]&Z     
\end{tikzcd}
    \end{center}
    with $f$ being proper and $\alpha\in H^*(X/Y)$, we have $g^*f_*\alpha=(f')_*(g')^*\alpha$.
    \item [(A7)] Projection formula: For any Cartesian diagram
    \begin{center}
        \begin{tikzcd}
X' \arrow[d] \arrow[r,"f'"] & Y' \arrow[d] &\\
X \arrow[r,"f"]            & Y \arrow[r]&Z     
\end{tikzcd}
    \end{center}
    with $f$ being proper and $\alpha\in H^*(Y'/Y)$, $\beta\in H^*(X/Z)$, we have $(f')_*((f^*\alpha)\cdot \beta)=\alpha\cdot (f_*\beta).$
    \item [(A8)] Graded commutativity: For any Cartesian diagram
    \begin{center}
        \begin{tikzcd}
X' \arrow[d, "g'"] \arrow[r,"f'"] & Y' \arrow[d, "g"] \\
X \arrow[r,"f"]            & Y 
\end{tikzcd}
    \end{center}
    and $\alpha\in H^*(X/Y)$, $\beta\in H^*(Y'/Y)$, we have $(g^*\alpha)\cdot \beta = (-1)^{|\alpha||\beta|} (f^*\beta)\cdot \alpha$. 
\end{enumerate}

The main theorem of Khan \cite{Khan} constructs the relative virtual class
\[ 1_{X/Y}\in H^{-2d}(X/Y) \]
associated to any quasi-smooth morphism $f:X\rightarrow Y$ of relative dimension $d$. The relative virtual classes satisfy the following properties:
\begin{enumerate}
    \item [$\bullet$] Functoriality of relative virtual classes: For any quasi-smooth morphisms $X\rightarrow Y\rightarrow Z$, we have $1_{X/Y}\cdot 1_{Y/Z}=1_{X/Z}$. 
    \item [$\bullet$] Base change of relative virtual classes: For any Cartesian diagram
    \begin{center}
        \begin{tikzcd}
X' \arrow[d] \arrow[r] & Y' \arrow[d,"g"] \\
X \arrow[r, "f"]            & Y      
\end{tikzcd}
    \end{center}
    with $f$ being quasi-smooth, we have $g^*(1_{X/Y})=1_{X'/Y'}$. 
\end{enumerate}
The relative virtual class allows us to define the following additional structures.
\begin{enumerate}
    \item [(iv)] Virtual pullback: For any $X\xrightarrow{f}Y\rightarrow Z$ where $f$ is quasi-smooth, there is a natural map\footnote{If $Z=\pt$, then this construction becomes the usual virtual pullback between Borel--Moore homology groups. If furthermore $Y=Z=\pt$, then the image of $1$ under $\BQ=H^0(Y/Z)\rightarrow H^{\BM}_{2d}(X)$ is the virtual fundamental class of $X$.}
    $$f^!:H^{a}(Y/Z)\rightarrow H^{a-2d}(X/Z),\quad \alpha\mapsto 1_{X/Y}\cdot \alpha.
    $$
    
    \item [(v)] Virtual Umkehr map: For any quasi-smooth and proper morphism $f:X\rightarrow Y$, there is a natural map
    $$f_!:H^{a}(X)\rightarrow H^{a-2d}(Y),\quad \alpha\mapsto f_*(\alpha\cdot 1_{X/Y}).
    $$
\end{enumerate}
By functoriality and base change properties of the relative virtual classes, these new structures satisfy the following properties whenever the expression makes sense. For example, whenever we write $f^!$ (resp. $f_!$), the morhpism $f$ is understood to be quasi-smooth (resp. quasi-smooth and proper).

\begin{enumerate}
    \item [(A9)] Functoriality of virtual pullbacks: $(fg)^!\alpha=g^!(f^!\alpha)$ 
    \item [(A10)] Functoriality of virtual Umkehr maps: $(fg)_!\alpha=f_!(g_!\alpha)$ 
    \item [(A11)] Base change of the virtual pullbacks: For any Cartesian diagram
    \begin{center}
        \begin{tikzcd}
X' \arrow[d,"g'"] \arrow[r,"f'"] & Y' \arrow[d,"g"] \\
X \arrow[r, "f"]            & Y      
\end{tikzcd}
    \end{center}
    where $f$ is quasi-smooth, $g$ is proper and $\alpha\in H^{\BM}_*(Y')$, we have $f^!g_*\alpha=(g')_*(f')^!\alpha$.
    \item [(A12)] Base change of the virtual Umkehr maps: For any Cartesian diagram
    \begin{center}
        \begin{tikzcd}
X' \arrow[d,"g'"] \arrow[r,"f'"] & Y' \arrow[d,"g"] \\
X \arrow[r, "f"]            & Y      
\end{tikzcd}
    \end{center}
    with $f$ being quasi-smooth and proper and $\alpha\in H^*(X)$, we have $g^*f_!(\alpha)=(f')_!(g')^*\alpha$.
    \item [(A13)] Virtual projection formulas: For any quasi-smooth proper morphism $f:X\rightarrow Y$, we have 
    $$\begin{cases}
        f_*(\alpha\cdot (f^!\beta)) = (f_!\alpha)\cdot \beta,& \textnormal{if } \alpha\in H^*(X), \beta\in H^{\BM}_*(Y), \\
            f_!((f^*\alpha)\cdot\beta)=\alpha\cdot(f_!\beta),& \textnormal{if } \alpha\in H^*(Y), \beta\in H^*(X).
    \end{cases}$$
    \item [(A14)] Virtual pullback and cap product: For any quasi-smooth morphism $X\rightarrow Y$, we have
    $$f^!(\alpha\cdot \beta)=(f^*\alpha)\cdot (f^!\beta),\quad \alpha\in H^*(Y),\ \beta\in H^\BM_*(Y). 
    $$
\end{enumerate}

\bibliographystyle{myamsplain} 
\bibliography{refs}

\end{document}